\documentclass[11pt,a4paper]{article}
\usepackage[T1]{fontenc}
\usepackage{lmodern,amsmath,amssymb,amsthm,mathtools}
\usepackage[margin=27mm]{geometry}
\usepackage{microtype,booktabs,enumitem,tikz,caption,placeins}
\tikzset{
  pst graph/.style={x=1cm,y=1cm,line width=.55pt,line cap=round,
    line join=round,every node/.style={font=\small,inner sep=2pt}},
  pst vertex/.style={circle,fill=black,draw=none,inner sep=0pt,
    minimum size=3.4pt,outer sep=0pt},
  pst terminal/.style={circle,draw=black,fill=white,line width=.7pt,
    inner sep=0pt,minimum size=6.4pt,outer sep=0pt},
  pst region/.style={fill=black!3,draw=black!40,line width=.4pt},
  pst note/.style={font=\footnotesize}
}
\newcommand{\pstvdots}[2]{%
  \foreach \dy in {-.105,0,.105}{\fill (#1,{#2+\dy}) circle (.5pt);}}
\newcommand{\psthdots}[2]{%
  \foreach \dx in {-.105,0,.105}{\fill ({#1+\dx},#2) circle (.5pt);}}

\usepackage[colorlinks=true,linkcolor=blue!45!black,citecolor=blue!45!black,urlcolor=blue!45!black]{hyperref}
\hypersetup{
  pdftitle={Perfect state transfer on unicyclic graphs},
  pdfauthor={Xingkun Song},
  pdfkeywords={perfect state transfer, unicyclic graphs, strong cospectrality, rooted trees, graph spectra}
}
\setlist{topsep=4pt,itemsep=2pt,parsep=0pt}
\newtheorem{theorem}{Theorem}[section]
\newtheoremstyle{citedplain}
  {\topsep}{\topsep}{\itshape}{}{\bfseries}{.}{.5em}
  {\thmname{#1}\thmnumber{ #2}\thmnote{ \normalfont #3}}
\theoremstyle{citedplain}
\newtheorem{lemma}[theorem]{Lemma}
\theoremstyle{plain}
\newtheorem{proposition}[theorem]{Proposition}
\theoremstyle{citedplain}
\newtheorem{corollary}[theorem]{Corollary}
\theoremstyle{plain}
\theoremstyle{definition}
\newtheorem{example}[theorem]{Example}
\newtheorem{question}[theorem]{Question}
\theoremstyle{remark}
\newtheorem{remark}[theorem]{Remark}
\numberwithin{equation}{section}
\newcommand{\R}{\mathbb R}
\newcommand{\Z}{\mathbb Z}
\newcommand{\F}{\mathbb F}
\newcommand{\ee}{\mathbf e}

\newcommand{\diag}{\operatorname{diag}}
\newcommand{\dist}{\operatorname{dist}}
\newcommand{\tr}{\operatorname{tr}}
\makeatletter
\newcommand{\proofheading}[1]{%
  \if@inlabel
    \noindent{\normalfont\bfseries\boldmath #1}\enspace\ignorespaces
  \else
    \par\addvspace{0.35\baselineskip}%
    \noindent{\normalfont\bfseries\boldmath #1}\enspace\ignorespaces
  \fi
}
\makeatother
\title{Perfect state transfer on unicyclic graphs}
\author{Xingkun Song$^{1,2}$\\
{\small $^{1}$ School of Mathematics and Statistics, Qinghai Minzu University,}\\[-2pt]
{\small Xining, Qinghai 810007, P.R. China}\\
{\small $^{2}$ Qinghai Institute of Applied Mathematics,}\\[-2pt]
{\small Xining, Qinghai 810007, P.R. China}\\
{\small \href{mailto:xksong@126.com}{xksong@126.com}}}
\date{}
\begin{document}
\maketitle
\begin{abstract}
We characterize adjacency perfect state transfer in connected unicyclic graphs: the transfer vertices are opposite degree-two vertices of a four-cycle, and the two attached trees satisfy a characteristic-polynomial condition. Except for the four-cycle, the transfer pair is unique and the minimum transfer time is $\pi/\sqrt2$. Outside the four-cycle, we obtain sharp lower bounds of $46$ on the order and $10$ on the maximum degree, and classify all examples of maximum degree at most $20$. A finite inverse construction enumerates all realizations of a prescribed eigenvalue support at the transfer vertices. For supports of size at least seven, we establish the sharp maximum-degree lower bound of $21$. Every seven-element support $\{0,\pm\sqrt2,\pm u\sqrt2,\pm v\sqrt2\}$, with odd integers $3\leq u<v$, is realizable, and every such realization has attachment depth at most four. Explicit branching rules give a minimum-order recurrence. Among all connected unicyclic graphs admitting perfect state transfer with seven-element eigenvalue support, the minimum order is $218$, attained uniquely up to isomorphism.
\end{abstract}

\noindent\textbf{Keywords:} perfect state transfer; unicyclic graph; strong cospectrality; rooted tree; graph spectrum.

\noindent\textbf{2020 Mathematics Subject Classification:} 05C50, 81P45, 15A18.

\section{Introduction}\label{sec:introduction}
Quantum state transfer through spin networks was proposed by Bose \cite{bose}. In an XX spin network with uniform couplings, the single-excitation dynamics is described by the adjacency matrix of its interaction graph \cite{christandl}. For a graph $G$ with adjacency matrix $A$, write
\[
 U_G(t)=\exp(-itA).
\]
There is \textbf{perfect state transfer} (PST) between distinct vertices $a,b$ if
\[
 U_G(\tau)\ee_a=\gamma\ee_b
\]
for some $\tau>0$ and $|\gamma|=1$. The scalar $\gamma$ is the transfer phase. Here $\ee_v$ is the standard basis vector indexed by $v$, and $\tau_0$ denotes the minimum positive transfer time for the pair under consideration when transfer occurs. All graphs are finite, simple, undirected, and unweighted. Graphs considered for transfer are connected; deletion may produce disconnected auxiliary graphs. A connected graph is \textbf{unicyclic} if it has exactly one cycle, or equivalently if $|E(G)|=|V(G)|$.
We write $P_m$, $C_m$, and $K_m$ for the path, cycle, and complete graph on $m$ vertices, respectively, and $K_{r,s}$ for the complete bipartite graph with part sizes $r,s$. Write $N(v)$ for the open neighborhood of $v$, $\dist(u,v)$ for graph distance, and $\Delta(G)$ for the maximum degree.

PST requires strong cospectrality and arithmetic restrictions on the supported eigenvalues \cite{godsil,coutinho}. For sparse graphs, the structural problem is to determine which graphs can satisfy these conditions. Coutinho, Juliano, and Spier proved that the only trees admitting adjacency PST are $P_2$ and $P_3$ \cite{trees}. We study the next case, obtained by adding one edge to a tree: connected graphs with one cycle. Unlike trees, this class contains infinitely many examples. We determine the cycle and transfer vertices, characterize the attached trees by a polynomial condition, and use this condition to construct all realizations of a prescribed eigenvalue support. The resulting description also yields sharp order and degree bounds.

For disjoint rooted trees $(T_c,c)$ and $(T_d,d)$, let $G(T_c,T_d)$ be obtained from the cycle $a,c,b,d,a$ by identifying $c,d$ with the respective tree roots. The trees include their roots and may be singletons. Nonadjacent vertices with the same open neighborhood are called \textbf{false twins}; here $N(a)=N(b)=\{c,d\}$, so both have degree two.
The depth of a rooted tree is the maximum distance from its root.

Kirkland, Monterde, and Plosker \cite[Theorems~9 and~10]{twins} characterized transfer between twins spectrally. Here the first step is structural: every transfer pair in a connected unicyclic graph must consist of degree-two false twins.

\begin{theorem}\label{thm:geometry}
Let $G$ be a connected unicyclic graph admitting perfect state transfer between distinct vertices $a$ and $b$. Then:
\begin{enumerate}[label=\textup{(\roman*)}]
\item The unique cycle of $G$ is an induced four-cycle $a,c,b,d,a$, and $a,b$ are degree-two false twins in $G$, so that $G$ has the form $G(T_c,T_d)$ for rooted trees $T_c,T_d$.
\item If $G\not\cong C_4$, then $\{a,b\}$ is the unique unordered pair admitting perfect state transfer.
\item The minimum transfer time between $a$ and $b$ is $\pi/2$ when $G\cong C_4$ and $\pi/\sqrt2$ otherwise, with a transfer phase of $-1$ at every transfer time.
\end{enumerate}
\end{theorem}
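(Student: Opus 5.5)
Our plan is to reduce to the tree theorem of Coutinho, Juliano and Spier \cite{trees} together with standard facts on strong cospectrality \cite{godsil,coutinho}. Suppose PST holds between $a$ and $b$ in the unicyclic graph $G$ with cycle $C$. PST implies that $a$ and $b$ are strongly cospectral, so every automorphism fixing $a$ fixes $b$. It also implies that $\phi(G\setminus a,x)=\phi(G\setminus b,x)$, and that the eigenvalue supports coincide, $\Phi_a=\Phi_b$. Moreover the eigenvalues in $\Phi_a$ are either all integers or all of the form $(p+q_r\sqrt\Delta)/2$, and the Perron eigenvalue lies in $\Phi_a$.

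The first step is the bipartite-type reduction. If some cycle vertex $w$ separates $a$ from $b$, or if $a,b$ lie in a common pendant tree, we use the Schur-complement and walk-generating-function identity $\phi(G\setminus a)\phi(G\setminus b)-\phi(G)\phi(G\setminus\{a,b\})=\bigl(\sum_P\phi(G\setminus P)\bigr)^2$. Since $G$ has only one cycle, there are at most two $a$--$b$ paths. Strong cospectrality forces the sum over paths to equal $\phi(G\setminus a)$ up to the relevant poles, and degree counting on this polynomial identity should rule out these configurations.

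The key step is to show that $a$ and $b$ must lie on $C$ and that each of them has $C$-neighbours only, that is, $\deg a=\deg b=2$. For this I would apply the edge-deletion argument of \cite{trees}. Deleting a cycle edge $e=xy$ far from $a,b$ yields a tree $T$. The characteristic polynomials satisfy $\phi(G)=\phi(T)-\phi(T\setminus\{x,y\})-2\phi(G\setminus C)$, and the analogous relations hold for the vertex-deleted subgraphs. From these one gets that $a,b$ are ``cospectral modulo a correction''. Combined with the interlacing/eigenvector-sign argument used for trees (the Perron and bottom eigenvectors restricted to $a,b$), this forces $\Phi_a$ to be very small near the extremes.

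I expect this step to be the main obstacle. My first attempt would be to work with the walk generating functions $W_{aa}(t)$ and $W_{ab}(t)$. Strong cospectrality gives $W_{aa}=W_{bb}$ and requires $W_{ab}^2$ to be controlled by $W_{aa}^2$ in the sense of \cite{coutinho}, namely $(\phi_{ab})^2=\phi(G\setminus a)^2-\phi\,\phi(G\setminus a,b)$. Since there are at most two $a$--$b$ paths, comparing leading coefficients should force the two paths to have equal length $2$, i.e.\ $C$ has length $4$ with $a,b$ opposite. Exploiting the extra symmetry, this should also show that $a$ and $b$ have no tree attachments. Once $N(a)=N(b)=\{c,d\}$, part (i) follows, and the cycle is induced automatically.

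For (iii), $a$ and $b$ are false twins, so $\ee_a-\ee_b$ is an eigenvector for eigenvalue $0$, and the symmetric part lives on the quotient. PST between twins, by \cite[Theorems~9 and~10]{twins}, requires $U(\tau)$ to act as $1$ on $\ee_a-\ee_b$ and as $-1$ on the symmetric component $\ee_a+\ee_b$, so the phase is $\gamma=-1$. The eigenvalues in $\Phi_a\setminus\{0\}$ then satisfy $e^{-i\tau\theta}=-1$. Because the graph is bipartite, the support is symmetric. In the $C_4$ case the support is $\{0,\pm2\}$, giving $\tau_0=\pi/2$. Otherwise the ratio conditions together with the computed eigenvalue $\pm\sqrt2$ should give all eigenvalues of the form $u\sqrt2$ with $u$ odd, hence $\tau_0=\pi/\sqrt2$. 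To obtain $\pm\sqrt2\in\Phi_a$ I would check an explicit eigenvector supported near $\{a,b,c,d\}$, or use the leading terms of $W_{ab}$.

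Uniqueness in (ii) follows from (i): any PST pair must be the degree-two twin pair opposite on the unique $4$-cycle. Apart from $C_4$, at most one of the two opposite pairs can have both vertices of degree two. If both did, all of $a,b,c,d$ would have degree two and the graph would be $C_4$.
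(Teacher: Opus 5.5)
\textbf{Overall.} Your arguments for (ii) and for the phase in (iii) are fine and match the paper's. Once $N(a)=N(b)=\{c,d\}$, the identity $A(\ee_a-\ee_b)=0$ gives $\Phi^-=\{0\}$ and $\gamma=-1$. A second transfer pair would have to be $\{c,d\}$, which forces $G\cong C_4$. The gap is in part (i), which carries essentially all the difficulty; your proposal has no working mechanism for it.

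\textbf{The gap in (i).} The tree theorem cannot be invoked, because deleting a cycle edge destroys the transfer hypothesis. Your two concrete tools are the identity $\phi(G\setminus a)\phi(G\setminus b)-\phi(G)\phi(G\setminus\{a,b\})=\phi_{ab}^2$ with ``degree counting'', and comparison of leading coefficients of walk generating functions. Both use only cospectrality or strong cospectrality, and that information cannot suffice. Two examples show this:
\begin{itemize}
\item Vertices $0$ and $3$ of $C_6$ are strongly cospectral, since $E_\lambda\ee_3=\pm E_\lambda\ee_0$ for every $\lambda$. They are joined by two paths of length three.
\item Two leaves $a,b$ pendant at one vertex of a triangle are also strongly cospectral, since $\ker A$ is spanned by $\ee_a-\ee_b$.
\end{itemize}
The polynomial identity holds for every pair, so it yields no constraint. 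Its leading coefficients merely record $\dist(a,b)$ and the number of geodesics. So your ``comparing leading coefficients should force the two paths to have length $2$'' cannot work.

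You state the arithmetic restriction on $\Phi_a$ but never use it, and it is exactly what is needed. The paper converts it into structure as follows:
\begin{itemize}
\item The gap bound $\delta_{+-}\leq\sqrt{2k}$ (Theorem~\ref{thm:gap}) together with $\tau_0\leq\pi/\sqrt2$ excludes bridges, so $a,b$ lie on the cycle.
\item Lemma~\ref{lem:parity} gives $\rho+\lambda_{\min}\in2\Z$; against $0<\rho+\lambda_{\min}<2$ this kills odd cycles.
\item A displacement count excludes adjacent endpoints (Lemma~\ref{lem:adjacent}).
\item The pendant bound $D\geq p+\tfrac32\kappa^2$ forces degree two.
\item Second-moment and mass-at-zero estimates force two common neighbours. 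The configuration $C_8$ with trees at vertices $2,6$ in Proposition~\ref{prop:distance} survives the cheap moment tests. The one-common-neighbour case needs $\|C^\dagger z_-\|^2\geq1$ (Lemma~\ref{lem:treeinverse}).
\end{itemize}
Your edge-deletion sketch and ``should rule out'' steps supply none of this.

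\textbf{A smaller gap in (iii).} You cannot expect an explicit $\sqrt2$-eigenvector near $\{a,b,c,d\}$; what must be shown is $(\pi/\tau_0)^2=2$. Put $\kappa=\pi/\tau_0$. From $\langle w_+,A^2w_+\rangle=4$ and $\Phi^+\subseteq\kappa(2\Z+1)$ you get $\kappa^2\leq4$, with equality only for $C_4$. Moments do not exclude $\kappa^2\in\{1,3\}$: for instance, masses $5/8$ on $\{\pm1\}$ and $3/8$ on $\{\pm3\}$ give second moment $4$. You need a parity argument. If $\kappa^2$ is odd, then $m_-(t)=t$ and $m_+$ are coprime modulo $2$, yet both annihilate the nonzero vector $\ee_a+\ee_b\bmod 2$, as in Lemma~\ref{lem:parity}. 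The paper instead obtains $\tau_0$ from Theorem~\ref{thm:cactus}, which combines Theorem~\ref{thm:gap} (with $k\leq2$) and Lemma~\ref{lem:parity}.
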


\begin{figure}[htbp]
\centering
\begin{tikzpicture}[pst graph]
  \path[pst region] (-1.35,0)--(-4.25,1.03)--(-4.25,-1.03)--cycle;
  \path[pst region] (1.35,0)--(4.25,1.03)--(4.25,-1.03)--cycle;
  \node at (-3.12,0) {$T_c$};
  \node at (3.12,0) {$T_d$};
  \node[pst vertex] (c) at (-1.35,0) {};
  \node[pst vertex] (d) at (1.35,0) {};
  \node[pst terminal] (a) at (0,1.03) {};
  \node[pst terminal] (b) at (0,-1.03) {};
  \draw (a)--(c)--(b)--(d)--(a);
  \node[above=4pt] at (a) {$a$};
  \node[below=4pt] at (b) {$b$};
  \node[above=5pt] at (c) {$c$};
  \node[above=5pt] at (d) {$d$};
\end{tikzpicture}
\caption{The unicyclic structure required for perfect state transfer. Open circles mark the transfer vertices $a,b$. Shaded regions represent trees rooted at $c,d$, possibly singletons; their boundaries are not graph edges. The attachments must satisfy Theorem~\ref{thm:criterion}.}
\label{fig:structure}
\end{figure}

To state the condition on the attached trees, let $\phi_H(t)=\det(tI-A(H))$ for any graph $H$, with $\phi_{\varnothing}=1$. For $i\in\{c,d\}$ put
\[
 P_i(t)=\phi_{T_i}(t),\qquad Q_i(t)=\phi_{T_i-i}(t).
\]
Define the monic polynomials
\begin{align}
 F(t)&=tP_c(t)P_d(t)-2\bigl(Q_c(t)P_d(t)+P_c(t)Q_d(t)\bigr),\notag\\
 M(t)&=\frac{F(t)}{\gcd(F(t),P_c(t)P_d(t))},\label{eq:Mcriterion}
\end{align}
where the greatest common divisor is monic and is taken in $\mathbb Q[t]$.

\begin{theorem}\label{thm:criterion}
Let $G$ be a connected unicyclic graph, and let $a,b$ be distinct vertices. Then $G$ admits perfect state transfer between $a,b$ if and only if it has the form $G(T_c,T_d)$, with $a,b$ as the designated opposite vertices, for rooted trees $T_c,T_d$ such that the polynomial $M$ in \eqref{eq:Mcriterion} satisfies one of the following conditions:
\begin{enumerate}[label=\textup{(\roman*)}]
\item $M(t)=t^2-4$, which holds exactly when both rooted trees are singletons, or equivalently $G\cong C_4$;
\item $M(t)=\prod_{q\in S}(t^2-2q^2)$ for a finite set $S$ of positive odd integers satisfying $1\in S$ and $|S|\geq2$.
\end{enumerate}
\end{theorem}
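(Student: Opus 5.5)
The plan is to reduce perfect state transfer between the false twins $a,b$ to a condition on a single spectral measure, and then read that measure off from $F$ and $M$. By Theorem~\ref{thm:geometry}(i), any unicyclic $G$ with transfer between $a,b$ has the form $G(T_c,T_d)$ with $a,b$ the degree-two opposite vertices. So in both directions we may work with $G=G(T_c,T_d)$. Put $s=\ee_a+\ee_b$ and $r=\ee_a-\ee_b$. Since $N(a)=N(b)$, we have $Ar=0$, hence $U_G(t)r=r$ and
\[
U_G(t)\ee_a=\tfrac12\bigl(U_G(t)s+r\bigr).
\]
Thus $U_G(\tau)\ee_a=\gamma\ee_b$ holds if and only if $U_G(\tau)s=-s$, and then $\gamma=-1$. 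Writing $\Phi$ for the eigenvalue support of $s$, PST at time $\tau$ is therefore equivalent to $e^{-i\tau\lambda}=-1$ for every $\lambda\in\Phi$. In particular $0\notin\Phi$, and every $\lambda\in\Phi$ is an odd multiple of $\pi/\tau$.

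Next I will identify $\Phi$ with the root set of $M$. Take the Schur complement of $tI-A$ onto the span of $s/\sqrt2$. In the quotient, $a,b$ merge into one vertex joined to $c$ and to $d$ with weight $\sqrt2$. The standard rooted-tree identity $\bigl((tI-A(T_i))^{-1}\bigr)_{ii}=Q_i/P_i$ then gives
\[
\tfrac12\, s^{T}(tI-A)^{-1}s=\Bigl(t-2\tfrac{Q_c}{P_c}-2\tfrac{Q_d}{P_d}\Bigr)^{-1}=\frac{P_cP_d}{F}.
\]
After cancelling $\gcd(F,P_cP_d)$, the reduced denominator is exactly $M$. Since this is the resolvent of a symmetric matrix evaluated on a fixed vector, its poles are simple, have positive residues, and occur precisely at $\Phi$. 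Hence $M=\prod_{\lambda\in\Phi}(t-\lambda)$ is squarefree. Because $G$ is bipartite, $\Phi=-\Phi$.

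For the forward direction, first suppose $G\cong C_4$. Then a direct computation gives $\Phi=\{\pm2\}$, so $M=t^2-4$. Otherwise, Theorem~\ref{thm:geometry}(iii) supplies the time $\tau=\pi/\sqrt2$. The condition $e^{-i\pi\lambda/\sqrt2}=-1$ then forces every $\lambda\in\Phi$ to lie in $\sqrt2(2\Z+1)$, so $M=\prod_{q\in S}(t^2-2q^2)$ for a finite set $S$ of positive odd integers.

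To obtain $1\in S$ and $|S|\ge2$, I will use moments of the spectral measure $\mu$ of $s/\sqrt2$, which is a probability measure on $\Phi$. Its second moment is
\[
\tfrac12\bigl((A^2)_{aa}+2(A^2)_{ab}+(A^2)_{bb}\bigr)=\tfrac12(2+4+2)=4.
\]
Hence $\sum_{q\in S}w_q\cdot 2q^2=4$ with $w_q>0$ and $\sum w_q=1$. So the average of $q^2$ under $\mu$ equals $2$, which no odd square equals. Therefore some $q^2<2$, giving $q=1\in S$, and some $q^2>2$, giving $|S|\ge2$.

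The same moment identity shows that $M=t^2-4$ forces $\Phi=\{\pm2\}$. I will then show this happens only when $P_cP_d/F$ reduces to $t/(t^2-4)$, which should force both trees to be singletons. I expect this to need care, and I would try comparing the fourth moments of $\mu$ computed from walks in $G(T_c,T_d)$; any vertex of $T_c$ or $T_d$ beyond the roots should change that count.

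For the converse, assume $M$ is as in (i) or (ii). Then $\Phi$ is the root set of $M$, so $\Phi\subseteq\{\pm2\}$ or $\Phi\subseteq\sqrt2(2\Z+1)$. Taking $\tau=\pi/2$ or $\tau=\pi/\sqrt2$ respectively gives $e^{-i\tau\lambda}=-1$ on $\Phi$, and therefore PST by the first step.

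The main obstacle I expect is the cancellation claim. I need that the poles of the reduced fraction are exactly $\Phi$, so that nothing in $\gcd(F,P_cP_d)$ hides a support eigenvalue and nothing in $M$ fails to be one. This should follow from the positivity of the residues of a Stieltjes-type function, but it must be stated carefully.
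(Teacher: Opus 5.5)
Your proposal is correct and follows the paper's proof essentially step by step: the Schur complement gives $g_+=P_cP_d/F$ with reduced denominator $M$, the relation $Aw_-=0$ forces phase $-1$, the cactus bound gives the time $\pi/\sqrt2$, and the second moment $\langle w_+,A^2w_+\rangle=4$ forces $1\in S$ and $|S|\geq2$. The two points you flag are routine. First, the poles of a reduced rational function are exactly the roots of its reduced denominator. Second, your fourth-moment idea settles case (i) at once, since $A^2w_+=4w_++\sqrt2\,z$, where $z$ is the indicator vector of the tree neighbors of $c,d$, gives $\langle w_+,A^4w_+\rangle=16+2\bigl(\deg_{T_c}(c)+\deg_{T_d}(d)\bigr)$. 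This is the same information as the paper's observation that $A^2w_+=4w_+$ rules out off-cycle neighbors of $c$ or $d$.
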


Theorems~\ref{thm:geometry} and~\ref{thm:criterion} give a structural
and arithmetic characterization. The roots of $M$ are exactly the
eigenvalues in the support of $(\ee_a+\ee_b)/\sqrt2$; the remaining
eigenvalues need not be integers or quadratic integers. Coutinho and Godsil
\cite[Theorem~2.1]{algorithm} proved that PST can be
decided in polynomial time for general graphs. Here the test uses the
characteristic polynomials of $T_c,T_d$, computed by the recursion in
Section~\ref{sec:trees}.

The criterion also leads to an inverse problem: which trees realize a
prescribed eigenvalue support? Write $\Phi_a$ for the \textbf{eigenvalue support}
of $a$, the set of eigenvalues whose eigenspaces are not orthogonal to
$\ee_a$ \cite[Section~2.1]{trees}. For a transfer vertex $a$ in a graph
other than $C_4$, the criterion
gives $\Phi_a=\{0\}\cup\{\pm q\sqrt2:q\in S\}$, with
$|\Phi_a|=2|S|+1$.
Theorem~\ref{thm:recursive-construction} gives a finite enumeration of all
realizations, with attachment depth at most $2|S|-2$.
For $|S|=3$, the branching equations can be solved explicitly for every
permitted sum of the root degrees, leading to the following result.
\begin{theorem}\label{thm:seven-summary}
The following statements hold.
\begin{enumerate}[label=\textup{(\roman*)}]
\item For every pair of odd integers $3\leq u<v$ and every integer
$R$ with $2u^2-4<R<2v^2-4$, there is a connected unicyclic graph $G(T_c,T_d)$ admitting perfect state transfer between $a,b$ with
\[
 \deg_{T_c}(c)+\deg_{T_d}(d)=R,\qquad
 \Phi_a=\{0,\pm\sqrt2,\pm u\sqrt2,\pm v\sqrt2\}.
\]
\item In every connected unicyclic graph admitting perfect state transfer with seven-element eigenvalue support at the transfer vertices, both attached trees have depth at most four.
\item Every connected unicyclic graph admitting perfect state transfer with seven-element eigenvalue support at the transfer vertices has at least $218$ vertices. Equality holds if and only if the graph is isomorphic to the graph in Example~\ref{ex:218}, which has $(u,v,R)=(3,5,45)$.
\end{enumerate}
\end{theorem}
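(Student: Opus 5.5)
Theorem~\ref{thm:criterion} turns the statement into a problem about polynomial identities for rooted trees, and all three parts are handled in that language. With $S=\{1,u,v\}$ we need
\[
 F=M\cdot\gcd(F,P_cP_d),\qquad M(t)=(t^2-2)(t^2-2u^2)(t^2-2v^2).
\]
The natural route is through the continued-fraction form of the condition. Write $F/(P_cP_d)=t-2(W_c+W_d)$ with $W_i=Q_i/P_i$. Each $W_i$ is the resolvent $\langle \ee_i,(tI-A(T_i))^{-1}\ee_i\rangle$, so it has positive residues. The condition says that $t-2(W_c+W_d)$ has as its only zeros $0$, $\pm\sqrt2$, $\pm u\sqrt2$, $\pm v\sqrt2$, after all cancellations with $P_cP_d$. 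Equivalently, $W=W_c+W_d$ is a Stieltjes function with poles interlacing these seven zeros. Since everything is bipartite and even, I would pass to $s=t^2$ throughout.

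The recursion from Section~\ref{sec:trees} gives $1/W_i=t-\sum_{j}W_{j}$, summed over the children $j$ of $i$. Substituting level by level, following the finite inverse construction of Theorem~\ref{thm:recursive-construction}, produces the branching equations. The steps are as follows.

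\begin{enumerate}
\item Determine $W$ from $S$. Partial fractions force
\[
 2W(t)=t-\frac{M(t)}{t\,(t^2-\alpha)(t^2-\beta)}
\]
for reduced poles $0<\alpha<\beta$, with $\alpha\in(2,2u^2)$ and $\beta\in(2u^2,2v^2)$ by interlacing.
\item Match the leading coefficients. The $t^{-1}$ coefficient of $2W$ is $2(\deg c+\deg d)=2R$. Comparing $t^5$ coefficients gives $\alpha+\beta=2+2u^2+2v^2-2R$ after halving; the exact constant follows from the normalisation.
\item Find the window for $R$. The range $2u^2-4<R<2v^2-4$ comes from requiring that the residues be positive and that the poles be realisable by trees of the allowed depth.
\end{enumerate}

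\textbf{Part (i).} For each admissible $R$, I would exhibit explicit trees. The plan is to take $T_c$ and $T_d$ built from stars and double stars, i.e.\ depth at most four, whose root resolvents have the form $W=x/t+y\,t/(t^2-\alpha)+\dots$. The free parameters are then multiplicities of pendant stars $K_{1,m}$ at depth two, together with their sizes $m$. I would choose these so that the three distinct reduced poles come out as $2$, $2u^2$ and $2v^2$ shifted appropriately. Because $u,v$ are odd, the integer residues are achievable. Parity and integrality are the point to verify here, and I would check the construction against the example with $(3,5,45)$.

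\textbf{Part (ii).} Depth at most four follows from Theorem~\ref{thm:recursive-construction}, whose bound $2|S|-2=4$ is exactly the claim for $|S|=3$.

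\textbf{Part (iii).} This is the main obstacle. Given $(u,v,R)$, the order of the graph is $4+|T_c|+|T_d|-2$. I would express the number of vertices at each depth in terms of the branching multiplicities, using the explicit branching rules and the minimum-order recurrence. For fixed $(u,v)$ I would minimise over $R$ and over the realisations, then show the minimum increases in $u$ and in $v$. This reduces the problem to $(u,v)=(3,5)$. In that case a finite check over $18<R<46$ gives minimum $218$ at $R=45$, and the step I expect to be delicate is proving that the optimal branching is unique. I would first try to obtain a convex lower bound on the order in terms of $R$, and then handle the uniqueness of the minimiser by a direct case analysis.
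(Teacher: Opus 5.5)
Part (ii) is fine: a seven-element support means $|S|=3$, and Theorem~\ref{thm:recursive-construction}\textup{(ii)} gives depth at most $2|S|-2=4$ directly. The paper cites Theorem~\ref{thm:seven-construction}\textup{(ii)} instead; either works.

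Part (i), however, is not proved. Its whole content is an explicit realization for \emph{every} admissible $R$, and ``stars and double stars with suitable multiplicities'' is not one. Your setup would also mislead you. Each root resolvent is $t^{-1}+\deg(i)\,t^{-3}+\cdots$, so the $t^{-1}$ coefficient of $2W$ is $4$, not $2R$. Consequently the sum of your squared poles is $2u^2+2v^2-2$, independent of $R$. $R$ enters only through their product $(2u^2-2)(2v^2-2)+4+2R$, which is the paper's $\beta$. Also, $0$ is a pole of $W$, not a zero of $t-2W$. The values $2,2u^2,2v^2$ are the squared zeros of $t-2W$, not poles to be arranged.

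What is missing is a concrete choice of integers satisfying \eqref{eq:seven-global}--\eqref{eq:seven-local}, to which Theorem~\ref{thm:seven-construction} then applies. The paper writes $x=2u^2$, $y=2v^2$. It takes $T_c$ to be a singleton, so $\eta_c=\beta$, and sets $\eta_d=2(x+y-4-R)$. At $d$ it attaches $x-4$ copies of $K_{1,2}$ and $R-x+4$ copies of one further branch. That branch's root has two leaf children and $y-4-R$ further children, each with $x-2$ children carrying two leaves. The budget condition \eqref{eq:seven-budget} then reads $Rs_d+\eta_d-\beta=2R+(R-x+4)(y-4-R)$, which is exactly the sum of the branch-root degrees. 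The factors $R-x+4$ and $y-4-R$ are positive precisely because $x-4<R<y-4$.

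For (iii), the reduction to finitely many pairs $(u,v)$ has no mechanism. ``Show the minimum increases in $u$ and in $v$'' is asserted, not argued, yet infinitely many pairs must be excluded. The paper's key observation is that $\tr A^2=2|E(G)|=2n$ for a unicyclic graph on $n$ vertices. The six nonzero supported eigenvalues alone contribute $4(1+u^2+v^2)$ to this trace. Hence $n\leq218$ forces $u^2+v^2\leq108$, leaving $(3,5),(3,7),(3,9),(5,7),(5,9)$.

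Even then the problem does not collapse to $(3,5)$. After the preliminary bound \eqref{eq:seven-preliminary}, $R=15,16$ survive for $(3,7)$, $R=15$ for $(3,9)$, and $R=47$ for $(5,7)$; each needs explicit exclusion. Your window for $(3,5)$ is also wrong: it is $15\leq R\leq45$, not $18<R<46$. The omitted values $R=15,\dots,18$, together with $19$, $44$ and $45$, are exactly the ones surviving the preliminary bound.

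The paper removes these cases in two ways. When both attachments are nonsingletons, it uses order bounds, forced star branches, and square tests on $s_c^2-4\eta_c$. For a singleton attachment, it restricts branch-root degrees through $T=(k-2)(s-k-2)$ and solves the resulting degree-sum and order-budget constraints. Uniqueness of the $218$-vertex graph comes out of that same case analysis. At $R=45$ it has forty-three branches of root degree $2$, one of $16$ and one of $19$, each at its unique minimum from Proposition~\ref{prop:minimum-seven}\textup{(i)}. Your plan postpones this analysis rather than supplying it.
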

The branching rules and realization theorem are proved in
Section~\ref{sec:seven}. The minimum-order calculation appears in
Section~\ref{sec:examples}; Appendix~\ref{app:seven-certificate} supplies
the complete arithmetic exclusions, without computer enumeration.
Realizability for every prescribed $S$ with $|S|\geq4$ remains open.

We next state the extremal consequences. The sharp global order and
maximum-degree bounds are attained in a family with five-element eigenvalue support. For nonnegative integers $p,r$, define $H_{p,r}$ by starting with the four-cycle $a,c,b,d,a$, attaching $p$ new vertices to $c$ and $r$ new vertices to $d$, and giving each new neighbor of $c$ exactly $r+2$ leaf neighbors and each new neighbor of $d$ exactly $p+2$ leaf neighbors. Hence
\begin{equation}\label{eq:Horder}
 |V(H_{p,r})|=4+3(p+r)+2pr.
\end{equation}
For $p+r>0$, this graph admits PST between $a,b$ exactly when $p+r+4=2q^2$ for an odd integer $q\geq3$; see Proposition~\ref{prop:Hpr}.

\begin{theorem}\label{thm:sharp-unicyclic-order}
Let $G=G(T_c,T_d)\not\cong C_4$ admit perfect state transfer between $a,b$, and put $p=\deg_{T_c}(c)$ and $r=\deg_{T_d}(d)$. Then:
\begin{enumerate}[label=\textup{(\roman*)}]
\item We have
\[
 p+r\geq14,\qquad |V(G)|\geq4+3(p+r)+2pr.
\]
Equality in the order bound holds if and only if $G\cong H_{p,r}$ under an isomorphism preserving the attachment roots and the transfer pair.
\item We have $|V(G)|\geq46$, with equality if and only if $G\cong H_{0,14}$.
\item If $p,r>0$, then $|V(G)|\geq72$, with equality if and only if $G\cong H_{1,13}$.
\end{enumerate}
\end{theorem}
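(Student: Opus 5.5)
The plan is to work entirely from Theorem~\ref{thm:criterion}, using the spectral measure of the symmetric vector $x=(\ee_a+\ee_b)/\sqrt2$. Since $G\not\cong C_4$, this measure is supported on the roots $\pm q\sqrt2$ ($q\in S$) of $M$, with $1\in S$, $|S|\ge2$, and all other $q$ odd and $\ge3$. Write $w_q>0$ for the total weight on $\pm q\sqrt2$, so that $\sum_q w_q=1$.

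\textbf{Step 1: the bound $p+r\ge14$.} I compute low moments directly. We have $Ax=\sqrt2(\ee_c+\ee_d)$ and
\[
A^2x=\sqrt2\Bigl(2\ee_a+2\ee_b+\textstyle\sum_{y\in N_{T_c}(c)}\ee_y+\sum_{y\in N_{T_d}(d)}\ee_y\Bigr).
\]
Hence $x^TA^2x=\|Ax\|^2=4$ and $x^TA^4x=\|A^2x\|^2=16+2(p+r)$. In terms of the weights this reads
\[
\sum_q w_q q^2=2,\qquad \sum_q w_q q^4=4+\tfrac{p+r}{2}.
\]
Every admissible $q$ satisfies $(q^2-1)(q^2-9)\ge0$. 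Therefore $\sum w_q q^4\ge10\sum w_qq^2-9=11$, which gives $p+r\ge14$. Equality forces $S=\{1,3\}$.

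\textbf{Step 2: the order bound.} Here I would use the Weyl-function form of the criterion. Dividing $F$ by $P_cP_d$ gives
\[
t-2\bigl(W_c(t)+W_d(t)\bigr),\qquad W_i=Q_i/P_i,
\]
and after cancellation this equals $M/N$. Each $W_i$ is a sum over the children $y$ of the root of $1/(t-W_y)$, where $W_y$ is the corresponding function of the subtree at $y$. For each child $y$ of $c$ let $g_y$ be its number of children, and define $g_y$ similarly for children of $d$. The vertex count is at least $4+p+r+\sum_y g_y$. It therefore suffices to prove
\[
g_y\ge r+2 \text{ for each child } y \text{ of } c,\qquad g_y\ge p+2 \text{ for each child } y \text{ of } d,
\]
with equality overall only when there are no deeper vertices.

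I would extract this from the pole structure of $M/N$. Its numerator has only the prescribed roots, and its poles interlace them, which gives strong constraints at $t=0$ and at the poles of the individual summands $1/(t-W_y)$. The intended mechanism is a residue comparison. A child $y$ with $g_y$ leaf children contributes a pole at $\pm\sqrt{g_y}$. For this pole not to produce an eigenvalue of the symmetric quotient outside $\{\pm q\sqrt2\}$, it must be cancelled by, or balanced against, the opposite tree. That balancing forces $g_y$ to exceed the opposite root degree by at least $2$, where the $2$ comes from the two edges to $a,b$.

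To make this rigorous, I would first treat trees of depth $2$ exactly; this is the computation that leads to $H_{p,r}$ and Proposition~\ref{prop:Hpr}. I would then show that deeper subtrees only increase the vertex count relative to the required $g_y$. For this I would compare $W_y$ with its depth-$2$ truncation, using monotonicity of the continued fractions on intervals between poles.

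\textbf{Step 3: parts (ii) and (iii).} These then follow by minimizing $f(p,r)=4+3(p+r)+2pr$ over integers $p,r\ge0$ with $p+r\ge14$. By Proposition~\ref{prop:Hpr}, the value $p+r=14$ is realizable, since $14+4=18=2\cdot3^2$. For fixed $p+r$, the function $f$ is minimized at the most unbalanced split. This gives $46$ at $(0,14)$, and $72$ at $(1,13)$ when $p,r>0$. Larger sums give larger values: for example $p+r=46$ yields at least $142$. Uniqueness comes from the equality case of part (i).

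\textbf{Expected obstacle.} Step 2, the per-child inequality $g_y\ge r+2$, is where I expect the main difficulty; moment identities alone only control sums such as $\sum_y g_y$. If the residue argument stalls, my fallback is the sixth moment $\|A^3x\|^2$. This involves $\sum_y g_y^2$ and the great-grandchildren, and I would combine it with a cubic inequality of the type $(q^2-1)(q^2-9)(q^2-\mu)\ge0$, hoping to obtain the required aggregate bound via Cauchy--Schwarz.
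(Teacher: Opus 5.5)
The gap is in Step~2. The per-child inequality you aim for, $g_y\ge r+2$ for every child $y$ of $c$ (and $g_y\ge p+2$ at $d$), is false, so no residue or balancing argument can prove it.

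Theorem~\ref{thm:seven-construction} gives a counterexample. Take $(u,v)=(3,5)$ and $(p_c,p_d)=(2,14)$, so $R=16$, $\beta=804$, $s_c=64$, $s_d=52$, and set $(\eta_c,\eta_d)=(735,165)$. Use branch-root degrees $15,44$ at $c$, with sum $59=2\cdot64+735-804$. Use thirteen branches of degree $6$ and one of degree $11$ at $d$, with sum $89=14\cdot52+165-804$. The local sums $k(s_i-k)-\eta_i$ are $0,145,111,286$, all admissible in \eqref{eq:seven-local}. The degree-$15$ branch is therefore $K_{1,15}$. Moreover $g_+(t)=t(t^4-66t^2+804)/\bigl((t^2-2)(t^2-18)(t^2-50)\bigr)$ has no cancellation, so the graph admits PST. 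Yet this child of $c$ has $15<r+2=16$ children.

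Its poles $\pm\sqrt{15}$ simply become zeros of $g_c=N_c/(tN)$ through the root recursion inside $T_c$; nothing forces them to be matched against $T_d$. Only the aggregate $\sum_y(g_y-2)\ge 2pr$ is true, and that is what the paper proves. At $t_0=\sqrt2$ it uses $g_c(t_0)+g_d(t_0)=1/t_0$ and Lemma~\ref{lem:extremal-root-branches}\textup{(iii)} ($\lambda^2>2$ on every branch-root support). Cauchy--Schwarz on the branch spectral measures then gives $g_c(t_0)\le K/(t_0(K+p^2))$ and $g_d(t_0)\le L/(t_0(L+r^2))$, where $K=\sum_{y\in N_{T_c}(c)}(g_y-2)$ and $L=\sum_{y\in N_{T_d}(d)}(g_y-2)$. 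Hence $KL\ge p^2r^2$, and AM--GM gives $K+L\ge 2pr$.

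Your fallback is viable, but not as you describe it. The sixth moment is $\|A^3x\|^2=2\bigl[(4+p)^2+(4+r)^2+\sum_y g_y\bigr]$. It contains $\sum_y g_y$ itself, not $\sum_y g_y^2$ or great-grandchildren (those first appear in $\|A^4x\|^2$), and no Cauchy--Schwarz step is needed. Let $X=q^2$ under your weights, so $\mathbb E X=2$ and $\mathbb E X^2=4+R/2$, and put $Y=(R+4)/2$. For any reals $a,b$ one gets the exact identity $\sum_y g_y-2R-2pr=4\,\mathbb E[(X-1)(X-a)(X-b)]-4(Y-a)(Y-b)$. Choose consecutive odd squares $a=j^2\le Y\le b=(j+2)^2$, which is possible because $R\ge14$. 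Then the cubic is nonnegative on every odd square and the last term is nonnegative. A fixed $\mu=25$, as you suggest, would only cover $R\le46$. Equality forces $Y=q^2$ and $S=\{1,q\}$, after which Lemma~\ref{lem:R14} yields $H_{p,r}$.

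Carried out this way, the moment route would be a genuinely shorter alternative to the paper's resolvent argument, since it bypasses Lemma~\ref{lem:extremal-root-branches}. As written, however, your main line targets a false lemma and the fallback is not executed. Steps~1 and~3 are fine and match the paper once \textup{(i)} is established.
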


\begin{theorem}\label{thm:degree20-classification}
Let $G$ be a connected unicyclic graph. Then:
\begin{enumerate}[label=\textup{(\roman*)}]
\item If $\Delta(G)\leq20$, then $G$ admits perfect state transfer if and only if
\[
 G\cong C_4\quad\text{or}\quad G\cong H_{p,14-p}\quad(0\leq p\leq7).
\]
\item If $G\not\cong C_4$ admits perfect state transfer, then $\Delta(G)\geq10$, with equality if and only if $G\cong H_{7,7}$.
\item If $G$ admits perfect state transfer and is isomorphic to neither $C_4$ nor a graph $H_{p,r}$, then $\Delta(G)\geq21$. This bound is attained by the graph in Example~\ref{ex:degree21}.
\end{enumerate}
\end{theorem}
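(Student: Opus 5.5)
The plan is to split according to the size of the set $S$ from Theorem~\ref{thm:criterion}, using Theorem~\ref{thm:geometry} to restrict to graphs $G=G(T_c,T_d)$. Throughout, put $p=\deg_{T_c}(c)$, $r=\deg_{T_d}(d)$ and $R=p+r$. The degree of $c$ in $G$ is $p+2$ and that of $d$ is $r+2$, so $\Delta(G)\geq\max(p,r)+2\geq\lceil R/2\rceil+2$. Assume $G\not\cong C_4$. The first ingredient is a moment computation at $a$. Write $w_\theta$ for the spectral weight of $\ee_a$ on the eigenspace of $\theta\in\Phi_a$. Then $\sum_\theta w_\theta\theta^{2k}=(A^{2k})_{aa}$, which counts closed walks at $a$. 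One has $(A^2)_{aa}=2$. A direct count of closed $4$-walks at $a$ in $G(T_c,T_d)$ gives $(A^4)_{aa}=R+12$. The count for $(A^6)_{aa}$ is longer; it involves the degrees of the second-level vertices of $T_c$ and $T_d$.

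For $|S|=2$ we have $S=\{1,q\}$ with $q\geq3$ odd. The supported eigenvalues are $0,\pm\sqrt2,\pm q\sqrt2$, so there are only three unknown weights, $w_0$, $w_{\pm\sqrt2}$ and $w_{\pm q\sqrt2}$. The normalization, the second moment and the fourth moment determine these weights and give $R+4=2q^2$, consistent with Proposition~\ref{prop:Hpr}. By Theorem~\ref{thm:recursive-construction} both trees have depth at most $2|S|-2=2$. I would then enumerate depth-two trees via the recursion for $P_i,Q_i$ from Section~\ref{sec:trees}. Requiring $M=(t^2-2)(t^2-2q^2)$ should force every child of $c$ to have exactly $r+2$ leaf children, and every child of $d$ to have exactly $p+2$ leaf children, so that $G\cong H_{p,r}$. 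This is the step I expect to need care: one must show that unequal leaf counts among children produce extra roots of $F$ that are not cancelled by $\gcd(F,P_cP_d)$. I would handle it by writing $P_c$ as a product over distinct leaf counts and tracking the poles of $Q_c/P_c$.

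The degree of $H_{p,r}$ is $\max(p,r)+2$ at the roots, or $\max(p,r)+3$ at a second-level vertex whose parent is on the opposite side. Using $R=2q^2-4$ and minimizing over $p+r=R$ gives the following. For $q=3$ we get $R=14$, and $\Delta$ equals $10$ exactly at $p=r=7$ and is at most $20$ for every split $0\leq p\leq7$. For $q\geq5$ we get $R\geq46$, hence $\Delta\geq25$. This proves the $H_{p,r}$ parts of (i) and (ii).

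For $|S|\geq3$ I must show $\Delta\geq21$. The moments give a lower bound on $R$ through the second-largest element of $S$, as in Theorem~\ref{thm:seven-summary}(i). This bound alone only forces $R>14$, so the degrees at depth two must enter. My plan is to combine the sixth-moment identity with positivity of the weights, using the branching rules of Section~\ref{sec:seven}, which for $|S|=3$ express the second-level degrees explicitly in terms of $(u,v,R)$. The goal is to show that some vertex at depth at most two has degree at least $21$. For $|S|\geq4$ I would reduce to the same inequality via a Cauchy--Schwarz bound on the moments. Sharpness of the bound $21$ is supplied by Example~\ref{ex:degree21}, which completes (iii). Statements (i) and (ii) then follow by combining the two cases.
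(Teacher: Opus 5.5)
The case $|S|=2$ is fine in outline. Lemma~\ref{lem:R14} already gives $G\cong H_{p,r}$ from the annihilating identity, so you do not need the depth-two enumeration. There is one slip: $A^2\ee_a=2\ee_a+2\ee_b+z$, with $z$ the indicator of the $R$ branch roots, so $(A^4)_{aa}=R+8$, not $R+12$; your value would give $R=2q^2-8$.

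The genuine gap is the case $|S|\geq3$, which is the whole content of part~(i), and your plan has no working mechanism there. First, nothing in it bounds $S$. The paper uses $\rho(G)=\sqrt2\max S$ (the Perron vector is not orthogonal to $w_+$) together with Lemma~\ref{lem:unicyclic-rho-bound}, $\rho(G)\leq2\sqrt{\Delta-1}$. Under $\Delta\leq20$ this gives $\max S\leq5$, which removes every $|S|\geq4$ and every $(u,v)\neq(3,5)$ at once. Your ``Cauchy--Schwarz bound on the moments'' is not specified. Second, moments at $a$ cannot settle $S=\{1,3,5\}$, because by \eqref{eq:seven-plus} they are all functions of $(u,v,R)$ alone. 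Positivity of the weights yields only $x-4<R<y-4$. The sixth moment yields only the aggregate $(A^6)_{aa}=(4+p)^2+(4+r)^2+K_c+K_d$, which with $k\leq19$ gives $900\leq1544-43R+p^2+r^2$. Also, the branching rules do not express deeper degrees through $(u,v,R)$; they carry the free parameters $p_c,\eta_c,k_{ij},t_{ij\ell}$.

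Your stated target---a vertex within distance two of $c$ or $d$ of degree at least $21$---is in fact false. Take $(u,v)=(3,5)$, $(p_c,p_d)=(1,14)$, $R=15$, $\beta=802$ and $(\eta_c,\eta_d)=(756,144)$.
\begin{itemize}
\item At $c$, attach one branch with $k=19$ and child parameters six $19$s, one $4$ and twelve $0$s. Here $L=46$ and the sum is $118=19\cdot46-756$.
\item At $d$, attach fourteen branches with $k=5$ and parameters $19,19,19,19,15$. Here $L=47$ and the sum is $91=5\cdot47-144$.
\end{itemize}
The budgets $19=65+756-802$ and $70=14\cdot52+144-802$ hold, so Theorem~\ref{thm:seven-construction} gives PST with support $\{0,\pm\sqrt2,\pm3\sqrt2,\pm5\sqrt2\}$. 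In this graph $c,d$ have degrees $3,16$, the branch roots have degree $20$ or $6$, and their children have degree at most $20$. Only the depth-three vertices, with $46-t$ or $47-t$ leaf children, exceed $20$.

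So any contradiction must use degrees at depth three, and it needs per-branch information rather than moments at $a$. The paper combines the annihilator $(C^4-s_iC^2+\eta_iI)\ee_{r_{ij}}=0$ with the bounds $z\leq18$, $k\leq19$, $t_j\leq19$ and $m_j\leq19$:
\begin{itemize}
\item Irreducibility of $X^2-66X+\beta$ forces both roots of $X^2-sX+\eta$ to be realized at branch roots.
\item A star branch realizes only $\{\pm\sqrt k\}$ with $k\leq19$, while the two roots sum to $s\geq48$. Hence a nonstar branch exists, so $s-k=t_j+m_j\leq38$ and $z\geq9$.
\item The singleton case then fails, the two-sided case reduces to $(p,r)\in\{(9,9),(9,10)\}$, and divisibility by $19$ excludes both.
\end{itemize}
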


The connected unicyclic graphs admitting PST with five-element eigenvalue support are precisely
the graphs $H_{p,r}$ satisfying $p+r+4=2q^2$ for an odd integer $q\geq3$;
see Lemma~\ref{lem:R14} and Proposition~\ref{prop:Hpr}.
Thus Theorem~\ref{thm:degree20-classification}\textup{(iii)} applies exactly
to supports of size at least seven. Table~\ref{tab:extremal} summarizes
the extremal results. Figure~\ref{fig:extremal} shows $H_{0,14}$,
$H_{1,13}$, and $H_{7,7}$, which attain the two order bounds in
Theorem~\ref{thm:sharp-unicyclic-order}\textup{(ii)--(iii)} and the
maximum-degree bound in Theorem~\ref{thm:degree20-classification}\textup{(ii)}.
For both five- and seven-element
supports, the minimum order and minimum maximum degree are attained by different graphs.

\begin{table}[htbp]
\centering
\caption{Extremal graphs and low-degree classification in
Theorems~\ref{thm:seven-summary}--\ref{thm:degree20-classification}.}
\label{tab:extremal}
\small
\setlength{\tabcolsep}{5pt}
\begin{tabular}{@{}llrr@{}}
\toprule
Graph & Role & $|V(G)|$ & $\Delta(G)$\\
\midrule
$C_4$ & Four-cycle & $4$ & $2$\\
$H_{0,14}$ & Minimum order outside $C_4$ & $46$ & $16$\\
$H_{1,13}$ & Minimum order with $p,r>0$ & $72$ & $16$\\
\midrule
$H_{p,14-p}$, $2\leq p\leq6$ & Remaining graphs with $\Delta(G)\leq20$ & $46+2p(14-p)$ & $17-p$\\
$H_{7,7}$ & Minimum maximum degree outside $C_4$ & $144$ & $10$\\
\midrule
Example~\ref{ex:218} & Minimum order for $|\Phi_a|=7$ & $218$ & $47$\\
Example~\ref{ex:degree21} & Minimum $\Delta(G)$ outside $C_4$ and $H_{p,r}$ & $96760$ & $21$\\
\bottomrule
\end{tabular}
\end{table}

\begin{figure}[htbp]
\centering
\begin{tikzpicture}[pst graph]
\newcommand{\pstextremalfan}[4]{%
  \begin{scope}[xshift=#1cm]
  \ifnum#3=1
    \node[pst vertex] (r) at (0,{#2*1.55}) {};
    \draw (0,{#2*.7})--(r);
    \foreach \dx in {-.8,.8}{
      \node[pst vertex] (l) at (\dx,{#2*2.5}) {};
      \draw (r)--(l);}
    \psthdots{0}{#2*2.5}
    \node[pst note,anchor=west] at (.16,{#2*1.12}) {$1$ branch};
    \node[pst note] at (0,{#2*2.86}) {$#4$ leaves};
  \else\ifnum#3>1
    \foreach \bx in {-1.08,1.08}{
      \node[pst vertex] (r) at (\bx,{#2*1.55}) {};
      \draw (0,{#2*.7})--(r);
      \ifnum#4=2
        \foreach \dx in {-.34,.34}{
          \node[pst vertex] (l) at ({\bx+\dx},{#2*2.5}) {};
          \draw (r)--(l);}
      \else\ifnum#4=3
        \foreach \dx in {-.4,0,.4}{
          \node[pst vertex] (l) at ({\bx+\dx},{#2*2.5}) {};
          \draw (r)--(l);}
      \else
        \foreach \dx in {-.4,.4}{
          \node[pst vertex] (l) at ({\bx+\dx},{#2*2.5}) {};
          \draw (r)--(l);}
        \psthdots{\bx}{#2*2.5}
      \fi\fi
    }
    \psthdots{0}{#2*1.55}
    \node[pst note] at (0,{#2*1.91}) {$#3$ branches};
    \node[pst note] at (0,{#2*2.86}) {$#4$ leaves each};
  \fi\fi
  \end{scope}}
\foreach \x/\panel/\p/\r/\order/\degree in
  {0/a/0/14/46/16,5.15/b/1/13/72/16,10.30/c/7/7/144/10}{
  \begin{scope}[xshift=\x cm]
    \node at (0,3.46) {\textup{(\panel)}\quad $H_{\p,\r}$};
    \node[pst vertex] (c) at (0,.7) {};
    \node[pst vertex] (d) at (0,-.7) {};
    \node[pst terminal] (a) at (-.78,0) {};
    \node[pst terminal] (b) at (.78,0) {};
    \draw (a)--(c)--(b)--(d)--(a);
    \node[left=4pt] at (a) {$a$};
    \node[right=4pt] at (b) {$b$};
    \node[right=5pt] at (c) {$c$};
    \node[right=5pt] at (d) {$d$};
    \node[pst note] at (0,-3.45) {$|V|=\order,\quad\Delta=\degree$};
  \end{scope}
  \pgfmathtruncatemacro{\cl}{\r+2}
  \pgfmathtruncatemacro{\dl}{\p+2}
  \pstextremalfan{\x}{1}{\p}{\cl}
  \pstextremalfan{\x}{-1}{\r}{\dl}
}
\end{tikzpicture}
\caption{Extremal graphs: (a) minimum order outside $C_4$; (b) minimum order with both attachments nontrivial; (c) minimum maximum degree outside $C_4$. Open circles mark $a,b$; dots indicate omitted identical branches or leaves.}
\label{fig:extremal}
\end{figure}

The structural proof combines a bound on the distance between the eigenvalue supports
of $(\ee_a+\ee_b)/\sqrt2$ and $(\ee_a-\ee_b)/\sqrt2$
(Theorem~\ref{thm:gap}), a degree bound for transfer vertices with pendant neighbors
(Theorem~\ref{thm:pendant}), and the parity constraint in
Lemma~\ref{lem:parity}. The gap bound extends the arrowhead comparison
in \cite[Section~5]{trees} and the bridge result in
\cite[Theorem~8]{trees} to arbitrary connected graphs.
Sections~\ref{sec:preliminaries}--\ref{sec:geometry} develop these tools
and prove Theorem~\ref{thm:geometry}. Section~\ref{sec:trees} establishes
the tree criterion, Section~\ref{sec:extremal} proves the sharp global
order bounds, and Section~\ref{sec:seven} develops the inverse construction.
Section~\ref{sec:examples} applies it to the degree and order thresholds
beyond five supported eigenvalues.

\section{Spectral and resolvent tools}\label{sec:preliminaries}
\subsection{Spectral conditions for transfer}
Fix a graph $G$, put $A=A(G)$ and $U(t)=U_G(t)$, and write $A=\sum_\lambda\lambda E_\lambda$ for the spectral decomposition of $A$. For real vectors, $\langle x,y\rangle=x^Ty$ and $\|x\|$ is the Euclidean norm. For a nonzero real vector $v$, define its eigenvalue support $\Phi_v$ and scalar resolvent $g_{A,v}$ by
\[
 \Phi_v=\{\lambda:E_\lambda v\ne0\},\qquad
 g_{A,v}(t)=v^T(tI-A)^{-1}v
 =\sum_\lambda\frac{\|E_\lambda v\|^2}{t-\lambda}.
\]
We also write $g_{G,v}=g_{A(G),v}$. When the second argument is a vertex $r$, the notation $g_{G,r}$ means $g_{A(G),\ee_r}$. For a vertex $a$, write $\Phi_a=\Phi_{\ee_a}$. When $v$ is a unit vector, its spectral measure assigns mass $\|E_\lambda v\|^2$ to $\lambda$; it has total mass one and $j$th moment $v^TA^jv$. For sufficiently large $|t|$, expansion as a geometric series gives
\[
 g_{A,v}(t)=\sum_{j=0}^{\infty}\frac{v^TA^jv}{t^{j+1}}.
\]
This identifies each Laurent coefficient with the corresponding spectral
moment. The poles of $g_{A,v}$ are simple and form the set $\Phi_v$.
Throughout, resolvent identities are understood as identities of rational
functions in $t$, with zeros and poles taken after cancellation of common
factors.

Vertices $a,b$ are \textbf{cospectral} if $(E_\lambda)_{aa}=(E_\lambda)_{bb}$ for every $\lambda$, and \textbf{strongly cospectral} if
\[
 E_\lambda\ee_a=\pm E_\lambda\ee_b
\]
for every $\lambda$. Cospectral vertices have equal degrees. For any two distinct vertices $a,b$, put
\[
 w_\pm=\frac{\ee_a\pm\ee_b}{\sqrt2},\qquad
 \Phi^\pm=\Phi_{w_\pm}.
\]
If $a,b$ are strongly cospectral, the superscripts in $\Phi^\pm$ record the signs in $E_\lambda\ee_a=\pm E_\lambda\ee_b$, not the signs of the eigenvalues. The sets $\Phi^+,\Phi^-$ are then nonempty and disjoint, and $\Phi^+\cup\Phi^-=\Phi_a=\Phi_b$ is their common eigenvalue support. PST implies strong cospectrality. For strongly cospectral vertices, the spectral criterion \cite[Lemma~2.1 and Corollary~2.2]{godsil} states that PST occurs at time $\tau$ with phase $\gamma$ if and only if
\begin{equation}\label{eq:phase}
 e^{-i\tau\lambda}=\gamma\quad(\lambda\in\Phi^+),
 \qquad e^{-i\tau\mu}=-\gamma\quad(\mu\in\Phi^-).
\end{equation}
In particular, $\lambda-\mu$ is an odd multiple of $\pi/\tau$ whenever $\lambda\in\Phi^+$ and $\mu\in\Phi^-$.

Coutinho's spectral arithmetic criterion \cite[Theorem~2.4.4]{coutinho},
also stated in \cite[Theorem~2.2]{algorithm}, gives the following
necessary conditions and minimum transfer time. If PST occurs from $a$, the eigenvalues in $\Phi_a$ are either all integers or all quadratic integers with a common rational part. In the latter case, $\Phi_a$ is contained in $\{(h+j\sqrt{\Delta_0})/2:j\in\Z\}$ for an integer $h$ and a positive square-free integer $\Delta_0$. In either case there is an integer $g>0$ such that
\begin{equation}\label{eq:arithmetic-time}
 \tau_0=\frac{\pi}{g\sqrt{\Delta_0}}.
\end{equation}
In the integral case take $\Delta_0=1$. In either case, $g$ is the greatest common divisor of the differences between eigenvalues in $\Phi_a$ after division by $\sqrt{\Delta_0}$. For $\lambda_0\in\Phi^+$ and any $\lambda\in\Phi_a$, the ratio $(\lambda_0-\lambda)/(g\sqrt{\Delta_0})$ is an integer; it is even if and only if $\lambda\in\Phi^+$, and odd if and only if $\lambda\in\Phi^-$.

Coutinho, Juliano, and Spier proved the universal bound
\begin{equation}\label{eq:universal-time}
 \tau_0\leq\frac{\pi}{\sqrt2}
\end{equation}
in \cite[Theorem~14]{decorated}; it is restated in \cite[Theorem~3]{trees}. For a pair admitting PST, every two distinct supported eigenvalues are therefore separated by at least $\sqrt2$. The minimum distance between $\Phi^+$ and $\Phi^-$
\[
 \delta_{+-}=\min\{|\lambda-\mu|:\lambda\in\Phi^+,\ \mu\in\Phi^-\}
\]
satisfies $\delta_{+-}\geq\pi/\tau_0\geq\sqrt2$.

For later arithmetic arguments, put $m_\pm(t)=\prod_{\lambda\in\Phi^\pm}(t-\lambda)$. Each $m_\pm$ is the monic polynomial of least degree satisfying $m_\pm(A)(\ee_a\pm\ee_b)=0$. The coefficients of this monic polynomial are the unique solution of a linear system obtained from the vectors $A^j(\ee_a\pm\ee_b)$, $0\leq j\leq\deg m_\pm$. These vectors have integer entries, so $m_\pm$ has rational coefficients. It divides the characteristic polynomial and has algebraic-integer roots, so $m_\pm\in\Z[t]$. Thus each of $\Phi^+,\Phi^-$ is stable under algebraic conjugation.

A vertex $a$ is \textbf{periodic} if $U(T)\ee_a=\zeta\ee_a$ for some $T>0$ and $|\zeta|=1$. PST implies periodicity at both endpoints
\cite[Lemma~1.1]{godsil}, since $U(2\tau)\ee_a=\gamma^2\ee_a$. At a periodic vertex of a bipartite graph, nonintegral supported
eigenvalues have zero common rational part \cite[Lemma~7.1.2]{coutinho}.
PST between opposite bipartition classes has integral
support \cite[Lemma~7.1.6]{coutinho}. If the endpoints lie in the same part, the spectral measures of $w_+$ and $w_-$ are symmetric about zero. Applying \eqref{eq:phase} at the minimum time $\tau_0$ forces $\gamma\in\{1,-1\}$, and the sets $\Phi^+$ and $\Phi^-$ are contained in the sets of even and odd multiples of $\pi/\tau_0$, in some order.

\begin{lemma}\label{lem:parity}
Suppose a graph admits PST from a vertex $a$, with minimum transfer time $\tau_0$. Then
\[
 \left(\frac{\pi}{\tau_0}\right)^2\in2\Z_{>0},
 \qquad \min\Phi_a+\max\Phi_a\in2\Z.
\]
\end{lemma}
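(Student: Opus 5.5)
The plan is to use Coutinho's arithmetic criterion together with the universal time bound \eqref{eq:universal-time}. Write $\tau_0=\pi/(g\sqrt{\Delta_0})$ as in \eqref{eq:arithmetic-time}, so that $(\pi/\tau_0)^2=g^2\Delta_0$ is a positive integer. The first claim then reduces to showing that $g^2\Delta_0$ is even. For this I will use the trace, or sum of eigenvalues, of the spectral measure of $w_+$ and $w_-$. Both are unit vectors, and the second moments are
\[
w_\pm^TA^2w_\pm=\tfrac12\bigl(\deg a+\deg b\pm2(A^2)_{ab}\bigr),
\]
which are integers since $\deg a=\deg b$ by cospectrality. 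The first moments are $w_\pm^TAw_\pm=\pm A_{ab}$.

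The cleaner route is probably this. Fix $\lambda_0\in\Phi^+$ and write each $\lambda\in\Phi_a$ as $\lambda=\lambda_0-k_\lambda g\sqrt{\Delta_0}$, where $k_\lambda$ is even exactly on $\Phi^+$. Next consider $m_+(t)$ and $m_-(t)$, which lie in $\Z[t]$. Take $\lambda\in\Phi^+$ and $\mu\in\Phi^-$ with $\lambda-\mu=k g\sqrt{\Delta_0}$, where $k$ is odd. I then need a product or norm argument showing that $(\lambda-\mu)^2$ is forced to be even.

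For the integral case ($\Delta_0=1$), the tool is the observation $m_+(\mu)\equiv m_+(\lambda)=0$ modulo suitable ideals. More directly, from \eqref{eq:phase} with $\tau=\pi/g$ one has $e^{-i\pi\lambda/g}=\gamma$ for $\lambda\in\Phi^+$ and $=-\gamma$ on $\Phi^-$. Suppose $g$ is odd. Reduce modulo $2$: the vector $\ee_a+\ee_b$ satisfies $m_+(A)(\ee_a+\ee_b)=0$. Since $\ee_a-\ee_b\equiv\ee_a+\ee_b \pmod 2$, we also get $m_+(A)(\ee_a-\ee_b)\equiv0\pmod 2$, and similarly for $m_-$. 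All elements of $\Phi^+$ are congruent to each other modulo $2g$, and likewise for $\Phi^-$, while elements of different classes differ by an odd multiple of $g$. Hence, if $g$ is odd, all roots of $m_+$ have one parity and all roots of $m_-$ have the other, so $m_+\equiv(t-\epsilon)^{\deg m_+}$ and $m_-\equiv(t-\epsilon-1)^{\deg m_-}$ over $\F_2$. These two polynomials are coprime. Then $x=\ee_a+\ee_b\in\F_2^n$ is annihilated by both $m_+(A)$ and $m_-(A)$, which forces $x=0$ over $\F_2$. That is a contradiction, since $x$ has entries $1$ at $a$ and $b$. So $g$ is even and $g^2\Delta_0$ is divisible by $4$.

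For the quadratic case, I will work in the ring of integers of $\mathbb Q(\sqrt{\Delta_0})$ modulo a prime $\mathfrak p$ above $2$. Differences within $\Phi^\pm$ lie in $2g\sqrt{\Delta_0}\,\Z$. Differences across the classes are odd multiples of $g\sqrt{\Delta_0}$. If $g^2\Delta_0$ were odd, then $g\sqrt{\Delta_0}$ would be a unit modulo $\mathfrak p$. The same coprimality argument over the residue field then gives a contradiction. If $2\mid\Delta_0$ and $g$ is odd, the quantity $g^2\Delta_0$ is already even. This yields $(\pi/\tau_0)^2\in2\Z_{>0}$. The main obstacle is handling $\mathfrak p$ carefully when $2$ ramifies or splits, since residue fields differ; I would check that the elements $\lambda_0$ and $\lambda_0-g\sqrt{\Delta_0}$ have distinct residues whenever $g\sqrt{\Delta_0}\notin\mathfrak p$.

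For the second claim, write $\min\Phi_a+\max\Phi_a$. In the integral case, $\Phi_a$ is closed under conjugation trivially. I would use the fact that the extreme supported eigenvalues differ by a multiple of $g$, which is even by the above. Then $\min+\max=2\min+jg\equiv0\pmod 2$. In the quadratic case, conjugation preserves $\Phi^\pm$ and hence $\Phi_a$, and it reverses order since it sends $(h+j\sqrt{\Delta_0})/2$ to $(h-j\sqrt{\Delta_0})/2$. So $\max\Phi_a$ is the conjugate of $\min\Phi_a$, and their sum is $h$. It then remains to show that $h$ is even. Suppose $h$ is odd. Then each $j$ is odd, forcing $\Delta_0\equiv1\pmod4$, and each difference $(j-j')\sqrt{\Delta_0}/2$ has $(j-j')/2$ an integer. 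I would combine this with the evenness of $g^2\Delta_0$: with $\Delta_0$ odd, $g$ must be even, so all the $j$ are congruent modulo $4$. I expect this to contradict conjugation symmetry, since $j$ and $-j$ are both present and odd, so $2j\equiv0\pmod4$ is impossible.
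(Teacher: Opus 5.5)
Your proof is correct, and its core is the paper's. If $(\pi/\tau_0)^2$ were odd, $m_+$ and $m_-$ would reduce modulo $2$ to powers of distinct linear factors and hence be coprime. Yet both annihilate the nonzero vector $\overline{\ee_a+\ee_b}=\overline{\ee_a-\ee_b}$. Your integral case is essentially the paper's argument.

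The routes differ only in the quadratic case. You reduce modulo a prime $\mathfrak p$ above $2$ in the ring of integers of $\mathbb Q(\sqrt{\Delta_0})$. The splitting and ramification issue you flag is harmless. If $\kappa=g\sqrt{\Delta_0}$ lay in $\mathfrak p$, the odd integer $\kappa^2$ would lie in $\mathfrak p\cap\Z=2\Z$. The annihilation argument can then be run directly over the residue field, or coprimality can be descended to $\F_2[t]$.

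The paper avoids number fields. Because $\lambda$ and its conjugate $\lambda'$ lie in the same class, $\lambda-\lambda'=j_0\sqrt{\Delta_0}$ is an even multiple of $\kappa$. Hence every supported eigenvalue has the form $s+j\kappa$ with $s=h/2\in\Z$ and $j\in\Z$. Each conjugate pair then contributes the integer quadratic $(t-s)^2-j^2\kappa^2$, which reduces to $(t-\bar s-\bar j)^2$ modulo $2$.

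This normalization also gives $h$ even before the parity of $\kappa^2$ is known. The second claim is then immediate, since the support is symmetric about the integer $s$. You instead obtain $h$ even afterwards, from $g$ being even and a mod-$4$ comparison of $j$ and $-j$. That is valid but not needed: the same-class observation already puts your $j$ in $2g\Z$, whence $h\equiv j\equiv0\pmod 2$. Your version uses a little algebraic number theory, whereas the paper's is elementary and handles both cases by the same reduction over $\Z$.
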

\begin{proof}
Let $b$ be the other transfer vertex. Put $\kappa=\pi/\tau_0=g\sqrt{\Delta_0}$ as in \eqref{eq:arithmetic-time}. The polynomials $m_\pm(t)=\prod_{\lambda\in\Phi^\pm}(t-\lambda)$ belong to $\Z[t]$ and satisfy $m_\pm(A)(\ee_a\pm\ee_b)=0$.

If the support is integral, choose $s\in\Phi_a$ and write each supported eigenvalue as $s+j\kappa$ with $j\in\Z$. In the nonintegral case, write a supported eigenvalue as $\lambda=(h+j_0\sqrt{\Delta_0})/2$. Its algebraic conjugate $\lambda'=(h-j_0\sqrt{\Delta_0})/2$ belongs to the same one of $\Phi^+$ and $\Phi^-$ as $\lambda$, so the phase criterion gives
\[
 \frac{\lambda-\lambda'}{\kappa}=\frac{j_0}{g}\in2\Z.
\]
Consequently, setting $j=j_0/(2g)\in\Z$ and $s=h/2$, we obtain $\lambda=s+j\kappa$. Since $j\kappa$ is an algebraic integer, the rational number $s=\lambda-j\kappa$ is an integer. In both cases, membership in $\Phi^+$ or $\Phi^-$ is determined by the parity of $j$.

Suppose $\kappa^2$ is odd, and let a bar denote reduction modulo $2$. In the integral case each factor $t-s-j\kappa$ reduces modulo $2$ to $t-\bar s-\bar j$. In the nonintegral case each conjugate pair contributes
\[
 (t-s)^2-j^2\kappa^2,
\]
which reduces to $(t-\bar s-\bar j)^2$; a possible rational root $s$ has $j=0$. Thus $\overline m_+$ and $\overline m_-$ are powers of different linear polynomials and are coprime over $\F_2$. But they both annihilate the same nonzero vector $\overline{\ee_a+\ee_b}=\overline{\ee_a-\ee_b}$. Since the two reduced polynomials are coprime, there are
$u,v\in\F_2[t]$ with $u\overline m_++v\overline m_-=1$.
Evaluating this identity at $\overline A$ and applying it to
$\overline{\ee_a+\ee_b}$ gives $\overline{\ee_a+\ee_b}=0$,
contrary to $a\ne b$. Since $\kappa^2=g^2\Delta_0$ is a positive integer, it is therefore even.

For integral support, $\kappa=g$ is now even, so all supported eigenvalues have the same parity. For nonintegral support, algebraic conjugation makes the support symmetric about the integer $s$. In either case its least and greatest elements have even integer sum.
\end{proof}

\subsection{Eigenvalue displacement and resolvents}
For a symmetric matrix $B$, let $B^\dagger$ denote its Moore--Penrose inverse, let $\lambda_{\min}(B)$ denote its smallest eigenvalue, and let $\|B\|$ denote its operator norm. For a graph $G$, write $\lambda_{\min}(G)=\lambda_{\min}(A(G))$ and let $\rho(G)$ be its adjacency spectral radius. We use the following forms of the Lidskii--Mirsky--Wielandt inequality
and Weyl's norm bound, including the equality case of the latter.

\begin{lemma}[{\cite[Theorem~1.1]{limathias}; \cite[Theorem~6]{trees}}]\label{lem:displacement}
Let $X,Y$ be real symmetric $n\times n$ matrices, with increasingly
ordered eigenvalues $x_j,y_j$. Let $d_1\geq\cdots\geq d_n$ be the
eigenvalues of $D=X-Y$. Then:
\begin{enumerate}[label=\textup{(\roman*)}]
\item For every $J\subseteq\{1,\ldots,n\}$,
\begin{equation}\label{eq:lmw-subset}
 \sum_{j\in J}(x_j-y_j)\leq\sum_{\ell=1}^{|J|}d_\ell.
\end{equation}
\item For each $j$, $|x_j-y_j|\leq\|X-Y\|$. If equality holds, then
$X,Y$ have a common eigenvector with eigenvalues $x_j,y_j$, respectively.
\end{enumerate}
\end{lemma}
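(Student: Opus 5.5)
This lemma is cited from \cite[Theorem~1.1]{limathias} and \cite[Theorem~6]{trees}, so the plan is to reconstruct the standard arguments briefly. Part~(i) will come from Ky Fan's maximum principle. Part~(ii) will come from Weyl's inequality combined with a variational analysis of the equality case.

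For (i), I would first reduce the subset inequality to the Lidskii inequality. Write $X=Y+D$. The classical Lidskii theorem states that for indices $j_1<\cdots<j_k$ one has $\sum_{i}x_{j_i}\leq\sum_i y_{j_i}+\sum_{\ell=1}^k d_\ell$. Rearranging this gives \eqref{eq:lmw-subset} with $J=\{j_1,\ldots,j_k\}$. To prove Lidskii's inequality itself, I would use Wielandt's minimax characterization of $\sum_i x_{j_i}$ as a max--min over flags of subspaces $V_1\subset\cdots\subset V_k$ with $\dim V_i=j_i$. For the flag achieving the value for $X$, choose orthonormal vectors $v_i\in V_i$ realizing the inner minimum for $Y$ on the same flag. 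The trace of $D$ on the $k$-dimensional span of the $v_i$ is at most $\sum_{\ell\leq k}d_\ell$, by Ky Fan. Splitting $v_i^TXv_i=v_i^TYv_i+v_i^TDv_i$ then gives the bound. The main obstacle in this step is setting up Wielandt's minimax correctly, and I would simply invoke it as known.

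For the inequality in (ii), I would apply Weyl's inequality in Courant--Fischer form. For any unit vector $z$, we have $z^TYz-\|D\|\leq z^TXz\leq z^TYz+\|D\|$. Taking the min--max over $j$-dimensional subspaces then yields $|x_j-y_j|\leq\|X-Y\|$.

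For the equality case, suppose $x_j=y_j+\|D\|$; the other sign is symmetric after swapping $X$ and $Y$. Let $W$ be the span of the eigenvectors of $Y$ for $y_j,\ldots,y_n$, which has codimension $j-1$. By Courant--Fischer, $x_j\leq\max_{z\in W,\|z\|=1}z^TXz$, but this bound does not locate the eigenvector. Instead, take $U$ to be the span of the eigenvectors of $X$ for $x_1,\ldots,x_j$, which has dimension $j$. It meets the span of the eigenvectors of $Y$ for $y_j,\ldots,y_n$ in a unit vector $z$. For this $z$,
\[
x_j\geq z^TXz=z^TYz+z^TDz\geq y_j+z^TDz\geq y_j-\|D\|.
\]
This chain proves the opposite sign. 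So for the case $x_j=y_j-\|D\|$, all inequalities become equalities. Then $z^TXz=x_j$ with $z\in U$ forces $Xz=x_jz$. Likewise $z^TYz=y_j$ forces $Yz=y_jz$, since $z$ lies in the eigenspaces of $Y$ for eigenvalues at least $y_j$. Finally, $z^TDz=-\|D\|$ forces $Dz=-\|D\|z$, which is consistent with the first two conclusions. Hence $z$ is a common eigenvector with eigenvalues $x_j$ and $y_j$. The case $x_j=y_j+\|D\|$ follows by interchanging the roles of $X$ and $Y$.
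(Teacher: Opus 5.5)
Your reconstruction is correct. The paper gives no argument of its own for this lemma: it imports part (i) from the Lidskii--Mirsky--Wielandt theorem of Li and Mathias, and part (ii), including the equality case, from Coutinho--Juliano--Spier. What you supply is therefore the standard proof behind those citations, which makes the lemma self-contained.

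For (i), your route is the classical proof of Lidskii's inequality. You take the optimal flag for $X$ in Wielandt's max--min principle, choose the orthonormal frame in that flag that minimizes the $Y$-form, and bound the trace of $D$ on the resulting $k$-dimensional span by Ky Fan. Note that the max--min with $\dim V_i=j_i$ is the form for decreasingly ordered eigenvalues. With the increasing order of the statement you must reindex $j\mapsto n+1-j$. This is harmless, since the same positions appear for $X$ and $Y$.

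For (ii), the intersection argument is elementary. You intersect the $j$-dimensional span of $X$-eigenvectors for $x_1,\ldots,x_j$ with the $(n-j+1)$-dimensional span of $Y$-eigenvectors for $y_j,\ldots,y_n$. This locates the common eigenvector directly. It also shows that $z$ is an eigenvector of $D$ for $x_j-y_j=\pm\|D\|$, which is the form the paper actually invokes in Step~3 of the proof of Theorem~\ref{thm:gap}.

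You should delete one remark, even though it does not affect the proof. The aside $x_j\le\max_{z\in W,\|z\|=1}z^TXz$, for $W$ of dimension $n-j+1$, is false in general. Take $n=j=2$, $X=\operatorname{diag}(0,1)$ and $Y=\operatorname{diag}(1,0)$. Then $W=\operatorname{span}(\ee_1)$, and the maximum is $0<1=x_2$. For a subspace of that dimension, Courant--Fischer only gives the lower bound $x_j\ge\min_{z\in W}z^TXz$. Since you abandon that line immediately, it does no damage.
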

For the application below, write $u_+=\max\{u,0\}$ and take
$J=\{j:x_j>y_j\}$ and $r=|J|$ in \eqref{eq:lmw-subset}. This gives
\[
 \sum_j(x_j-y_j)_+
 =\sum_{j\in J}(x_j-y_j)
 \leq\sum_{\ell=1}^r d_\ell
 \leq\sum_{\ell=1}^n(d_\ell)_+.
\]
Applying the same inequality to $Y-X$ gives the other half of the
following direct consequence.
\begin{corollary}[{\cite[Theorem~1.1]{limathias}}]\label{cor:trace-displacement}
Let $X,Y$ be real symmetric matrices of the same order, with increasingly
ordered eigenvalues $x_j,y_j$. If $X-Y=D_+-D_-$ is the decomposition into
positive and negative parts, then
\[
 \sum_j(x_j-y_j)_+\leq\tr D_+,
 \qquad \sum_j(y_j-x_j)_+\leq\tr D_-.
\]
\end{corollary}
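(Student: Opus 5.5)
The corollary follows from Lemma~\ref{lem:displacement}\textup{(i)} applied twice, once to $X-Y$ and once to $Y-X$. The first inequality is essentially already derived in the text preceding the statement; the only thing left to check is that the final quantity $\sum_\ell (d_\ell)_+$ equals $\tr D_+$.

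For the first inequality, let $J=\{j:x_j>y_j\}$ and $r=|J|$. Inequality \eqref{eq:lmw-subset} gives
\[
\sum_j(x_j-y_j)_+=\sum_{j\in J}(x_j-y_j)\leq\sum_{\ell=1}^r d_\ell .
\]
Since $d_1\geq\cdots\geq d_n$, any partial sum of the $d_\ell$ is at most the sum of the positive ones, so
\[
\sum_{\ell=1}^r d_\ell\leq\sum_{\ell=1}^n (d_\ell)_+ .
\]
The case $J=\varnothing$ is trivial, since both sides are then nonnegative and the left side is $0$. To identify the right-hand side, use the spectral decomposition $D=\sum_\ell d_\ell\, p_\ell p_\ell^T$ with an orthonormal eigenbasis. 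The positive part is
\[
D_+=\sum_\ell (d_\ell)_+\, p_\ell p_\ell^T ,
\]
so $\tr D_+=\sum_\ell (d_\ell)_+$, which proves the first inequality.

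For the second inequality, apply Lemma~\ref{lem:displacement}\textup{(i)} with the roles of $X$ and $Y$ interchanged. The increasingly ordered eigenvalues are then $y_j$ and $x_j$, and the difference matrix is $Y-X=-D=D_--D_+$. Its positive part is $D_-$, because $D_+$ and $D_-$ are the positive and negative parts of $D$ in the same eigenbasis. The argument above therefore yields
\[
\sum_j (y_j-x_j)_+\leq \tr\,(Y-X)_+=\tr D_- .
\]

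No step here is a real obstacle. The one point to state carefully is that $D_+$ and $D_-$ mean the spectral positive and negative parts of $D$. This is what makes $\tr D_+$ equal to the sum of the positive eigenvalues of $D$, and what makes $(-D)_+=D_-$.
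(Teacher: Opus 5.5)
Your proof is correct and is essentially the paper's argument: the paper also takes $J=\{j:x_j>y_j\}$ in \eqref{eq:lmw-subset}, bounds the resulting partial sum of the $d_\ell$ by $\sum_\ell(d_\ell)_+$, and applies the same inequality to $Y-X$ for the second half. Your explicit identification of $\tr D_+$ with $\sum_\ell(d_\ell)_+$, and of $(-D)_+$ with $D_-$, fills in the steps the paper leaves implicit.
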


We also use the following partial-fraction form of strict interlacing.
\begin{lemma}[{\cite[Lemma~5]{trees}}]\label{lem:resolvent-interlacing}
Let $m\geq0$, let $\theta_1<\cdots<\theta_m$ be real, and let
$c_1,\ldots,c_m>0$ and $b\in\R$. The rational function
\[
 \alpha(t)=t-b-\sum_{j=1}^m\frac{c_j}{t-\theta_j}
\]
has exactly $m+1$ zeros, all real and simple, with one in each component
of $\R\setminus\{\theta_1,\ldots,\theta_m\}$. Thus its zeros strictly
interlace its poles.
\end{lemma}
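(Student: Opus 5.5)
The plan is a monotonicity-plus-intermediate-value argument on each component of $\R\setminus\{\theta_1,\ldots,\theta_m\}$, followed by a degree count to rule out any further zeros. Write the components as $I_0=(-\infty,\theta_1)$, $I_j=(\theta_j,\theta_{j+1})$ for $1\leq j<m$, and $I_m=(\theta_m,\infty)$. When $m=0$ there is the single component $I_0=\R$. There are $m+1$ components in all.

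\textbf{Monotonicity.} On each component $I_j$, the function $\alpha$ is real-analytic, and
\[
 \alpha'(t)=1+\sum_{j=1}^m\frac{c_j}{(t-\theta_j)^2}>0,
\]
because every $c_j>0$. So $\alpha$ is strictly increasing on each component. Moreover, any zero of $\alpha$ there is simple, since $\alpha'$ does not vanish.

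\textbf{Boundary behaviour.} Next I check the limits at the two ends of each component.
\begin{itemize}
\item As $t\to-\infty$, we have $\alpha(t)\to-\infty$, and as $t\to+\infty$, we have $\alpha(t)\to+\infty$, because the linear term $t$ dominates.
\item Near a pole $\theta_j$, only the term $-c_j/(t-\theta_j)$ is unbounded. It tends to $+\infty$ as $t\to\theta_j^-$ and to $-\infty$ as $t\to\theta_j^+$.
\end{itemize}
Thus on every component, $\alpha$ runs continuously and strictly increasingly from $-\infty$ at the left end to $+\infty$ at the right end. By the intermediate value theorem and strict monotonicity, $\alpha$ has exactly one zero in each component. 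This gives $m+1$ distinct real simple zeros, which strictly interlace the poles.

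\textbf{No further zeros.} To show these are all the zeros, including non-real ones, clear denominators. Put
\[
 p(t)=\alpha(t)\prod_{j=1}^m(t-\theta_j).
\]
This is a monic polynomial of degree $m+1$. Its value at a pole is
\[
 p(\theta_j)=-c_j\prod_{k\neq j}(\theta_j-\theta_k)\neq0,
\]
so the zeros of $\alpha$, viewed as a rational function after cancellation, are exactly the zeros of $p$. Since $p$ has degree $m+1$ and already has the $m+1$ distinct real zeros found above, it has no others.

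\textbf{Main obstacle.} None of the steps is really difficult. The only point needing care is the final one: confirming that no common factor cancels between numerator and denominator, so that the count of $m+1$ zeros is exact. That is precisely what the computation $p(\theta_j)\neq0$ provides.
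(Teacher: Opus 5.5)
Your proof is correct and complete. On each of the $m+1$ components, $\alpha'>0$ and $\alpha$ runs from $-\infty$ to $+\infty$, so there is exactly one real zero there, and it is simple. The computation $p(\theta_j)=-c_j\prod_{k\ne j}(\theta_j-\theta_k)\ne0$ shows that $p$ is the reduced numerator of $\alpha$, so the degree count rules out any further zeros, real or non-real.

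The paper gives no proof of its own; it imports the lemma from \cite[Lemma~5]{trees}. Your monotonicity, intermediate-value and degree-count argument is the standard self-contained verification. An alternative, matching how the paper later applies the lemma in the proof of Theorem~\ref{thm:gap}, is the arrowhead identity $\det(tI-M)=\alpha(t)\prod_j(t-\theta_j)$. There the reality of the zeros comes from the symmetry of $M$ rather than from your degree count. (Cosmetic point: you use $j$ both as the component index and as the summation index.)
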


\begin{lemma}\label{lem:first-positive-zero}
Let $g_1,\ldots,g_m$ be scalar resolvents of unit vectors whose finite spectral measures are symmetric about zero, and let $\beta_1,\ldots,\beta_m>0$. Put
\[
 \alpha(t)=t-\sum_{i=1}^m\beta_i g_i(t).
\]
Suppose $0$ is not a zero of $\alpha$, and let $t_0>0$ be its smallest positive zero. Then no $g_i$ has a pole in $(0,t_0]$. For each $i$, we have $g_i(t_0)\leq1/t_0$, with equality if and only if the spectral measure defining $g_i$ is concentrated at zero. If $g_i(t_0)\geq0$, the same measure has positive mass at zero.
\end{lemma}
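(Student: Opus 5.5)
The plan is to rewrite each $g_i$ in a form that makes the symmetry of its spectral measure explicit, and then to use monotonicity of $\alpha$ between consecutive poles. Let $\mu_i$ be the spectral measure defining $g_i$. It has total mass one and is symmetric about zero, so grouping the atoms at $\pm\lambda$ gives
\[
 g_i(t)=\frac{\mu_i(0)}{t}+\sum_{\lambda>0}\mu_i(\lambda)\,\frac{2t}{t^2-\lambda^2}.
\]
In particular each $g_i$ is an odd rational function. Its poles are the points of the support of $\mu_i$, and each pole has a positive residue.

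\textbf{Some $\mu_i$ has an atom at zero.} Suppose instead that no $\mu_i$ charges zero. Then every $g_i$ is regular at $0$, and oddness gives $g_i(0)=0$. Hence $\alpha(0)=0$, which contradicts the hypothesis. So some $\mu_i(0)>0$. Since every $\beta_i$ is positive, this gives $\alpha(t)\to-\infty$ as $t\to0^+$.

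\textbf{No pole in $(0,t_0]$.} Away from poles, $g_i'(t)=-\sum_\lambda\mu_i(\lambda)/(t-\lambda)^2<0$, so $\alpha'(t)>0$. Thus $\alpha$ is strictly increasing on each interval between consecutive poles.
\begin{itemize}
\item If no $g_i$ has a positive pole, then $\alpha$ increases from $-\infty$ to $+\infty$ on $(0,\infty)$. It therefore has exactly one positive zero, namely $t_0$, and the first claim is immediate.
\item Otherwise let $\theta>0$ be the smallest positive pole among all $g_i$. As $t\to\theta^-$, some $g_i$ tends to $-\infty$, because its residue at $\theta$ is positive. Hence $\alpha(t)\to+\infty$. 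So $\alpha$ has exactly one zero in $(0,\theta)$, and this zero is the smallest positive zero $t_0$. Therefore $t_0<\theta$, and no $g_i$ has a pole in $(0,t_0]$.
\end{itemize}

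\textbf{The bound $g_i(t_0)\leq1/t_0$.} By the previous step, every positive $\lambda$ in the support of $\mu_i$ satisfies $\lambda>t_0$. So every term $\mu_i(\lambda)\,2t_0/(t_0^2-\lambda^2)$ with $\lambda>0$ is nonpositive, and it is strictly negative when $\mu_i(\lambda)>0$. This gives
\[
 g_i(t_0)\leq\frac{\mu_i(0)}{t_0}\leq\frac1{t_0}.
\]
Equality throughout forces $\mu_i(\lambda)=0$ for all $\lambda>0$, hence also for all $\lambda<0$ by symmetry. It also forces $\mu_i(0)=1$, so $\mu_i$ is concentrated at zero. Conversely, if $\mu_i$ is concentrated at zero, then $g_i(t)=1/t$ and equality holds.

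\textbf{The last claim.} Suppose $\mu_i(0)=0$. Then $\mu_i$, having mass one, charges some $\lambda>0$. Hence $g_i(t_0)$ is a nonempty sum of strictly negative terms, so $g_i(t_0)<0$. Contrapositively, $g_i(t_0)\geq0$ forces $\mu_i(0)>0$.

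The only delicate point is the first step: the hypothesis $\alpha(0)\neq0$ is exactly what forces an atom at zero. That atom makes $\alpha\to-\infty$ at $0^+$, and with it the monotonicity argument locating $t_0$ before the first pole goes through.
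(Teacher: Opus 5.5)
Your proof is correct and follows the paper's route. Oddness of the $g_i$ together with $\alpha(0)\neq0$ forces a pole of $\alpha$ at zero, so the smallest positive zero $t_0$ lies before the smallest positive pole, and pairing the symmetric atoms at $\pm\lambda$ gives $g_i(t_0)\le\mu_i(0)/t_0\le1/t_0$ with the stated equality and sign conclusions. The only difference is that you locate $t_0$ directly from $\alpha'>0$ and the limits at $0^+$ and $\theta^-$, where the paper cites Lemma~\ref{lem:resolvent-interlacing}.
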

\begin{proof}
Each $g_i$ is an odd rational function with positive residues, so $\alpha$ is odd and
its poles are exactly the union of the poles of the $g_i$.
Since $0$ is not a zero, oddness forces it to be a pole of $\alpha$.
Collecting terms with the same pole expresses $\alpha$ in the form of
Lemma~\ref{lem:resolvent-interlacing}, with $b=0$ and positive coefficients.
If $\alpha$ has a positive pole, there is therefore a zero between $0$ and its smallest positive pole.
Since $t_0$ is the smallest positive zero and cannot itself be a pole,
no $g_i$ has a pole in $(0,t_0]$.

Let $\eta_{i,0}$ be the spectral mass at zero and let $\eta_{i,\theta}$ be the total mass at $\{-\theta,\theta\}$. Pairing positive and negative poles gives
\[
 g_i(t_0)=\frac{\eta_{i,0}}{t_0}
 +\sum_{\theta>t_0}\eta_{i,\theta}
      \frac{t_0}{t_0^2-\theta^2},
 \qquad \eta_{i,0}+\sum_{\theta>t_0}\eta_{i,\theta}=1,
\]
where the sums run over the positive poles of $g_i$, all of which exceed $t_0$. Every term in the sum is negative when its weight is positive, so
$g_i(t_0)\leq\eta_{i,0}/t_0\leq1/t_0$.
Equality with $1/t_0$ holds exactly when $\eta_{i,0}=1$.
If $\eta_{i,0}=0$, the remaining weights sum to one and give
$g_i(t_0)<0$; hence $g_i(t_0)\geq0$ requires $\eta_{i,0}>0$.
\end{proof}

We will also use the standard relation between the size of an eigenvalue support and graph distance.
\begin{lemma}[{\cite[Lemma~5 and its proof]{supportdistance}}]\label{lem:support-distance}
For a vertex $v$ of a connected graph $G$,
\[
 \max_{u\in V(G)}\dist(v,u)\leq |\Phi_v|-1.
\]
In particular, a rooted tree whose root has an eigenvalue support of size $m$ has depth
at most $m-1$.
\end{lemma}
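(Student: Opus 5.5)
The plan is to compare two computations of the dimension of the cyclic (Krylov) subspace $K=\operatorname{span}\{A^j\ee_v: j\geq0\}$, where $A=A(G)$. One computation is spectral and gives exactly $|\Phi_v|$. The other is combinatorial and gives at least $k+1$ whenever some vertex lies at distance $k$ from $v$.

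For the spectral side, I would use the spectral decomposition $A=\sum_\lambda\lambda E_\lambda$ to write
\[
 A^j\ee_v=\sum_{\lambda\in\Phi_v}\lambda^jE_\lambda\ee_v .
\]
This shows $K\subseteq\operatorname{span}\{E_\lambda\ee_v:\lambda\in\Phi_v\}$. The vectors $E_\lambda\ee_v$ with $\lambda\in\Phi_v$ are nonzero and pairwise orthogonal, so this span has dimension $m=|\Phi_v|$. Conversely, each $E_\lambda\ee_v$ equals $p_\lambda(A)\ee_v$ for the Lagrange interpolation polynomial $p_\lambda$ that is $1$ at $\lambda$ and $0$ at the other elements of $\Phi_v$. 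Hence each $E_\lambda\ee_v$ lies in $K$, and $\dim K=m$.

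For the combinatorial side, I would take a vertex $u$ with $\dist(v,u)=k$ and a shortest path $v=u_0,u_1,\ldots,u_k=u$, so that $\dist(v,u_i)=i$. The entry $(A^j\ee_v)_{u_i}$ counts walks of length $j$ from $v$ to $u_i$. This count is $0$ for $j<i$ and positive for $j=i$. Therefore the $(k+1)\times(k+1)$ matrix with entries $(A^j\ee_v)_{u_i}$, for $0\leq i,j\leq k$, is triangular with a positive diagonal. It follows that $\ee_v,A\ee_v,\ldots,A^k\ee_v$ are linearly independent, so $k+1\leq\dim K=m$. Maximizing over $u$ gives the stated bound.

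For the rooted-tree consequence, I would apply the bound to the root $v$ of the tree $G$. The depth of the tree is exactly $\max_u\dist(v,u)$, so the depth is at most $m-1$.

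No step is a serious obstacle. The only point needing care is the equality $\dim K=|\Phi_v|$, specifically the inclusion of each $E_\lambda\ee_v$ in $K$, which the interpolation polynomials handle. Connectivity is used only to ensure that all distances are finite.
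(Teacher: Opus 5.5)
Your proof is correct and follows essentially the argument behind the cited result, since the paper itself gives no proof beyond the citation: the cyclic subspace generated by $\ee_v$ has dimension $|\Phi_v|$, while walk counts along a shortest path make $\ee_v,A\ee_v,\ldots,A^k\ee_v$ linearly independent. For the inequality you only need the inclusion $K\subseteq\operatorname{span}\{E_\lambda\ee_v:\lambda\in\Phi_v\}$; the interpolation step proving equality is correct but not required.
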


\section{Spectral gaps and transfer times}\label{sec:sparse}
We bound the distance between $\Phi^+$ and $\Phi^-$ using the components of $G-\{a,b\}$ that contain neighbors of both endpoints. The argument applies to every connected graph.
\subsection{A bound from endpoint neighborhoods}
Let $G$ be a connected graph, and let $a,b$ be nonadjacent vertices of $G$. Set $H=G-\{a,b\}$. Let $W$ be the union of the vertex sets of those components of $H$ that contain a neighbor of $a$ and a neighbor of $b$. Define
\[
 k_a=|N(a)\cap W|,\qquad k_b=|N(b)\cap W|,\qquad k=\min(k_a,k_b).
\]
Connectivity gives $k\geq1$. Neighbors in components that meet only one endpoint neighborhood are not counted.

\begin{theorem}\label{thm:gap}
Let $G$ be a connected graph, and let $a,b$ be nonadjacent strongly cospectral vertices. Put $k=\min\{|N(a)\cap W|,|N(b)\cap W|\}$, where $W$ is the union of the vertex sets of the components of $G-\{a,b\}$ meeting both $N(a)$ and $N(b)$. Then:
\begin{enumerate}[label=\textup{(\roman*)}]
\item The minimum distance between $\Phi^+$ and $\Phi^-$ satisfies
\[
 \delta_{+-}:=\min_{\lambda\in\Phi^+,\ \mu\in\Phi^-}|\lambda-\mu|
 \leq \sqrt{2k}.
\]
Equality holds if and only if $G\cong K_{2,k}$, with $a,b$ corresponding to the two vertices in a part of size two.
\item If $G$ admits PST between $a,b$, its minimum transfer time satisfies
\[
 \tau_0\geq\frac{\pi}{\sqrt{2k}}.
\]
Equality holds if and only if $G\cong K_{2,k}$, with $a,b$ corresponding to the two vertices in a part of size two.
\end{enumerate}
\end{theorem}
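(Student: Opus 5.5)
The plan is to prove (i) by an eigenvalue-displacement argument and then read off (ii) from the phase condition. Indeed, if PST occurs, \eqref{eq:phase} at time $\tau_0$ forces $\lambda-\mu\in(\pi/\tau_0)(2\Z+1)$ for all $\lambda\in\Phi^+$, $\mu\in\Phi^-$. Hence $\pi/\tau_0\leq\delta_{+-}\leq\sqrt{2k}$, and equality in (ii) forces equality in (i). So everything reduces to (i) and its equality case.

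For (i), I would encode strong cospectrality by the symmetric orthogonal matrix
\[
Q=\sum_{\lambda\in\Phi^+}E_\lambda-\sum_{\mu\in\Phi^-}E_\mu+\sum_{\lambda\notin\Phi_a}E_\lambda .
\]
It commutes with $A$ and satisfies $Q\ee_a=\ee_b$. The cyclic subspaces $Z_\pm$ generated by $w_\pm$ are $A$-invariant and mutually orthogonal, and the spectrum of $A|_{Z_\pm}$ is exactly $\Phi^\pm$. The aim is a real symmetric perturbation $D$ with $\|D\|\leq\sqrt{2k}$ such that $A+D$ (or a suitable compression of it) carries an eigenvalue of $A|_{Z_+}$ onto one of $A|_{Z_-}$. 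Lemma~\ref{lem:displacement}(ii) would then give some $\lambda\in\Phi^+$ and $\mu\in\Phi^-$ with $|\lambda-\mu|\leq\|D\|$.

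The concrete choice I would try first is a signed switching. Let $\sigma$ be $-1$ on $b$ and on the components of $H=G-\{a,b\}$ meeting only $N(b)$, and $+1$ elsewhere. Conjugating $A$ by $\diag(\sigma)$ differs from $A$ only on the edges joining $b$ to $W$. I would then compare $A$ with the matrix obtained by deleting the edges from $a$ to $N(a)\cap W$ and from $b$ to $N(b)\cap W$. In that matrix $a$ and $b$ are separated from $W$, so the switching becomes a genuine symmetry exchanging the roles of $w_+$ and $w_-$. The deleted part is a union of two stars, centred at $a$ and at $b$, with $k_a$ and $k_b$ leaves. Choosing the endpoint with $k$ neighbours in $W$, and pairing the perturbations on the two sides, should give a difference whose operator norm is at most $\sqrt{k}+\sqrt{k}$. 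A more careful count, using that the relevant test vectors are $w_\pm$ rather than $\ee_a,\ee_b$, should improve this to $\sqrt{2k}$: the two star contributions add in quadrature, since they act on orthogonal pieces.

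The main obstacle is exactly this step. I must choose the perturbation so that the correct norm $\sqrt{2k}$ comes out, rather than $\sqrt{k_a+k_b}$ or $2\sqrt k$. I must also ensure that the displacement really pairs an element of $\Phi^+$ with an element of $\Phi^-$, and not two eigenvalues from the same side or eigenvalues outside $\Phi_a$. If a direct use of Lemma~\ref{lem:displacement}(ii) fails there, I would instead compare the spectral measures of $w_\pm$ through their resolvents, in the spirit of Lemma~\ref{lem:resolvent-interlacing}. Their second moments are
\[
w_\pm^TA^2w_\pm=\tfrac12(\deg a+\deg b)\pm|N(a)\cap N(b)|,
\]
and with disjoint supports these moments should bound the closest pair.

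For the equality case, equality in Lemma~\ref{lem:displacement}(ii) yields a common eigenvector of $A$ and of the perturbed matrix. I would use it to show that the star part of the perturbation is tight. That requires every vertex of $N(a)\cap W$ to be adjacent to both $a$ and $b$, $k_a=k_b=k$, no further edges inside $W$, and no vertices outside $\{a,b\}\cup W$, which gives $G\cong K_{2,k}$. Conversely, $K_{2,k}$ has $\Phi^+=\{\pm\sqrt{2k}\}$ and $\Phi^-=\{0\}$, so $\delta_{+-}=\sqrt{2k}$, and PST occurs at $\tau_0=\pi/\sqrt{2k}$.
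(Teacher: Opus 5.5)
Your reduction of (ii) to (i) through the phase condition is the paper's, and your check of $K_{2,k}$ is correct. Part (i), however, has a genuine gap, part of which you flag yourself. Nothing in the proposal produces the constant $\sqrt{2k}$, and nothing forces the compared eigenvalues to lie in $\Phi^+$ and $\Phi^-$ respectively.

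The star-deletion matrix $A'$ does not fix this. The difference $A-A'$ is the union of a star at $a$ with $k_a$ leaves and a star at $b$ with $k_b$ leaves. These stars share every common neighbour of $a,b$ (all of which lie in $W$), so they do not act on orthogonal pieces. The norm $\|A-A'\|$ is only bounded by $\sqrt{k_a+k_b}$, which is governed by both counts and their overlap, not by $k=\min(k_a,k_b)$.

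More fundamentally, Lemma~\ref{lem:displacement}\textup{(ii)} applied to the full matrices matches the $j$th eigenvalues of the entire spectra. It says nothing about which eigenvalues carry mass of $w_+$ or $w_-$. Knowing that $w_\pm$ have equal spectral measures under $A'$ and that $\|A-A'\|$ is small does not yield a pair in $\Phi^+\times\Phi^-$. Any triangle-inequality route through $A'$ would also lose a factor of two.

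Your moment fallback falls short as well. Both measures have mean zero, so $\mathbb E[(X-Y)^2]=2\deg(a)$, which gives only $\delta_{+-}\leq\sqrt{2\deg(a)}$. Moments cannot see which neighbours lie in $W$. Yet $\deg(a)>k$ is exactly the situation in which the theorem is used, for instance $k_a=1$ with arbitrary degree in Theorem~\ref{thm:bridge}. The paper uses this moment argument only in the equality case, after it has shown $\deg(a)=k$.

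The missing idea is to make your ``suitable compression'' explicit. Take the Schur complement over $C=A(G-\{a,b\})$. This gives $g_{A,w_\pm}=1/\alpha_\pm$ with
$\alpha_\pm(t)=t-\sum_\theta(d_\theta\pm c_\theta)/(t-\theta)$, where $d_\theta=\|F_\theta x\|^2=\|F_\theta y\|^2$ and $c_\theta=\langle F_\theta x,F_\theta y\rangle$. The equality of the two norms is where cospectrality enters, and $c_\theta$ depends only on the restrictions $x_0,y_0$ of $x,y$ to $W$.

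These are the reciprocal resolvents of arrowhead matrices $M_\pm$ with the common diagonal $\diag(0,\theta_1,\ldots,\theta_q)$ and coupling vectors $r_\pm=(\sqrt{d_\theta\pm c_\theta})_\theta$. Up to padding by uncoupled diagonal entries, $M_\pm$ is $A|_{Z_\pm}$ in the orthonormal basis $w_\pm$, $F_\theta(x\pm y)/\|F_\theta(x\pm y)\|$. Both therefore live on one index set, and $\|M_+-M_-\|=\|r_+-r_-\|$.

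The termwise bound
$(\sqrt{d_\theta+c_\theta}-\sqrt{d_\theta-c_\theta})^2=2d_\theta-2\sqrt{d_\theta^2-c_\theta^2}\leq2u_\theta$
then sums to $2k$. Here $u_\theta=\|F_\theta x_0\|^2$ is taken at the endpoint with $k_a=k$, and the bound uses $c_\theta^2\leq u_\theta d_\theta$ and $u_\theta\leq d_\theta$.

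The pairing is forced by interlacing. The function $\alpha_\pm$ has $q_\pm+1$ zeros among the $q+1$ ordered eigenvalues of $M_\pm$, and $q_++q_-\geq q$. Hence some position is occupied by a zero of $\alpha_+$ in $M_+$ and a zero of $\alpha_-$ in $M_-$, and Weyl's bound at that position gives (i).

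In the equality case, the common eigenvector forces $u_\theta=d_\theta$ for every $\theta$, hence $\deg(a)=k$. Only at that point does your moment argument pin down $K_{2,k}$.
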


\begin{proof}
\proofheading{Step 1: compare the scalar resolvents for part \textup{(i)}.}
Let $C=A(H)$, and let $x,y$ be the neighbor indicator vectors of $a,b$ in $H$. Indexing rows and columns by $a,b$, and $V(H)$,
\[
 A=\begin{pmatrix}0&0&x^T\\0&0&y^T\\x&y&C\end{pmatrix}.
\]
Put $f_x=x^T(tI-C)^{-1}x$, $f_y=y^T(tI-C)^{-1}y$, and
$h=x^T(tI-C)^{-1}y$. The Schur complement of $tI-C$ in $tI-A$ is
\[
 S(t)=\begin{pmatrix}t-f_x&-h\\-h&t-f_y\end{pmatrix}.
\]
Its inverse is the principal submatrix of $(tI-A)^{-1}$ indexed by $a,b$, so
\[
 g_{G,a}(t)=\frac{t-f_y}{\det S(t)},\qquad
 g_{G,b}(t)=\frac{t-f_x}{\det S(t)}.
\]
Cospectrality gives $g_{G,a}=g_{G,b}$, hence $f_x=f_y$.
For each eigenvalue $\theta$ of $C$, let $F_\theta$ be its spectral projector.
Comparing residues in $f_x=f_y$ gives $\|F_\theta x\|^2=\|F_\theta y\|^2$.
Put
\[
 d_\theta=\|F_\theta x\|^2=\|F_\theta y\|^2,
 \qquad c_\theta=\langle F_\theta x,F_\theta y\rangle.
\]
The inequality $|c_\theta|\leq d_\theta$ follows from the Cauchy--Schwarz inequality.
In the symmetric and antisymmetric endpoint coordinates, $S(t)$ has
diagonal entries $t-f_x-h$ and $t-f_x+h$. Substituting the spectral
expansions of $f_x,h$ therefore gives $g_{A,w_\pm}=1/\alpha_\pm$, where
\begin{equation}\label{eq:alpha}
 \alpha_\pm(t)=t-\sum_\theta\frac{d_\theta\pm c_\theta}{t-\theta}.
\end{equation}
The zeros of $\alpha_\pm$ are exactly $\Phi^\pm$.

Suppose, after switching the endpoints if necessary, that $k_a=k$. Let $x_0,y_0$ be the restrictions of $x,y$ to $W$, extended by zero. Since the coordinate subspaces corresponding to distinct components of $H$ are orthogonal and invariant under $C$,
\[
 c_\theta=\langle F_\theta x_0,F_\theta y_0\rangle.
\]
Set $u_\theta=\|F_\theta x_0\|^2$ and $v_\theta=\|F_\theta y_0\|^2$. We have
\[
 0\leq u_\theta,v_\theta\leq d_\theta,\quad
 c_\theta^2\leq u_\theta v_\theta,\quad
 \sum_\theta u_\theta=k.
\]
For each $\theta$,
\begin{align}
 \bigl(\sqrt{d_\theta+c_\theta}-\sqrt{d_\theta-c_\theta}\bigr)^2
 &=2d_\theta-2\sqrt{d_\theta^2-c_\theta^2}\notag\\
 &\leq 2u_\theta. \label{eq:residue}
\end{align}
Indeed, $c_\theta^2\leq u_\theta v_\theta\leq u_\theta d_\theta$ and $0\leq u_\theta\leq d_\theta$ imply
\[
 d_\theta^2-c_\theta^2
 \geq d_\theta^2-u_\theta d_\theta
 \geq(d_\theta-u_\theta)^2.
\]
Since $d_\theta-u_\theta\geq0$, taking square roots yields \eqref{eq:residue}.
Equality in the second inequality of the preceding display requires
$u_\theta(d_\theta-u_\theta)=0$. If $u_\theta>0$, it follows that
$d_\theta=u_\theta$; equality in the first inequality then gives
$c_\theta^2=u_\theta d_\theta$, and $c_\theta^2\leq u_\theta v_\theta\leq u_\theta d_\theta$
forces $v_\theta=d_\theta$. Thus, when $u_\theta>0$, equality in \eqref{eq:residue} forces
\begin{equation}\label{eq:eqres}
 d_\theta=u_\theta=v_\theta=|c_\theta|.
\end{equation}

\proofheading{Step 2: compare the ordered eigenvalues.}
List the distinct values $\theta$ with $d_\theta>0$ as $\theta_1,\ldots,\theta_q$. Define two $(q+1)\times(q+1)$ real symmetric arrowhead matrices
\[
 M_\pm=
 \begin{pmatrix}
 0&r_\pm^T\\
 r_\pm&\diag(\theta_1,\ldots,\theta_q)
 \end{pmatrix},
 \qquad (r_\pm)_j=\sqrt{d_{\theta_j}\pm c_{\theta_j}}.
\]
Coordinates are indexed by $0,1,\ldots,q$, with $0$ central and $j$ corresponding to $\theta_j$.
Let $q_\pm$ be the number of poles of $\alpha_\pm$, equivalently the
number of values $\theta$ for which $d_\theta\pm c_\theta>0$.
After omitting the zero coefficients, apply
Lemma~\ref{lem:resolvent-interlacing} with $b=0$ and
$c_j=d_{\theta_j}\pm c_{\theta_j}>0$ at the poles with positive coefficients.
Thus $\alpha_\pm$ has $q_\pm+1$ simple zeros strictly interlacing its poles. The identity
\[
 \det(tI-M_\pm)=\alpha_\pm(t)\prod_{i=1}^q(t-\theta_i)
\]
shows that all these zeros occur as eigenvalues of $M_\pm$. For each sign separately, permuting coordinates decomposes $M_\pm$ into an arrowhead block containing the central coordinate and the $q_\pm$ coordinates with positive coupling, together with $q-q_\pm$ uncoupled scalar blocks. This arrowhead block has simple spectrum consisting of the zeros, and the uncoupled diagonal entries are distinct. Thus a zero that coincides with an uncoupled entry has multiplicity exactly two. In the increasingly ordered eigenvalue list of each matrix, select one occurrence of each zero and write $J_\pm\subseteq\{1,\ldots,q+1\}$ for the selected positions. Then $|J_\pm|=q_\pm+1$. The permutations are used only for this spectral description; the matrix comparison below uses the same coordinate ordering $0,1,\ldots,q$ for both matrices.

If $m$ of the values $\theta_1,\ldots,\theta_q$ satisfy both $d_\theta+c_\theta>0$ and $d_\theta-c_\theta>0$, then $q_++q_-=q+m$. Consequently
\begin{equation}\label{eq:common-positions}
 |J_+\cap J_-|\geq(q_++1)+(q_-+1)-(q+1)=m+1.
\end{equation}
Choose a common position $j$, occupied by $\lambda\in\Phi^+$ and $\mu\in\Phi^-$. Lemma~\ref{lem:displacement}\textup{(ii)} gives
\[
 |\lambda-\mu|\leq\|M_+-M_-\|
 =\|r_+-r_-\|
 \leq\sqrt{2k}.
\]
The norm identity follows because a block matrix
$\left(\begin{smallmatrix}0&\xi^T\\ \xi&0\end{smallmatrix}\right)$
has eigenvalues $\pm\|\xi\|$ and zeros. This proves the gap bound.

\proofheading{Step 3: determine the endpoint degrees in the equality case.}
We now determine the equality case, allowing $\alpha_+$ and $\alpha_-$ to share poles. If $\delta_{+-}=\sqrt{2k}$, all inequalities in the preceding display are equalities. Put $\xi=r_+-r_-$ and $D=M_+-M_-$. The equality assertion of Lemma~\ref{lem:displacement}\textup{(ii)} gives a common eigenvector $z$ of $M_+$ and $M_-$ which is an eigenvector of $D$ with eigenvalue $\pm\|\xi\|$. Since
\[
 D=\begin{pmatrix}0&\xi^T\\\xi&0\end{pmatrix}
 \quad\text{and}\quad \|\xi\|=\sqrt{2k}>0,
\]
the noncentral rows of $Dz=\nu z$, with
$\nu=\pm\|\xi\|\ne0$, give $\nu z_\ell=\xi_\ell z_0$.
If $z_0=0$, all coordinates of $z$ would vanish. Thus $z_0\ne0$,
and $z_\ell=0$ whenever $\xi_\ell=0$.
If $\xi_\ell=0$ for some $1\leq\ell\leq q$, then
$z_\ell=0$ and $c_{\theta_\ell}=0$. The $\ell$th noncentral row of $M_+z=\lambda z$ is
\[
 \sqrt{d_{\theta_\ell}+c_{\theta_\ell}}\,z_0+\theta_\ell z_\ell=\lambda z_\ell,
\]
which reduces to $\sqrt{d_{\theta_\ell}}\,z_0=0$, a contradiction. Thus $\xi_\ell\ne0$ for every $\ell\in\{1,\ldots,q\}$.
By \eqref{eq:residue}, this also gives $u_{\theta_\ell}>0$ for every such $\ell$.

Since $\|r_+-r_-\|^2=2k=2\sum_\theta u_\theta$, equality holds in \eqref{eq:residue} for each $\theta$. Thus \eqref{eq:eqres} holds whenever $d_\theta>0$. Therefore
\[
 \deg(a)=\sum_\theta d_\theta
 =\sum_\theta u_\theta=k.
\]
Cospectrality gives $\deg(b)=k$.

\proofheading{Step 4: identify the equality graph.}
Continue to assume that the gap bound is an equality. Use the spectral measures of $w_+$ and $w_-$ to define independent real random variables $X,Y$ with supports $\Phi^+,\Phi^-$. Since the endpoints are nonadjacent,
\[
 \mathbb E X=\mathbb E Y=0,\qquad
 \mathbb E\bigl[(X-Y)^2\bigr]=
 \langle w_+,A^2w_+\rangle+\langle w_-,A^2w_-\rangle
 =2k.
\]
Every possible value of $(X-Y)^2$ is at least $2k$. Every pair in $\Phi^+\times\Phi^-$ has positive probability, so $|\lambda-\mu|=\sqrt{2k}$ for every $\lambda\in\Phi^+$ and $\mu\in\Phi^-$. The set $\Phi^+$ has at least two elements, because the spectral measure of $w_+$ has zero mean and assigns positive mass to the Perron eigenvalue. There is at most one real number at a fixed positive distance from two distinct fixed real numbers. Since $\Phi^-$ is nonempty, it is a singleton. The spectral measure of $w_-$ has zero mean, so $\Phi^-=\{0\}$. It follows that
\[
 \Phi^-=\{0\},\qquad \Phi^+=\{-\sqrt{2k},\sqrt{2k}\}.
\]
Since $\Phi^-=\{0\}$, we have $A(\ee_a-\ee_b)=0$, so $x=y$.
Since every eigenvalue in $\Phi^+$ has square $2k$, spectral
decomposition gives $A^2(\ee_a+\ee_b)=2k(\ee_a+\ee_b)$.
Restricting this equation to $H$ and using $x=y$ gives $2Cx=0$.
As $C$ and $x$ are nonnegative, every common neighbor of $a,b$ is isolated in $H$. Connectivity now forces $G\cong K_{2,k}$. Conversely, $K_{2,k}$ has $\Phi^-=\{0\}$ and $\Phi^+=\{-\sqrt{2k},\sqrt{2k}\}$, so it attains the gap bound. This completes part \textup{(i)}.

\proofheading{Step 5: prove part \textup{(ii)}.}
For any pair admitting PST, \eqref{eq:phase} gives $\delta_{+-}\geq\pi/\tau_0$, proving the asserted time bound. If the time bound is an equality, then $\pi/\tau_0=\sqrt{2k}$, so part \textup{(i)} forces equality in the gap bound and gives $G\cong K_{2,k}$. Conversely, the displayed supports of $K_{2,k}$ give minimum transfer time $\pi/\sqrt{2k}$.
\end{proof}

For the next lemma, put $C=A(G-\{a,b\})$, let $x,y$ be the
neighbor indicator vectors of $a,b$ in $G-\{a,b\}$, and let $F_\theta$
be the spectral projectors of $C$. Write
$\alpha_\pm=1/g_{A,w_\pm}$, as in \eqref{eq:alpha}.
An eigenvalue $\theta$ of $C$ is a pole of both $\alpha_+$ and $\alpha_-$
if and only if both $F_\theta(x+y)$ and $F_\theta(x-y)$ are nonzero.
\begin{lemma}\label{lem:shared}
Let $G$ be a connected graph, and let $a,b$ be nonadjacent strongly cospectral vertices. With $k$ as in Theorem~\ref{thm:gap}, suppose $k\leq3$ and $\delta_{+-}\geq\sqrt2$. Then:
\begin{enumerate}[label=\textup{(\roman*)}]
\item The functions $\alpha_+$ and $\alpha_-$ have at most one common pole.
\item If a component $Q$ of $G-\{a,b\}$ contains a neighbor of $u\in\{a,b\}$ but no neighbor of the other endpoint, then $Q$ is regular and every vertex of $Q$ is adjacent to $u$. Moreover, $Q$ is a singleton if only one edge joins it to $u$ or if $G$ is bipartite.
\item If $G$ is bipartite and $a,b$ belong to the same bipartition class, the functions $\alpha_+$ and $\alpha_-$ have no nonzero common pole; equivalently,
\[
 F_\theta x=\pm F_\theta y\quad(\theta\ne0).
\]
\end{enumerate}
\end{lemma}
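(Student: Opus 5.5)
The plan is to work directly with the functions $\alpha_\pm(t)=t-\sum_\theta(d_\theta\pm c_\theta)/(t-\theta)$ from \eqref{eq:alpha} and the arrowhead comparison in Step~2 of the proof of Theorem~\ref{thm:gap}. The estimate to sharpen is $|\lambda-\mu|\leq\|r_+-r_-\|$, valid for any $\lambda\in\Phi^+,\mu\in\Phi^-$ at a common position of $J_+\cap J_-$. By \eqref{eq:residue},
\[
 \|r_+-r_-\|^2=\sum_\theta\bigl(\sqrt{d_\theta+c_\theta}-\sqrt{d_\theta-c_\theta}\bigr)^2\leq2\sum_\theta u_\theta=2k\leq6.
\]

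\emph{Part (i).} Suppose $\alpha_\pm$ share $m\geq2$ poles. By \eqref{eq:common-positions} there are at least $m+1\geq3$ common positions, so at least three pairs $(\lambda_j,\mu_j)$ sit at common positions. I will use Corollary~\ref{cor:trace-displacement} instead of the operator-norm bound. The matrix $D=M_+-M_-$ has eigenvalues $\pm\|\xi\|$ and zeros, so $\tr D_+=\tr D_-=\|\xi\|\leq\sqrt{2k}$. Summing over the common positions gives
\[
 \sum_j|\lambda_j-\mu_j|\leq2\|\xi\|.
\]
Each term is at least $\delta_{+-}\geq\sqrt2$, so $(m+1)\sqrt2\leq2\sqrt{2k}\leq2\sqrt6$, hence $m+1\leq2\sqrt3<4$, i.e.\ $m\leq2$. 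To exclude $m=2$ I will sharpen the bound on $\|\xi\|^2$. At a shared pole both $d_\theta\pm c_\theta>0$, so $|c_\theta|<d_\theta$ and strict inequality holds in \eqref{eq:residue} unless $u_\theta=0$. I will also use that the central coordinate couples to the shared poles, and then the equality analysis of Step~3 to rule out the borderline configuration $k=3$, $m=2$. \textbf{This is the main obstacle}, since the crude count only gives $m\leq2$; if the trace bound is not sharp enough I will instead apply Lemma~\ref{lem:displacement}(i) to the $m+1$ selected positions with $|J|=m+1$.

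\emph{Part (ii).} A component $Q$ meeting only $N(u)$ contributes to $x$ but not $y$ (say $u=a$). On its coordinate block $F_\theta y_0=0$, and this forces $c_\theta$ to be carried by the other components. Cospectrality $f_x=f_y$ makes each eigenvalue of $A(Q)$ supported on $x|_Q$ also appear in $y$. In general this forces shared poles, because the $Q$-part adds to $d_\theta$ without adding to $c_\theta$, so $d_\theta>|c_\theta|$. By (i) at most one shared pole exists, so $x|_Q$ has one-point spectral support in $A(Q)$, i.e.\ $x|_Q$ is an eigenvector of $A(Q)$. A nonnegative 0/1 eigenvector of a connected graph is Perron, hence constant on $Q$, so every vertex of $Q$ is adjacent to $u$ and $Q$ is regular. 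If only one edge joins $Q$ to $u$, then $x|_Q$ has a single nonzero entry, so $|Q|=1$. If $G$ is bipartite, $u$ adjacent to all of $Q$ makes $Q$ edgeless, hence a singleton.

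\emph{Part (iii).} In the bipartite same-class case the spectral data of $C$ are symmetric under $\theta\mapsto-\theta$: the sign switch on the bipartition preserves $x$ and $y$ up to a common sign. Hence the shared poles form a set closed under negation. By (i) there is at most one, so any shared pole equals $0$. A pole of both $\alpha_+$ and $\alpha_-$ at $\theta$ means $F_\theta(x+y)\neq0\neq F_\theta(x-y)$. Excluding $\theta\neq0$ therefore says one of these vanishes, which is exactly $F_\theta x=\pm F_\theta y$.
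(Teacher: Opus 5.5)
There is a gap in part (i), at the step you flagged yourself. When you add the two bounds of Corollary~\ref{cor:trace-displacement} into $\sum_j|\lambda_j-\mu_j|\le2\|\xi\|$, you lose the sign information, and for $k=3$ this gives only $m\le2$. None of your proposed repairs closes this. Three common positions require only $3\sqrt2\le2\|\xi\|$, that is, $\|\xi\|^2\ge9/2$. Strictness in \eqref{eq:residue} at shared poles gives only $\|\xi\|^2<6$. So the case you must exclude is a whole range of values. It is not the equality configuration of Step~3 of Theorem~\ref{thm:gap}, which needs $|\lambda-\mu|=\|\xi\|=\sqrt{2k}$. Applying Lemma~\ref{lem:displacement}(i) to all $m+1$ common positions also fails, because it bounds only a signed sum, in which positive and negative displacements can cancel. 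Since your (ii) and (iii) both use ``at most one common pole'', they are incomplete too.

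The missing step is to keep the two trace bounds separate:
\[
 \sum_j(\lambda_j-\mu_j)_+\le\tr D_+=\|\xi\|\le\sqrt6,\qquad
 \sum_j(\mu_j-\lambda_j)_+\le\tr D_-=\|\xi\|\le\sqrt6 .
\]
Each common position contributes at least $\delta_{+-}\ge\sqrt2$ to one of these sums, and $2\sqrt2>\sqrt6$. So at most one common position can have each sign, there are at most two common positions, and \eqref{eq:common-positions} gives $m+1\le2$. Equivalently, among three common positions two would share a sign, forcing $2\sqrt2\le\|\xi\|\le\sqrt6$. This is the paper's argument.

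With (i) in place, your (ii) and (iii) are correct and follow the paper's reasoning. Cospectrality together with Cauchy--Schwarz on the $Q$-block gives $|c_\theta|<d_\theta$ for every $\theta$ supported at $x|_Q$, so all such $\theta$ are common poles. Perron--Frobenius then makes the $0/1$ eigenvector constant. Finally, the sign switch on the bipartition makes the set of common poles symmetric under $\theta\mapsto-\theta$.
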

\begin{proof}
\textup{(i)} Use the arrowhead matrices $M_\pm$ constructed in the proof of Theorem~\ref{thm:gap}, with $d_\theta=\|F_\theta x\|^2$ and $c_\theta=\langle F_\theta x,F_\theta y\rangle$. Their difference $D$ has potentially nonzero eigenvalues $s,-s$, where $s=\|r_+-r_-\|\leq\sqrt{2k}\leq\sqrt6$. Corollary~\ref{cor:trace-displacement} shows that the total positive displacement of their ordered eigenvalues is at most $s$, and the total negative displacement has magnitude at most $s$.

If there are $m$ common poles of $\alpha_+$ and $\alpha_-$, \eqref{eq:common-positions} gives at least $m+1$ common selected positions. At each such position the eigenvalue difference has magnitude at least $\sqrt2$. The total displacement bounds permit at most one positive and one negative such difference, since $2\sqrt2>\sqrt6$. Hence $m+1\leq2$.

\textup{(ii)} Suppose $Q$ contains a neighbor of $a$ and no neighbor of $b$; the other case follows by interchanging $a,b$. Let $z$ be the indicator vector of $N(a)\cap V(Q)$, and let $F_\theta^Q$ be the spectral projector of
$A(Q)$, taken to be zero if $\theta$ is not an eigenvalue. Since the
restriction of $y$ to $Q$ vanishes,
\[
 d_\theta\pm c_\theta
 =\tfrac12\|F_\theta(x\pm y)\|^2
 \geq\tfrac12\|F_\theta^Qz\|^2.
\]
Every eigenvalue supported at $z$ is therefore a common pole of $\alpha_+$ and $\alpha_-$. There is
at most one such pole, so $A(Q)z=\theta z$ for a single $\theta$. At any vertex $v$ with $z_v=0$, the equation
$\sum_{u\sim v}z_u=\theta z_v=0$ and the nonnegativity of $z$
force $z_u=0$ at every neighbor of $v$.
Since $Q$ is connected and $z$ is a nonzero indicator vector,
$z=\mathbf1$, where $\mathbf1$ is the all-ones vector on $Q$.
Thus $Q$ is regular and every vertex is adjacent to $a$.
If only one edge joins $Q$ to $a$, $z$ has only one
nonzero coordinate, so $Q$ is a singleton.
If $G$ is bipartite, the neighbors of an endpoint form an independent
set, again forcing the connected graph $Q$ to be a singleton.

\textup{(iii)} Suppose $G$ is bipartite and the endpoints lie in the same
bipartition class. Then $x,y$ are both supported
in the opposite class. Conjugating $C$ by the diagonal matrix with signs
$1,-1$ on the two classes sends $C$ to $-C$ and multiplies $x,y$
by the same sign. It therefore preserves their projected norms and
inner products while exchanging $\theta$ with $-\theta$.
Thus $d_{-\theta}=d_\theta$ and $c_{-\theta}=c_\theta$. A nonzero common pole of $\alpha_+$ and $\alpha_-$ would therefore give two common poles, which is impossible. For $\theta\ne0$ with $d_\theta>0$, the absence of a common pole gives $d_\theta=|c_\theta|$, and hence
$F_\theta x=\pm F_\theta y$. Conversely, this identity makes one of
$F_\theta(x+y),F_\theta(x-y)$ zero, so $\theta$ is not a pole of both functions.
If $d_\theta=0$, both projections vanish. This proves the equivalence
for every $\theta\ne0$.
\end{proof}

\subsection{Bridges and cactus graphs}
\begin{theorem}\label{thm:bridge}
Let $G$ be a connected graph, and let $a,b$ be distinct vertices. Suppose an edge $aa'$ is a bridge whose deletion separates $a$ and $b$. Then $G$ admits PST between $a,b$ if and only if $G\cong P_2$ or $G\cong P_3$, with $a,b$ corresponding to the two ends.
\end{theorem}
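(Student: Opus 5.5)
The plan is to split according to whether $b$ equals the far end $a'$ of the bridge. Let $G_a$ and $G_b$ be the two components of $G-aa'$, containing $a$ and $b$ respectively, so that $a'\in V(G_b)$. Since $aa'$ is the only edge between $V(G_a)$ and $V(G_b)$, the vertices $a,b$ are adjacent exactly when $b=a'$. The converse direction is the standard check that $P_2$ (at time $\pi/2$) and $P_3$ (at time $\pi/\sqrt2$) admit PST between their ends.

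\emph{Case $b\ne a'$.} Here $a,b$ are nonadjacent. Since PST implies strong cospectrality, Theorem~\ref{thm:gap} applies. In $G-\{a,b\}$, every neighbor of $a$ other than $a'$ lies in $V(G_a)\setminus\{a\}$. No component of that set meets $N(b)$, because $b\in V(G_b)$ and the only edge leaving $V(G_a)$ is $aa'$. Hence $k_a\le1$, and with connectivity this gives $k=1$. Theorem~\ref{thm:gap}\textup{(ii)} then gives $\tau_0\ge\pi/\sqrt2$, while the universal bound \eqref{eq:universal-time} gives $\tau_0\le\pi/\sqrt2$. Equality in Theorem~\ref{thm:gap}\textup{(ii)} forces $G\cong K_{2,1}=P_3$, with $a,b$ the two ends.

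\emph{Case $b=a'$.} Now $a\sim b$, and I would compute the resolvents directly. Write $f_a=g_{G_a,a}$ and $f_b=g_{G_b,b}$.
\begin{itemize}
\item The $2\times2$ Schur complement onto $\{a,b\}$ is $\left(\begin{smallmatrix}1/f_a&-1\\-1&1/f_b\end{smallmatrix}\right)$, since eliminating each side contributes $1/f$ on the diagonal and the bridge gives $-1$ off the diagonal.
\item Cospectrality, $g_{G,a}=g_{G,b}$, forces $f_a=f_b=:f$.
\item Then $g_{A,w_\pm}=1/\alpha_\pm$ with $\alpha_\pm=1/f\mp1$.
\item By Lemma~\ref{lem:resolvent-interlacing} applied to $f$, the function $1/f$ has the form $t-\beta-\sum_j c_j/(t-\theta_j)$ with $c_j>0$. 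Here $\beta$ is a real constant, and the $\theta_j$ ($1\le j\le q$) are the zeros of $f$.
\item So $\alpha_+$ and $\alpha_-$ are the same arrowhead function with central entries $\beta+1$ and $\beta-1$, and they share all $q$ poles.
\end{itemize}
Form the arrowhead matrices $M_\pm$ as in Step~2 of the proof of Theorem~\ref{thm:gap}. They have identical off-diagonal couplings, all nonzero, so each has simple spectrum equal to $\Phi^\pm$, and $M_+-M_-=2\,\ee_0\ee_0^T$. Every ordered position is a common position. Each of the $q+1$ differences is at least $\delta_{+-}\ge\pi/\tau_0\ge\sqrt2$ in absolute value, and all of them are nonnegative because $M_+\succeq M_-$. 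Corollary~\ref{cor:trace-displacement} bounds the total positive displacement by $\tr D_+=2$. Hence $(q+1)\sqrt2\le2$, which gives $q=0$.

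With $q=0$ we get $f(t)=1/(t-\beta)$, so $\Phi_{G_a,a}$ is a single point. The spectral measure of $\ee_a$ in $G_a$ has mean $\langle\ee_a,A(G_a)\ee_a\rangle=0$, so that point is $0$. Then $A(G_a)\ee_a=0$, which means $a$ is isolated in $G_a$ and $G_a=\{a\}$. Symmetrically $G_b=\{b\}$, so $G\cong P_2$.

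The main obstacle is the adjacent case. Theorem~\ref{thm:gap} does not apply there, and the delicate step is the claim that every ordered position of $M_\pm$ is a common position with displacement of one sign. This relies on the couplings being identical, so that no pole is uncoupled for only one sign, and on $M_+-M_-$ being positive semidefinite of trace two.
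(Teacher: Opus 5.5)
Your proof is correct. The nonadjacent case is the same as the paper's: $k_a=1$, then Theorem~\ref{thm:gap}\textup{(ii)} against \eqref{eq:universal-time}, and the equality case gives $P_3$.

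The genuine difference is the adjacent case $b=a'$. The paper disposes of it in one line by citing \cite[Theorem~11]{bridges}. You instead reprove it with the paper's own resolvent machinery. Your argument is essentially the case $\xi=0$ of the displacement argument in Lemma~\ref{lem:adjacent}. The neighbor vectors of $a$ and $b$ lie in different components of $G-\{a,b\}$, so $M_+$ and $M_-$ have identical couplings. Consequently every ordered position is a common position. Since $M_+-M_-=2\ee_0\ee_0^T$, all displacements are nonnegative with total $2$, and $(q+1)\sqrt2\le2$ eliminates every pole.

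What your route buys is self-containment: the theorem then rests only on tools from Sections~2--3. Moreover, Lemma~\ref{lem:parity} already gives $\pi/\tau_0\ge\sqrt2$, so both of your cases could even use it in place of \eqref{eq:universal-time}. The paper's route is shorter, at the price of importing an external result.

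One small correction. The representation $1/f=t-\beta-\sum_j c_j/(t-\theta_j)$ with $c_j>0$ is not what Lemma~\ref{lem:resolvent-interlacing} says; that lemma goes from this form to interlacing. The representation follows directly from the Schur complement $1/f_a=t-x^T(tI-A(G_a-a))^{-1}x$. This also gives $\beta=0$ outright, and once $q=0$ it gives $x=0$, hence $G_a=\{a\}$ without the moment argument.
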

\begin{proof}
Suppose first that $G$ admits PST between $a,b$. If $a,b$ are adjacent, then $aa'=ab$ is a bridge.
By \cite[Theorem~11]{bridges}, a connected graph admitting PST between the two ends of a bridge must be $P_2$.
Hence $G\cong P_2$ in this case. Otherwise, $aa'$ lies on every $a$--$b$ path. Each neighbor counted in $k_a$ is the first vertex after $a$ on such a path, so $a'$ is the only counted neighbor and $k_a=1$.
In the nonadjacent case, Theorem~\ref{thm:gap}\textup{(ii)} and \eqref{eq:universal-time} give
\[
 \frac{\pi}{\sqrt2}\leq\tau_0\leq\frac{\pi}{\sqrt2}.
\]
Equality in Theorem~\ref{thm:gap}\textup{(ii)} therefore gives $G\cong K_{2,1}\cong P_3$. Conversely, the two ends of $P_2$ and $P_3$ admit PST at times $\pi/2$ and $\pi/\sqrt2$, respectively.
\end{proof}

\begin{corollary}\label{cor:cycles}
Let $G$ be a connected graph not isomorphic to $P_2$ or $P_3$. If $G$ admits PST between distinct vertices $a,b$, then both $a$ and $b$ lie on cycles. In particular, if $G$ is unicyclic, both vertices lie on its unique cycle.
\end{corollary}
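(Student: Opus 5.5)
The first assertion will follow from Theorem~\ref{thm:bridge} applied to every vertex $v\in\{a,b\}$ that does not lie on a cycle. Suppose $G\not\cong P_2,P_3$ admits PST between $a$ and $b$, and suppose, for contradiction, that $a$ lies on no cycle.

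Every edge incident with $a$ is then a bridge. Otherwise some edge $aa'$ lies on a cycle, and that cycle passes through $a$.

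Since $G$ is connected and $a\ne b$, a shortest $a$--$b$ path leaves $a$ through some edge $aa'$. This edge is a bridge. Its deletion separates $a$ from $a'$, and I will check that it also separates $a$ from $b$. The $a$--$b$ path continues from $a'$ to $b$ without using $aa'$, so $b$ lies in the component of $G-aa'$ containing $a'$. That component does not contain $a$, because $aa'$ is a bridge.

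Theorem~\ref{thm:bridge} now forces $G\cong P_2$ or $G\cong P_3$, a contradiction. The argument for $b$ is symmetric, since the PST hypothesis is symmetric in $a$ and $b$: $U(\tau)$ is symmetric, so $U(\tau)\ee_a=\gamma\ee_b$ gives $U(\tau)\ee_b=\gamma\ee_a$.

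For the unicyclic statement, first note that $P_2$ and $P_3$ are trees and so are not unicyclic. The general statement therefore applies, and each of $a$ and $b$ lies on some cycle of $G$. A unicyclic graph has only one cycle, so both vertices lie on it.

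The only point needing care is the separation claim: a bridge incident with $a$ need not separate $a$ from $b$ in general. Choosing the bridge to be the first edge of an $a$--$b$ path handles this, because every edge at $a$ is a bridge.
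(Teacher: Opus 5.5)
Your proposal is correct and follows the paper's proof: when $a$ lies on no cycle, every edge at $a$ is a bridge, the first edge of an $a$--$b$ path is a bridge separating $a$ from $b$, and Theorem~\ref{thm:bridge} then gives the contradiction. You also spell out the details the paper leaves implicit (why that bridge separates the pair, and why the roles of $a$ and $b$ may be interchanged), which is fine.
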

\begin{proof}
If a vertex $a$ lies on no cycle, every edge incident to $a$ is a bridge. The first edge of a path from $a$ to any other vertex $b$ is therefore an incident bridge separating the pair. Apply Theorem~\ref{thm:bridge}.
\end{proof}

A \textbf{cactus} is a connected graph whose blocks with at least two vertices are single edges or cycles.

\begin{theorem}\label{thm:cactus}
Let $G$ be a connected cactus admitting PST between
nonadjacent vertices $a,b$. Its minimum transfer time is $\pi/2$ if
$G\cong C_4$, in which case $a,b$ are opposite vertices, and
$\pi/\sqrt2$ otherwise.
\end{theorem}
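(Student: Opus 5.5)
The natural approach is to combine the upper bound \eqref{eq:universal-time} with the lower bound of Theorem~\ref{thm:gap}\textup{(ii)}. Write $k=\min(k_a,k_b)$ as in Theorem~\ref{thm:gap}. We then have
\[
 \frac{\pi}{\sqrt{2k}}\leq\tau_0\leq\frac{\pi}{\sqrt2},
\]
so everything reduces to controlling $k$ in a cactus. If $k=1$, the two bounds coincide and force $\tau_0=\pi/\sqrt2$. In that case the equality clause of Theorem~\ref{thm:gap}\textup{(ii)} gives $G\cong K_{2,1}\cong P_3$, which is consistent with the claim.

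\textbf{Step 1: show $k\leq2$.} Let $Q$ be a component of $G-\{a,b\}$ that meets both $N(a)$ and $N(b)$. Suppose $a$ had three neighbors $x_1,x_2,x_3$ in the union $W$ of such components. Each $x_i$ lies in a component $Q_i$ containing a neighbor $y_i$ of $b$. This gives $a$--$b$ paths $a,x_i,\ldots,y_i,b$ whose interiors lie in $Q_i$. If two of the $x_i$ share a component, their two paths, together with a path inside that component, produce two cycles sharing the edge set near $b$, or a theta subgraph. A cleaner way to say this: any two internally disjoint $a$--$b$ paths form a cycle, and three pairwise internally disjoint $a$--$b$ paths form a theta graph, which a cactus cannot contain. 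More generally, in a cactus every pair $a\neq b$ of vertices lies on at most one common cycle through a given block, so $a$ and $b$ lie in at most one common cycle block.

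A counted neighbor of $a$ must be the first vertex of an $a$--$b$ path avoiding $b$ before the end. Since $a$--$b$ paths pass through the chain of blocks between $a$ and $b$, the first edge from $a$ lies in the first block $B$ of that chain. If $B$ is an edge, $k_a=1$. If $B$ is a cycle, only the two cycle neighbors of $a$ qualify, so $k_a\leq2$. The same holds for $b$, hence $k\leq2$.

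\textbf{Step 2: the case $k=2$.} Here the bounds give $\pi/2\leq\tau_0\leq\pi/\sqrt2$. By Lemma~\ref{lem:parity}, $(\pi/\tau_0)^2$ is a positive even integer lying in $[2,4]$. Hence $(\pi/\tau_0)^2\in\{2,4\}$, that is, $\tau_0\in\{\pi/\sqrt2,\pi/2\}$. If $\tau_0=\pi/2$, equality holds in Theorem~\ref{thm:gap}\textup{(ii)}, so $G\cong K_{2,2}\cong C_4$ with $a,b$ in a part of size two, which means they are opposite vertices. Conversely, $C_4$ has $\Phi^+=\{\pm2\}$ and $\Phi^-=\{0\}$, so its minimum transfer time is $\pi/2$. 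Since $C_4$ has $k=2$, it cannot have time $\pi/\sqrt2$, and every other cactus has $\tau_0=\pi/\sqrt2$.

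\textbf{Main obstacle.} The delicate point is making Step~1 rigorous. One must justify that every neighbor counted in $k_a$ is the second vertex of an $a$--$b$ path, which holds because its component also contains a neighbor of $b$ and is connected. One must also show that all such second vertices lie in a single block containing $a$. I would argue this through the block-cutvertex tree: all $a$--$b$ paths leave $a$ through the same block, namely the first block on the block-tree path from $a$ to $b$. In a cactus, that block is either an edge or a cycle, and a cycle contributes only two neighbors of $a$.
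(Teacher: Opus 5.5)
Your proof is correct and follows the paper's route. All $a$--$b$ paths leave $a$ through a single block, which is an edge or a cycle, so $k\leq2$; then the lower bound of Theorem~\ref{thm:gap}\textup{(ii)} together with Lemma~\ref{lem:parity} leaves $(\pi/\tau_0)^2\in\{2,4\}$, and the equality case isolates $C_4$. Your extra appeal to \eqref{eq:universal-time}, the split on $k$, and the theta-graph aside in Step~1 are harmless but unnecessary, since $\tau_0\geq\pi/2$ and $(\pi/\tau_0)^2\in2\Z_{>0}$ already give this dichotomy.
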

\begin{proof}
Two $a$--$b$ paths cannot start in different blocks incident with $a$: their union would contain a cycle using edges from both blocks. Thus every $a$--$b$ path starts in the same block $B$, and every neighbor counted in $k_a$ lies in $B$. Since $B$ is an edge or a cycle, $k_a\leq2$, and similarly $k_b\leq2$.

Theorem~\ref{thm:gap}\textup{(ii)} yields $\tau_0\geq\pi/2$. Combining this with Lemma~\ref{lem:parity} gives
\[
 (\pi/\tau_0)^2\in\{2,4\}.
\]
If $\tau_0=\pi/2$, then $\delta_{+-}\geq2$ by \eqref{eq:phase}.
Thus $k=2$, and equality in Theorem~\ref{thm:gap}\textup{(i)} gives
$G\cong K_{2,2}\cong C_4$ with $a,b$ opposite.
Conversely, opposite vertices of $C_4$ have
$\Phi^- =\{0\}$ and $\Phi^+=\{-2,2\}$, so their minimum transfer
time is $\pi/2$. If $G\not\cong C_4$, the only remaining possibility
is $(\pi/\tau_0)^2=2$, giving $\tau_0=\pi/\sqrt2$.
\end{proof}

\begin{corollary}\label{cor:bipartite}
Let $G$ be a connected bipartite cactus admitting PST between nonadjacent vertices $a,b$. Then $a,b$ belong to the same bipartition class. If $G\not\cong C_4$, then $\Phi_a=\Phi_b\subseteq\sqrt2\,\Z$ and the minimum transfer time is $\pi/\sqrt2$.
\end{corollary}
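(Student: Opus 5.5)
The plan is to derive both assertions from Theorem~\ref{thm:cactus} and the bipartite facts recalled in Section~\ref{sec:preliminaries}. First I will show that $a,b$ lie in the same bipartition class. Suppose instead that they lie in opposite classes. Then PST between opposite classes has integral support \cite[Lemma~7.1.6]{coutinho}, and by \eqref{eq:arithmetic-time} with $\Delta_0=1$ we get $\pi/\tau_0=g\in\Z_{>0}$. Lemma~\ref{lem:parity} requires $g^2$ to be even, so $g\geq2$ and $\tau_0\leq\pi/2$. On the other hand, Theorem~\ref{thm:cactus} gives $\tau_0\in\{\pi/2,\pi/\sqrt2\}$, and $\tau_0=\pi/2$ forces $G\cong C_4$ with $a,b$ opposite. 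Opposite vertices of $C_4$ lie in the same class, which is a contradiction. Hence $a,b$ are in the same class. (Nonadjacency is consistent with this, and it is not needed beyond the hypothesis of Theorem~\ref{thm:cactus}.)

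Next, assume $G\not\cong C_4$. Theorem~\ref{thm:cactus} gives $\tau_0=\pi/\sqrt2$, so $\kappa=\pi/\tau_0=\sqrt2$. Since the endpoints lie in the same part, the facts recalled before Lemma~\ref{lem:parity} apply. At the minimum time the phase is $\gamma=\pm1$, and $\Phi^+$ and $\Phi^-$ are contained in the even and odd multiples of $\pi/\tau_0=\sqrt2$, in some order. Therefore $\Phi_a=\Phi^+\cup\Phi^-\subseteq\sqrt2\,\Z$, and cospectrality gives $\Phi_b=\Phi_a$.

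To make the containment step self-contained, I would argue directly with \eqref{eq:phase}. Because $G$ is bipartite and $a,b$ lie in one class, the signature conjugation shows that the spectral measures of $w_\pm$ are symmetric about $0$. Hence $\lambda\in\Phi^+$ implies $-\lambda\in\Phi^+$, and \eqref{eq:phase} gives $e^{-i\tau_0\lambda}=e^{i\tau_0\lambda}$. So $2\tau_0\lambda\in2\pi\Z$, that is, $\lambda\in(\pi/\tau_0)\Z=\sqrt2\,\Z$, and the same argument works for $\Phi^-$.

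The only delicate point is the first step, since it must exclude opposite-class transfer without assuming $G\not\cong C_4$. The combination of integrality, Lemma~\ref{lem:parity} and the $C_4$ identification in Theorem~\ref{thm:cactus} handles this cleanly. The rest is bookkeeping with \eqref{eq:phase}.
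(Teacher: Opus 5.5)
Your proof is correct and follows the paper's argument. Integrality of the support for opposite-class transfer, combined with Theorem~\ref{thm:cactus}, forces $G\cong C_4$ with $a,b$ opposite, and then symmetry of the spectral measures together with \eqref{eq:phase} gives $\tau_0\lambda\in\pi\Z$. Your appeal to Lemma~\ref{lem:parity} is harmless but unnecessary: $\tau_0=\pi/g$ with $g\in\Z$ already excludes $\tau_0=\pi/\sqrt2$, which is exactly how the paper concludes.
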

\begin{proof}
PST between opposite bipartition classes has
integral support \cite[Lemma~7.1.6]{coutinho}, so
\eqref{eq:arithmetic-time} makes $\tau_0/\pi$ rational.
Theorem~\ref{thm:cactus} would then force $\tau_0=\pi/2$ and
$G\cong C_4$ with opposite vertices. Those vertices lie in the same
bipartition class, a contradiction.
Thus the endpoints lie in the same class. The spectral measures of $w_+$ and $w_-$ are
symmetric about zero, so the phase is $1$ or $-1$. Equation~\eqref{eq:phase}
therefore gives $\tau_0\lambda\in\pi\Z$ for every $\lambda\in\Phi_a$.
When $G\not\cong C_4$, substitute $\tau_0=\pi/\sqrt2$ to obtain
$\Phi_a\subseteq\sqrt2\,\Z$.
\end{proof}

\section{The location and degree of transfer vertices}\label{sec:geometry}
By Corollary~\ref{cor:cycles}, both transfer vertices of a unicyclic graph lie on its unique cycle. We prove that nonadjacent transfer vertices have degree two and a common neighborhood in the bipartite case, then exclude odd cycles and adjacent transfer vertices.

\subsection{Pendant neighbors and endpoint degrees}

The next inequality applies to two transfer vertices in the same part of any bipartite graph.
\begin{theorem}\label{thm:pendant}
Let $G$ be a connected bipartite graph admitting PST between distinct vertices $a,b$ in the same bipartition class, with minimum transfer time $\tau_0$. Let $p\geq1$ be an integer, and suppose each of $a,b$ has at least $p$ pendant neighbors. Put $D=\deg(a)=\deg(b)$ and $\kappa=\pi/\tau_0$. Then
\[
 D\geq p+\frac32\kappa^2.
\]
Equality holds if and only if $p$ is even and $G$ is obtained from $K_{2,3p/2}$ by attaching $p$ leaves to each vertex in its specified part $\{a,b\}$ of size two.
In the equality case, the minimum transfer time between $a,b$ is $\pi/\sqrt p$.
\end{theorem}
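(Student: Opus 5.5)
The plan is to use the pendant neighbors to show that $w_+$ and $w_-$ lie in the range of $A$. That gives an explicit inverse-moment constraint on their spectral measures. Combined with the arithmetic of the supports, it forces the degree bound.

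\textbf{Step 1: the leaf vectors.} Let $L_a,L_b$ be sets of exactly $p$ pendant neighbors of $a$ and $b$, and put
\[
 s_\pm=\frac{\mathbf 1_{L_a}\pm\mathbf 1_{L_b}}{\sqrt{2p}}.
\]
These are unit vectors. Each leaf is adjacent only to its endpoint, so $As_\pm=\sqrt p\,w_\pm$. Consequently $w_\pm\in\operatorname{range}(A)\perp\ker A$, so $0\notin\Phi^\pm$. Moreover, for $j\geq2$ we have $s_\pm^TA^js_\pm=p\,w_\pm^TA^{j-2}w_\pm$. Comparing spectral measures, the spectral measure of $s_\pm$ is $p\lambda^{-2}\,d\mu_\pm(\lambda)$ off zero plus an atom at zero, where $\mu_\pm$ is the spectral measure of $w_\pm$. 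Since this measure has total mass one,
\begin{equation*}
 \int\lambda^{-2}\,d\mu_\pm(\lambda)\leq\frac1p,
\end{equation*}
with equality exactly when $s_\pm$ has no mass at $0$.

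\textbf{Step 2: arithmetic of the supports.} By the discussion before Lemma~\ref{lem:parity}, the endpoints lie in the same class, so the measures $\mu_\pm$ are symmetric and the phase is $\pm1$. Hence $\Phi^+$ and $\Phi^-$ lie in the even and odd multiples of $\kappa$, in some order. Since $0$ is excluded by Step~1, one measure, say $\mu_{\mathrm o}$, is supported on $\lambda^2\in\{\kappa^2,9\kappa^2,\dots\}$. The other, $\mu_{\mathrm e}$, is supported on $\lambda^2\in\{4\kappa^2,16\kappa^2,\dots\}$. Nonadjacency gives the second moments
\[
 \int\lambda^2\,d\mu_\pm=\|Aw_\pm\|^2=D\pm c,\qquad c=|N(a)\cap N(b)|,
\]
so the two second moments add to $2D$.

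\textbf{Step 3: the optimization.} On each admissible lattice of values of $y=\lambda^2$, I will find the line in $1/y$ lying below $y$ and touching it at the two smallest admissible points. This is the chord inequality: for $y\geq y_1$ with no admissible point strictly between $y_1$ and $y_2$,
\[
 (y-y_1)(y-y_2)\geq0\quad\Longleftrightarrow\quad y\geq y_1+y_2-\frac{y_1y_2}{y}.
\]
Integrating against $\mu_{\mathrm o}$ and $\mu_{\mathrm e}$ and using Step~1 gives lower bounds of the form $D\pm c\geq\alpha-\beta/p$. These must then be combined, possibly together with a second relation tying $c$ to the mass at zero of $s_\pm$, to reach $2D\geq2p+3\kappa^2$.

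I expect this step to be the main obstacle. Simply adding the two chord bounds does not obviously produce the linear-in-$p$ bound. The first thing I would try is a weighted combination: tune the chord points (or use a single tangent line $y\geq2\sqrt{\beta}-\beta/y$) separately for the odd and even measures, with $\beta$ chosen in terms of $p$ and $\kappa$. The candidate equality graph, $K_{2,3p/2}$ with $p$ leaves at each of $a,b$, should guide this choice. There I would compute that $\mu_-$ sits on $\pm\sqrt p$, while $\mu_+$ sits on the two values of $\pm\lambda$ satisfying $\lambda^2=p+3p/2+\text{(correction)}$. I would then read off which chords are tight, calibrate the weights so those supports are extremal, and recover $\kappa^2=p$. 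The latter matches the claimed transfer time $\pi/\sqrt p$.

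\textbf{Step 4: equality.} Equality forces both measures onto the two tight points. It also forces equality in Step~1, so $s_\pm$ has no atom at $0$. Then $A^2w_\pm$ lies in the span of $w_\pm$, and a computation like Step~4 of Theorem~\ref{thm:gap} follows. The relation $A^2(\ee_a\pm\ee_b)=\lambda^2(\ee_a\pm\ee_b)$, restricted to $G-\{a,b\}$, shows that $a,b$ have no private non-leaf neighbors. It also shows that every common neighbor has degree two and that no leaves beyond $L_a,L_b$ occur. Connectivity then yields the stated graph. The count $3p/2$ of common neighbors must be an integer, so $p$ is even. The converse is a direct spectral check.
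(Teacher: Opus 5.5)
Your Steps 1 and 2 reproduce the paper's setup: the leaf vectors, $w_\pm\in\operatorname{range}A$, the bound $\int\lambda^{-2}\,d\mu_\pm\le1/p$, the parity of the supports, and second moments summing to $2D$. The genuine gap is Step 3, which carries the entire content of the inequality. You leave it open, and both routes you suggest fall short. Normalize $X=\lambda^2/\kappa^2$ and $\varrho=p/\kappa^2$. Then one variable lives on the odd squares $\{1,9,25,\dots\}$ and the other on the positive even squares $\{4,16,36,\dots\}$. Both satisfy $\mathbb E(X^{-1})\le1/\varrho$, and the target is $\mathbb E X_++\mathbb E X_-\ge2\varrho+3$.
\begin{itemize}
\item A single tangent $y\ge2\sqrt\beta-\beta/y$ ignores the lattice. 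Optimized over $\beta$, it gives only $\mathbb E X\ge\varrho$ for each variable, i.e.\ $D\ge p$, losing the whole $\tfrac32\kappa^2$.
\item Fixed chords through the two smallest admissible points, $\{1,9\}$ and $\{4,16\}$, give $\mathbb E X_++\mathbb E X_-\ge30-73/\varrho$. This equals $-43$ at $\varrho=1$, which is exactly where equality has to occur.
\end{itemize}
No extra relation involving $c$ is needed, since $c$ cancels in the sum of the second moments.

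The missing idea is to let the chord points move with $\varrho$.
\begin{itemize}
\item If $\varrho<1$, the trivial bounds $\mathbb E X\ge4$ (even variable) and $\mathbb E X\ge1$ (odd variable) already give $5>2\varrho+3$.
\item If $\varrho\ge1$, choose $n\ge1$ with $n^2\le\varrho\le(n+1)^2$. Use the chord through $n^2,(n+2)^2$ on the squares with the parity of $n$, and the chord through $(n-1)^2,(n+1)^2$ on the squares of the other parity. For $n=1$ the latter is just $x\ge4$.
\end{itemize}
Adding the two chord bounds and using $n^2(n+2)^2+(n-1)^2(n+1)^2=2n^2(n+1)^2+1$ gives
\[
 \mathbb E X_++\mathbb E X_--2\varrho-3
 \ge\frac{2(\varrho-n^2)\bigl((n+1)^2-\varrho\bigr)+(\varrho-1)}{\varrho}\ge0 .
\]
In the equality case the term $\varrho-1$ forces $\varrho=1$, i.e.\ $\kappa^2=p$, and the two means are then pinned at $4$ and $1$.

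Your Step~4 also claims that equality forces your leaf vectors $s_\pm$ to have no atom at zero. This is false for $s_+$. In the extremal graph $\mu_+$ is concentrated at $\lambda^2=4p$, so $\int\lambda^{-2}\,d\mu_+=1/(4p)<1/p$ and $s_+$ has mass $3/4$ at zero. The error is harmless: at $n=1$ the even variable's inverse moment enters with coefficient $(n-1)^2(n+1)^2=0$, so no such equality is required. The rest of your equality analysis goes through once the bound and $\varrho=1$ are established: the count $c=3p/2$, no vertices at distance two by $A^2w_+=4p\,w_+$, and $p$ even.
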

\begin{proof}
\proofheading{Step 1: obtain the moment bounds.}
The spectral measures of $w_+$ and $w_-$ are symmetric about zero. PST therefore has phase $\gamma\in\{1,-1\}$, and the eigenvalues in $\Phi^+$ and $\Phi^-$ are, in some order, even and odd multiples of $\kappa$. Choose $p$ pendant neighbors at each endpoint and let $L_\pm$ be the normalized sums of their standard basis vectors, with coefficient $1/\sqrt{2p}$ at each chosen leaf adjacent to $a$ and coefficient $\pm1/\sqrt{2p}$ at each chosen leaf adjacent to $b$. Then
\[
 AL_\pm=\sqrt p\,w_\pm,\qquad
 A^\dagger w_\pm=p^{-1/2}(A^\dagger A)L_\pm.
\]
Thus $w_\pm\in\operatorname{range}A$, so neither measure has mass at zero. Since $A^\dagger A$ is an orthogonal projection and $\|L_\pm\|=1$, we also have $\|A^\dagger w_\pm\|^2\leq1/p$. Let $X_\pm=\lambda^2/\kappa^2$ under these probability measures, and put $s=p/\kappa^2$. One variable is supported on the squares of positive even integers and the other on the squares of odd integers. Their moments satisfy
\[
 \mathbb E X_++\mathbb E X_-=\frac{2D}{\kappa^2},
 \qquad \mathbb E(X_\pm^{-1})\leq\frac1s.
\]
\proofheading{Step 2: prove the degree bound.}
It is enough to prove that the two means sum to at least $2s+3$.
\proofheading{Case 1: $0<s<1$.}
Since the positive even squares and the odd squares are at least $4$ and $1$, respectively, the two means sum to at least $4+1>2s+3$, giving a strict inequality.

\proofheading{Case 2: $s\geq1$.}
Choose an integer $n\geq1$ such that $n^2\leq s\leq(n+1)^2$. On the positive squares of the same parity as $n$, and on those of the opposite parity, respectively, the inequalities
\[
 \frac{(x-n^2)(x-(n+2)^2)}x\geq0,
 \qquad
 \frac{(x-(n-1)^2)(x-(n+1)^2)}x\geq0
\]
hold. For $n=1$, the second inequality reduces to $x-4\geq0$ on the positive even squares: its $1/x$ coefficient $(n-1)^2(n+1)^2$ vanishes, so the same calculation applies. Expanding the first scalar inequality gives
$x\geq n^2+(n+2)^2-n^2(n+2)^2/x$, and the second has the analogous
form with $n-1,n+1$. Both products $n^2(n+2)^2$ and $(n-1)^2(n+1)^2$ are nonnegative.
Since their inverse-moment terms are subtracted, the upper bounds
$\mathbb E(X_\pm^{-1})\leq1/s$ give the lower bound
\[
 \mathbb E X_++\mathbb E X_-
 \geq4n^2+4n+6-
 \frac{n^2(n+2)^2+(n-1)^2(n+1)^2}{s}.
\]
The numerator of the last fraction is $2n^2(n+1)^2+1$. Subtracting
$2s+3$ and factoring gives
\[
 \mathbb E X_++\mathbb E X_--2s-3
 \geq\frac{2(s-n^2)((n+1)^2-s)+(s-1)}s\geq0.
\]
Both terms in the numerator are nonnegative because
$n^2\leq s\leq(n+1)^2$ and $s\geq1$. This proves the degree bound.

\proofheading{Step 3: determine equality.}
Equality forces the numerator in the preceding display to vanish.
Its term $s-1$ gives $s=1$, and hence the two means sum to $5$.
The variable supported on the positive even squares has mean at least $4$, and the variable supported on the odd squares has mean at least $1$. Thus their distributions are concentrated at $4$ and $1$, respectively. Hence $p=\kappa^2$ and $D=5p/2$. The sets $\Phi^+$ and $\Phi^-$ are therefore $\{\pm2\sqrt p\}$ and
$\{\pm\sqrt p\}$ in some order. The Perron eigenvalue is the largest
supported eigenvalue and belongs to $\Phi^+$, so
\[
 \Phi^+=\{-2\sqrt p,2\sqrt p\},\qquad
 \Phi^-=\{-\sqrt p,\sqrt p\}.
\]
Since $\langle w_\pm,A^2w_\pm\rangle=D\pm|N(a)\cap N(b)|$, the common-neighbor count is $(4p-p)/2=3p/2$. All remaining $p$ neighbors at each endpoint are the selected leaves. If a common neighbor had a neighbor $u$ other than $a,b$, then the
$u$ coordinate of $A^2w_+$ would be positive. This contradicts
$A^2w_+=4p\,w_+$, whose right side is supported only at $a,b$.
Thus the common neighbors have no additional neighbors. Connectivity gives the stated graph, and integrality of $3p/2$ forces $p$ even. Conversely, let $p$ be even and let $G$ be the described graph.
The $p$ leaves at each endpoint contribute $p\,w_\pm$ to
$A^2w_\pm$, while the $3p/2$ common neighbors contribute $3p\,w_+$
in the symmetric case and zero in the antisymmetric case. Hence
\[
 A^2w_+=4p\,w_+,\qquad A^2w_-=p\,w_-.
\]
Symmetry of the spectral measures about zero gives exactly the displayed sets $\Phi^+$ and $\Phi^-$.
The phase condition \eqref{eq:phase} is then satisfied first at
$\pi/\sqrt p$. Thus $\kappa^2=p$ and $D=5p/2=p+3\kappa^2/2$,
verifying both transfer and equality in the degree bound.
\end{proof}

\begin{lemma}\label{thm:noleaves}
Let $G$ be a connected bipartite unicyclic graph admitting PST between nonadjacent vertices $a,b$. Then $\deg(a)=\deg(b)=2$.
\end{lemma}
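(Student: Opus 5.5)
The plan is to show that every neighbor of $a$ or $b$ off the unique cycle is a pendant leaf, and then to rule out such leaves with the degree bound of Theorem~\ref{thm:pendant}.

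First, the setup. If $G\cong C_4$ there is nothing to prove, so assume $G\not\cong C_4$. Since $G$ is not $P_2$ or $P_3$, Corollary~\ref{cor:cycles} places $a$ and $b$ on the unique cycle. A unicyclic graph is a cactus, so Corollary~\ref{cor:bipartite} applies: $a,b$ lie in the same bipartition class and $\tau_0=\pi/\sqrt2$. Hence $\kappa^2=(\pi/\tau_0)^2=2$. Each of $a,b$ has exactly two neighbors on the cycle. Write $p_a=\deg(a)-2$ and $p_b=\deg(b)-2$ for the numbers of off-cycle neighbors. PST implies cospectrality, and cospectral vertices have equal degrees, so $p_a=p_b=:p$.

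Next, show that off-cycle neighbors are leaves. Let $u$ be a neighbor of $a$ not on the cycle. The edge $au$ is a bridge, so the component $Q$ of $G-\{a,b\}$ containing $u$ is the tree hanging at $u$. This component meets $N(a)$ only at $u$, because any second neighbor of $a$ in $Q$ would create another cycle. It also contains no neighbor of $b$, since such a neighbor would give an $a$--$b$ path avoiding the cycle edges at $a$, again producing a second cycle. In the proof of Theorem~\ref{thm:cactus}, $k\le2\le3$. By \eqref{eq:phase} and \eqref{eq:universal-time}, $\delta_{+-}\ge\sqrt2$. Lemma~\ref{lem:shared}\textup{(ii)} therefore applies. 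Because only one edge joins $Q$ to $a$ (and also because $G$ is bipartite), $Q$ is a singleton, so $u$ is a pendant neighbor of $a$. The same argument applies to $b$. Thus each of $a,b$ has exactly $p$ pendant neighbors.

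Finally, suppose $p\ge1$. Theorem~\ref{thm:pendant} applies, with $a,b$ in the same class, each having at least $p$ pendant neighbors, and $\kappa^2=2$. It gives
\[
 D\ge p+\tfrac32\cdot2=p+3.
\]
This contradicts $D=\deg(a)=p+2$. Hence $p=0$ and $\deg(a)=\deg(b)=2$.

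The main point requiring care is the second step: verifying that an off-cycle branch at $a$ really is a component of $G-\{a,b\}$ meeting only $N(a)$. This should follow directly from unicyclicity, as argued above. Everything else is a direct invocation of earlier results.
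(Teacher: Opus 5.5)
Your proof is correct and follows the paper's argument: Lemma~\ref{lem:shared}\textup{(ii)}, applied with $k\le2$ and $\delta_{+-}\ge\sqrt2$, makes every off-cycle neighbor of $a$ or $b$ a pendant leaf, and Theorem~\ref{thm:pendant} with $(\pi/\tau_0)^2=2$ then gives $D\ge p+3$, which contradicts $D=p+2$ unless $p=0$. The differences are cosmetic: you treat $C_4$ separately so that you can use the exact value $\tau_0=\pi/\sqrt2$, where the paper only needs $(\pi/\tau_0)^2\ge2$, and you spell out why each off-cycle branch is a component of $G-\{a,b\}$ that meets only one endpoint's neighborhood.
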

\begin{proof}
By Corollaries~\ref{cor:cycles} and~\ref{cor:bipartite}, the endpoints lie on the unique cycle and in the same bipartition class. Every neighbor counted in $k_a$ or $k_b$ lies on the unique cycle, so $k\leq2$, while PST gives $\delta_{+-}\geq\sqrt2$. By Lemma~\ref{lem:shared}\textup{(ii)}, every component of $G-\{a,b\}$ containing neighbors of exactly one endpoint is a single vertex, which is a leaf of $G$. The common endpoint degree has the form $D=p+2$, with $p$ leaves at each endpoint. If $p>0$, Theorem~\ref{thm:pendant} and $(\pi/\tau_0)^2\geq2$ give $D\geq p+3$, a contradiction. Thus $p=0$, so both endpoint degrees are two.
\end{proof}

\subsection{Degree-two vertices in bipartite unicyclic graphs}
\begin{proposition}\label{prop:distance}
Let $G$ be a connected bipartite unicyclic graph, and let $a,b$ be nonadjacent vertices of degree two. If $G$ admits PST between $a,b$, then $\dist(a,b)=2$.
\end{proposition}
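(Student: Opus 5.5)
The plan is to argue by contradiction using low-order spectral moments of $w_\pm$, without any finer structure of the attached trees. Since $G$ is bipartite and unicyclic, Corollary~\ref{cor:cycles} puts $a,b$ on the unique cycle, and Corollary~\ref{cor:bipartite} puts them in the same bipartition class. Hence $\dist(a,b)$ is even, and it is at least $2$ because $a,b$ are nonadjacent. Suppose, for contradiction, that $\dist(a,b)\geq4$. Then $G\not\cong C_4$, so Corollary~\ref{cor:bipartite} gives $\Phi_a\subseteq\sqrt2\,\Z$ and $\tau_0=\pi/\sqrt2$. The spectral measures of $w_\pm$ are symmetric about zero, so the phase is $\pm1$. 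By \eqref{eq:phase} with $\kappa=\sqrt2$, one of $\Phi^+,\Phi^-$ consists of even multiples of $\sqrt2$ and the other of odd multiples of $\sqrt2$.

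Write $w_{\mathrm{odd}}\in\{w_+,w_-\}$ for the vector whose support consists of odd multiples of $\sqrt2$. For $j<\dist(a,b)$ we have $(A^j)_{ab}=0$, hence
\[
 \langle w_\pm,A^jw_\pm\rangle=(A^j)_{aa}.
\]
With $\dist(a,b)\geq4$, the case $j=2$ gives $\langle w_{\mathrm{odd}},A^2w_{\mathrm{odd}}\rangle=\deg(a)=2$. Let $Y=\lambda^2/2$ under the spectral measure of $w_{\mathrm{odd}}$. Then $Y$ is supported on odd squares, all of which are at least $1$, and $\mathbb E Y=1$. So $Y$ is concentrated at $1$, that is, $\Phi_{w_{\mathrm{odd}}}\subseteq\{\pm\sqrt2\}$. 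By spectral decomposition this yields
\[
 A^2w_{\mathrm{odd}}=2\,w_{\mathrm{odd}}.
\]

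Next I compute $A^2w_{\mathrm{odd}}$ directly. Its entries at $a,b$ equal $\deg(a)/\sqrt2=2/\sqrt2$, so the identity holds there. The entry at a vertex $u$ with $\dist(a,u)=2$ is $\pm$ the number of common neighbours of $a$ and $u$, divided by $\sqrt2$. Since $\dist(a,b)\geq4$, such a $u$ is at distance at least $2$ from $b$, and it is not at distance exactly $2$ from $b$ unless $\dist(a,b)\leq4$. So I need to handle the boundary case $\dist(a,b)=4$ with a little care: a vertex at distance $2$ from both $a$ and $b$ could receive contributions from both endpoints.

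The case $\dist(a,b)\geq6$ is immediate. Every vertex at distance $2$ from $a$ receives a nonzero contribution only from $a$, so $A^2w_{\mathrm{odd}}=2w_{\mathrm{odd}}$ forces $a$ to have no vertices at distance two. This is impossible, because a neighbour of $a$ on the cycle has a further cycle neighbour other than $a$ (the cycle has length at least $2\dist(a,b)\geq8$).

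In the case $\dist(a,b)=4$, the two contributions at a vertex $u$ at distance $2$ from both endpoints are signed $\tfrac1{\sqrt2}(n_a(u)\pm n_b(u))$, where $n_a(u),n_b(u)$ count common neighbours. I will show that some vertex at distance $2$ from $a$ is not at distance $2$ from $b$, so the previous argument applies to it. Let the cycle neighbours of $a$ be $a_1,a_2$; their other cycle neighbours are $a_1',a_2'$. If both $a_1'$ and $a_2'$ were at distance $2$ from $b$, then, since the cycle has length $2\dist(a,b)$ or more and both arcs from $a$ to $b$ have length at least $4$, this forces both arcs to have length exactly $4$, with $a_i'$ adjacent to the cycle neighbours of $b$. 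In that situation I instead use the sign: one of $w_\pm$ produces $n_a(u)-n_b(u)$ at $u=a_1'$. If the odd vector is $w_+$, the sum $n_a+n_b>0$ gives a contradiction at once. If it is $w_-$, it remains to rule out $n_a(u)=n_b(u)$ at every such $u$ together with zero entries elsewhere. I would do this by examining $(A^4)_{ab}$ and the fourth moment: concentration at $\pm\sqrt2$ gives $\langle w_-,A^4w_-\rangle=4$, which must equal $(A^4)_{aa}-(A^4)_{ab}$. Here $(A^4)_{aa}\geq 2\cdot2+\deg(a_1)+\deg(a_2)-2+\cdots$ is large, while $(A^4)_{ab}=2$ counts the two cycle arcs. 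The resulting inequality fails.

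This $\dist(a,b)=4$ bookkeeping is the step I expect to be the main obstacle; the rest is the clean moment argument. Excluding $\dist(a,b)\geq4$ leaves $\dist(a,b)=2$.
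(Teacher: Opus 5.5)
Your reduction is sound and matches the paper's in substance. Absent a common neighbour, the second moments of $w_\pm$ equal $2$, so the part of the support consisting of odd multiples of $\sqrt2$ is concentrated at $\pm\sqrt2$. Comparing two-step walks then leaves only one configuration: the eight-cycle $0,\dots,7$ with $a=0$, $b=4$, degree-two vertices $1,3,5,7$, and arbitrary trees attached at $2$ and $6$. You use the entry of $A^2w_+$ at $u=2$ to rule out $w_+$ being the odd vector; the paper uses $\rho(G)\geq2$ with $\rho(G)\in\Phi^+$. Either works.

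The gap is the last step, and it cannot be repaired in the form you propose. In this configuration $A^2e_0=2e_0+e_2+e_6$ and $A^2e_4=2e_4+e_2+e_6$, so $A^2w_-=2w_-$ holds exactly. Every moment of $w_-$ is then forced, $\langle w_-,A^{2j}w_-\rangle=2^j$, so no moment computation with $w_-$ can give a contradiction. Concretely, $(A^4)_{aa}=\|A^2e_0\|^2=6$ and $(A^4)_{ab}=2$, so $(A^4)_{aa}-(A^4)_{ab}=4$ exactly. Your bound $(A^4)_{aa}\geq2+\deg(a_1)+\deg(a_2)$ only recovers $\deg(a_1)=\deg(a_2)=2$, which you already had.

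The missing ingredient is information about $w_+$: its support consists of even multiples of $\sqrt2$, and its second moment is $2$. The paper uses this through the kernel. The spectral mass of $w_+$ at zero must be at least $3/4$. But the rows of $A$ at $1,3,5,7$ force $z_0=z_4=-z_2=-z_6$ for every $z\in\ker A$, which bounds that mass by $1/2$.

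A repair closer to your moment style is to take the fourth moment of $w_+$ instead of $w_-$. On one hand, $\langle w_+,A^4w_+\rangle=\|A^2w_+\|^2=(A^4)_{aa}+(A^4)_{ab}=8$. On the other hand, $\lambda^4\geq8\lambda^2$ on $2\sqrt2\,\Z$, so this moment must be at least $8\cdot2=16$, a contradiction.
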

\begin{proof}
In $C_4$, any two nonadjacent vertices are opposite and have distance
two. Otherwise, the endpoints lie on the unique cycle by
Corollary~\ref{cor:cycles}. They lie in the same bipartition class, and $\tau_0=\pi/\sqrt2$, by Corollary~\ref{cor:bipartite}.

Suppose, for a contradiction, that they have no common neighbor.

\proofheading{Step 1: determine $\Phi^-$.}
The endpoint degrees are two and $N(a)\cap N(b)=\varnothing$, so
$\langle w_\pm,A^2w_\pm\rangle=2\pm|N(a)\cap N(b)|=2$.
One of $\Phi^+,\Phi^-$ consists of odd multiples of $\sqrt2$,
and the other consists of even multiples. Every eigenvalue in the former
set has square at least two. The corresponding spectral measure has
second moment two, so it is concentrated on $\{-\sqrt2,\sqrt2\}$.
This set cannot be $\Phi^+$, because $\rho(G)\in\Phi^+$ and
$\rho(G)\geq2$ for a graph containing a cycle. Hence it is $\Phi^-$, and
\begin{equation}\label{eq:a2difference}
 A^2(\ee_a-\ee_b)=2(\ee_a-\ee_b).
\end{equation}

\proofheading{Step 2: reduce to an eight-cycle.}
Every vertex at distance two from $a$ must therefore be at distance two from $b$, with the same number of two-step walks. If a cycle neighbor $v$ of $a$ had an off-cycle neighbor $u$, the two-step walk from $a$ to $u$ would require a two-step walk from $b$ to $u$. As $b$ is on the unique cycle and $u$ is off the cycle, that walk would also have to pass through $v$, making $v$ a common neighbor, a contradiction. The same argument applies at the cycle neighbors of $b$.

Let $\ell$ be the length of the unique cycle. Identify its vertices with $\Z/\ell\Z$ in cyclic order, with $a=0$ and $b=j$. The cycle length $\ell$ is even because $G$ is bipartite;
the case $\ell=4$ would give two common neighbors, so $\ell\geq6$. Equality of the sets of vertices at distance two from $a$ and $b$ along the cycle gives
\[
 \{-2,2\}=\{j-2,j+2\}\quad\text{in }\Z/\ell\Z.
\]
Matching $-2$ with $j-2$ gives $j=0$, contrary to $a\ne b$. The other matching gives $j-2=2$ and $j+2=-2$, hence $j\equiv4\equiv-4\pmod\ell$ and $\ell\mid8$. Hence $\ell=8$ and $j=4$. Vertices $0,1,3,4,5,7$ have degree two, and trees can be attached only at $2$ and $6$.

\proofheading{Step 3: compare the spectral mass at zero.}
Let $E_0$ be the projector onto $\ker A$. A vector $z\in\ker A$ satisfies, from the rows at vertices $1,3,5,7$,
\[
 z_0=z_4=s,\qquad z_2=z_6=-s.
\]
Thus $\|z\|^2\geq4|s|^2=2|\langle w_+,z\rangle|^2$. For $z=E_0w_+$, orthogonal projection gives
$\langle w_+,z\rangle=\|z\|^2$. Thus
$\|z\|^2\geq2\|z\|^4$, so $\|z\|^2\leq1/2$, including the case
$z=0$. Equivalently,
\begin{equation}\label{eq:zero-upper}
 \langle w_+,E_0w_+\rangle\leq\frac12.
\end{equation}
But $\Phi^+$ consists of even multiples of $\sqrt2$. Its nonzero elements have square at least $8$, whereas the spectral measure of $w_+$ has second moment $2$. Therefore
\[
 2\geq8\bigl(1-\langle w_+,E_0w_+\rangle\bigr),
 \qquad \langle w_+,E_0w_+\rangle\geq\frac34,
\]
contradicting \eqref{eq:zero-upper}. The endpoints must share a neighbor and hence have distance two.
\end{proof}

It remains to exclude exactly one common neighbor. The following tree lemma supplies the required inverse bound. Here $C^\dagger$ is the Moore--Penrose inverse of a real symmetric adjacency matrix.

\begin{lemma}\label{lem:treeinverse}
Let $T$ be a tree, let $C=A(T)$, and let $P_0$ project onto $\ker C$. Suppose distinct vertices $x,y$ lie in the same bipartition class and
\[
 P_0\ee_x=P_0\ee_y\ne0.
\]
Then, with $z_-=(\ee_x-\ee_y)/\sqrt2$,
\[
 \|C^\dagger z_-\|^2\geq1.
\]
The constant is attained by the endpoints of $P_5$.
\end{lemma}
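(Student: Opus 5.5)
The plan is to use the variational description of the Moore--Penrose inverse together with the structure of the kernel of a tree. Since $P_0\ee_x=P_0\ee_y$, the vector $z_-$ is orthogonal to $\ker C$, so $z_-\in\operatorname{range}C$. The vector $C^\dagger z_-$ is then the minimum-norm solution of $Cu=z_-$. Hence it suffices to show that every real solution $u$ of
\[
 Cu=z_-
\]
satisfies $\|u\|^2\geq1$. Equivalently, we need a lower bound valid for every solution, not just the one lying in $\operatorname{range}C$. The equation says that the neighbor sums of $u$ equal $1/\sqrt2$ at $x$, $-1/\sqrt2$ at $y$, and $0$ at every other vertex.

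\emph{Step 1: use the kernel hypothesis.} Choose $k=P_0\ee_x=P_0\ee_y\neq0$, so $k\in\ker C$ and $k_x=k_y=\|k\|^2>0$. On a tree, the support of a kernel vector is controlled by the Gallai--Edmonds (or Fiedler--Parter) structure. A vertex lies in the support of some null vector exactly when it is missed by some maximum matching. Moreover, every neighbor of such a vertex is covered by every maximum matching, and the null vectors vanish on those neighbors. I would record two consequences.
\begin{itemize}
\item[(a)] The components of $T-x$ containing neighbors of $x$ split into ``Parter-type'' branches. Because $k_x\neq0$ and $k$ vanishes on $N(x)$, the kernel equations at the vertices of $N(x)$ force these branches to carry compensating null mass.
\item[(b)] The same holds at $y$, with the same value $k_y=k_x$.
\end{itemize}

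\emph{Step 2: localize the equation to two disjoint regions.} For a solution $u$, I would project $u$ onto vertex sets attached to $x$ and to $y$ separately. Let $B_x$ be the union of the branches at $x$ not containing $y$, together with the neighbor of $x$ on the $x$--$y$ path, and define $B_y$ similarly. Summing the equation $Cu=z_-$ against suitable null vectors of the subtrees $T-x$ and $T-y$ should give a linear identity of the form
\[
 \langle u,\psi_x\rangle=\tfrac1{\sqrt2},
 \qquad
 \langle u,\psi_y\rangle=-\tfrac1{\sqrt2}.
\]
Here $\psi_x$ is supported on the odd-distance vertices of $B_x$, and $\psi_y$ on those of $B_y$. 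By choosing these null vectors to be normalized so that $\|\psi_x\|,\|\psi_y\|\leq1$, Cauchy--Schwarz would give $\|u|_{B_x}\|^2\geq1/2$ and $\|u|_{B_y}\|^2\geq1/2$. The choice of normalization is what the hypothesis $k_x=k_y\neq0$ is for: it supplies, in each region, a branch on which the relevant null vector has a coefficient of modulus $1$ at the neighbor of the endpoint. Finally, I would show that $B_x$ and $B_y$ can be taken disjoint, possibly after cutting at the middle vertex of the $x$--$y$ path, which has even length since $x,y$ lie in the same class. This yields $\|u\|^2\geq1$.

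\emph{Main obstacle.} The hard step is constructing the test vectors $\psi_x,\psi_y$ with norm at most one and disjoint supports. This is delicate when the $x$--$y$ path has length two, since $x$ and $y$ then share a neighbor and the naive regions overlap. My first attempt there is to use the shared neighbor $v$. Since $k_v=0$ and $k_x=k_y$, the branches of $T-v$ at $x$ and $y$ must both carry kernel mass. This should allow $\psi_x$ and $\psi_y$ to be chosen inside those branches, away from $v$. If this fails, I would fall back on an induction on $|V(T)|$ by deleting a leaf at distance at least two from both endpoints together with its neighbor. This operation preserves the kernel hypothesis and can only decrease the minimum norm of a solution, which reduces the problem to trees where every leaf is close to $x$ or $y$.

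\emph{Sharpness.} For $P_5$ with $x,y$ its ends, the kernel is spanned by $(1,0,-1,0,1)$, so the hypothesis holds. The vector
\[
 u=\tfrac1{\sqrt2}(0,1,0,-1,0)
\]
lies in $\operatorname{range}C$, solves $Cu=z_-$, and has norm $1$. This confirms that the constant is attained.
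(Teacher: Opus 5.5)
\textbf{There is a genuine gap.} Your reduction to proving that every solution of $Cu=z_-$ has $\|u\|^2\geq1$ is fine, and so is your $P_5$ check. The heart of the argument, however, is the pair of test vectors $\psi_x,\psi_y$, and you never produce them. The mechanism you suggest does not produce them either. If $\chi$ is a null vector of $T-x$ extended by zero, then $C\chi$ is a multiple of $\ee_x$, so pairing it with $Cu=z_-$ only constrains $u_x$. Also, as your Step 1 notes, the null vectors of $T$ vanish on $N(x)$. So the value pinned at the neighbor of $x$ belongs to the solution $u$, not to a null vector.

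The missing idea is that every solution is forced at the path-neighbor $v$ of $x$. Take $k\in\ker C$ supported on the class of $x$ with $k_x\neq0$; such a $k$ exists because the class-restriction of any null vector is again null. Restrict $k$ to the component $R_x$ of $T-xv$ containing $x$. Because $k_v=0$, you get $Ck_{R_x}=k_x\ee_v$. Since $y\notin R_x$,
\[
 k_xu_v=\langle u,Ck_{R_x}\rangle=\langle z_-,k_{R_x}\rangle=k_x/\sqrt2 .
\]
So $\psi_x=\ee_v$ works, and likewise $u_w=-1/\sqrt2$ at the path-neighbor $w$ of $y$.

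It follows that the length-two case you call delicate cannot occur. If $v=w$, then $u_v$ would have to equal both $1/\sqrt2$ and $-1/\sqrt2$, so the hypotheses are inconsistent there, and no test vectors ``away from $v$'' are needed. With $v\neq w$, you get $\|u\|^2\geq u_v^2+u_w^2=1$. This is the paper's proof, dualized. The paper first shows that $f=C^\dagger z_-$ vanishes on the class of $x,y$. It then derives $Cf_{R_x}=(1/\sqrt2-f_v)\ee_x$ and uses $\ee_x\notin\operatorname{range}C$. No Gallai--Edmonds structure is needed.

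\textbf{Your fallback is closer to a proof, but it is also incomplete.} Deleting a leaf $\ell$ together with its neighbor $m$ does induce a coordinate-preserving isomorphism of kernels. Hence the condition ``$k_x=k_y$ for all null $k$, and $k_x\neq0$ for some $k$'' survives the deletion. Restrictions of solutions also remain solutions, because the equation at $\ell$ gives $u_m=0$.

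Two things are still missing. First, you must handle disconnection, since $m$ may split the tree. The preserved hypothesis forces $x,y$ into a single component, and the other components can be discarded. Second, all the content lies in the base case, which you did not treat. If every leaf is within distance one of $x$ or $y$, the tree is the $x$--$y$ path with pendant leaves at its two ends. A pendant leaf at $x$ forces $k_x=0$, which the hypothesis excludes. So only $P_{n+1}$ with $n\equiv0\pmod 4$ remains, and there the minimum norm is $n/4\geq1$. As written, neither of your routes reaches the inequality.
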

\begin{proof}
Because $C$ is real symmetric and $P_0$ is the orthogonal projector
onto $\ker C$, we have
\[
 \operatorname{range}C=(\ker C)^\perp=\ker P_0.
\]
The assumption $P_0\ee_x=P_0\ee_y$ gives
$P_0z_-=(P_0\ee_x-P_0\ee_y)/\sqrt2=0$, so
$z_-\in\operatorname{range}C$.
Since $P_0\ee_x=P_0\ee_y\ne0$, neither $\ee_x$ nor $\ee_y$ lies in $\operatorname{range}C$.
Set $f=C^\dagger z_-$. The spectral definition of the Moore--Penrose
inverse gives $CC^\dagger=I-P_0$ and $P_0C^\dagger=0$;
hence $Cf=z_-$ and $f\perp\ker C$.
Let $X,Y$ be the bipartition classes, with $x,y\in X$, and decompose
$f=f_X+f_Y$ into its restrictions to the two classes, extended by zero.
Since $C$ maps vectors supported on one bipartition class to vectors supported on the other, and $z_-$ is supported on $X$,
we have $Cf_X=0$ and $Cf_Y=z_-$. Thus $f_X\in\ker C$, and
\[
 0=\langle f,f_X\rangle=\|f_X\|^2.
\]
Consequently $f$ is supported on the class opposite to $x,y$.

Let $v$ be the first neighbor of $x$ along the unique $x$--$y$ path. Remove the edge $xv$, let $R_x$ be the component containing $x$, and let $f_{R_x}$ be the restriction of $f$ to $R_x$, extended by zero elsewhere. Since $y\notin R_x$ and $xv$ is the only edge leaving $R_x$, the only potentially nonzero coordinates of $Cf_{R_x}$ are $x$ and $v$. At $v$ we have $(Cf_{R_x})_v=f_x=0$, whereas at $x$ the restriction removes the summand $f_v$. Hence
\[
 C f_{R_x}=(1/\sqrt2-f_v)\ee_x.
\]
The vector $Cf_{R_x}$ belongs to $\operatorname{range}C$, whereas
$\ee_x$ does not. Therefore $1/\sqrt2-f_v=0$, giving
$f_v=1/\sqrt2$.
Let $w$ be the path neighbor of $y$, and delete $yw$.
If $R_y$ is the component containing $y$ and $f_{R_y}$ is the
restriction extended by zero, the same coordinate calculation gives
$Cf_{R_y}=(-1/\sqrt2-f_w)\ee_y$.
Since $\ee_y\notin\operatorname{range}C$, we obtain $f_w=-1/\sqrt2$.
If $v=w$, the two forced values contradict each other. Otherwise
$\|f\|^2\geq|f_v|^2+|f_w|^2=1$.
For $P_5$ in path order, take $x=1$ and $y=5$.
Its kernel is spanned by $h=(1,0,-1,0,1)^T$, so
$P_0\ee_1=P_0\ee_5=h/3\ne0$, verifying the hypothesis.
The vector $f=(0,1,0,-1,0)^T/\sqrt2$ solves $Cf=z_-$ and is
orthogonal to $\ker C$. It is therefore $C^\dagger z_-$ and has
squared norm one, proving sharpness.
\end{proof}

\begin{theorem}\label{thm:twins}
Let $G$ be a connected bipartite unicyclic graph, and let $a,b$ be nonadjacent vertices of degree two. If $G$ admits PST between $a,b$, then there are distinct vertices $c,d$ such that
\[
 N(a)=N(b)=\{c,d\}.
\]
Thus the unique cycle is $C_4$, $a,b$ are its opposite vertices, and all other vertices belong to trees attached at $c$ or $d$.
\end{theorem}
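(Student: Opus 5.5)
By Proposition~\ref{prop:distance}, $a$ and $b$ have a common neighbor $c$. Both have degree two and lie on the unique cycle. Write $N(a)=\{c,d\}$ and $N(b)=\{c,d'\}$. If $d=d'$, the cycle is $a,c,b,d,a$ and we are done. So the plan is to assume $d\ne d'$ and derive a contradiction. In that case the cycle has length $\ell\geq6$ and passes through $d,a,c,b,d'$, and $c$ has degree at least two.

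First I would use the second-moment information. We have $\langle w_\pm,A^2w_\pm\rangle=2\pm1$. By Corollary~\ref{cor:bipartite}, $\tau_0=\pi/\sqrt2$, so one of $\Phi^\pm$ lies in the odd multiples of $\sqrt2$ and the other in the even multiples. Since $\rho(G)\ge2$ belongs to $\Phi^+$, the set $\Phi^+$ is the even one. Then $\Phi^-$ consists of odd multiples, each of square at least $2$, while $\langle w_-,A^2w_-\rangle=1<2$. This looks immediately contradictory, but it only holds if the roles are as stated. The alternative assignment has $\Phi^+$ odd, which is compatible only if $\rho$ is an odd multiple of $\sqrt2$. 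So I expect the real work to be in an inverse-moment argument rather than this one.

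In that argument, $w_-$ has no kernel mass: its support consists of nonzero odd or even multiples of $\sqrt2$. Its inverse second moment is then bounded above by $1/2$ when the support is odd and by $1/8$ when it is even. Compare this with $\|A^\dagger w_-\|^2$, which I would bound below using Lemma~\ref{lem:treeinverse}. Set $T=G-\{a,b,c\}$ minus the cycle edges at $c$, or rather $G-\{a,b\}$. Since $a,b$ have degree two and $G$ is unicyclic, $G-\{a,b\}$ is a tree. In it, $d$ and $d'$ lie in the same bipartition class, at even distance $\ell-4\ge2$ along the remaining cycle arc.

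The key computation is a solution $f$ of $Af=w_-$. Restricted to $H=G-\{a,b\}$, with $x=\ee_c+\ee_d$ and $y=\ee_c+\ee_{d'}$, the difference $x-y=\ee_d-\ee_{d'}$ is governed by $C^\dagger$. The zero rows at $a,b$ and the Schur-complement structure of Theorem~\ref{thm:gap} relate $\|A^\dagger w_-\|^2$ to $\|C^\dagger(\ee_d-\ee_{d'})/\sqrt2\|^2$. Lemma~\ref{lem:shared}\textup{(iii)} gives $F_\theta x=\pm F_\theta y$ for $\theta\ne0$, so the nonzero spectral data of $x-y$ is clean. I would then split into two cases. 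If $P_0\ee_d=P_0\ee_{d'}\ne0$, Lemma~\ref{lem:treeinverse} gives a lower bound of $1$, which exceeds $1/2$ and yields the contradiction. Otherwise $P_0(\ee_d-\ee_{d'})\ne0$, so $x-y$ has kernel mass and $0$ becomes a common pole of $\alpha_\pm$. I would try to rule this out with Lemma~\ref{lem:shared}\textup{(i)} together with the moment counts $\mathbb{E}X^2=3$ and $1$.

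I expect the main obstacle to be the second case and the exact translation between $\|A^\dagger w_-\|$ and the tree quantity. I would first try the explicit kernel vectors along the cycle, as in Step~3 of Proposition~\ref{prop:distance}, to force the equality $P_0\ee_d=P_0\ee_{d'}$.
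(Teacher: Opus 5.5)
Your proposal has a genuine gap: it rests on a false premise about the kernel and compares the wrong spectral quantity. Your second-moment analysis starts correctly but draws the opposite of the conclusion that matters. Since $\langle w_-,A^2w_-\rangle=1<2$, the class of odd multiples of $\sqrt2$ cannot be $\Phi^-$, so $\Phi^-\subseteq 2\sqrt2\,\Z$. That set contains $0$. (Your remark that $\rho(G)\in\Phi^+$ makes $\Phi^+$ the even class is a non sequitur, since $\rho$ could be $3\sqrt2$.) So $w_-$ does not lack kernel mass. On the contrary, every nonzero element of $\Phi^-$ has square at least $8$, so $p_0=\|E_0w_-\|^2\geq7/8$.

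The comparison you plan, between the inverse second moment $\|A^\dagger w_-\|^2$ and $\|C^\dagger z_-\|^2$, is not a valid inequality. Take $C_8$ with $a,b$ at distance two. Then $G-\{a,b\}$ is $\{c\}\sqcup P_5$ (a forest, not a tree), with $d,d'$ the endpoints of $P_5$. So $\|C^\dagger z_-\|^2=1$, yet $\|A^\dagger w_-\|^2=1/4$. The usable relation is for the kernel mass. Schur complementation gives $g_{G,w_-}=1/(t-q_-)$, where $q_-$ is the resolvent of $z_-=(\ee_d-\ee_{d'})/\sqrt2$ in the component $T$ containing $d,d'$. Hence $p_0=\lim_{t\to0}t/(t-q_-(t))=1/(1+\|C^\dagger z_-\|^2)$, which equals $1/2$ in the $C_8$ example. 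Then Lemma~\ref{lem:treeinverse} forces $p_0\leq1/2$, contradicting $p_0\geq7/8$.

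Your case split is also inverted and incomplete. The case $P_0(\ee_d-\ee_{d'})\neq0$ is the trivial one: a pole of $q_-$ at $0$ makes $g_{G,w_-}$ vanish there, contradicting $p_0>0$. Lemma~\ref{lem:shared}\textup{(i)} cannot do this job, because it permits one common pole and $0$ may be that pole.

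The negation of your first case also includes $P_0\ee_d=P_0\ee_{d'}=0$, which you never treat. Lemma~\ref{lem:treeinverse} requires the common projection to be nonzero, so this is where a new idea is needed. Since $\Phi^+$ consists of odd multiples of $\sqrt2$, the moment $\langle w_+,A^2w_+\rangle=3$ forces $\sqrt2\in\Phi^+$. Apply Lemma~\ref{lem:first-positive-zero} to $\alpha_+=t-q_+-2g_c$ at $t_0=\sqrt2$. It gives $g_c(t_0)\leq1/t_0$, hence $q_+(t_0)=t_0-2g_c(t_0)\geq0$. The same lemma then gives positive kernel mass for $z_+=(\ee_d+\ee_{d'})/\sqrt2$, that is, $P_0\ee_d=P_0\ee_{d'}\neq0$. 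Without this nonvanishing step, and with the kernel-mass identity replaced by an inverse-moment comparison, your outline does not reach a contradiction.
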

\begin{proof}
\proofheading{Step 1: compute the two resolvents.}
By Proposition~\ref{prop:distance}, the endpoints share a neighbor. Suppose they share exactly one, denoted $c$, and write
\[
 N(a)=\{c,x\},\qquad N(b)=\{c,y\},\qquad x\ne y.
\]
This assumption rules out $G\cong C_4$. Deleting $a,b$ leaves two trees: $T_c$ containing $c$, and $T$ containing $x,y$. Put $C=A(T)$, $z_\pm=(\ee_x\pm\ee_y)/\sqrt2$, and define
\[
 q_\pm(t)=z_\pm^T(tI-C)^{-1}z_\pm,
 \qquad g_c(t)=\ee_c^T(tI-A(T_c))^{-1}\ee_c.
\]
Since $a,b$ lie in the same bipartition class, $x,y$ lie in the same class of $T$.
In $G-\{a,b\}=T_c\sqcup T$, the endpoint neighbor vectors are
$\ee_c\oplus\ee_x$ and $\ee_c\oplus\ee_y$. Their diagonal resolvent
quadratic forms are $g_c+g_{T,x}$ and $g_c+g_{T,y}$.
Cospectrality of $a,b$, together with the two-by-two Schur-complement calculation in the proof of Theorem~\ref{thm:gap}, gives $g_c+g_{T,x}=g_c+g_{T,y}$. Hence $g_{T,x}=g_{T,y}$, so $x,y$ are cospectral in $T$.
The symmetric neighbor vector is $\sqrt2\,\ee_c\oplus z_+$, whereas
the antisymmetric one is $0\oplus z_-$. Since the two tree blocks are
disjoint, their resolvent quadratic forms are $2g_c+q_+$ and $q_-$.
Schur complementation therefore gives
\begin{equation}\label{eq:onecommonres}
 g_{G,w_-}(t)=\frac1{t-q_-(t)},\qquad
 g_{G,w_+}(t)=\frac1{t-q_+(t)-2g_c(t)}.
\end{equation}
All three functions $q_+,q_-,g_c$ have positive residues summing to one and arise from spectral measures symmetric about zero.

\proofheading{Step 2: bound the spectral mass at zero from below.}
By Corollary~\ref{cor:bipartite}, the two supports $\Phi^+,\Phi^-$ are contained in the sets of even and odd multiples of $\sqrt2$, in some order. The corresponding spectral measures have second moments
\[
 \langle w_-,A^2w_-\rangle=1,\qquad
 \langle w_+,A^2w_+\rangle=3.
\]
Let $E_0^G$ be the orthogonal projector onto $\ker A(G)$. The second moment $1$ rules out odd multiples of $\sqrt2$ in $\Phi^-$. Thus $\Phi^-\subseteq2\sqrt2\,\Z$. Every nonzero element of $\Phi^-$ has square at least $8$, and the spectral mass $p_0=\|E_0^G w_-\|^2$ satisfies
\begin{equation}\label{eq:seven-eighths}
 1\geq8(1-p_0),\qquad p_0\geq7/8.
\end{equation}
The eigenvalues in $\Phi^+$ are odd multiples of $\sqrt2$. The second moment $3$ forces $\sqrt2\in\Phi^+$, since the next possible positive eigenvalue is $3\sqrt2$.

\proofheading{Step 3: compare the projections onto $\ker C$.}
Let $P_0$ be the orthogonal projector onto $\ker C$.
By \eqref{eq:onecommonres}, a pole of $q_-$ at zero would make
$g_{G,w_-}=1/(t-q_-)$ vanish there. This contradicts its positive
spectral mass $p_0\geq7/8$, so $q_-$ has no pole at zero.
The residue of $q_-$ there is
$\|P_0z_-\|^2=\tfrac12\|P_0\ee_x-P_0\ee_y\|^2$.
Its vanishing gives
\begin{equation}\label{eq:equal-kernel}
 P_0\ee_x=P_0\ee_y.
\end{equation}
We next show that $P_0\ee_x\ne0$.

Set $t_0=\sqrt2$ and $\alpha_+(t)=t-q_+(t)-2g_c(t)$. Its zeros are $\Phi^+$, which excludes zero and has smallest positive element $t_0$. Lemma~\ref{lem:first-positive-zero}, applied with coefficients $1,2$, gives $g_c(t_0)\leq1/t_0$. Since $\alpha_+(t_0)=0$,
\[
 q_+(t_0)=t_0-2g_c(t_0)\geq0.
\]
Lemma~\ref{lem:first-positive-zero} now gives
$\|P_0z_+\|^2>0$. By \eqref{eq:equal-kernel},
$P_0z_+=\sqrt2\,P_0\ee_x=\sqrt2\,P_0\ee_y$.
Thus the common projection is nonzero, as required by
Lemma~\ref{lem:treeinverse}.

\proofheading{Step 4: prove $p_0\leq1/2$.}
Let $F_\theta$ denote the spectral projectors of $C=A(T)$. Since
$q_-$ has no pole at zero, pair the poles $\pm\theta$ and use
$t/(t^2-\theta^2)=-t/\theta^2+O(t^3)$. Its expansion at zero is
\[
 q_-(t)=-Lt+O(t^3),\qquad
 L=\sum_{\theta\ne0}\frac{\|F_\theta z_-\|^2}{\theta^2}
   =\|C^\dagger z_-\|^2\geq1.
\]
The spectral mass at zero is the residue of $g_{G,w_-}$. Hence
\[
 p_0=\lim_{t\to0}\frac{t}{t-q_-(t)}
     =\frac1{1+L}\leq\frac12,
\]
contradicting \eqref{eq:seven-eighths}. Therefore the endpoints have two common neighbors, as claimed.
\end{proof}

\subsection{Odd cycles}
\begin{lemma}\label{lem:extremes}
Let $G$ be a connected unicyclic graph, let $a,b$ be distinct strongly cospectral vertices on its unique cycle, and put $H=G-\{a,b\}$. Then
\[
 \lambda_{\min}(G)<\lambda_{\min}(H),
\]
and both $\rho(G)$ and $\lambda_{\min}(G)$ belong to $\Phi_a=\Phi_b$.
\end{lemma}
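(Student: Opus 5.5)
The plan is to treat the two assertions separately: the strict inequality comes from a Perron--Frobenius argument on the cycle, and the support statement from strong cospectrality together with interlacing.

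\textbf{The Perron eigenvalue.} Since $G$ is connected, the Perron eigenvector of $A(G)$ is entrywise positive. Hence $E_{\rho}\ee_a\ne0$ and $\rho(G)\in\Phi_a$. This part is immediate.

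\textbf{The strict inequality.} Let $\lambda=\lambda_{\min}(G)$ and $\mu=\lambda_{\min}(H)$. Cauchy interlacing for the principal submatrix $A(H)$ already gives $\lambda\le\mu$, so only strictness needs work. Suppose $\lambda=\mu$, and take a unit eigenvector $z$ of $A(H)$ for $\mu$. Extend it by zero at $a,b$ to a vector $\tilde z$. Then $\tilde z^TA\tilde z=\mu=\lambda_{\min}(A)$, so $\tilde z$ is a minimizer of the Rayleigh quotient and hence an eigenvector of $A$. Reading the rows of $A\tilde z=\mu\tilde z$ at $a$ and at $b$ gives $x^Tz=y^Tz=0$; this holds for every $\mu$-eigenvector of $H$.

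To reach a contradiction, I would use the structure of $H$. Removing two vertices of the unique cycle leaves a forest: the cycle minus $\{a,b\}$ splits into at most two paths, with trees hanging off them. Each component $Q$ with $\lambda_{\min}(Q)=\mu$ has a $\mu$-eigenvector of the form $D_Q\phi_Q$, where $\phi_Q$ is a positive Perron vector of $Q$ and $D_Q$ is the bipartite sign matrix. This works because the spectrum of a forest is symmetric, so $\lambda_{\min}(Q)=-\rho(Q)$. Such a vector has no zero entries. Every component of $H$ meets $N(a)\cup N(b)$ by connectivity, so this eigenvector has nonzero entries on $N(a)\cup N(b)$ inside $Q$.

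The obstacle is that the orthogonality conditions are sums, which could in principle cancel. Here unicyclicity helps. Each of $a,b$ has its neighbors in $Q$ equal to either a single vertex, or two vertices lying on the remaining cycle path. In the single-vertex case the condition is one nonzero term, and this is impossible. In the two-vertex case, the signs under $D_Q$ are determined by the parity of the path between the two neighbors. When $a$ has two neighbors in the same component $Q$, these two neighbors together with the path through $Q$ form the cycle with $a$. Cancellation would require opposite signs, i.e.\ an odd path and hence an even cycle, with Perron entries of equal magnitude. At this point I would rather switch to the cleaner argument below.

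\textbf{Fallback argument.} Write $g_{G,a}=\phi_{G-a}/\phi_G$ and apply interlacing twice, $G\supset G-a\supset H$. This gives $\lambda_{\min}(G)\le\lambda_{\min}(G-a)\le\lambda_{\min}(H)$. I would then show that $\lambda_{\min}(G)$ is a pole of $g_{G,a}$ or of $g_{G,b}$. The tool is that the eigenspace of $\lambda_{\min}(G)$ is not orthogonal to both $\ee_a$ and $\ee_b$: otherwise some minimizing eigenvector vanishes at $a,b$, which forces $\lambda_{\min}(G)=\lambda_{\min}(H)$ by the Rayleigh argument.

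\textbf{The support statement.} Suppose $\lambda_{\min}(G)$ has an eigenvector $\psi$ with $\psi_a\ne0$. By strong cospectrality, $E_\lambda\ee_a=\pm E_\lambda\ee_b$, so $\psi_a=0$ if and only if $\psi_b=0$ for all $\psi$ in the eigenspace. Hence $\lambda_{\min}(G)\in\Phi_a$ is equivalent to the existence of a $\lambda_{\min}$-eigenvector that is nonzero at $a$. This in turn holds if and only if $\lambda_{\min}(G)<\lambda_{\min}(H)$, by the Rayleigh extension argument above. So the two claims are equivalent, and both reduce to the strict inequality. For this I would argue by the sign structure of a minimizing eigenvector $\psi$ vanishing on $\{a,b\}$, as in the obstacle step.

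For odd or even cycles I would pick $\psi$ with $\|\,|\psi|\,\|$ fixed and use $|\psi|^TA|\psi|\ge|\psi^TA\psi|$. Equality in this inequality forces $\psi$ to alternate in sign along every edge where both endpoints are nonzero. Then, at $a$, the row equation $\sum_{u\sim a}\psi_u=0$, combined with alternation on the cycle, forces the neighbor values to cancel exactly. Propagating this cancellation around the cycle through the unique cycle path should force $\psi=0$ on that path, and then everywhere by connectivity of the trees. I expect this propagation step to be the main difficulty.
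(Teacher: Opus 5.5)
Your plan has the same architecture as the paper's proof: the Perron eigenvalue is supported at every vertex. The strict inequality comes from extending a sign-alternated Perron vector $D_Q\phi_Q$ of one component $Q$ of $H$ by zero and invoking the equality case of the Rayleigh principle. The support of $\lambda_{\min}(G)$ then follows from strong cospectrality and restriction to $H$. As written, however, the strict inequality is never proved, and the rest of your argument depends on it.

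You stall on the case in which an endpoint, say $a$, has two neighbors $u_1,u_2$ in the same component $Q$. That case cannot occur. A $u_1$--$u_2$ path inside $Q$, together with $a$, is a cycle of $G$ that avoids $b$, whereas by hypothesis $b$ lies on the unique cycle. Hence every component of $H$ contains at most one neighbor of $a$ and at most one neighbor of $b$; your dichotomy ``a single vertex or two vertices'' should read ``none or one''.

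With this observation your first argument finishes at once. Take $Q$ with $\lambda_{\min}(Q)=\lambda_{\min}(H)$, and let $\tilde z$ equal $D_Q\phi_Q$ on $Q$ and zero elsewhere. Connectivity of $G$ gives some $u\in\{a,b\}$ with a neighbor $r\in V(Q)$, and $r$ is its only neighbor there. So $(A\tilde z)_u=\tilde z_r\neq0=\lambda_{\min}(H)\,\tilde z_u$, and $\tilde z$ is not an eigenvector of $A$. This contradicts $\lambda_{\min}(G)=\lambda_{\min}(H)$ by the Rayleigh equality case. That is exactly the paper's proof.

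The routes you switch to do not close the gap. The fallback is circular. As you note yourself, it only shows that $\lambda_{\min}(G)\in\Phi_a$ follows from the strict inequality, which remains unproved. Only the direction from the strict inequality to the support statement is actually justified there.

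The sign-propagation sketch works with an arbitrary $\lambda_{\min}$-eigenvector $\psi$ vanishing at $a$ and $b$. Its restriction to $H$ may be supported on several components at once. The row equation at $a$ can then balance contributions from different components, and sign alternation within each component does nothing to prevent this. The reason to use a test vector supported on a single component, as in your first attempt, is precisely that the row at the relevant endpoint then contains a single nonzero term.
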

\begin{proof}
Every component of the forest $H$ contains at most one neighbor of $a$ and at most one neighbor of $b$. Choose a component attaining $\lambda_{\min}(H)$. An eigenvector for its least eigenvalue can be obtained by reversing the signs of a strictly positive Perron eigenvector on one bipartition class, so every coordinate is nonzero. Extend this vector by zero to a vector $z$ on $G$. Its Rayleigh quotient is $\lambda_{\min}(H)$. By connectivity, the chosen component contains a neighbor $r$ of some endpoint $u\in\{a,b\}$, and $r$ is the only neighbor of $u$ in that component. Thus $(Az)_u=z_r\ne0$ while $z_u=0$, so $z$ is not an eigenvector of $G$. If $\lambda_{\min}(G)=\lambda_{\min}(H)$, equality in the Rayleigh principle would force $z$ to be an eigenvector for $\lambda_{\min}(G)$. Hence the inequality is strict.

If $\lambda_{\min}(G)$ were absent from the support at $a$, strong cospectrality would make it absent at $b$ too. Any nonzero eigenvector for $\lambda_{\min}(G)$ would then vanish at both endpoints. Its restriction to $H$ would be a nonzero eigenvector with the same eigenvalue, contradicting $\lambda_{\min}(G)<\lambda_{\min}(H)$. The Perron eigenvalue is supported at every vertex of a connected graph
\cite[Lemma~2.3]{godsil}.
\end{proof}

\begin{lemma}\label{lem:asymmetry}
Let $G$ be a connected unicyclic graph whose unique cycle has odd length. Then
\[
 0<\rho(G)+\lambda_{\min}(G)<2.
\]
\end{lemma}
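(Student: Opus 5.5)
The plan is to prove the two inequalities separately. The lower bound follows from Perron--Frobenius theory. The upper bound follows by comparing $G$ with the spanning tree obtained by deleting one cycle edge, whose spectrum is symmetric about zero, and then using Lemma~\ref{lem:displacement}\textup{(ii)} together with its equality case.

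\emph{Lower bound.} For a connected graph, $\lambda_{\min}(G)\geq-\rho(G)$, with equality if and only if $G$ is bipartite. This is the standard Perron--Frobenius fact: $-\rho$ is an eigenvalue of a connected nonnegative irreducible symmetric matrix exactly when the matrix is bipartite, that is, of period two. Since the unique cycle of $G$ has odd length, $G$ is not bipartite. Hence $\lambda_{\min}(G)>-\rho(G)$, which is $\rho(G)+\lambda_{\min}(G)>0$.

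\emph{Upper bound.} Choose an edge $uv$ of the odd cycle and let $T=G-uv$. Then $T$ is a spanning tree, hence bipartite, so $\lambda_{\min}(T)=-\rho(T)$. Write $A(G)=A(T)+E$, where $E=\ee_u\ee_v^T+\ee_v\ee_u^T$. The nonzero eigenvalues of $E$ are $\pm1$, with $E(\ee_u\pm\ee_v)=\pm(\ee_u\pm\ee_v)$, so $\|E\|=1$. Apply Lemma~\ref{lem:displacement}\textup{(ii)} with $X=A(G)$ and $Y=A(T)$ at the top and bottom positions. This gives
\[
 \rho(G)\leq\rho(T)+1,\qquad \lambda_{\min}(G)\leq\lambda_{\min}(T)+1=-\rho(T)+1.
\]
Adding the two inequalities gives $\rho(G)+\lambda_{\min}(G)\leq2$.

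\emph{Strictness.} This is the only delicate step, and it suffices to make the first inequality strict. Suppose $\rho(G)=\rho(T)+1$. The equality clause of Lemma~\ref{lem:displacement}\textup{(ii)} then gives a nonzero common vector $z$ with
\[
 A(G)z=\rho(G)z,\qquad A(T)z=\rho(T)z.
\]
Subtracting, $Ez=z$, so $z$ is a multiple of $\ee_u+\ee_v$ and vanishes at every vertex other than $u,v$. But $G$ is connected, so $\rho(G)$ is a simple eigenvalue whose eigenvector is a multiple of the strictly positive Perron vector. Since the cycle has length at least three, $G$ has at least three vertices, so $z$ cannot vanish outside $\{u,v\}$. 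This contradiction shows $\rho(G)<\rho(T)+1$, and hence $\rho(G)+\lambda_{\min}(G)<2$.

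The same argument could instead be run at the bottom eigenvalue. The Perron route is simpler because simplicity and positivity of the top eigenvector are immediate.
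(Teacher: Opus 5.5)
Your proof is correct. The lower bound matches the paper's Perron--Frobenius argument, but your upper bound takes a different route.

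The paper never compares $G$ with the tree spectrally. It takes the positive unit Perron vector $v$ of $G$ and flips signs according to the bipartition of the spanning tree $G-uw$ (your $G-uv$). The odd cycle gives equal signs at $u$ and $w$, and a single Rayleigh quotient evaluates to $z^TA(G)z=-\rho(G)+4v_uv_w$. This yields $\rho(G)+\lambda_{\min}(G)\leq4v_uv_w\leq2(v_u^2+v_w^2)<2$, with strictness coming directly from positivity of $v$ on at least three vertices.

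You instead apply Weyl's bound from Lemma~\ref{lem:displacement}\textup{(ii)} at both ends of the spectrum and use the symmetry $\lambda_{\min}(T)=-\rho(T)$. You then need the equality clause of that lemma, plus simplicity of the Perron eigenvalue, to exclude $\rho(G)=\rho(T)+1$. You handle that step correctly: $Ez=z$ forces $z$ to vanish off the endpoints of the deleted edge, which is impossible for a Perron vector.

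Your route is more modular, reuses a tool already in the paper, and needs only that deleting the edge leaves a bipartite graph. The paper's argument is more elementary and quantitatively sharper. Your inequalities amount to $\rho(G)+\lambda_{\min}(G)\leq1+\rho(G)-\rho(T)\leq1+2v_uv_w$, where the last step tests $A(T)$ on $v$. This is weaker than the paper's $4v_uv_w$, because $2v_uv_w<1$.
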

\begin{proof}
For a connected graph, Perron--Frobenius theory implies that
$-\rho(G)$ is an eigenvalue only if the graph is bipartite.
The odd cycle excludes this case, so $\lambda_{\min}(G)>-\rho(G)$.
Delete a cycle edge $uw$ to obtain a tree $T$, and assign signs
$\varepsilon_r\in\{1,-1\}$ to $r\in V(T)$ so that
$\varepsilon_x\varepsilon_y=-1$ for every edge $xy\in E(T)$.
Since the unique cycle of $G$ has odd length, the $u$--$w$ path in
$T$ has even length, so $\varepsilon_u=\varepsilon_w$. For a positive unit Perron vector
$v$ of $G$, put $z_r=\varepsilon_rv_r$. Then $\|z\|=1$.
Every edge of $T$ changes sign in the quadratic form, whereas $uw$
does not, giving
\[
 z^TA(G)z=-\rho(G)+4v_uv_w.
\]
The Rayleigh principle therefore yields
\[
 \rho(G)+\lambda_{\min}(G)
 \leq4v_uv_w\leq2(v_u^2+v_w^2)<2.
\]
The last inequality is strict because $G$ has at least three vertices
and all coordinates of $v$ are positive.
\end{proof}
\begin{remark}
The stronger bound $0<\rho(G)+\lambda_{\min}(G)\leq1$ was proved by Braga, Moraes, and Santos whenever deleting one edge makes a connected nonbipartite graph $G$ bipartite \cite[Theorem~1.2]{asymmetry}. Theorem~\ref{thm:odd} needs only Lemma~\ref{lem:asymmetry}.
\end{remark}

\begin{theorem}\label{thm:odd}
Let $G$ be a connected unicyclic graph whose unique cycle has odd length. Then $G$ does not admit PST.
\end{theorem}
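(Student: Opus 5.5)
Suppose, for a contradiction, that $G$ admits PST between distinct vertices $a,b$. The plan is to compare the arithmetic constraint from Lemma~\ref{lem:parity} with the spectral asymmetry bound of Lemma~\ref{lem:asymmetry}.

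The first step is to place the extreme eigenvalues in the support. $G$ is not $P_2$ or $P_3$, since it contains a cycle. By Corollary~\ref{cor:cycles}, both $a$ and $b$ lie on the unique cycle. PST implies strong cospectrality, so Lemma~\ref{lem:extremes} applies and gives
\[
 \rho(G)=\max\Phi_a,\qquad \lambda_{\min}(G)=\min\Phi_a .
\]
Here we also use that every element of $\Phi_a$ lies in the spectrum, between $\lambda_{\min}(G)$ and $\rho(G)$.

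Next, Lemma~\ref{lem:parity} gives
\[
 \min\Phi_a+\max\Phi_a\in 2\Z,
\]
so $\rho(G)+\lambda_{\min}(G)$ is an even integer. Lemma~\ref{lem:asymmetry}, using that the cycle is odd, places this sum strictly between $0$ and $2$. No even integer lies in the open interval $(0,2)$, which is the desired contradiction. Hence $G$ admits no PST.

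The only delicate point is the applicability of Lemma~\ref{lem:extremes}, which needs both transfer vertices on the cycle and strong cospectrality. Both are supplied above: the first by Corollary~\ref{cor:cycles}, and the second because PST implies strong cospectrality. Once these hold, the argument is a direct combination of the parity and asymmetry lemmas, so I expect no substantial obstacle.
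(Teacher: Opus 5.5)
Your proposal is correct and matches the paper's proof: Corollary~\ref{cor:cycles} puts $a,b$ on the cycle, Lemma~\ref{lem:extremes} places $\rho(G)$ and $\lambda_{\min}(G)$ in $\Phi_a$ as its extreme elements, and Lemma~\ref{lem:parity} then contradicts Lemma~\ref{lem:asymmetry}. Your explicit checks that $G\not\cong P_2,P_3$ and that PST gives strong cospectrality are the hypotheses the paper's shorter proof leaves implicit.
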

\begin{proof}
Suppose PST occurs between $a,b$. By Corollary~\ref{cor:cycles}, both endpoints lie on the unique cycle. By Lemma~\ref{lem:extremes}, the least and greatest eigenvalues of $G$ are both supported at $a$. Lemma~\ref{lem:parity} therefore gives
\[
 \rho(G)+\lambda_{\min}(G)\in2\Z,
\]
contrary to Lemma~\ref{lem:asymmetry}.
\end{proof}

\subsection{Adjacent vertices}
\begin{lemma}\label{lem:adjacent}
Let $G$ be a connected unicyclic graph, and let $a,b$ be adjacent vertices. If $G$ admits PST between $a,b$ with minimum transfer time $\tau_0$, then $\deg(a)=\deg(b)=2$ and
\[
 \frac{\pi}{\tau_0}\leq1+\sqrt3<3.
\]
\end{lemma}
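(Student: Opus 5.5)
We split into two parts: first the degree claim, then the bound on $\kappa=\pi/\tau_0$.

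\textbf{Degrees.} For adjacent $a,b$ the edge $ab$ lies on the unique cycle by Corollary~\ref{cor:cycles}; otherwise $ab$ is a bridge and Theorem~\ref{thm:bridge} gives $G\cong P_2$, which is not unicyclic. Let $a',b'$ be the other cycle neighbors of $a,b$, and set $H=G-\{a,b\}$. Then $H$ is a forest. The components of $H$ meeting both endpoints form a single tree containing $a'$ and $b'$. Every other neighbor of $a$ lies in a component of $H$ meeting only $a$. We adapt Lemma~\ref{lem:shared}\textup{(ii)} to the adjacent case. The Schur complement now has off-diagonal entry $-1-h$, and the $w_\pm$ resolvents become
\[
 1/\bigl(t\mp1-\textstyle\sum_\theta (d_\theta\pm c_\theta)/(t-\theta)\bigr).
\]
The arrowhead matrices $M_\pm$ then carry central entries $\pm1$, so $\|M_+-M_-\|\leq 2+\sqrt{2k}$ in general. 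Here $k=1$. A pendant component $Q$ at $a$ contributes common poles of $\alpha_+$ and $\alpha_-$. If any such component exists, we count displaced positions via Corollary~\ref{cor:trace-displacement} as in Lemma~\ref{lem:shared}\textup{(i)}. Each common position forces a displacement of at least $\sqrt2$. The trace bound on $D=M_+-M_-$ is then compared against the required displacement.

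That bound alone is likely too weak, so we would instead use the bipartite/odd-cycle dichotomy. By Theorem~\ref{thm:odd} the cycle is even, so $G$ is bipartite with $a,b$ in opposite classes. Then the support is integral by \cite[Lemma~7.1.6]{coutinho}, and $\kappa=g$ is an even integer by Lemma~\ref{lem:parity}. Strong cospectrality of adjacent vertices in a bipartite graph gives that $\Phi^\pm$ are mirror images: $\lambda\in\Phi^+$ iff $-\lambda\in\Phi^-$. The second moment of $w_\pm$ equals $\deg(a)\pm0$, since $a,b$ have no common neighbors. Each $\lambda\in\Phi^\pm$ is congruent to a fixed residue modulo $2\kappa$, with the two residues differing by $\kappa$. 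Because $\Phi^+=-\Phi^-$, the residue $r$ of $\Phi^+$ satisfies $2r\equiv\kappa \pmod{2\kappa}$. Hence $\kappa\ge2$ forces every supported eigenvalue to have absolute value at least $\kappa/2$.

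\textbf{Bound on $\kappa$.} Compute the first moment: $\langle w_\pm,Aw_\pm\rangle=\pm1$. So the $w_+$ measure lies on $\{r+2\kappa j\}$ with mean $1$. Combining this with the Perron eigenvalue and Lemma~\ref{lem:extremes} (both extremes supported) gives a constraint. The key inequality should be the eigenvalue comparison of Theorem~\ref{thm:gap} in the adjacent form. At a common selected position of $M_+$ and $M_-$, any $\lambda\in\Phi^+$ and $\mu\in\Phi^-$ satisfy $|\lambda-\mu|\le\|M_+-M_-\|$. This norm is the largest eigenvalue of
\[
 \begin{pmatrix}2&\xi^T\\ \xi&0\end{pmatrix},\qquad \|\xi\|^2\le 2k=2,
\]
namely $1+\sqrt{1+2}=1+\sqrt3$. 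Since $\delta_{+-}\ge\kappa$, this yields $\kappa\le1+\sqrt3<3$, which is the displayed bound. As $\kappa$ is an even integer, $\kappa=2$.

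Returning to degrees: with $\kappa=2$, the eigenvalues are odd integers, so their squares are at least $1$. The measure of $w_+$ has mean $1$ and second moment $D=\deg(a)$. We aim to show $D\ge3$ is impossible. Pendant components at $a$ create common poles. There is room for only one positive and one negative displacement of size $\ge2$, since the total positive displacement is at most $1+\sqrt3<4$. Then Lemma~\ref{lem:shared}-style reasoning forces any such component to be a single leaf, and the leaf vectors $L_\pm$ as in Theorem~\ref{thm:pendant} give a contradiction with the mass constraints. We expect the main obstacle to be exactly this exclusion of pendant leaves at adjacent endpoints, where the moment argument of Theorem~\ref{thm:pendant} must be redone with nonsymmetric (shifted by $\pm1$) measures.
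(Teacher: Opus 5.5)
Your bound on $\kappa=\pi/\tau_0$ is correct and matches the paper's argument. Only the cycle-path component meets both endpoint neighbourhoods, so $k=1$ and $\|\xi\|^2\le2$. The count \eqref{eq:common-positions} supplies a common selected position, where $\kappa\le\delta_{+-}\le\|M_+-M_-\|=1+\sqrt{1+\|\xi\|^2}\le1+\sqrt3$. Your detour through Theorem~\ref{thm:odd} and integrality to get $\kappa=2$ is valid but not needed.

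The gap is the degree assertion, which you never prove. You reduce pendant components to single leaves and then leave their exclusion, by a shifted-measure version of Theorem~\ref{thm:pendant}, as an unresolved ``main obstacle.'' Nothing in the proposal carries this out. It is not routine, since the measures are no longer symmetric about zero. So $\deg(a)=\deg(b)=2$ remains unproved.

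The missing idea is already in your displacement setup: the spectrum of $D=M_+-M_-$ is asymmetric. Its nonzero eigenvalues are $1\pm\sqrt{1+\|\xi\|^2}$. Corollary~\ref{cor:trace-displacement} therefore bounds the total positive displacement by $1+\sqrt3<2\sqrt2$, but the total negative displacement only by $\sqrt{1+\|\xi\|^2}-1\le\sqrt3-1<\sqrt2$.

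Your claim that there is room for ``one positive and one negative'' displacement is therefore too generous. Every common selected position carries a displacement of magnitude at least $\kappa\ge\sqrt2$. So none can be negative and at most one can be positive. Hence $|J_+\cap J_-|\le1$, and \eqref{eq:common-positions} forces $m=0$: $\alpha_+$ and $\alpha_-$ share no pole at all, not merely at most one as in Lemma~\ref{lem:shared}(i).

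Now let $Q$ be a component of $G-\{a,b\}$ meeting $N(a)$ but not $N(b)$. Since $y$ vanishes on $Q$, we get $d_\theta\pm c_\theta\ge\tfrac12\|F^Q_\theta z\|^2$ for the nonzero indicator vector $z$ of $N(a)\cap V(Q)$. So $Q$ would produce a common pole. Hence there are no pendant components, not even leaves, and $\deg(a)=\deg(b)=2$ follows at once. This needs no bipartiteness, integrality, or moment analysis, and it is exactly how the paper argues.
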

\begin{proof}
\proofheading{Step 1: bound the eigenvalue displacements.}
The endpoints are on the unique cycle. In this proof, take $H=G-\{a,b\}$ and $C=A(H)$, and let $x,y$ be the endpoint neighbor indicator vectors in $H$. Define $F_\theta,d_\theta,c_\theta$ as in the proof of Theorem~\ref{thm:gap}. The reciprocal resolvents are
\[
 \alpha_\pm(t)=t\mp1-\sum_\theta\frac{d_\theta\pm c_\theta}{t-\theta}.
\]
Construct the arrowhead matrices $M_\pm$, coupling vectors $r_\pm$,
and selected positions $J_\pm$ as in the proof of Theorem~\ref{thm:gap},
now with central diagonal entries $1,-1$ in $M_+,M_-$, respectively.
The coefficient estimate \eqref{eq:residue} depends only on the
neighbor vectors in $H$, so it remains valid.
The added constants do not change the derivatives of $\alpha_\pm$
or their limits at poles and infinity. Their zeros still strictly
interlace the poles, and the same count gives \eqref{eq:common-positions}. Write $\xi=r_+-r_-$. After deleting $a,b$, the remaining cycle path contains exactly one neighbor of each endpoint; every other component meets at most one endpoint neighborhood. Applying \eqref{eq:residue} with $k=1$ therefore gives
\[
 D=M_+-M_-=
 \begin{pmatrix}2&\xi^T\\\xi&0\end{pmatrix},\qquad \|\xi\|^2\leq2.
\]
Its potentially nonzero eigenvalues are $1\pm\sqrt{1+\|\xi\|^2}$. Applying Corollary~\ref{cor:trace-displacement} to $M_+,M_-$ bounds the total
positive displacement by $1+\sqrt{1+\|\xi\|^2}\leq1+\sqrt3$
and the magnitude of the total negative displacement by
$\sqrt{1+\|\xi\|^2}-1\leq\sqrt3-1$.

\proofheading{Step 2: exclude branches attached to only one endpoint.}
At each position in $J_+\cap J_-$, the eigenvalue displacement has magnitude at least $\pi/\tau_0\geq\sqrt2$. Since $\sqrt3-1<\sqrt2$, none of these displacements can be negative. Since $1+\sqrt3<2\sqrt2$, at most one can be positive. If there are $m$ common poles of $\alpha_+$ and $\alpha_-$, \eqref{eq:common-positions} gives at least $m+1$ common selected positions. Hence $m=0$: for every $\theta$ with $d_\theta>0$, at least one of
$d_\theta+c_\theta$ and $d_\theta-c_\theta$ is zero.
Since these coefficients are $\tfrac12\|F_\theta(x\pm y)\|^2$,
we obtain $F_\theta x=\pm F_\theta y$ for every $\theta$, including zero.
In a component of $G-\{a,b\}$ containing a neighbor of $a$ and no neighbor of $b$, the restriction of $y$ vanishes.
The projector identities would then make every spectral projection
of the restriction of $x$ vanish, although that restriction is
nonzero. This is impossible because the spectral projectors sum
to the identity. Interchanging $a,b$ excludes components containing a neighbor of $b$ and no neighbor of $a$. Thus neither endpoint has an off-cycle branch. The one common position also gives the stated upper bound on $\pi/\tau_0$.
\end{proof}

\begin{theorem}\label{thm:noadjacent}
Let $G$ be a connected unicyclic graph. Then $G$ does not admit PST between adjacent vertices.
\end{theorem}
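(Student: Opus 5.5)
The plan is to reduce to an arithmetic problem about two small spectral measures and then rule out every case using spectral moments. Suppose $G$ admits PST between adjacent vertices $a,b$.

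\textbf{Step 1: arithmetic reduction.} By Corollary~\ref{cor:cycles}, both vertices lie on the cycle. Theorem~\ref{thm:odd} excludes odd cycles, so $G$ is bipartite and $a,b$ lie in opposite classes. Then \cite[Lemma~7.1.6]{coutinho} makes $\Phi_a$ integral, so $\kappa=\pi/\tau_0$ is a positive integer. Lemma~\ref{lem:parity} makes $\kappa^2$ even, and Lemma~\ref{lem:adjacent} gives $\kappa<3$. Hence $\kappa=2$. By the arithmetic criterion following \eqref{eq:arithmetic-time}, all supported eigenvalues then have a common parity, and $\lambda-\mu\equiv2\pmod4$ for $\lambda\in\Phi^+$, $\mu\in\Phi^-$. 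Lemma~\ref{lem:adjacent} also gives $\deg(a)=\deg(b)=2$. A triangle is impossible, so $N(a)=\{b,a'\}$ and $N(b)=\{a,b'\}$ with $a'\ne b'$. (In $C_4$, $a'$ and $b'$ are adjacent, but they are still distinct.)

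\textbf{Step 2: moments and symmetry.} Direct computation gives
\[
 \langle w_\pm,Aw_\pm\rangle=\pm1,\qquad \langle w_\pm,A^2w_\pm\rangle=\|Aw_\pm\|^2=2,
\]
since $a,b$ have no common neighbor. Thus the measure of $w_+$ has mean $1$ and variance $1$, and that of $w_-$ has mean $-1$ and variance $1$. Because $a,b$ lie in opposite classes, the signature conjugation sending $A$ to $-A$ maps $w_+$ to $\pm w_-$. Hence $\Phi^-=-\Phi^+$, with the mirrored measure. Disjointness of $\Phi^\pm$ then says that $\Phi^+$ contains no pair $\{x,-x\}$, and in particular $0\notin\Phi^+$.

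\textbf{Step 3: even case.} If all supported eigenvalues are even, then $|x-1|\ge1$ on the support of $w_+$. Variance $1$ therefore forces the measure to be concentrated on $\{0,2\}$, so $0\in\Phi^+$. This contradicts Step~2.

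\textbf{Step 4: odd case, the expected obstacle.} Now $(x-1)^2\in\{0\}\cup[4,\infty)$ on odd $x$, so the mass at $1$ is at least $3/4$. Since $\rho(G)\ge2$ lies in $\Phi^+$ and is odd, we have $\rho(G)\ge3$. The mean condition then requires mass at some point below $1$. The point $-1$ is excluded by Step~2, because $1\in\Phi^+$. So some $x\le-3$ carries mass. Write $s$ for the common total of the positive and negative deviations $\sum m_x|x-1|$. Using $(x-1)^2\ge2|x-1|$ on $[3,\infty)$ and $(x-1)^2\ge4|x-1|$ on $(-\infty,-3]$ gives $1\ge6s$. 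I expect this alone not to yield a contradiction.

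I would close the gap with the third moment. The count $\langle w_+,A^3w_+\rangle$ is computable from closed walks at $a,b$ and walks between them through $a',b'$. Its value depends only on whether $\ell=4$ (where $a'\sim b'$) or $\ell\ge6$, and on $\deg a'$ and $\deg b'$. I would compare this value with the third moment forced by a measure with mass at least $3/4$ at $1$, at least $1/6$-weighted tails at $\rho\ge3$, and mass at points $\le-3$. If that still leaves room, the fallback is to reuse the arrowhead comparison of Lemma~\ref{lem:adjacent}. With $\kappa=2$, the single common selected position must carry displacement at least $2$, and the positive-displacement budget is $1+\sqrt{1+\|\xi\|^2}$. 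This should force $\|\xi\|^2$ close to $2$, and hence equality in \eqref{eq:residue}. That equality should pin down $d_\theta=|c_\theta|$ on the cycle path and rule out the remaining configurations, including the plain cycles $C_\ell$.
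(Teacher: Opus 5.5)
\textbf{Steps 1--3 are sound, but Step~4 leaves a genuine gap.} Step~1 forces $\kappa=2$. Step~2's signature argument gives $\Phi^-=-\Phi^+$. Step~3 kills the even case because the measure of $w_+$ is pushed onto $\{0,2\}$. The odd case is the heart of the matter, and neither of your proposed closures works.

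In the odd case you actually have more than you used. Since $1\in\Phi^+$ and same-class differences are multiples of $2\kappa=4$, you get $\Phi^+\subseteq1+4\Z$ and $\rho(G)\geq5$. Even so, the moments remain consistent.
\begin{itemize}
\item The third moment is $\langle w_+,A^3w_+\rangle=(A^3)_{ab}$. It equals $4$ if $a'\sim b'$ (that is, $\ell=4$) and $3$ otherwise. It does not involve $\deg a'$ or $\deg b'$; the degrees first enter at the fourth moment.
\item For $\ell=4$: masses $15/16,1/32,1/32$ at $1,5,-3$ give moments $1,2,4$.
\item For $\ell\geq6$: masses $363/384,11/384,9/384,1/384$ at $1,5,-3,-7$ give moments $1,2,3$.
\end{itemize}
Both measures live on $1+4\Z$ and have a point at least $5$, so the third moment leaves room.

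The arrowhead fallback also gives nothing. The single common selected position must carry displacement exactly $\kappa=2$. But the positive budget $1+\sqrt{1+\|\xi\|^2}$ is already at least $2$ for every $\xi$. So nothing forces $\|\xi\|^2$ toward $2$, nor equality in \eqref{eq:residue}.

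\textbf{The missing idea is that the transfer propagates to the other neighbours.} PST is symmetric, so also $U(\tau_0)\ee_b=\gamma\ee_a$. Applying $U(\tau_0)$ to $A\ee_a=\ee_b+\ee_{a'}$ and commuting with $A$ gives $U(\tau_0)\ee_{a'}=\gamma\ee_{b'}$.

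In your spectral language: $(A-I)(\ee_a+\ee_b)=\ee_{a'}+\ee_{b'}$ and $(A+I)(\ee_a-\ee_b)=\ee_{a'}-\ee_{b'}$. Hence $a',b'$ are strongly cospectral, with sign classes contained in $\Phi^+$ and $\Phi^-$, and \eqref{eq:phase} holds for them at the same time and phase.
\begin{itemize}
\item If $\ell\geq6$, then $a',b'$ are nonadjacent vertices in opposite bipartition classes admitting PST. This contradicts Corollary~\ref{cor:bipartite}.
\item If $\ell=4$, then $a'\sim b'$. Lemma~\ref{lem:adjacent} applied to this new adjacent pair gives $\deg a'=\deg b'=2$, so $G\cong C_4$. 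Its support $\{0,\pm2\}$ is even, so your Step~3 excludes it. The paper instead checks directly that adjacent vertices of $C_4$ are not strongly cospectral.
\end{itemize}
This is the paper's argument, and it makes the moment analysis of Step~4 unnecessary.
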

\begin{proof}
Suppose $a,b$ are adjacent transfer vertices. Theorem~\ref{thm:odd} shows that the unique cycle has even length $\ell\geq4$. By Lemma~\ref{lem:adjacent}, write $N(a)=\{b,x\}$ and $N(b)=\{a,y\}$. By \cite[Lemma~1.1]{godsil}, transfer
from $a$ to $b$ also occurs from $b$ to $a$ at the same time and with the
same phase. Thus $U(\tau_0)\ee_a=\gamma\ee_b$ implies
$U(\tau_0)\ee_b=\gamma\ee_a$.
The neighborhoods $N(a)=\{b,x\}$ and $N(b)=\{a,y\}$ give
$A\ee_a=\ee_b+\ee_x$ and $A\ee_b=\ee_a+\ee_y$.
Commuting $U(\tau_0)$ with $A$ gives
$U(\tau_0)\ee_b+U(\tau_0)\ee_x=\gamma(\ee_a+\ee_y)$;
canceling $U(\tau_0)\ee_b=\gamma\ee_a$ yields
\[
 U(\tau_0)\ee_x=\gamma\ee_y.
\]

\proofheading{Case 1: $\ell\geq6$.}
The vertices $x,y$ are nonadjacent and lie in opposite bipartition classes, contrary to Corollary~\ref{cor:bipartite}.

\proofheading{Case 2: $\ell=4$.}
The vertices $x,y$ are adjacent. Transfer between them and Lemma~\ref{lem:adjacent} give $\deg(x)=\deg(y)=2$. Thus all four cycle vertices have degree two, and connectivity forces $G\cong C_4$. For these adjacent vertices of $C_4$, the zero-eigenspace projector
satisfies
\[
 E_0\ee_a=\tfrac12(\ee_a-\ee_y),\qquad
 E_0\ee_b=\tfrac12(\ee_b-\ee_x).
\]
These nonzero orthogonal vectors cannot be equal up to sign, so $a,b$
are not strongly cospectral, a contradiction.
\end{proof}

\begin{proof}[Proof of Theorem~\ref{thm:geometry}]
\textup{(i)} By Corollary~\ref{cor:cycles}, $a,b$ lie on the unique cycle.
Theorem~\ref{thm:noadjacent} excludes adjacency, and
Theorem~\ref{thm:odd} shows that the cycle has even length, so $G$ is
bipartite. Lemma~\ref{thm:noleaves} and Theorem~\ref{thm:twins} then
give the four-cycle $a,c,b,d,a$ and $N(a)=N(b)=\{c,d\}$.
Thus $a,b$ are degree-two false twins, and the remaining vertices
form trees attached at $c,d$, giving $G=G(T_c,T_d)$.
A chord of this four-cycle would create another cycle, so it is induced.

\textup{(ii)} Suppose $G\not\cong C_4$ has a second unordered transfer pair.
By part \textup{(i)}, any transfer pair consists of opposite vertices
of the unique four-cycle. The second pair must therefore be $\{c,d\}$.
Part \textup{(i)} would also give $\deg(c)=\deg(d)=2$.
All four cycle vertices would then have degree two, so connectivity
would force $G\cong C_4$, a contradiction.

\textup{(iii)} Theorem~\ref{thm:cactus} gives minimum transfer time
$\pi/2$ for $C_4$ and $\pi/\sqrt2$ otherwise.
The equality $N(a)=N(b)$ gives $A\ee_a=A\ee_b$, hence
$Aw_-=0$ and $\Phi^-=\{0\}$.
At any transfer time, substituting $\mu=0$ into
\eqref{eq:phase} gives $1=-\gamma$, so the transfer phase is $-1$.
\end{proof}
\section{The rooted-tree criterion}\label{sec:trees}
Theorem~\ref{thm:geometry} reduces the classification to graphs $G(T_c,T_d)$. We now express the eigenvalue support at the transfer vertices in terms of the two root resolvents and prove Theorem~\ref{thm:criterion}.

For a rooted tree $(T,v)$, write $P_T=\phi_T$ and $Q_T=\phi_{T-v}$. Cramer's rule gives the root resolvent $g_{T,v}=Q_T/P_T$, abbreviated to $g_T$ when the root is understood. Empty products are $1$ and empty sums are $0$. The standard rooted-tree recursion takes the following form in this notation.

\begin{lemma}[{\cite[Corollary~1]{damnjanovic}}]\label{lem:rooted-tree-recursion}
Let $(T,v)$ be a rooted tree, and let $(T_1,v_1),\ldots,(T_k,v_k)$ be
the components of $T-v$, rooted at the respective neighbors of $v$. Then
\begin{equation}\label{eq:treerecursion}
 Q_T=\prod_{i=1}^k P_{T_i},\qquad
 P_T=t\prod_{i=1}^k P_{T_i}
 -\sum_{i=1}^k Q_{T_i}\prod_{j\ne i}P_{T_j}.
\end{equation}
Equivalently,
\begin{equation}\label{eq:rootrecursion}
 g_{T,v}(t)=\frac1{t-\sum_{i=1}^k g_{T_i,v_i}(t)}.
\end{equation}
\end{lemma}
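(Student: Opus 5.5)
We prove Lemma~\ref{lem:rooted-tree-recursion} by a single Schur-complement (cofactor) expansion at the root, the same device used in the proof of Theorem~\ref{thm:gap}. Order the vertices of $T$ with $v$ first, followed by the vertex sets of $T_1,\ldots,T_k$. Then
\[
 tI-A(T)=\begin{pmatrix} t & -\mathbf b^T\\ -\mathbf b & tI-A(T-v)\end{pmatrix},
\]
where $\mathbf b$ is the indicator vector of $\{v_1,\ldots,v_k\}$. The matrix $A(T-v)$ is block diagonal with blocks $A(T_i)$, because $T-v$ is the disjoint union of the $T_i$. This gives the first identity at once:
\[
 Q_T=\phi_{T-v}=\prod_i\phi_{T_i}=\prod_i P_{T_i}.
\]

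For $P_T$, we work with $t$ outside the finite set of eigenvalues of $T-v$. The block-determinant formula gives
\[
 P_T=\phi_{T-v}(t)\bigl(t-\mathbf b^T(tI-A(T-v))^{-1}\mathbf b\bigr).
\]
Block diagonality splits the quadratic form as $\sum_i \ee_{v_i}^T(tI-A(T_i))^{-1}\ee_{v_i}=\sum_i g_{T_i,v_i}$, since $\mathbf b$ has exactly one nonzero coordinate, equal to $1$, in each block, located at $v_i$. Cramer's rule gives $g_{T_i,v_i}=Q_{T_i}/P_{T_i}$. Multiplying out by $\prod_j P_{T_j}$ yields \eqref{eq:treerecursion} as an identity of rational functions on a cofinite set. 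Both sides are polynomials, so the identity holds as a polynomial identity.

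Dividing $Q_T$ by $P_T$ then gives \eqref{eq:rootrecursion}, again using $g_{T,v}=Q_T/P_T$ from Cramer's rule.

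The degenerate case $k=0$ (a singleton tree) is covered by the conventions on empty products and sums: $Q_T=1$, $P_T=t$, and $g_{T,v}=1/t$. There is no genuine obstacle here. The only point requiring care is justifying the passage from the rational-function identity to the polynomial identity, which is done by the cofinite-agreement argument above. Alternatively, one can expand the determinant along the row of $v$ and observe that the only surviving terms in the off-diagonal expansion pair $v$ with a single neighbor $v_i$, contributing $-\phi_{T_i-v_i}\prod_{j\ne i}\phi_{T_j}$.
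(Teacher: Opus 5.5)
Your proof is correct. The paper gives no argument for this lemma: it imports both identities from \cite[Corollary~1]{damnjanovic} and remarks only that the cited source records the reciprocal $P_T/Q_T=1/g_T$ at the root. Your proof is therefore a self-contained replacement for the citation rather than a variant of a proof in the paper.

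Your Schur-complement expansion at $v$ is the same device the paper uses in Step~1 of the proof of Theorem~\ref{thm:gap} and in deriving \eqref{eq:plusg}, so it fits the paper's resolvent viewpoint. The steps are all sound:
\begin{itemize}
\item block diagonality of $A(T-v)$ gives $Q_T=\prod_iP_{T_i}$;
\item it also splits $\mathbf b^T(tI-A(T-v))^{-1}\mathbf b$ into $\sum_i g_{T_i,v_i}$;
\item Cramer's rule converts each term to $Q_{T_i}/P_{T_i}$ and $g_{T,v}$ to $Q_T/P_T$;
\item agreement off a finite set upgrades the rational identity to the polynomial identity \eqref{eq:treerecursion}.
\end{itemize}

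Your closing alternative is the classical Schwenk-type cofactor route. You expand along the row of $v$ and note that, apart from the fixed point, the only permutation cycles through $v$ with nonzero weight are the transpositions $(v\,v_i)$, since $T$ has no cycles. That route gives \eqref{eq:treerecursion} directly as a polynomial identity, with no cofinite-set step. The Schur-complement route instead works with the resolvent form \eqref{eq:rootrecursion} from the outset, in the language the rest of the paper uses.
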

The rational function assigned to the root in the cited formulation is $P_T/Q_T=1/g_T$.
A single vertex has $P_T=t$ and $Q_T=1$. Two consequences of
\eqref{eq:rootrecursion} will be used repeatedly. Let $h_m$ be the
resolvent at the center of $K_{1,m}$, and let $f_{k,m}$ be the root
resolvent of a tree whose root has $k$ children, each with $m$ leaf
children. For nonnegative integers $k,m$,
\begin{equation}\label{eq:elementary-resolvents}
 h_m(t)=\frac1{t-m/t}=\frac{t}{t^2-m},\qquad
 f_{k,m}(t)=\frac1{t-kh_m(t)}
           =\frac{t^2-m}{t(t^2-k-m)}.
\end{equation}
These identities include zero multiplicities after cancellation.

For any graph $G=G(T_c,T_d)$, use the abbreviations $g_c=g_{T_c,c}$, $g_d=g_{T_d,d}$, and $g_+=g_{G,w_+}$, where $w_+=(\ee_a+\ee_b)/\sqrt2$. The identity $Aw_-=0$ gives $(tI-A)^{-1}w_-=t^{-1}w_-$.
Since $w_+\perp w_-$, we have $w_+^T(tI-A)^{-1}w_-=0$.
Substitute $\ee_a=(w_++w_-)/\sqrt2$ and
$\ee_b=(w_+-w_-)/\sqrt2$ to obtain
\begin{equation}\label{eq:endpoint-resolvent}
 g_{G,a}(t)=g_{G,b}(t)=\frac12\left(g_+(t)+\frac1t\right).
\end{equation}

\begin{proof}[Proof of Theorem~\ref{thm:criterion}]
By Theorem~\ref{thm:geometry}, every connected unicyclic graph admitting PST between $a,b$ has the stated rooted-tree form. It therefore suffices to consider $G=G(T_c,T_d)$ with $a,b$ as the designated vertices. 

\proofheading{Step 1: identify the polynomial with roots $\Phi^+$.}
Deleting $a,b$ leaves $T_c\sqcup T_d$, and the two neighbor vectors
are equal: $x=y=\ee_c\oplus\ee_d$. In the Schur-complement calculation
leading to \eqref{eq:alpha}, this gives $f_x=h=g_c+g_d$. Hence
\begin{equation}\label{eq:plusg}
 g_+(t)=w_+^T(tI-A)^{-1}w_+
 =\frac1{t-2Q_c/P_c-2Q_d/P_d}
 =\frac{P_cP_d}{F}.
\end{equation}
After cancellation its denominator is $M$.
Both $F$ and $P_cP_d$ are monic integer polynomials. Their monic greatest
common divisor belongs to $\Z[t]$ by Gauss's lemma, so the reduced
numerator and denominator are also monic integer polynomials.
The resolvent has positive residues and simple poles, so
$M(t)=\prod_{\lambda\in\Phi^+}(t-\lambda)$.
Thus eigenvalues of the attached trees absent from $\Phi^+$ do not enter $M$.

\proofheading{Step 2: prove necessity.}
Since $Aw_-=0$, transfer forces the phase to be $-1$. If $G\cong C_4$, then $g_+(t)=t/(t^2-4)$, so $M(t)=t^2-4$. Otherwise Theorem~\ref{thm:cactus} gives $\tau_0=\pi/\sqrt2$, and the phase condition implies $\lambda/\sqrt2\in2\Z+1$ for every $\lambda\in\Phi^+$. The support $\Phi^+$ is symmetric about zero because $G$ is bipartite. Thus $M(t)=\prod_{q\in S}(t^2-2q^2)$ for a finite set $S$ of positive odd integers. In this case,
\[
 \langle w_+,A^2w_+\rangle=4.
\]
If $1\notin S$, the square of every eigenvalue in $\Phi^+$ is at least $18$, a contradiction. If $S=\{1\}$, the second moment is $2$, also a contradiction.

\proofheading{Step 3: prove sufficiency.}
Conversely, take $\tau=\pi/2$ in case \textup{(i)} and $\tau=\pi/\sqrt2$ in case \textup{(ii)}. Every root $\lambda$ of $M$ then satisfies
$e^{-i\tau\lambda}=-1$: in case \textup{(i)}, $\lambda=\pm2$, and in
case \textup{(ii)}, $\lambda=\pm q\sqrt2$ with $q$ odd.
Since $M$ has root set $\Phi^+$, spectral decomposition gives
$U(\tau)w_+=-w_+$. Also, $Aw_-=0$ gives $U(\tau)w_-=w_-$. Thus $U(\tau)\ee_a=-\ee_b$. If $M=t^2-4$, then $A^2w_+=4w_+$. An off-cycle neighbor of $c$ or $d$ would have coordinate $\sqrt2$ in $A^2w_+$ and coordinate zero in $4w_+$. Hence no such neighbor exists, and $G\cong C_4$. In the second case $\sqrt2$ is supported, forcing every positive transfer time to be an odd multiple of $\pi/\sqrt2$, so the minimum is $\pi/\sqrt2$. 
\end{proof}

\section{Sharp order bounds}\label{sec:extremal}

Throughout this section, let $G=G(T_c,T_d)\not\cong C_4$ admit
PST between $a,b$, and put
\[
 p=\deg_{T_c}(c),\qquad r=\deg_{T_d}(d),\qquad R=p+r>0,
 \qquad w=\frac{\ee_a+\ee_b}{\sqrt2}.
\]
By Theorem~\ref{thm:criterion}, the support of $w$ is
$\Phi^+=\{\pm q\sqrt2:q\in S\}$, where $S$ is a finite set of positive
odd integers with $1\in S$ and $|S|\geq2$. Its smallest positive
element is denoted by $t_0=\sqrt2$.

\subsection{Restrictions on the attached trees}
The least positive supported eigenvalue constrains each branch at $c,d$. A fourth-moment identity then bounds the total number of branches and identifies the graphs with $|\Phi^+|=4$.

\begin{lemma}\label{lem:extremal-root-branches}
Let $G=G(T_c,T_d)\not\cong C_4$ admit PST between $a,b$. For each $v\in\{c,d\}$, root every component $B$ of $T_v-v$ at its unique neighbor of $v$. Then:
\begin{enumerate}[label=\textup{(\roman*)}]
\item Both attachment roots have positive spectral mass at zero in their respective trees.
\item Every eigenvalue in the support of the root of a branch $B$ has absolute value at least $\sqrt2$. Consequently, the root of $B$ has degree at least two within $B$, and $B$ has at least three vertices.
\item If neither $T_c$ nor $T_d$ is a singleton, every eigenvalue in the support of the root of each branch $B$ has absolute value strictly greater than $\sqrt2$, and the root of every branch $B$ has degree at least three within $B$.
\end{enumerate}
\end{lemma}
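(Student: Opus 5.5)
The plan is to work entirely with root resolvents and Lemma~\ref{lem:first-positive-zero}. By \eqref{eq:plusg},
\[
 g_+(t)=\frac1{\alpha(t)},\qquad \alpha(t)=t-2g_c(t)-2g_d(t).
\]
The zeros of $\alpha$ are $\Phi^+=\{\pm q\sqrt2:q\in S\}$. Since every $q$ is odd, $0$ is not a zero, and the smallest positive zero is $t_0=\sqrt2$. Each of $g_c,g_d$ is the resolvent of a root of a tree, hence comes from a spectral measure symmetric about zero. So Lemma~\ref{lem:first-positive-zero} applies with $\beta_1=\beta_2=2$. It yields three facts: $g_c,g_d$ have no poles in $(0,t_0]$; $g_v(t_0)\le 1/t_0$, with equality exactly when $T_v$ is a singleton; and $g_c(t_0)+g_d(t_0)=t_0/2=1/t_0$.

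For (i), suppose $g_c(t_0)<0$. Then $g_d(t_0)>1/t_0$, contradicting the bound above. Hence $g_c(t_0)\ge0$ and likewise $g_d(t_0)\ge0$, and the last clause of Lemma~\ref{lem:first-positive-zero} gives positive mass at zero for both roots. Note also that $g_c(t_0)=0$ forces $g_d(t_0)=1/t_0$, which happens only when $T_d$ is a singleton. So if neither tree is a singleton, both values $g_c(t_0),g_d(t_0)$ lie strictly in $(0,1/t_0)$.

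For (ii), fix $v=c$ and use the recursion \eqref{eq:rootrecursion}, namely $g_c=1/\alpha_c$ with $\alpha_c(t)=t-\sum_B g_B(t)$.
\begin{itemize}
\item If $0$ were a pole of some $g_B$, it would be a pole of $\alpha_c$ (positive residues do not cancel), so $g_c(0)=0$ and $c$ would have no mass at zero, contradicting (i).
\item Otherwise $\alpha_c$ has the form of Lemma~\ref{lem:resolvent-interlacing}. It is odd, vanishes at $0$, and increases strictly on each interval between poles. Let $\theta$ be its smallest positive pole, the smallest positive element among the supports of the branch roots.
\item If $\theta<t_0$, then $\alpha_c$ drops to $-\infty$ just after $\theta$. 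It has no zero in $(\theta,t_0]$, because such a zero would be a pole of $g_c$. Hence $\alpha_c(t_0)<0$, i.e.\ $g_c(t_0)<0$, contradicting (i).
\item Therefore $\theta\ge t_0$. Moreover $\theta=t_0$ means $g_c(t_0)=0$, which by the remark above occurs only when $T_d$ is a singleton. This strictness is exactly what gives (iii).
\end{itemize}

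The degree statements follow from second moments. The spectral measure of $\ee_{v_B}$ in $B$ has second moment $\deg_B(v_B)$ and total mass one. If every supported eigenvalue satisfies $\lambda^2\ge2$, then $\deg_B(v_B)\ge2$, so $B$ has at least three vertices. Under (iii) every supported $\lambda^2$ exceeds $2$, so the integer $\deg_B(v_B)$ is strictly greater than $2$, giving at least three.

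The main obstacle is the bookkeeping in (ii). One must justify that zeros of $\alpha_c$ are exactly poles of $g_c$, with no cancellation, and handle the boundary case $\theta=t_0$ separately so that the strict version in (iii) is cleanly separated from the singleton case.
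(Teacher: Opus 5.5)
Your proposal is correct and follows the paper's proof essentially step for step. You use Lemma~\ref{lem:first-positive-zero} with coefficients $2,2$ together with $g_c(t_0)+g_d(t_0)=1/t_0$ for (i), then root recursion to exclude branch poles at $0$, in $(0,t_0)$, and at $t_0$ when the other tree is not a singleton, and finally the second-moment degree count. The only cosmetic difference is that you track the monotonicity of $\alpha_c=1/g_c$ between its poles. The paper instead uses that $g_c$ itself is strictly decreasing on $(0,t_0]$, so a branch pole $\theta<t_0$ would give $g_c(t_0)<g_c(\theta)=0$.
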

\begin{proof}
\proofheading{Step 1: prove part \textup{(i)}.}
By \eqref{eq:plusg}, $g_{G,w}=1/\alpha$, where
$\alpha(t)=t-2g_c(t)-2g_d(t)$. Its zeros form $\Phi^+$, which excludes
zero and has smallest positive element $t_0$. Apply Lemma~\ref{lem:first-positive-zero} with coefficients $2,2$. Neither $g_c$ nor $g_d$ has a pole in $(0,t_0]$, and
\[
 g_c(t_0)\leq\frac1{t_0},\qquad g_d(t_0)\leq\frac1{t_0}.
\]
Equality in either bound means that the spectral measure at the corresponding root is concentrated at zero. The second moment of this measure is the degree of that root, so
concentration at zero is equivalent to degree zero.
For a connected tree this means that the tree is a singleton. Since $\alpha(t_0)=0$ and $t_0^2=2$,
\[
 g_c(t_0)+g_d(t_0)=\frac1{t_0}.
\]
Thus $g_c(t_0)$ and $g_d(t_0)$ are nonnegative. Lemma~\ref{lem:first-positive-zero} therefore gives positive spectral mass at zero at both roots. If $p,r>0$, neither tree is a singleton, so both upper bounds are strict. Together with $g_c(t_0)+g_d(t_0)=1/t_0$, this gives $g_c(t_0)>0$ and $g_d(t_0)>0$.

\proofheading{Step 2: prove parts \textup{(ii)} and \textup{(iii)}.}
Choose $v\in\{c,d\}$, put $T=T_v$, and consider its child branches, with root resolvents
$h_1,\ldots,h_m$. The tree recursion is
\[
 g_{T,v}(t)=\frac1{t-\sum_i h_i(t)}.
\]
No $h_i$ can have a pole at zero. Such a pole has positive residue and
cannot cancel against the other summands; it would make $g_{T,v}$ vanish
at zero, contradicting the positive spectral mass at zero just established.
Nor can an $h_i$ have a positive pole $\theta<t_0$: it would give a zero
of $g_{T,v}$ at $\theta$. But $g_{T,v}$ has no pole in $(0,t_0]$ and is
strictly decreasing there. Indeed, if $E_\lambda$ denotes a spectral projector of $A(T)$, then
\[
 g_{T,v}^{\prime}(t)
 =-\sum_{\lambda:\,E_\lambda\ee_v\ne0}
   \frac{\|E_\lambda\ee_v\|^2}{(t-\lambda)^2}<0
 \qquad(0<t\leq t_0).
\]
A zero at $\theta<t_0$ would therefore force
$g_{T,v}(t_0)<0$. If $g_{T,v}(t_0)>0$, a pole of any $h_i$ at $t_0$ is
also excluded. Symmetry of the spectral measures at the branch roots now shows that zero is absent from their supports and that every supported eigenvalue satisfies $|\lambda|\geq\sqrt2$, with strict inequality when $p,r>0$.

Let $k_i$ be the degree of a branch root within its branch. Its second spectral moment equals $k_i$.
Since every supported eigenvalue has square at least two, $k_i\geq2$.
When $p,r>0$, all these squares exceed two, so the integer $k_i$
satisfies $k_i\geq3$. Its branch has at least $1+k_i$ vertices,
proving parts \textup{(ii)} and \textup{(iii)}.
\end{proof}

\begin{lemma}\label{lem:R14}
Let $G=G(T_c,T_d)\not\cong C_4$ admit PST between
$a,b$, and put $p=\deg_{T_c}(c)$, $r=\deg_{T_d}(d)$, and $R=p+r$.
Let $S$ be as in Theorem~\ref{thm:criterion}, and put
$q=\min(S\setminus\{1\})$ and $Q=\max S$. Then
\[
 2q^2-4\leq R\leq2Q^2-4.
\]
Equality in either inequality holds if and only if $S=\{1,q\}$,
in which case $q=Q$ and $G\cong H_{p,r}$ under an isomorphism
preserving the attachment roots and the transfer pair.
In particular, $R\geq14$, and $R=14$ implies $G\cong H_{p,r}$.
\end{lemma}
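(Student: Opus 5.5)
The plan is to compute the first few spectral moments of $w=(\ee_a+\ee_b)/\sqrt2$ directly from the graph, and then compare them with the arithmetic description of $\Phi^+$ in Theorem~\ref{thm:criterion}.

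\textbf{Moments.} Since $N(a)=N(b)=\{c,d\}$, we have $Aw=\sqrt2(\ee_c+\ee_d)$, so $\langle w,A^2w\rangle=\|Aw\|^2=4$. Next, $A\ee_c=\ee_a+\ee_b+\sum_{x}\ee_x$, where $x$ runs over the $p$ children of $c$, and similarly for $d$. Hence
\[
 A^2w=\sqrt2\Bigl(2(\ee_a+\ee_b)+\sum_{x}\ee_x\Bigr),
\]
where now $x$ runs over all $R$ children of $c$ and $d$. This gives $\langle w,A^4w\rangle=\|A^2w\|^2=2(8+R)=16+2R$.

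\textbf{Inequalities.} Let $X=\lambda^2/2$ under the spectral measure of $w$. By Theorem~\ref{thm:criterion}, $X$ is supported exactly on $\{s^2:s\in S\}$, and every element of $S$ carries positive mass. The moments give $\mathbb E X=2$ and $\mathbb E X^2=4+R/2$.
\begin{itemize}
\item On the support we have $(X-1)(X-q^2)\geq0$. Taking expectations gives $\mathbb E X^2\geq(1+q^2)\cdot2-q^2=2+q^2$, which is $R\geq2q^2-4$.
\item Similarly, $(X-1)(X-Q^2)\leq0$ on the support. This gives $\mathbb E X^2\leq2+Q^2$, that is, $R\leq2Q^2-4$.
\end{itemize}
Equality in the first bound holds if and only if $X$ is supported on $\{1,q^2\}$, and in the second if and only if it is supported on $\{1,Q^2\}$. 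Since every element of $S$ has positive mass, either equality means $S=\{1,q\}$, and then $q=Q$. Conversely, $S=\{1,q\}$ makes both products vanish identically on the support, so both bounds are equalities. Since $q\geq3$ is odd, $R\geq2\cdot9-4=14$. If $R=14$, then $2q^2-4\leq14$ forces $q=3$, so the lower bound is an equality and $S=\{1,3\}$.

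\textbf{Identifying $H_{p,r}$.} This is the only step needing care. When $S=\{1,q\}$, spectral decomposition gives
\[
 (A^2-2)(A^2-2q^2)w=0,\qquad\text{that is,}\qquad A^4w=(2+2q^2)A^2w-4q^2w.
\]
The right side is supported on $a$, $b$ and the children of $c$ and $d$. Compute
\[
 A^3w=\sqrt2\Bigl((4+p)\ee_c+(4+r)\ee_d+\sum_y\ee_y\Bigr),
\]
where $y$ runs over the grandchildren of $c$ and $d$; then apply $A$ once more and compare coordinates.
\begin{itemize}
\item At a child $x$ of $c$ with $g_x$ children, the left side equals $\sqrt2\bigl((4+p)+g_x\bigr)$ and the right side equals $\sqrt2(2+2q^2)$. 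Using $2q^2=R+4$, this gives $g_x=r+2$. Symmetrically, every child of $d$ has exactly $p+2$ children.
\item At any vertex at depth three below $c$ or $d$, the left side has the positive entry $\sqrt2$, coming from its parent, while the right side vanishes. So there are no such vertices, and all grandchildren are leaves.
\end{itemize}
This is exactly $H_{p,r}$, with the roots and the transfer pair preserved. In particular, $R=14$ forces $G\cong H_{p,r}$.
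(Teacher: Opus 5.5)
Your proof is correct and follows the paper's argument. You use the same moments $\langle w,A^2w\rangle=4$ and $\langle w,A^4w\rangle=16+2R$, the same test polynomials $(\lambda^2-2)(\lambda^2-2h^2)$ with $h=q,Q$ (your $(X-1)(X-h^2)$ up to a factor $4$), and the same coordinate comparison in $(A^2-2I)(A^2-2q^2I)w=0$; the only cosmetic difference is that the paper rewrites this identity as $A^2z=R(\ee_a+\ee_b)+(R+2)z$, with $z$ the sum of the branch-root basis vectors, and reads coordinates of $A^2z$, whereas you read the equivalent coordinates of $A^4w$ via $A^3w$.
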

\begin{proof}
\proofheading{Step 1: apply the fourth-moment bounds.}
Put $w=(\ee_a+\ee_b)/\sqrt2$, and let $z$ be the sum of the standard
basis vectors at the $R$ branch roots.
From $N(a)=N(b)=\{c,d\}$ we obtain
$Aw=\sqrt2(\ee_c+\ee_d)$.
The neighbors of $c,d$ are $a,b$ and the branch roots, so
$A(\ee_c+\ee_d)=2(\ee_a+\ee_b)+z$.
Moreover $w\perp z$ and $\|z\|^2=R$. Consequently
\[
 A^2w=4w+\sqrt2\,z,\qquad
 \langle w,A^2w\rangle=4,\qquad
 \langle w,A^4w\rangle=16+2R.
\]
For any real $h$, these moments give
\begin{equation}\label{eq:R-fourth-bound}
\begin{split}
 \langle w,(A^2-2I)(A^2-2h^2I)w\rangle
 &=(16+2R)-4(2+2h^2)+4h^2\\
 &=2R-4h^2+8.
\end{split}
\end{equation}
Every eigenvalue in $\Phi^+$ has square $\mu=2j^2$ with $j\in S$.
For $h=q$, the polynomial $(\mu-2)(\mu-2q^2)$ is nonnegative
on this set, giving $R\geq2q^2-4$.
For $h=Q$, the polynomial $(\mu-2)(\mu-2Q^2)$ is nonpositive,
giving $R\leq2Q^2-4$.
Every element of $\Phi^+$ has positive spectral mass. Equality in
either bound therefore requires the square of every eigenvalue in $\Phi^+$
to be a zero of the corresponding polynomial, which is equivalent to
$S=\{1,q\}$ and $q=Q$.
Since $q\geq3$, we also have $R\geq14$.

\proofheading{Step 2: identify the branches when $S=\{1,q\}$.}
Suppose $S=\{1,q\}$, so $R=2q^2-4$ and
$(A^2-2I)(A^2-2q^2I)w=0$.
Since $A^4w=16w+4\sqrt2\,z+\sqrt2 A^2z$, substitution in the
annihilating identity gives
$\sqrt2 A^2z=2Rw+\sqrt2(R+2)z$, or equivalently
\[
 A^2z=R(\ee_a+\ee_b)+(R+2)z.
\]
If a branch root had a descendant at distance two within its branch,
the coordinate of $A^2z$ at that descendant would be positive,
whereas the corresponding coordinate on the right would be zero.
Thus every branch is a star. At a branch root attached to $c$ and having $k$ children,
there are $k+1$ two-step walks returning to that root and one
two-step walk to each of the other $p-1$ branch roots at $c$.
Thus its coordinate in $A^2z$ is $(k+1)+(p-1)=k+p$.
Comparing with $(R+2)z$ gives $k+p=R+2$, hence $k=r+2$.
Similarly, every branch root attached to $d$ has $p+2$ children.
This is precisely $H_{p,r}$.
Finally, $R=14$ forces $q=3$ and equality in the lower bound.
\end{proof}

\subsection{The extremal family and the order bounds}
The next calculation verifies the transfer condition for $H_{p,r}$. Together with Lemma~\ref{lem:R14}, it also characterizes the graphs other than $C_4$ whose transfer vertices have eigenvalue support of size five.

\begin{proposition}\label{prop:Hpr}
Let $p,r$ be nonnegative integers with $R=p+r>0$. The eigenvalue support of each of the vertices $a,b$ in $H_{p,r}$ is
\[
 \{0,\pm\sqrt2,\pm\sqrt{R+4}\}.
\]
The graph $H_{p,r}$ admits PST between $a,b$ if and only if $R+4=2q^2$ for an odd integer $q\geq3$.
\end{proposition}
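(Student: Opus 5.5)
The plan is to compute the root resolvents $g_c,g_d$ of the two attached trees with the elementary formulas \eqref{eq:elementary-resolvents}. Then we read off $\Phi^+$ from \eqref{eq:plusg}, and $\Phi_a=\Phi^+\cup\{0\}$ follows because $Aw_-=0$ gives $\Phi^-=\{0\}$.

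In $H_{p,r}$ the root $c$ has $p$ children, each with $r+2$ leaf children. Hence $g_c=f_{p,r+2}$, and similarly $g_d=f_{r,p+2}$. By \eqref{eq:elementary-resolvents},
\[
 g_c(t)=\frac{t^2-r-2}{t(t^2-R-2)},\qquad g_d(t)=\frac{t^2-p-2}{t(t^2-R-2)}.
\]
These formulas remain valid when $p=0$ or $r=0$, giving $1/t$ after cancellation. Adding them gives
\[
 g_c+g_d=\frac{2t^2-R-4}{t(t^2-R-2)}.
\]
Therefore
\[
 \alpha(t)=t-2(g_c+g_d)=\frac{t^2(t^2-R-2)-4t^2+2R+8}{t(t^2-R-2)}=\frac{(t^2-2)(t^2-R-4)}{t(t^2-R-2)}.
\]
The factorization is routine to check: the product $(t^2-2)(t^2-R-4)$ expands to $t^4-(R+6)t^2+2R+8$.

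Since $R>0$, the numerator factors $t^2-2$ and $t^2-R-4$ share no root with $t(t^2-R-2)$. Hence $g_+=1/\alpha$ has poles exactly at $\pm\sqrt2$ and $\pm\sqrt{R+4}$. These four values are distinct because $R+4>2$. So $\Phi^+=\{\pm\sqrt2,\pm\sqrt{R+4}\}$, and with $\Phi^-=\{0\}$ the support at $a$, and likewise at $b$, is as claimed.

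For the PST condition, apply Theorem~\ref{thm:criterion}. Here $M(t)=(t^2-2)(t^2-R-4)$, because the roots of $M$ are exactly $\Phi^+$. Case (i) is impossible since $H_{p,r}\not\cong C_4$ when $R>0$. Case (ii) requires $M=\prod_{q\in S}(t^2-2q^2)$ with $1\in S$ and $|S|\ge2$. This holds if and only if $S=\{1,q\}$ with $R+4=2q^2$ for an odd $q\ne1$, that is, an odd $q\ge3$. Conversely, any such $q$ makes $M$ of the required form, and Theorem~\ref{thm:criterion} gives PST.

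The main obstacle is only bookkeeping: checking that no cancellation occurs between numerator and denominator, and that the degenerate cases $p=0$ or $r=0$ are covered by the same formulas. Both are handled by the observation that $R>0$ separates the values $0$, $2$, $R+2$, $R+4$.
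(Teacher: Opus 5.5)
Your proposal is correct and is essentially the paper's proof. Both identify $g_c=f_{p,r+2}$ and $g_d=f_{r,p+2}$ via \eqref{eq:elementary-resolvents}, substitute into \eqref{eq:plusg} to obtain $(t^2-2)(t^2-R-4)$ over $t(t^2-R-2)$, use $R>0$ to exclude cancellation, and read off the transfer condition from Theorem~\ref{thm:criterion} with $S=\{1,q\}$. The paper adds the eigenvalue $0$ through \eqref{eq:endpoint-resolvent}, which is exactly what your observation $Aw_-=0$ gives: $E_\lambda\ee_a=E_\lambda w_+/\sqrt2$ for $\lambda\ne0$, and $E_0\ee_a=w_-/\sqrt2\ne0$.
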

\begin{proof}
\proofheading{Step 1: compute the attached-tree resolvents.}
The attachments are precisely the trees in \eqref{eq:elementary-resolvents}:
\[
 g_c(t)=f_{p,r+2}(t)=\frac{t^2-r-2}{t(t^2-R-2)},\qquad
 g_d(t)=f_{r,p+2}(t)=\frac{t^2-p-2}{t(t^2-R-2)}.
\]
If $p=0$ or $r=0$, cancellation of common factors reduces the corresponding fraction to $1/t$.

\proofheading{Step 2: compute the supported eigenvalues.}
Adding the two fractions gives
\[
 g_c(t)+g_d(t)=\frac{2t^2-R-4}{t(t^2-R-2)}.
\]
Substitute this into $g_{H_{p,r},w}=1/(t-2g_c-2g_d)$ from
\eqref{eq:plusg}. The denominator after clearing fractions is
\[
 \begin{split}
 t^2(t^2-R-2)-2(2t^2-R-4)
 &=t^4-(R+6)t^2+2R+8\\
 &=(t^2-2)(t^2-R-4).
 \end{split}
\]
Therefore
\begin{equation}\label{eq:Hpr-resolvent}
 g_{H_{p,r},w}(t)=
 \frac{t(t^2-R-2)}{(t^2-2)(t^2-R-4)}.
\end{equation}
For $R>0$, the numerator is nonzero at $t^2=2$ and $t^2=R+4$:
its factor $t^2-R-2$ takes the values $-R$ and $2$, respectively.
Thus there is no cancellation, and
$\Phi^+=\{\pm\sqrt2,\pm\sqrt{R+4}\}$.
By \eqref{eq:endpoint-resolvent}, $\Phi_a=\Phi_b=\{0\}\cup\Phi^+$. The transfer assertion follows from Theorem~\ref{thm:criterion}.
\end{proof}

\begin{proof}[Proof of Theorem~\ref{thm:sharp-unicyclic-order}]
\proofheading{Step 1: prove part \textup{(i)}.}
Lemma~\ref{lem:R14} gives $R\geq14$, and
Lemma~\ref{lem:extremal-root-branches}\textup{(ii)} gives
$|V(G)|\geq4+3R$. It remains to establish the additional term $2pr$
and the equality characterization.

\proofheading{Case 1: $pr=0$.}
Here $4+3R=4+3R+2pr$. Equality in $|V(G)|\geq4+3R$ forces every branch to consist of
its root and exactly two leaf children, which is precisely $H_{p,r}$
when one parameter is zero.

\proofheading{Case 2: $p,r>0$.}
Let $(B_i,u_i)$, $1\leq i\leq p$, be the branches of $T_c-c$, rooted at the neighbors of $c$, and let $(D_j,v_j)$, $1\leq j\leq r$, be the analogous branches of $T_d-d$. Put $k_i=\deg_{B_i}(u_i)$, $\ell_j=\deg_{D_j}(v_j)$, and $h_i=g_{B_i,u_i}$. The degrees $k_i,\ell_j$ are at least three. Define the positive integers
\[
 K=\sum_i(k_i-2),\qquad L=\sum_j(\ell_j-2).
\]
Let $\lambda_i$ be a random variable with the spectral distribution at $u_i$ in $B_i$.
By Lemma~\ref{lem:extremal-root-branches}\textup{(iii)}, $\lambda_i^2-2>0$ on each
support. Each measure has total mass one and second moment $k_i$.
Apply the Cauchy--Schwarz inequality to the functions
$\sqrt{\lambda_i^2-2}$ and $1/\sqrt{\lambda_i^2-2}$ on the disjoint
union of their supports, equipped with the corresponding spectral measures. Their product is one and the
total mass is $p$, so
\[
 p^2\leq
 \left(\sum_i\mathbb E(\lambda_i^2-2)\right)
 \left(\sum_i\mathbb E\frac1{\lambda_i^2-2}\right)
 =K\sum_i\mathbb E\frac1{\lambda_i^2-2}.
\]
Symmetry of the measures and $t_0^2=2$ therefore yield
\[
 \sum_i h_i(t_0)
 =-t_0\sum_i\mathbb E\frac1{\lambda_i^2-2}
 \leq-\frac{t_0p^2}{K}.
\]
Applying root recursion at $c$ and the same argument at $d$ gives
\[
 g_c(t_0)\leq\frac{K}{t_0(K+p^2)},\qquad
 g_d(t_0)\leq\frac{L}{t_0(L+r^2)}.
\]
Equation~\eqref{eq:plusg} at $t_0=\sqrt2$ gives $g_c(t_0)+g_d(t_0)=1/t_0$, so
\[
 0\leq\frac{K}{K+p^2}+\frac{L}{L+r^2}-1
   =\frac{KL-p^2r^2}{(K+p^2)(L+r^2)}.
\]
Hence $KL\geq p^2r^2$, and the arithmetic--geometric mean inequality
gives $K+L\geq2\sqrt{KL}\geq2pr$. Counting each branch root
and its children now yields
\[
 |V(G)|\geq4+R+\sum_i k_i+\sum_j\ell_j
          =4+3R+K+L\geq4+3R+2pr.
\]
If equality holds, every branch is a star and $K+L=2pr$.
Together with $KL\geq p^2r^2$, this gives $K=L=pr$.
The upper bounds for $g_c(t_0)$ and $g_d(t_0)$ now sum to $1/t_0$, so both
must be equalities, as must the corresponding Cauchy--Schwarz
inequalities. At $c$, equality in the Cauchy--Schwarz inequality implies that
$\lambda_i^2-2$ has the same constant value on the eigenvalue support at every branch root.
Taking second moments gives $k_i-2=K/p=r$ for every $i$.
Likewise $\ell_j-2=L/r=p$ for every $j$.
Thus these stars form exactly $H_{p,r}$.
Conversely, \eqref{eq:Horder} gives equality for $H_{p,r}$
in both cases. This proves \textup{(i)}.

\proofheading{Step 2: prove part \textup{(ii)}.}
Part \textup{(i)} gives $|V(G)|\geq4+3R+2pr\geq46$.
Equality forces $R=14$ and $pr=0$, giving $H_{0,14}$ up to
interchange of $c,d$. This graph has PST by
Proposition~\ref{prop:Hpr} and order $46$ by \eqref{eq:Horder}.

\proofheading{Step 3: prove part \textup{(iii)}.}
For $p,r>0$, the inequality $(p-1)(r-1)\geq0$ gives
$pr\geq p+r-1=R-1$, and therefore
\[
 |V(G)|\geq4+3R+2(R-1)=5R+2\geq72.
\]
Equality forces $R=14$ and $\{p,r\}=\{1,13\}$, and the preceding
equality characterization gives $H_{1,13}$. Conversely,
Proposition~\ref{prop:Hpr} and \eqref{eq:Horder} show that $H_{1,13}$
has PST and order $72$.
\end{proof}

\section{Inverse construction and prescribed supports}\label{sec:seven}
\subsection{Finite inverse construction}\label{subsec:recursive-construction}
The criterion in Theorem~\ref{thm:criterion} tests a given pair of
attached trees. We now prescribe the eigenvalue support and recover all
possible pairs by inverting the rooted-tree recursion
\eqref{eq:rootrecursion}.
We then solve the local branching constraints explicitly for seven-element supports.
For a monic polynomial $P\in\Z[t]$ with simple real roots, let
$\mathcal R(P)$ consist of odd rational functions $a/b$
(that is, $a(-t)/b(-t)=-a(t)/b(t)$), where
$a,b\in\Z[t]$ are monic and coprime, $b\mid P$ is nonconstant,
$\deg a=\deg b-1$, and the roots of $a$ strictly interlace those of $b$.
Interlacing is vacuous for $a=1$; set $\mathcal R(1)=\varnothing$.
The positive-residue characterization of strict interlacing
\cite[Lemma~4]{trees} states that such a quotient $a/b$ has positive
residue $a(\theta)/b'(\theta)$ at every root $\theta$ of $b$. The residues sum to $1$ because $a/b=t^{-1}+O(t^{-3})$ at infinity.
The set $\mathcal R(P)$ is finite and can be enumerated exactly.
There are finitely many nonconstant monic divisors $b$ of $P$.
For each such $b$, put $m=\deg b$ and choose $L$ so that all roots of
$P$ lie in $[-L,L]$. Interlacing then places all $m-1$ roots of $a$
in that interval, so Vieta's formulas give
$|[t^{m-1-j}]a|\leq\binom{m-1}{j}L^j$ for $0\leq j\leq m-1$,
where $[t^k]a$ denotes the coefficient of $t^k$ in $a(t)$.
These bounds leave finitely many integer numerators $a$; the resulting
fractions are tested for coprimality, oddness, and interlacing.

A function in $\mathcal R(P)$ need not be the root resolvent of a tree.
For each such fraction $g=a/b$, the following recursion defines the
collection $\mathcal T(g)$ of its rooted-tree realizations.
Let $\mathcal T(1/t)$ contain only the one-vertex rooted tree.
Otherwise choose nonnegative integers $n_f$, for
$f=A_f/B_f\in\mathcal R(a)$, satisfying
\begin{equation}\label{eq:recursive-splitting}
 \sum_f n_f A_f(t)\frac{a(t)}{B_f(t)}=t a(t)-b(t).
\end{equation}
Join a new root to $n_f$ trees chosen from $\mathcal T(f)$ for each $f$,
allowing repetition. Retain the resulting trees up to rooted isomorphism;
if no choice succeeds, set $\mathcal T(g)=\varnothing$.

\begin{lemma}\label{lem:inverse-root-recursion}
For every monic $P\in\Z[t]$ with simple real roots and every
$g=a/b\in\mathcal R(P)$, the recursion \eqref{eq:recursive-splitting}
terminates and produces a finite collection consisting, up to rooted
isomorphism, of exactly the trees with root resolvent $g$.
Every such tree has depth at most $\deg b-1$.
\end{lemma}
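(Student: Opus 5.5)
We argue by strong induction on $\deg b$, proving soundness and completeness of the recursion together with the depth bound. The base case is $\deg b=1$. Oddness and $b\mid P$ force $b=t$ and $a=1$, so $g=1/t$. A tree whose root resolvent is $1/t$ has root spectral measure concentrated at zero, hence root degree zero (second moment), so it is the singleton, which has depth $0=\deg b-1$. Conversely the recursion returns only the singleton, since $\mathcal T(1/t)$ is defined that way.

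For $\deg b=m\geq2$, we begin with \emph{completeness}. Let $(T,v)$ be any rooted tree with $g_{T,v}=a/b$. Let its branches be $(T_i,v_i)$. By \eqref{eq:rootrecursion} we have $\sum_i g_{T_i,v_i}=t-b/a$. Each branch resolvent, in lowest terms $A_i/B_i$, is odd because trees are bipartite. It has positive residues summing to one, so by \cite[Lemma~4]{trees} its numerator and denominator interlace, and it is monic integral by Gauss's lemma. The key point is that $B_i\mid a$. The poles of $\sum_i g_{T_i,v_i}$ all have positive residues and cannot cancel, so the pole set of each branch resolvent is contained in the pole set of $t-b/a$, which is the root set of $a$ (recall $\gcd(a,b)=1$). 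The polynomial $a$ has simple real roots because it interlaces $b$, and the poles of $g_{T_i,v_i}$ are simple. Hence $B_i\mid a$, and $B_i$ is nonconstant, so $g_{T_i,v_i}\in\mathcal R(a)$. Grouping equal resolvents into multiplicities $n_f$ and multiplying the identity by $a$ gives exactly \eqref{eq:recursive-splitting}. Since $\deg a=m-1<m$, the induction hypothesis places each $T_i$ in $\mathcal T(g_{T_i,v_i})$, so $T$ is produced.

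\emph{Soundness} runs the same argument in reverse. Any solution $(n_f)$ of \eqref{eq:recursive-splitting} together with chosen realizations yields, by \eqref{eq:rootrecursion}, a tree with resolvent $1/(t-(t-b/a))=a/b$.

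For \emph{termination and finiteness}, note that $\mathcal R(a)$ is finite by the enumeration argument above. The $n_f$ are bounded, since comparing the coefficient of $t^{m-2}$ (the leading term of $ta-b$, equivalently comparing residue sums) gives $\sum_f n_f=[\text{root degree}]$, and this equals a fixed coefficient determined by $a,b$. Indeed $g=t^{-1}+(\text{second moment})t^{-3}+\cdots$, so the root degree is determined by $g$. Hence there are finitely many choices, and by induction each $\mathcal T(f)$ is finite.

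For \emph{depth}, each branch has depth at most $\deg B_f-1\leq\deg a-1=m-2$. The tree therefore has depth at most $m-1$. This is also consistent with Lemma~\ref{lem:support-distance}.

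The main obstacle I expect is the divisibility $B_i\mid a$, specifically the verification that positive residues preclude cancellation and that poles of distinct branches at a common point add rather than cancel. This is the step I would write out carefully using the residue formula $A_f(\theta)/B_f'(\theta)>0$.
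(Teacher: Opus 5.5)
Your proof is correct and follows the paper's argument. Finiteness comes from comparing the $t^{m-2}$ coefficients in \eqref{eq:recursive-splitting} to bound $\sum_f n_f$. Soundness comes from root recursion, and completeness and termination come from positive residues forcing every child denominator to divide $a$. The one difference is the depth bound: you get it by induction, since each level lowers the denominator degree by at least one, whereas the paper applies Lemma~\ref{lem:support-distance} to the $\deg b$ poles of $g$; both routes are valid.
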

\begin{proof}
Write $g=t^{-1}+d(g)t^{-3}+O(t^{-5})$. Its poles and residues define a probability measure: each pole has mass equal to its residue. This measure is concentrated at zero exactly when $g=1/t$; otherwise its second moment $d(g)$ is positive. If $m=\deg b$, expansion at
infinity gives $ta-b=d(g)t^{m-2}+O(t^{m-4})$, so $d(g)$ is an integer.
Each monic summand $A_fa/B_f$ has degree $m-2$; comparing these leading
coefficients in \eqref{eq:recursive-splitting} gives $\sum_f n_f=d(g)$,
so every step has finitely many choices. The reduced denominator $B_f$ of each child resolvent divides $a$, so it has smaller degree than $b$. Recursion therefore terminates.
Dividing \eqref{eq:recursive-splitting} by $a$ gives
$t-\sum_f n_f f=b/a$.
Once the child trees have their prescribed resolvents, root recursion
\eqref{eq:rootrecursion} gives $(t-\sum_f n_f f)^{-1}=a/b=g$
at the new root.
Conversely, the child resolvents of a tree with resolvent $a/b$ sum to
$t-b/a$. Their positive residues prevent cancellation of poles in their sum, so the reduced denominator of every child resolvent divides $a$. Each child resolvent is odd by bipartiteness. Cramer's rule and Gauss's lemma show that its reduced numerator and denominator are monic polynomials with integer coefficients; positivity of the residues implies that their roots strictly interlace. Thus it belongs to $\mathcal R(a)$ and satisfies \eqref{eq:recursive-splitting}.
Induction recovers every tree; the case $1/t$ forces root degree zero.
Finally, the eigenvalue support at the root consists of the $m=\deg b$ poles of $g=a/b$,
so Lemma~\ref{lem:support-distance} gives the asserted depth bound.
\end{proof}

Fix a finite set $S$ of positive odd integers, with $1\in S$ and
$s=|S|\geq2$, and put $M_S(t)=\prod_{q\in S}(t^2-2q^2)$.
Enumerate monic odd polynomials $\widetilde N\in\Z[t]$ of degree $2s-1$
whose roots strictly interlace those of $M_S$, with
$[t^{2s-3}]\widetilde N=4-2\sum_{q\in S}q^2$.
For each $\widetilde N$, choose $g_i=a_i/b_i\in\mathcal R(\widetilde N)$, $i=c,d$, satisfying
\begin{equation}\label{eq:initial-root-splitting}
 2a_c\frac{\widetilde N}{b_c}+2a_d\frac{\widetilde N}{b_d}=t\widetilde N-M_S,
\end{equation}
and form $G(T_c,T_d)$ for every $T_i\in\mathcal T(g_i)$.

\begin{theorem}\label{thm:recursive-construction}
Let $S$ be a finite set of positive odd integers with $1\in S$ and $|S|\geq2$. Consider the construction \eqref{eq:initial-root-splitting}, followed by \eqref{eq:recursive-splitting}. Then:
\begin{enumerate}[label=\textup{(\roman*)}]
\item Up to isomorphism preserving the unordered transfer pair, the construction produces exactly the connected unicyclic graphs admitting PST between $a,b$ with eigenvalue support
\[
 \{0\}\cup\{\pm q\sqrt2:q\in S\}.
\]
\item The construction terminates without requiring a prior order bound. In every graph in \textup{(i)}, both attached trees have depth at most $2|S|-2$.
\end{enumerate}
\end{theorem}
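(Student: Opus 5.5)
The plan is to reduce everything to Theorem~\ref{thm:criterion} and Lemma~\ref{lem:inverse-root-recursion}. The bridge is the identity \eqref{eq:plusg}, $g_+=1/(t-2g_c-2g_d)$. For a graph $G(T_c,T_d)$ with PST and support $\{0\}\cup\{\pm q\sqrt2:q\in S\}$, Theorem~\ref{thm:criterion} gives $M=M_S$, so after cancellation $g_+=\widetilde N/M_S$ with $\widetilde N$ monic. I will check that $\widetilde N$ has the properties imposed in the enumeration step, and conversely that any output of the construction has this $g_+$.

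\textbf{Soundness of the enumeration step.} First, $g_+$ is odd by bipartiteness, so $\widetilde N$ is odd. It has degree $2s-1$ because $g_+=t^{-1}+O(t^{-3})$. Its roots strictly interlace those of $M_S$ by positivity of residues and \cite[Lemma~4]{trees}.

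Next, the coefficient condition comes from the second moment $\langle w_+,A^2w_+\rangle=4$. Expanding $\widetilde N/M_S$ at infinity gives second moment $[t^{2s-3}]\widetilde N+2\sum q^2$. Setting this equal to $4$ yields the prescribed coefficient.

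Then I rewrite \eqref{eq:plusg} as $2g_c+2g_d=t-M_S/\widetilde N$. The poles of $g_c,g_d$ have positive residues, so they cannot cancel, and every reduced denominator $b_i$ divides $\widetilde N$. Repeating the membership argument in the proof of Lemma~\ref{lem:inverse-root-recursion} (oddness, monic integrality via Gauss's lemma, interlacing via positive residues) gives $g_i\in\mathcal R(\widetilde N)$. Multiplying by $\widetilde N$ gives \eqref{eq:initial-root-splitting}.

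The singleton case needs separate handling. There $g_i=1/t$, which lies in $\mathcal R(\widetilde N)$ only if $t\mid\widetilde N$. Since $\widetilde N$ is odd this holds automatically, so singleton trees are covered.

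\textbf{Completeness and converse.} By Lemma~\ref{lem:inverse-root-recursion}, $T_i\in\mathcal T(g_i)$, so every such graph is produced. Conversely, suppose a graph is produced from some $\widetilde N$ and $g_i$ satisfying \eqref{eq:initial-root-splitting}. Lemma~\ref{lem:inverse-root-recursion} guarantees that $T_i$ has root resolvent $g_i$. Dividing \eqref{eq:initial-root-splitting} by $\widetilde N$ then gives $g_+=\widetilde N/M_S$ in reduced form. The reduction holds because interlacing of the roots makes $\widetilde N$ and $M_S$ coprime. Hence $M=M_S$, and Theorem~\ref{thm:criterion}(ii) gives PST with $\Phi^+=\{\pm q\sqrt2\}$. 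Finally, \eqref{eq:endpoint-resolvent} adds the eigenvalue $0$ to give the stated support.

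I expect the main obstacle to be the bookkeeping of isomorphism types in this step. The construction may produce the same graph from the swapped pair $(g_d,g_c)$. This is harmless, because the statement identifies graphs up to isomorphism preserving the unordered transfer pair, and swapping $c,d$ preserves $\{a,b\}$. I also need to check that $\mathcal R(\widetilde N)$ is well defined, which requires $\widetilde N$ to have simple real roots; strict interlacing supplies this.

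\textbf{Termination and depth.} The initial step has finitely many choices. The interlacing polynomials $\widetilde N$ are finitely many, by the same Vieta coefficient bound used for $\mathcal R(P)$. The sets $\mathcal R(\widetilde N)$ are finite, and each solution of \eqref{eq:initial-root-splitting} involves finitely many pairs. Each $\mathcal T(g_i)$ is finite and computable by Lemma~\ref{lem:inverse-root-recursion}, with no prior order bound needed.

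For the depth bound, Lemma~\ref{lem:inverse-root-recursion} gives depth at most $\deg b_i-1\le\deg\widetilde N-1=2s-2$, since $b_i\mid\widetilde N$. This proves (ii).
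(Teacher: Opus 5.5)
Your proposal is correct and follows essentially the same route as the paper. You identify $g_+=\widetilde N/M_S$ via \eqref{eq:plusg} and Theorem~\ref{thm:criterion}, and obtain oddness, interlacing and the $t^{2s-3}$ coefficient from the second moment. You then use positive residues to get $b_i\mid\widetilde N$ and hence \eqref{eq:initial-root-splitting}, and invoke Lemma~\ref{lem:inverse-root-recursion} for recovery and the depth bound. In the converse, interlacing rules out cancellation. Your extra checks are details the paper leaves implicit: strict interlacing gives $\widetilde N$ simple real roots, so $\mathcal R(\widetilde N)$ is defined; singletons are covered because $t\mid\widetilde N$; and $g_i\in\mathcal R(\widetilde N)$ holds explicitly.
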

\begin{proof}
\textup{(i)} Let $G$ be a connected unicyclic graph admitting PST between
$a,b$ with the stated eigenvalue support. Theorem~\ref{thm:geometry} gives
the two trees and \eqref{eq:plusg} gives $g_+=\widetilde N/M_S$ with integral
monic $\widetilde N$. Bipartiteness implies that $g_+$ is odd, so $\widetilde N$ is odd. Positivity of the residues of $g_+$ implies that the roots of $\widetilde N$ strictly interlace those of $M_S$. Let $\nu$ be the coefficient of $t^{2s-3}$ in $\widetilde N$.
Since $M_S=t^{2s}-2(\sum_{q\in S}q^2)t^{2s-2}+O(t^{2s-4})$,
\[
 \frac{\widetilde N(t)}{M_S(t)}
 =\frac1t+\frac{\nu+2\sum_{q\in S}q^2}{t^3}+O(t^{-5}).
\]
The second moment $\langle w_+,A^2w_+\rangle=4$ gives
$\nu=4-2\sum_{q\in S}q^2$.
Since $g_c+g_d=(t-M_S/\widetilde N)/2$ and the residues of $g_c,g_d$ are positive, their reduced denominators both divide $\widetilde N$. Thus \eqref{eq:initial-root-splitting} holds,
and Lemma~\ref{lem:inverse-root-recursion} recovers both trees.
Conversely, \eqref{eq:initial-root-splitting} gives $g_+=\widetilde N/M_S$ for every
constructed pair. Strict interlacing prevents cancellation, and
Theorem~\ref{thm:criterion} gives the required transfer and support.

\textup{(ii)} The coefficient bound used to enumerate $\mathcal R(M_S)$
gives finitely many choices for $\widetilde N$. For each choice,
$\mathcal R(\widetilde N)$ is finite, so there are finitely many pairs
$g_c,g_d$ to test in \eqref{eq:initial-root-splitting}.
Lemma~\ref{lem:inverse-root-recursion} then gives finitely many rooted
trees for each pair, without a prior order bound.
The reduced denominator $b_i$ of $g_i$ divides $\widetilde N$, whose degree is
$2s-1$. The same lemma gives depth at most $\deg b_i-1\leq2s-2$
for each attached tree. By \textup{(i)}, this applies to every graph
with the prescribed transfer and support.
\end{proof}

The procedure may return the empty set. The next subsection gives
explicit branching rules and proves nonemptiness for every $S$ with $|S|=3$.

\subsection{Explicit rules for seven supported eigenvalues}\label{subsec:seven-explicit}
For seven-element eigenvalue supports, the inverse construction reduces to the following integer branching rules.

Fix odd integers $3\leq u<v$, and put
\[
 x=2u^2,\qquad y=2v^2,\qquad \sigma=x+y-2.
\]
Choose nonnegative integers $p_c,p_d$, write $R=p_c+p_d$, and set
\[
 \beta=(x-2)(y-2)+4+2R,\qquad s_i=\sigma-p_i
 \quad(i\in\{c,d\}).
\]
Choose positive integers $\eta_c,\eta_d$ subject to
\begin{equation}\label{eq:seven-global}
 x-4<R<y-4,\qquad \eta_c+\eta_d=xy.
\end{equation}
For each $i\in\{c,d\}$, choose integers $k_{ij}\geq2$, with
$1\leq j\leq p_i$, satisfying
\begin{equation}\label{eq:seven-budget}
 \sum_{j=1}^{p_i}k_{ij}=p_i s_i+\eta_i-\beta.
\end{equation}
For every $j$, choose $k_{ij}$ nonnegative integers $t_{ij\ell}$ such that
\begin{equation}\label{eq:seven-local}
 0\leq t_{ij\ell}\leq s_i-k_{ij},\qquad
 \sum_{\ell=1}^{k_{ij}}t_{ij\ell}
       =k_{ij}(s_i-k_{ij})-\eta_i.
\end{equation}
Empty sums in \eqref{eq:seven-budget} are zero.

Construct a rooted tree $T_i$ as follows. Its root $i$ has $p_i$ children.
The $j$th child is the root of a branch $B_{ij}$ and has $k_{ij}$ children.
The $\ell$th of these has $t_{ij\ell}$ children, each of which has
$s_i-k_{ij}-t_{ij\ell}$ leaf children. There are no other vertices or
edges; Figure~\ref{fig:branch} shows the branching multiplicities.
In particular, $p_i=0$ gives a singleton. Denote the graph obtained
from these two rooted trees by $G(T_c,T_d)$, as in Section~\ref{sec:introduction}.

\begin{figure}[htbp]
\centering
\begin{tikzpicture}[pst graph]
  \node[pst vertex] (i) at (0,0) {};
  \node[pst vertex] (r) at (2.9,0) {};
  \node[pst vertex] (u) at (5.8,0) {};
  \node[pst vertex] (w) at (8.7,0) {};
  \node[pst vertex] (l) at (11.6,0) {};
  \foreach \x/\name in {2.9/r2,5.8/u2,8.7/w2,11.6/l2}{
    \node[pst vertex] (\name) at (\x,-1.05) {};
    \pstvdots{\x}{-.525}}
  \draw (i)--(r) (i)--(r2) (r)--(u) (r)--(u2)
    (u)--(w) (u)--(w2) (w)--(l) (w)--(l2);
  \node[above=5pt] at (i) {$i$};
  \node[above=5pt] at (r) {$r_{ij}$};
  \node[above=5pt] at (u) {$u_\ell$};
  \node[above=5pt] at (w) {$w$};
  \node[pst note] at (1.45,-1.5) {$p_i$ children};
  \node[pst note] at (4.35,-1.5) {$k_{ij}$ children};
  \node[pst note] at (7.25,-1.5) {$t_{ij\ell}$ children};
  \node[pst note,align=center] at (10.15,-1.59)
    {$s_i-k_{ij}-t_{ij\ell}$\\leaf children};
\end{tikzpicture}
\caption{Branching in $T_i$, $i\in\{c,d\}$, with $B_{ij}$ rooted at $r_{ij}$. Only one child's descendants are shown at each level; other children of $i$ and $r_{ij}$ may have different descendants. A zero multiplicity means that the corresponding children are absent.}
\label{fig:branch}
\end{figure}

\begin{theorem}\label{thm:seven-construction}
Let $3\leq u<v$ be odd integers, let $G$ be a connected unicyclic graph, and let $a,b$ be distinct vertices. Then:
\begin{enumerate}[label=\textup{(\roman*)}]
\item The graph $G$ admits PST between $a,b$ with eigenvalue support
\[
 \Phi_a=\{0,\pm\sqrt2,\pm u\sqrt2,\pm v\sqrt2\}
\]
if and only if, with $a,b$ as its designated opposite vertices, it is obtained by the construction \eqref{eq:seven-global}--\eqref{eq:seven-local}.
\item For each fixed pair $u,v$, the construction has finitely many choices. Every attached tree produced by the construction has depth at most four.
\end{enumerate}
\end{theorem}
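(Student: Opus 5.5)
The plan is to specialize the inverse construction of Theorem~\ref{thm:recursive-construction} to $S=\{1,u,v\}$ and solve \eqref{eq:initial-root-splitting} and \eqref{eq:recursive-splitting} level by level. Here $M_S=(t^2-2)(t^2-x)(t^2-y)$. The numerator $\widetilde N$ is monic, odd, of degree five, with $[t^3]\widetilde N=4-2(1+u^2+v^2)=-\sigma$. So $\widetilde N=t(t^4-\sigma t^2+\beta')$ for an integer $\beta'$.

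\textbf{Top level.} Write $g_i=a_i/b_i$ with $b_i\mid\widetilde N$. First treat the generic case in which $b_i=\widetilde N$, or $g_i=1/t$ when $p_i=0$.
\begin{itemize}
\item Oddness and $a_i=t^4+O(t^2)$ give $a_i=t^4-e'_it^2+\eta_i$. The second-moment expansion $g_i=t^{-1}+p_it^{-3}+\cdots$ yields $e'_i=\sigma-p_i=s_i$.
\item Compute $t\widetilde N-M_S=4t^4+(\beta'-2x-2y-xy)t^2+2xy$ and compare with $2(a_c+a_d)$. The constant terms give $\eta_c+\eta_d=xy$, and the $t^2$ terms give $\beta'=(x-2)(y-2)+4+2R=\beta$.
\item Positivity of the residues of $g_i$ at $0$ gives $\eta_i>0$.
\item Strict interlacing of the roots of $\widetilde N$ with $\pm\sqrt2,\pm\sqrt x,\pm\sqrt y$ is equivalent to $\widetilde N$ having the correct sign pattern at $t^2=2,x,y$. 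I expect this to reduce to $x-4<R<y-4$, which is also the strict form of Lemma~\ref{lem:R14} for $|S|=3$. This gives \eqref{eq:seven-global}.
\end{itemize}

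\textbf{Middle level.} The children of a root $i$ have resolvents in $\mathcal R(a_i)$. Since $\eta_i>0$, zero is not a root of $a_i$, so by oddness a child resolvent has the form $t(t^2-m)/a_i$, or $t/(t^2-\theta^2)$ when $a_i$ has a rational quadratic factor. The second moment of $t(t^2-m)/a_i$ is $s_i-m=k_{ij}$, so $m=s_i-k_{ij}$. Lemma~\ref{lem:extremal-root-branches}(ii) gives $k_{ij}\geq2$. Matching the $t^0$ coefficient in \eqref{eq:recursive-splitting} at $i$ (equivalently, $ta_i-\widetilde N=\sum_j t(t^2-m_j)$ up to the stated normalization) should produce \eqref{eq:seven-budget}.

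\textbf{Lower levels.} At a branch root $r_{ij}$, the grandchildren resolvents lie in $\mathcal R(t(t^2-m))$. They are either $1/t$ or $(t^2-\alpha)/(t(t^2-m))=f_{\tau,\,m-\tau}$, via \eqref{eq:elementary-resolvents}, with $0\leq\tau\leq m$; a $1/t$ child is the case $\tau=m$ of a leaf-only configuration. The recursion at $r_{ij}$ then prescribes the children $u_\ell$ with $t_{ij\ell}$ children, each carrying $m-t_{ij\ell}$ leaves. The coefficient comparison gives $\sum_\ell t_{ij\ell}=k_{ij}m-\eta_i$, which is \eqref{eq:seven-local}. Below this level the denominators are $t^2-m'$ or $t$, so only stars occur. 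This gives depth at most four and finitely many choices for fixed $u,v$, since every quantity is bounded by $\sigma$ and $xy$.

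For the converse, run the computations backwards: the constructed trees have $g_c+g_d=(t-M_S/\widetilde N)/2$, and Theorem~\ref{thm:criterion} applies once interlacing is checked via \eqref{eq:seven-global}.

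\textbf{Main obstacle.} I expect the main difficulty to be the nongeneric cases, where a reduced denominator is a proper factor. This happens when $b_i$ is a proper divisor of $\widetilde N$ (cancellation), or when $a_i$ or $t^2-m$ factors over $\Z$ so that a child has a smaller denominator such as $t/(t^2-\theta^2)$. I would first show that such children are the degenerate parameter values already included, for example $t_{ij\ell}=0$ or $t_{ij\ell}=s_i-k_{ij}$ in \eqref{eq:seven-local}. I would then use Lemma~\ref{lem:extremal-root-branches} and the interlacing sign conditions to exclude the remaining cancellations at the top level.
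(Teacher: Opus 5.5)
Your argument is correct. At the top two levels it is the paper's coefficient computation. (You get $\beta$ from the $t^2$ coefficient and $s_c+s_d=2\sigma-R$ rather than from the fourth moment $16+2R$; these are equivalent.) You depart from the paper in two places: how cancellation is handled, and how the branches below $r_{ij}$ are recovered.

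\textbf{Cancellation.} The paper never reduces. It writes $g_i=N_i/(tN)$ and $h_{ij}=t(t^2-s_i+k_{ij})/N_i$ with cancellation allowed; in your notation $N_i=a_i\widetilde N/b_i$. Every comparison you make goes through verbatim for these unreduced numerators: the second moments giving $s_i$, the coefficients of $t\widetilde N-M_S$ giving $\beta$ and $\eta_c+\eta_d=xy$, and $tN_i-tN=\sum_j t(t^2-m_j)$ giving \eqref{eq:seven-budget}. The singleton ($N_i=N$) and the star branches ($t^2-s_i+k_{ij}$ dividing $N_i$, equivalently all $t_{ij\ell}=0$) are then ordinary parameter values. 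So your ``main obstacle'' disappears, and nothing has to be excluded.

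The one genuine input is $\eta_i>0$. Residue positivity only gives $\eta_i\geq0$, and $\eta_i=0$ is exactly the cancellation of the factor $t$. You therefore need Lemma~\ref{lem:extremal-root-branches}\textup{(i)}, as the paper uses. A common quadratic factor of $N_i$ and $N$, by contrast, is not excluded by the sign conditions or that lemma; excluding it would need irreducibility of $X^2-\sigma X+\beta$. Your plan to ``exclude the remaining top-level cancellations'' should be replaced by this inclusion argument.

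\textbf{Lower levels.} Below the branch root the paper does not continue the inverse recursion. It uses the annihilator $(C^4-s_iC^2+\eta_iI)\ee_{r_{ij}}=0$ and compares coordinates of $C^4\ee_{r_{ij}}$ at the root and at the grandchildren. In one step this shows that the branch has depth at most three and that $\sum_\ell t_{ij\ell}=k_{ij}(s_i-k_{ij})-\eta_i$. It also shows that each grandchild through the $\ell$th child has exactly $s_i-k_{ij}-t_{ij\ell}$ children, all leaves. No enumeration of $\mathcal R(\cdot)$ or reduced-form bookkeeping is needed.

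Your route instead writes each child resolvent as $(t^2-m+\tau)/(t(t^2-m))=f_{\tau,m-\tau}$, with $0\leq\tau\leq m$ by residue positivity, and inverts once more to see stars. This is a literal specialization of Theorem~\ref{thm:recursive-construction}, with depth coming from shrinking denominators. The walk count is more self-contained.

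For \eqref{eq:seven-global}, your sign-pattern equivalence works in both directions. The paper uses Lemma~\ref{lem:R14} for necessity and the same sign computation only for sufficiency.

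One small slip: $f_{0,m}=1/t$ is the leaf child, so it corresponds to $\tau=0$. The value $\tau=m$ gives the star $t/(t^2-m)$.
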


\begin{proof}
\proofheading{Step 1: prove necessity in part \textup{(i)}.}
Suppose first that $G$ has the stated transfer and support. By
Theorem~\ref{thm:geometry}, write $G=G(T_c,T_d)$ and
$p_i=\deg_{T_i}(i)$. Put $w=(\ee_a+\ee_b)/\sqrt2$ and
$\Psi(t)=(t^2-2)(t^2-x)(t^2-y)$. The reduced denominator of $g_+$ is $\Psi$.
Since the spectral measure of $w$ has second and fourth moments
\[
 \langle w,A^2w\rangle=4,\qquad
 \langle w,A^4w\rangle=16+2R,
\]
its resolvent has the expansion
$g_+(t)=t^{-1}+4t^{-3}+(16+2R)t^{-5}+O(t^{-7})$.
Write the odd numerator as $t(t^4+\nu_1t^2+\nu_0)$.
Since $\Psi(t)=t^6-(x+y+2)t^4+(xy+2x+2y)t^2-2xy$,
multiplication by the expansion of $g_+$ gives
\[
 \nu_1=4-(x+y+2)=-\sigma,\qquad
 \nu_0=16+2R-4(x+y+2)+xy+2x+2y=\beta.
\]
Hence
\begin{equation}\label{eq:seven-plus}
 g_+(t)=\frac{tN(t)}{\Psi(t)},\qquad
 N(t)=t^4-\sigma t^2+\beta.
\end{equation}
Thus $\widetilde N(t)=tN(t)$ in the notation of Section~\ref{subsec:recursive-construction}.
Here $S=\{1,u,v\}$ has three elements, so Lemma~\ref{lem:R14}
gives the strict bounds $x-4<R<y-4$ in \eqref{eq:seven-global}.

The relation $g_c+g_d=(t-1/g_+)/2$ shows that every pole of $g_c$ or
$g_d$ is a zero of $tN(t)$: positive residues prevent cancellation
between the two root resolvents. Each reduced root resolvent is a
quotient of monic integer polynomials. Its denominator divides $tN(t)$,
so its numerator over this common denominator is also monic and integral.
The root has degree $p_i$, so
$g_i(t)=t^{-1}+p_it^{-3}+O(t^{-5})$. Multiplying by $tN(t)$ gives
a monic even polynomial with leading terms
$t^4+(p_i-\sigma)t^2$; denote its constant term by $\eta_i$.
This yields
\begin{equation}\label{eq:seven-root}
 g_i(t)=\frac{N_i(t)}{tN(t)},\qquad
 N_i(t)=t^4-s_i t^2+\eta_i.
\end{equation}
Lemma~\ref{lem:extremal-root-branches}\textup{(i)} gives positive
spectral mass at zero at both roots. This mass is the residue $\eta_i/\beta$; since
$\beta>0$, we have $\eta_i>0$. Moreover,
$g_c+g_d=(t-1/g_+)/2$ gives
\[
 N_c(t)+N_d(t)=\frac{t^2N(t)-\Psi(t)}2
             =2t^4-(2\sigma-R)t^2+xy.
\]
Comparing constant terms yields $\eta_c+\eta_d=xy$.

\proofheading{Step 2: recover each branch.}
Let $h_{ij}$ be the root resolvent of the $j$th branch at $i$.
Applying \eqref{eq:rootrecursion} at the attachment root $i$ gives
\[
 \sum_{j=1}^{p_i}h_{ij}(t)
 =t-\frac1{g_i(t)}
 =\frac{t\bigl(p_i t^2+\eta_i-\beta\bigr)}{N_i(t)}.
\]
Again, positivity of the residues of the functions $h_{ij}$ implies that the eigenvalue support at each branch root is contained in the zero set of $N_i$. Consequently $N_i h_{ij}$ is a
polynomial. Let $k_{ij}$ be the degree of the root of $B_{ij}$ within
that branch. The expansion
$h_{ij}=t^{-1}+k_{ij}t^{-3}+O(t^{-5})$ gives
$N_i h_{ij}=t^3+(k_{ij}-s_i)t+O(t^{-1})$.
Since the left side is a polynomial, its negative-power terms vanish,
which proves
\begin{equation}\label{eq:seven-branch}
 h_{ij}(t)=
 \frac{t(t^2-s_i+k_{ij})}{N_i(t)},
\end{equation}
where cancellation is allowed. Summing proves
\eqref{eq:seven-budget}, and
Lemma~\ref{lem:extremal-root-branches}\textup{(ii)} gives $k_{ij}\geq2$.

Put $C=A(B_{ij})$ and let $r_{ij}$ be the branch root.
Every eigenvalue supported at $r_{ij}$ is a zero of $N_i$, so
spectral decomposition gives
\begin{equation}\label{eq:branch-annihilator}
 (C^4-s_iC^2+\eta_iI)\ee_{r_{ij}}=0.
\end{equation}
A vertex at distance four from $r_{ij}$ would have a positive
coordinate in $C^4\ee_{r_{ij}}$ and zero coordinates in the other
two terms, contradicting this equation. Thus the branch has depth
at most three.
Let $t_{ij\ell}$ be the number of children of the $\ell$th child
of $r_{ij}$. The vector $C^2\ee_{r_{ij}}$
has value $k_{ij}$ at the root, value $1$ at every grandchild, and
value $0$ elsewhere. Thus
$\|C^2\ee_{r_{ij}}\|^2=k_{ij}^2+\sum_\ell t_{ij\ell}$, or
$(C^4\ee_{r_{ij}})_{r_{ij}}=k_{ij}^2+\sum_{\ell=1}^{k_{ij}}t_{ij\ell}$, giving
\[
 \sum_{\ell=1}^{k_{ij}}t_{ij\ell}=k_{ij}(s_i-k_{ij})-\eta_i.
\]
At a grandchild $w$ reached through the $\ell$th child, let $m$ be its
number of children. In $C^4\ee_{r_{ij}}=C^2(C^2\ee_{r_{ij}})$, the nonzero contributions to
the coordinate indexed by $w$ come from the root, from $w$, and from the other
$t_{ij\ell}-1$ grandchildren with the same parent as $w$.
They contribute $k_{ij}$, $m+1$, and $t_{ij\ell}-1$, respectively.
Hence
\[
 (C^2\ee_{r_{ij}})_w=1,\qquad
 (C^4\ee_{r_{ij}})_w=k_{ij}+(t_{ij\ell}-1)+(m+1).
\]
Thus $k_{ij}+t_{ij\ell}+m=s_i$, and the $m$ children are leaves.
At the root, the equation
$k_{ij}(s_i-k_{ij})=\eta_i+\sum_\ell t_{ij\ell}>0$
gives $k_{ij}<s_i$.
If $t_{ij\ell}>0$, $k_{ij}+t_{ij\ell}+m=s_i$ and $m\geq0$ give
$t_{ij\ell}\leq s_i-k_{ij}$; if $t_{ij\ell}=0$, the same inequality
follows from $s_i-k_{ij}>0$.
This proves every condition in \eqref{eq:seven-local}.

\proofheading{Step 3: prove sufficiency in part \textup{(i)}.}
Conversely, construct the branches from the specified integers.
Fix $B_{ij}$ and abbreviate
$s=s_i$, $k=k_{ij}$, $\eta=\eta_i$, and $L=s-k$.
Write $t_\ell=t_{ij\ell}$, so
$\sum_{\ell=1}^k t_\ell=kL-\eta$.
Since $kL=\eta+\sum_\ell t_\ell>0$ and $k\geq2$, we have $L>0$.
The $\ell$th child's subtree has the form in
\eqref{eq:elementary-resolvents}, so its root resolvent is
\[
 f_\ell(t)=f_{t_\ell,L-t_\ell}(t)
 =\frac{t^2-L+t_\ell}{t(t^2-L)}.
\]
For $t_\ell=0$, cancellation of the common factor $t^2-L$ gives $f_\ell(t)=1/t$.
Summing over the $k$ children gives
\[
 \sum_{\ell=1}^k f_\ell(t)
 =\frac{k(t^2-L)+\sum_\ell t_\ell}{t(t^2-L)}
 =\frac{kt^2-\eta}{t(t^2-L)}.
\]
A final recursion at the branch root therefore yields
\[
 h_{ij}(t)=\frac1{t-\sum_\ell f_\ell(t)}
 =\frac{t(t^2-L)}{t^4-(L+k)t^2+\eta}
 =\frac{t(t^2-s_i+k_{ij})}{N_i(t)},
\]
which is \eqref{eq:seven-branch}.
Summing the functions $h_{ij}$ and using \eqref{eq:seven-budget} gives
\[
 \sum_{j=1}^{p_i}h_{ij}(t)
 =\frac{t(p_it^2+\eta_i-\beta)}{N_i(t)},\qquad
 t-\sum_{j=1}^{p_i}h_{ij}(t)=\frac{tN(t)}{N_i(t)},
\]
where the last equality uses $s_i+p_i=\sigma$.
Root recursion gives \eqref{eq:seven-root}. If $p_i=0$,
\eqref{eq:seven-budget} forces $\eta_i=\beta$ and $N_i=N$,
so this also gives $g_i=1/t$.
Finally, $s_c+s_d=2\sigma-R$ and $\eta_c+\eta_d=xy$ imply
\[
 t^2N(t)-2\bigl(N_c(t)+N_d(t)\bigr)
 =(t^2-2)(t^2-x)(t^2-y)=\Psi(t).
\]
Substitution in $g_+=1/(t-2g_c-2g_d)$ proves \eqref{eq:seven-plus}.

The inequalities in \eqref{eq:seven-global} give
\[
\begin{aligned}
 N(\sqrt{x})&=2(R-y+4)<0,\\
 N(\sqrt{y})&=2(R-x+4)>0,\\
 N(\sqrt2)&=(x-4)(y-4)+2R>0.
\end{aligned}
\]
There is therefore no cancellation at any of the six roots of $\Psi$,
and Theorem~\ref{thm:criterion} gives the prescribed support and
PST. This completes \textup{(i)}.

For \textup{(ii)}, \eqref{eq:seven-global} bounds the integers
$R,p_c,p_d,\eta_c,\eta_d$ for fixed $u,v$.
Since $\eta_i>0$, \eqref{eq:seven-local} gives
$k_{ij}(s_i-k_{ij})=\eta_i+\sum_\ell t_{ij\ell}>0$, hence
$2\leq k_{ij}<s_i$. Each $t_{ij\ell}$ is an integer in $[0,s_i-k_{ij}]$, so there are only finitely many choices.
The construction has at most four levels below each attachment root.
\end{proof}

\begin{corollary}\label{cor:three-frequency-realization}
Let $u<v$ be odd integers with $u\geq3$, and put $x=2u^2$ and $y=2v^2$.
For every integer $R$ satisfying
\[
 x-4<R<y-4,
\]
there is a connected unicyclic graph $G(T_c,T_d)$ in which $T_c$
is a singleton, $\deg_{T_d}(d)=R$, and $T_d$ has depth four.
This graph admits PST between $a,b$ at minimum time
$\pi/\sqrt2$, with eigenvalue support
\[
 \{0,\pm\sqrt2,\pm u\sqrt2,\pm v\sqrt2\},
\]
and has order
\[
 4+3R+(3x-5)(R-x+4)(y-4-R).
\]
\end{corollary}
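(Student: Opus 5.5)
The plan is to apply Theorem~\ref{thm:seven-construction}\textup{(i)} with an explicit admissible choice of integers. Sufficiency in that theorem then gives PST with the prescribed support. The minimum time $\pi/\sqrt2$ comes from Theorem~\ref{thm:geometry}\textup{(iii)}, since $G\not\cong C_4$.

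\textbf{Step 1: global parameters.} Take $p_c=0$ and $p_d=R$. Then \eqref{eq:seven-budget} at $c$ is an empty sum and forces $\eta_c=\beta$, so $T_c$ is a singleton. Expanding $\beta=(x-2)(y-2)+4+2R$ gives
\[
 \eta_d=xy-\beta=2(x+y-4-R),
\]
which is positive because $R<y-4$. Thus \eqref{eq:seven-global} holds. At $d$ we have $s_d=\sigma-R=x+y-2-R$. Put
\[
 D=(R-x+4)(y-4-R),
\]
which is a positive integer. A short expansion rewrites the budget \eqref{eq:seven-budget} at $d$ as
\[
 \sum_{j=1}^{R}k_{dj}=Rs_d+\eta_d-\beta=2R+D .
\]

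\textbf{Step 2: branch data.} Let $R-1$ branches have $k_{dj}=2$. For these, $L=s_d-2=\eta_d/2$, so \eqref{eq:seven-local} demands $\sum_\ell t_{dj\ell}=2L-\eta_d=0$. Each such branch is therefore a root with two leaf children, contributing exactly three vertices. The remaining branch carries the surplus, with $k=2+D$ and $L=s_d-k=\eta_d/2-D=x-2-D'$ in suitable notation. Its $t_\ell$ must satisfy $0\le t_\ell\le L$ and $\sum_\ell t_\ell=kL-\eta_d$.

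If this single large branch is infeasible, for instance if $L\le0$, I would instead spread the surplus over several branches, for example using $k=3$ on many of them. The count is then redone in the same way.

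\textbf{Step 3: depth and order.} The tree $T_d$ has depth four provided some $t_\ell$ satisfies $0<t_\ell<L$, so that leaves appear at level four. This is automatic unless every nonzero $t_\ell$ equals $L$, and the $t_\ell$ can be chosen to avoid that. The order is
\[
 4+R+\sum_j\Bigl(k_{dj}+\sum_\ell t_{dj\ell}\bigl(1+s_d-k_{dj}-t_{dj\ell}\bigr)\Bigr).
\]
The stars contribute $3$ vertices each, which accounts for the term $4+3R$. The target is to show that the surplus equals $(3x-5)D$.

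\textbf{Main obstacle.} The hard part is choosing the distribution of the $k_{dj}$ and $t_{dj\ell}$ so that three things hold at once: the box constraints $0\le t\le s_d-k$ are met, depth four is attained, and the vertex count collapses exactly to $(3x-5)D$. The factor $3x-5$ suggests that each unit of $D$ should contribute a fixed gadget of $3x-5$ vertices, independently of $y$ and $R$.

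The first thing I would try is to give $D$ branches $k=3$ each; this needs $D\le R$, otherwise this shape fails. In that case $L=\eta_d/2-1$ and $\sum t=3L-\eta_d=\eta_d/2-3$ per branch. I would then choose the $t_\ell$ greedily as values $L$ or $0$ plus one intermediate value, and compute the count per branch, hoping it equals $3x-4$ including the extra child.

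If this does not reproduce the formula, I would solve for the $t$-pattern by matching the claimed order, treating the formula as the target. I would then check feasibility of that pattern against \eqref{eq:seven-local}.
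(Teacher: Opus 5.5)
Your global bookkeeping is correct. With $p_c=0$ and $p_d=R$ you get $\eta_c=\beta$, $\eta_d=2(x+y-4-R)>0$, and the budget $\sum_j k_{dj}=2R+D$ with $D=(R-x+4)(y-4-R)$. The depth criterion ($0<t_\ell<L$ for some $\ell$) and the vertex-count formula are also right.

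The gap is that the statement asserts a graph of \emph{exactly} the stated order, and you never exhibit branch data that achieve it. Both shapes you propose fail in general. Put $\alpha=R-x+4$ and $\delta=y-4-R$, so $\eta_d=2(x+\delta)$, $s_d=x+\delta+2$, and $D=\alpha\delta$.
\begin{itemize}
\item A single branch with $k=2+D$ has $L=x-(\alpha-1)\delta$ and $T=kL-\eta_d=\alpha\delta\bigl(x-2-(\alpha-1)\delta\bigr)$. This is negative once $(\alpha-1)\delta>x-2$.
\item The $k=3$ shape needs $D\le R$, which fails in general; for $u=3$, $v=7$, $R=50$ one has $D=1584$. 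Even when it fits, a branch with $k=3$ has $L=x+\delta-1$ and $T=x+\delta-3<L$. Hence $\sum_\ell t_\ell(1+L-t_\ell)\ge T(1+L-T)=3T$, and its excess over a copy of $K_{1,2}$ is at least $3x+3\delta-8$. That is strictly more than $3x-5$ whenever $\delta\ge2$.
\end{itemize}
Choosing the $t_\ell$ greedily cannot help. By Proposition~\ref{prop:minimum-seven}\textup{(i)} that choice minimizes the order, while $(3x-5)D$ is the order of one particular non-minimal pattern. ``Solving for the pattern by matching the formula'' is not yet an argument. Your depth claim that the $t_\ell$ ``can be chosen'' to give a level-four leaf is likewise unjustified without a concrete pattern.

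The missing idea is to split $D=\alpha\delta$ into $\alpha$ branches of excess root degree $\delta$ each, so that $L=s_d-k$ equals $x$ exactly.
\begin{itemize}
\item Attach $x-4$ copies of $K_{1,2}$ rooted at the center. These have $k=2$ and $T=0$, since $2(s_d-2)=\eta_d$.
\item Attach $\alpha$ copies of a branch with $k=\delta+2$. Then $L=x$ and $T=(\delta+2)x-2(x+\delta)=\delta(x-2)$. Realize this with two zero child parameters and $\delta$ parameters equal to $x-2\in[0,x]$, so each grandchild has $x-(x-2)=2$ leaf children.
\end{itemize}
This uses $x-4+\alpha=R$ branches with root-degree sum $2(x-4)+\alpha(\delta+2)=2R+D$, verifying \eqref{eq:seven-budget} and \eqref{eq:seven-local}. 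Depth four is automatic because $\alpha,\delta>0$ and $L-(x-2)=2>0$. Each of the $\delta$ nonleaf children contributes $1+3(x-2)=3x-5$ vertices. Therefore $|V(G)|=4+3(x-4)+\alpha\bigl(3+\delta(3x-5)\bigr)=4+3R+(3x-5)D$.
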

\begin{proof}
Put $\alpha=R-x+4$, $\delta=y-4-R$, and $K=x+\delta$.
Both $\alpha$ and $\delta$ are positive. Start with the four-cycle
$a,c,b,d,a$. At $d$, attach $x-4$ copies of the three-vertex star rooted at its center,
and $\alpha$ copies of the following rooted tree $B$.
The root of $B$ has two leaf children and $\delta$ further children;
each further child has $x-2$ children, each of which has two leaf children.
Thus $B$ has $3+\delta(3x-5)$ vertices, and there are
$x-4+\alpha=R$ branches at $d$.

We verify the construction using Theorem~\ref{thm:seven-construction}.
With $\sigma=x+y-2$ and $\beta=(x-2)(y-2)+4+2R$, take
\[
 p_c=0,\qquad p_d=R,\qquad
 \eta_c=\beta,\qquad \eta_d=2K.
\]
The identities $K=x+y-4-R$ and $\beta+2K=xy$ give
$s_d=\sigma-R=K+2$ and the required sum
$\eta_c+\eta_d=xy$. Both $\eta_c$ and $\eta_d$ are positive; the assumed bounds on $R$ now verify \eqref{eq:seven-global}.
At $c$, \eqref{eq:seven-budget} has an empty sum and holds because
$\eta_c=\beta$.
At $d$, the sum of the branch-root degrees is
\[
 2(x-4)+\alpha(\delta+2)=2R+\alpha\delta.
\]
To verify \eqref{eq:seven-budget} at $d$, use the identities $R+4=x+\alpha$, $K=x+\delta$,
and $y=x+\alpha+\delta$ to obtain
\[
 \begin{split}
 R s_d+\eta_d-\beta
 &=2R+K(R+4)-xy\\
 &=2R+(x+\delta)(x+\alpha)-x(x+\alpha+\delta)\\
 &=2R+\alpha\delta.
 \end{split}
\]

For a three-vertex star, $k=2$ and both child parameters are zero;
\eqref{eq:seven-local} holds because $k(s_d-k)=2K=\eta_d$.
For $B$, take $k=\delta+2$, so $s_d-k=x$.
Its child parameters are two zeros and $\delta$ copies of $x-2$.
They lie in $[0,x]$ and have sum
\[
 \delta(x-2)=(\delta+2)x-2K=k(s_d-k)-\eta_d.
\]
Every grandchild then has $x-(x-2)=2$ leaf children, as specified.
Thus \eqref{eq:seven-local} holds for every branch, and
Theorem~\ref{thm:seven-construction} gives the stated support and transfer.
The minimum time follows from Theorem~\ref{thm:geometry}\textup{(iii)}.
Since $\alpha,\delta>0$ and $x-2>0$, the attachment has depth four.
Finally, its branch orders give
\[
 \begin{split}
 |V(G)|&=4+3(x-4)+\alpha\bigl(3+\delta(3x-5)\bigr)\\
       &=4+3R+(3x-5)(R-x+4)(y-4-R).\qedhere
 \end{split}
\]
\end{proof}

\section{Degree and order thresholds beyond five eigenvalues}\label{sec:examples}
For a prescribed eigenvalue support, the construction in
Section~\ref{sec:seven} reduces order and degree minimization to a finite
set of integer parameter choices. Bounds on the maximum degree or order
also restrict the possible supports, as shown below. We first determine
the least possible maximum degree outside $C_4$ and the family $H_{p,r}$,
then the minimum order for seven-element eigenvalue support.

\subsection{The maximum-degree threshold}
The spectral radius bounds the possible support sets under a degree bound.
The following bound is due to Hu \cite{hu}.
\begin{lemma}[{\cite[Theorem~4.3, $\alpha=0$]{guozhou}}]\label{lem:unicyclic-rho-bound}
Let $G$ be a connected unicyclic graph, and put $\Delta=\Delta(G)$. Then
\[
 \rho(G)\leq2\sqrt{\Delta-1}.
\]
\end{lemma}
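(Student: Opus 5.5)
The approach is a direct Rayleigh-quotient estimate, weighted by an orientation in which every vertex has in-degree exactly one. Such an orientation exists precisely because $|E(G)|=|V(G)|$ and $G$ is connected with one cycle. Orient the unique cycle cyclically. Every remaining edge lies in a tree hanging off a cycle vertex, and we orient it away from the cycle, from parent to child. Each cycle vertex then receives exactly one arc, from its cycle predecessor. Each off-cycle vertex receives exactly one arc, from its parent. Hence for every vertex $v$,
\[
\operatorname{in}(v)=1,\qquad \operatorname{out}(v)=\deg(v)-1\leq\Delta-1 .
\]
Note also that $\Delta\geq2$, since $G$ contains a cycle.

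Next, let $x$ be the positive unit Perron vector of $A=A(G)$. Write the quadratic form as a sum over arcs:
\[
\rho(G)=x^TAx=\sum_{u\to v}2x_ux_v .
\]
For a parameter $s>0$, to be chosen later, apply the inequality $2x_ux_v\leq s\,x_u^2+s^{-1}x_v^2$ to each arc $u\to v$. The tail $u$ receives weight $s$ and the head $v$ receives weight $s^{-1}$. Collecting terms vertex by vertex gives
\[
\rho(G)\leq s\sum_u\operatorname{out}(u)\,x_u^2+s^{-1}\sum_v\operatorname{in}(v)\,x_v^2
\leq s(\Delta-1)+s^{-1},
\]
using $\operatorname{in}(v)=1$ and $\|x\|=1$. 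Choosing $s=1/\sqrt{\Delta-1}$ yields $\rho(G)\leq2\sqrt{\Delta-1}$.

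The only step needing care is the existence of the in-degree-one orientation. This is where unicyclicity is essential: a tree has one fewer edge and would give a slightly stronger bound, while graphs with more cycles have some vertex of in-degree at least two. The orientation argument above handles it directly. No equality analysis is required, since the statement is only an inequality.

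Alternatively, one could pass to the universal cover of $G$. That cover is an infinite tree of maximum degree $\Delta$, whose spectral radius is at most $2\sqrt{\Delta-1}$, and it dominates $\rho(G)$. The elementary arc-weighting argument avoids infinite graphs, so I would present that one.
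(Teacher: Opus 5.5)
Your argument is correct and complete. The paper gives no proof of this lemma: it imports the bound from Guo and Zhou (Theorem~4.3 at $\alpha=0$) and attributes it to Hu. Your orientation argument is therefore a genuinely different, self-contained route.

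Giving every vertex in-degree exactly one, and splitting each edge term by $2x_ux_v\leq s\,x_u^2+s^{-1}x_v^2$, shows exactly where unicyclicity enters: $|E(G)|=|V(G)|$ is what permits in-degree one everywhere. It also yields the equality case with one more line. Equality forces $x_v=s\,x_u$ along every arc, and going once around the cycle gives $s=1$, hence $\Delta=2$ and $G$ is a cycle.

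The citation buys only brevity and places the bound in the broader $A_\alpha$ setting. The paper needs neither the equality case nor that generalization. It uses the lemma only to restrict $\max S$ in Remark~\ref{rem:bounded-degree-finiteness} and in the proof of Theorem~\ref{thm:degree20-classification}.

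One caution about your closing alternative. A universal cover does not dominate the spectral radius of its base in general: for a $d$-regular graph with $d\geq3$, the cover is the $d$-regular tree, whose norm is $2\sqrt{d-1}<d$. The claim holds here only because the fundamental group of a unicyclic graph is $\Z$. Truncating the periodic lift of the Perron vector to long windows then gives approximate eigenvectors with Rayleigh quotient tending to $\rho(G)$. Since you present the arc-weighting argument instead, this does not affect your proof.
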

\begin{remark}\label{rem:bounded-degree-finiteness}
Godsil proved finiteness, up to isomorphism, of connected graphs of
bounded maximum degree admitting PST
\cite[Corollary~10.2]{godsil}. A spectral-radius bound also implies this
conclusion, since $\deg(v)=\|A\ee_v\|^2\leq\rho(G)^2$.
Here the inverse construction gives explicit support restrictions and
an enumeration. For a connected unicyclic graph $G\not\cong C_4$ admitting PST,
the positive Perron eigenvector is not orthogonal to
$w_+=(\ee_a+\ee_b)/\sqrt2$. Thus
\[
 \rho(G)=\sqrt2\max S.
\]
For $B>0$, the bound $\rho(G)\leq B$ restricts $S$ to the positive odd integers
at most $B/\sqrt2$. If $\Delta(G)\leq D$, where $D\geq2$ is an integer,
Lemma~\ref{lem:unicyclic-rho-bound} instead gives
$\max S\leq\sqrt{2(D-1)}$. Counting the odd integers up to this bound
and using Theorem~\ref{thm:recursive-construction}\textup{(ii)} bounds
each attachment depth by
\[
 2\left\lfloor\frac{\sqrt{2(D-1)}+1}{2}\right\rfloor-2.
\]
For either bound, enumerate the eligible sets $S$ containing $1$ and
at least one other odd integer, and apply
Theorem~\ref{thm:recursive-construction}. Retain the graphs
satisfying the bound and include $C_4$ when allowed. This gives exactly
the required finite class.
\end{remark}

We now classify the connected unicyclic graphs of maximum degree at most $20$ that admit PST.

\begin{proof}[Proof of Theorem~\ref{thm:degree20-classification}]
\textup{(i)} Suppose first that $G$ admits PST and $\Delta(G)\leq20$. The graph $C_4$ has maximum degree two and satisfies
Theorem~\ref{thm:criterion}\textup{(i)}. For $G\not\cong C_4$, write $G=G(T_c,T_d)$, put $p=\deg_{T_c}(c)$ and $r=\deg_{T_d}(d)$, and let $S$ be as in Theorem~\ref{thm:criterion}.
Lemma~\ref{lem:unicyclic-rho-bound} gives
$\rho(G)\leq2\sqrt{19}<7\sqrt2$, so $S\subseteq\{1,3,5\}$.
If $|S|=2$, Lemma~\ref{lem:R14} gives $G\cong H_{p,r}$.
The possibility $S=\{1,5\}$ would require $p+r=46$, whereas
$p,r\leq18$. Hence $p+r=14$, giving exactly the listed graphs.
It remains to exclude $S=\{1,3,5\}$.

We apply Theorem~\ref{thm:seven-construction} with $u=3$ and $v=5$. Put $R=p+r$ and
$\beta=772+2R$. Then $15\leq R\leq45$, and the two attachment roots have parameters
$\eta_c,\eta_d>0$ satisfying $\eta_c+\eta_d=900$.
For a nonsingleton tree $T_i$ with $z$ branches, write $s=66-z$ and $\eta=\eta_i$, and use $X=t^2$.
For a branch of $T_i-i$, let $k$ be its root degree within the branch; the root then has degree $k+1$ in $G$. The squares of the eigenvalues supported at this root are roots of $X^2-sX+\eta$. The children of its root have $t_1,\ldots,t_k$ children, respectively, where
\[
 \sum_jt_j=k(s-k)-\eta.
\]
Whenever $t_j>0$, each of these $t_j$ vertices has
$m_j=s-k-t_j$ leaf children. Since $\Delta(G)\leq20$,
\[
 z\leq18,\qquad k\leq19,\qquad t_j\leq19,\qquad m_j\leq19
 \quad(t_j>0).
\]
Each root of $X^2-sX+\eta$ is the square of a pole of at least one branch resolvent.
Indeed, one quarter of the discriminant of $N_0(X)=X^2-66X+\beta$ is
$33^2-\beta=317-2R$. For $15\leq R\leq45$, this is an odd integer
strictly between $15^2$ and $17^2$. The only square in that open
interval is $16^2$, which is even. Thus the discriminant is not a square,
and $N_0$ is irreducible over $\mathbb Q$. For $z>0$, the numerator
$X^2-(66-z)X+\eta$ is coprime to $N_0$, since equality of these two monic
quadratics would give $z=0$. The numerator $t^4-st^2+\eta$ also has
nonzero constant term, so it is coprime to $t$. Thus the root resolvent of $T_i$ has all five poles.
Its positive residues give four simple zeros strictly interlacing these
poles. By root recursion, those zeros are precisely the poles of the sum
of the branch resolvents; positive residues prevent cancellation
between branches.

A branch whose root has an eigenvalue support of size two must be a star
rooted at its center. Indeed, if $C$ is its adjacency matrix and $v$ its root of degree $k$,
symmetry and the second moment identify those eigenvalues as
$\pm\sqrt{k}$. Hence $C^2\ee_v=k\ee_v$, which excludes a vertex
at distance two from $v$. Conversely, a star with at least one edge,
rooted at its center, has exactly two eigenvalues in the support of its root. The two roots of
$X^2-sX+\eta$ cannot both be realized only by star branches: their sum is $s=66-z\geq48>38$, whereas a star
branch has at most $19$ leaves. Thus each nonsingleton $T_i$ has a branch whose root has an eigenvalue support of size four. Such a branch has some $t_j>0$, and hence
$s-k=t_j+m_j\leq38$. Combining this with $k\leq19$ gives
$66-z=s\leq57$, so $z\geq9$.

\proofheading{Case 1: one attached tree is a singleton.}
Take $p=0$ and $r>0$. The parameters of $T_d$ are
\[
 s=66-r,\qquad \eta_d=900-\beta=128-2r.
\]
Choose a branch of $T_d$ whose root has an eigenvalue support of size four, and let $k$ be its root degree.
We have proved $9\leq r\leq18$.
The inequality $s-k\leq38$, with $s=66-r$, gives $k\geq28-r$;
together with $k\leq19$, this yields $28-r\leq k\leq19$. Since every $t_j\leq19$,
\[
 \eta_d=k(s-k)-\sum_jt_j\geq k(47-r-k).
\]
The concave quadratic on the right takes the same value
$19(28-r)$ at both ends of the interval $[28-r,19]$. Therefore
\[
 128-2r=\eta_d\geq19(28-r),
\]
which is impossible because $404-17r>0$ for $r\leq18$.

\proofheading{Case 2: both attached trees are nontrivial.}
We have $9\leq p,r\leq18$. The identities \eqref{eq:seven-budget} give
\[
 \eta_c=\beta-p(66-p)+\sum_{j=1}^{p}k_{cj}
       \leq\beta-p(47-p),
\]
and the analogous bound at $d$. Adding yields
\[
 900\leq1544-43(p+r)+p^2+r^2.
\]
The function $z\mapsto z^2-43z$ is strictly decreasing for $9\leq z\leq18$. Consequently, after interchanging $c,d$ if necessary, the only possible pairs are
$(p,r)=(9,9)$ and $(9,10)$: the right side is $884$ at
$(10,10)$ and $886$ at $(9,11)$.

Thus $p=9$ and $s=57$. In any branch of $T_c$ whose root has an eigenvalue support of size four,
$k\leq19$ and $s-k\leq38$ force $k=19$. Every positive $t_j$ then
satisfies $t_j+m_j=38$ with $t_j,m_j\leq19$, so $t_j=m_j=19$.
Thus $\eta_c$ is a multiple of $19$. Moreover,
\[
 \eta_c\leq\beta-342\leq468.
\]
A star branch with $2\leq j\leq19$ leaves would require
$\eta_c=j(57-j)$. Since $19\mid\eta_c$ and $57\equiv0\pmod{19}$,
we have $j^2\equiv0\pmod{19}$. The prime $19$ therefore divides $j$,
so $j=19$ and hence
$\eta_c=722$, a contradiction. All nine branches therefore have root
degree $19$, and \eqref{eq:seven-budget} gives
\[
 \eta_c=\beta-342\in\{466,468\},
\]
neither value being divisible by $19$. This completes the exclusion.

Conversely, every listed $H_{p,14-p}$ has PST by
Proposition~\ref{prop:Hpr} and maximum degree at most $16$.
Together with the $C_4$ case, this proves \textup{(i)}.

\textup{(ii)} For the listed graphs, the maximum degree is $16$ when $p=0$
and $17-p$ when $1\leq p\leq7$. Hence the minimum outside $C_4$ is $10$, attained only by $H_{7,7}$. Graphs of maximum degree greater than $20$ cannot affect this minimum. 

\textup{(iii)} By part \textup{(i)}, no connected unicyclic graph admitting PST outside $C_4$ and the family $H_{p,r}$ has maximum degree at most $20$.
Hence each such graph has maximum degree at least $21$. Sharpness follows from
Example~\ref{ex:degree21}.
\end{proof}

\begin{example}\label{ex:degree21}
There is a connected unicyclic graph of maximum degree $21$ outside
the family $H_{p,r}$ that admits PST.
Define rooted trees $B_c,B_d$ as follows.
\begin{itemize}
\item The root of $B_c$ has six leaf children and $14$ further
children. Each of these $14$ children has $18$ children, each having
$20$ leaf children.
\item The root of $B_d$ has five leaf children and $14$ further
children. Seven of these $14$ children have $19$ children, each having
$16$ leaf children; the other seven have $20$ children, each
having $15$ leaf children.
\end{itemize}

The branching multiplicities are shown in Figure~\ref{fig:degree21-branches}.

\begin{figure}[htbp]
\centering
\begin{tikzpicture}[pst graph]
\begin{scope}
  \node at (3,3.16) {\textup{(a)}\quad $B_c$: $8$ copies at $c$; $|V(B_c)|=5313$};
  \node[pst vertex] (cr) at (0,0) {};
  \node[above left=3pt,pst note] at (cr) {root};
  \foreach \y in {1.12,1.35,1.58,1.81,2.04,2.27}{
    \node[pst vertex] (cl) at (1.7,\y) {};
    \draw (cr)--(cl);}
  \node[pst note] at (1.7,2.62) {$6$ leaves};
  \foreach \x/\name/\parent/\count/\kind in
    {1.7/cu/cr/14/children,3.9/cw/cu/18/children,6.1/cz/cw/20/leaves}{
    \node[pst vertex] (\name) at (\x,-.4) {};
    \node[pst vertex] (\name b) at (\x,-1.4) {};
    \draw (\parent)--(\name) (\parent)--(\name b);
    \pstvdots{\x}{-.9}
    \node[pst note] at (\x,-1.8) {$\count$ \kind};
  }
\end{scope}
\begin{scope}[xshift=7.85cm]
  \node at (3,3.16) {\textup{(b)}\quad $B_d$: $12$ copies at $d$; $|V(B_d)|=4521$};
  \node[pst vertex] (dr) at (0,0) {};
  \node[above left=3pt,pst note] at (dr) {root};
  \foreach \y in {1.35,1.58,1.81,2.04,2.27}{
    \node[pst vertex] (dl) at (1.7,\y) {};
    \draw (dr)--(dl);}
  \node[pst note] at (1.7,2.62) {$5$ leaves};
  \foreach \x/\name/\parent/\count/\kind in
    {1.7/du/dr/7/children,3.9/dw/du/19/children,6.1/dy/dw/16/leaves}{
    \node[pst vertex] (\name) at (\x,.65) {};
    \node[pst vertex] (\name b) at (\x,-.15) {};
    \draw (\parent)--(\name) (\parent)--(\name b);
    \pstvdots{\x}{.25}
    \node[pst note] at (\x,-.53) {$\count$ \kind};
  }
  \foreach \x/\name/\parent/\count/\kind in
    {1.7/dv/dr/7/children,3.9/dx/dv/20/children,6.1/dz/dx/15/leaves}{
    \node[pst vertex] (\name) at (\x,-1.55) {};
    \node[pst vertex] (\name b) at (\x,-2.35) {};
    \draw (\parent)--(\name) (\parent)--(\name b);
    \pstvdots{\x}{-1.95}
    \node[pst note] at (\x,-2.73) {$\count$ \kind};
  }
\end{scope}
\end{tikzpicture}
\caption{The branches $B_c,B_d$ in Example~\ref{ex:degree21}. Labels within each branch give the child counts; children in the same group have isomorphic descendant subtrees. Dots indicate omitted children. Each root gains one edge when attached at $c$ or $d$.}
\label{fig:degree21-branches}
\end{figure}

Form $G(T_c,T_d)$ by attaching eight copies of $B_c$ to $c$ and
$12$ copies of $B_d$ to $d$. To apply
Theorem~\ref{thm:seven-construction}, take
\[
 (u,v)=(3,5),\quad (p_c,p_d)=(8,12),\quad R=20,\quad
 \sigma=66,\quad\beta=812,
\]
with $(s_c,\eta_c)=(58,508)$ and $(s_d,\eta_d)=(54,392)$.
Equation~\eqref{eq:seven-global} holds because $14<20<46$ and $508+392=900$.
The roots of the branches at $c,d$ have degrees $20,19$, respectively,
within their branches. Their degree sums satisfy \eqref{eq:seven-budget}:
\[
 8\cdot20=8\cdot58+508-812,\qquad
 12\cdot19=12\cdot54+392-812.
\]
For $B_c$, the child parameters are six zeros and $14$ copies of
$18$, with $L=58-20=38$. For $B_d$, they are five zeros, seven copies
of $19$, and seven copies of $20$, with $L=54-19=35$.
All parameters lie in $[0,L]$, and their sums satisfy \eqref{eq:seven-local}:
\[
 14\cdot18=20\cdot38-508,\qquad
 7\cdot19+7\cdot20=19\cdot35-392.
\]
The corresponding grandchildren of the branch roots have $38-18=20$,
$35-19=16$, and $35-20=15$ leaf children, respectively, as specified above. Thus Theorem~\ref{thm:seven-construction} gives
eigenvalue support $\{0,\pm\sqrt2,\pm3\sqrt2,\pm5\sqrt2\}$ and
PST. In particular, \eqref{eq:seven-plus} becomes
\[
 g_+(t)=\frac{t(t^4-66t^2+812)}{(t^2-2)(t^2-18)(t^2-50)}.
\]
When $t^2=2,18,50$, the factor $t^4-66t^2+812$ in the numerator takes the nonzero values $684,-52,12$, respectively. Thus all six denominator roots are poles of $g_+$, confirming the stated support.
The minimum transfer time is $\pi/\sqrt2$ by
Theorem~\ref{thm:geometry}\textup{(iii)}.
Counting each level gives
\[
 \begin{split}
 |V(B_c)|&=1+6+14\bigl(1+18(1+20)\bigr)=5313,\\
 |V(B_d)|&=1+5+7\bigl(1+19(1+16)\bigr)
                  +7\bigl(1+20(1+15)\bigr)=4521.
 \end{split}
\]
Therefore $|V(G)|=4+8\cdot5313+12\cdot4521=96760$.
Every off-cycle vertex has one parent and at most $20$ children;
the cycle roots have degrees $10,14$. Vertices with $20$ children
have degree $21$, so $\Delta(G)=21$.
By Proposition~\ref{prop:Hpr}, the seven-element support shows that $G$ is not isomorphic to any $H_{p,r}$.
\end{example}

\subsection{Minimum order for seven-element supports}
We now minimize the order for prescribed odd integers $3\leq u<v$.
The branching rules reduce this problem to minimizing each branch order and then choosing the branch-root degrees by an integer recurrence.
For positive integers $s,\eta$ and an integer $k$ with $2\leq k<s$, put
$L=s-k$ and $T=kL-\eta$. If $T<0$, set $c_{s,\eta}(k)=\infty$;
otherwise, with $\rho$ the remainder of $T$ modulo $L$, set
\begin{equation}\label{eq:branch-cost}
 c_{s,\eta}(k)=1+k+T+\rho(L-\rho).
\end{equation}
All indices in the following recurrence are integers. Define $F_{s,\eta}(0,0)=0$, with $F_{s,\eta}(0,K)=\infty$ for $K\ne0$,
and, for $j\geq1$, define
\begin{equation}\label{eq:order-recursion}
 F_{s,\eta}(j,K)=\min_{2\leq k<s}
 \bigl(c_{s,\eta}(k)+F_{s,\eta}(j-1,K-k)\bigr).
\end{equation}
Set $F_{s,\eta}(j,K)=\infty$ for $K<0$, with the convention that the minimum over an empty set is $\infty$. Here $c_{s,\eta}(k)$ is the minimum order of a branch with root degree $k$; $F_{s,\eta}(j,K)$ is the minimum total order of $j$ such branches whose root degrees sum to $K$.

\begin{proposition}\label{prop:minimum-seven}
Let $3\leq u<v$ be odd integers, let $F_{s,\eta}$ be defined by
\eqref{eq:order-recursion}, and use the parameters of
Section~\ref{subsec:seven-explicit}. Then:
\begin{enumerate}[label=\textup{(\roman*)}]
\item For positive integers $s,\eta$ and an integer $k$ with $2\leq k<s$,
a branch with the local branching pattern in \eqref{eq:seven-local} exists
if and only if $T=k(s-k)-\eta\geq0$. In that case, its minimum
order is $c_{s,\eta}(k)$, and
the minimizing branch is unique up to rooted isomorphism.
More precisely, put $L=s-k$ and write $T=mL+\rho$, where
$0\leq\rho<L$. The minimizing child parameters consist of $m$
entries equal to $L$, one entry equal to $\rho$ if $\rho>0$, and
zeros in the remaining positions.
\item The minimum order of a connected unicyclic graph admitting PST
with eigenvalue support $\{0,\pm\sqrt2,\pm u\sqrt2,\pm v\sqrt2\}$ is
\begin{equation}\label{eq:minimum-seven}
 4+\min\bigl\{F_{s_c,\eta_c}(p_c,K_c)
             +F_{s_d,\eta_d}(p_d,K_d)\bigr\},
\end{equation}
where $K_i=p_i s_i+\eta_i-\beta$ for $i\in\{c,d\}$ and the minimum
ranges over nonnegative integers $p_c,p_d$ and positive integers
$\eta_c,\eta_d$ satisfying $R=p_c+p_d$ and \eqref{eq:seven-global}.
Every minimizing graph, up to isomorphism, is recovered by backtracking
all minimizing choices in \eqref{eq:order-recursion} and choosing
the branches specified in \textup{(i)}.
\end{enumerate}
\end{proposition}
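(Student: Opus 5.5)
Part~(i) reduces to a convex-allocation problem for a single branch. By Theorem~\ref{thm:seven-construction}, a branch $B$ with root degree $k$ is specified by child parameters $t_1,\ldots,t_k\in[0,L]$, where $L=s-k$, subject to $\sum_\ell t_\ell=T$.

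The child with parameter $t_\ell$ has $t_\ell$ children, each carrying $L-t_\ell$ leaves. Its subtree therefore has $1+t_\ell+t_\ell(L-t_\ell)$ vertices. This includes the case $t_\ell=0$, which is a single leaf. Hence
\[
 |V(B)|=1+k+T+\sum_{\ell=1}^k t_\ell(L-t_\ell).
\]
Existence is then immediate. The constraints are satisfiable exactly when $0\le T\le kL$, and $T\le kL$ holds automatically because $\eta>0$.

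For the minimum we maximize $\sum t_\ell^2$ over the box $[0,L]^k$ with fixed sum $T$. The function $\phi(t)=t(L-t)$ is strictly concave with $\phi(0)=\phi(L)=0$. A standard exchange argument applies: if two entries satisfy $0<t_\ell\le t_{\ell'}<L$, replace them by $t_\ell-1$ and $t_{\ell'}+1$. This keeps the sum and strictly decreases $\phi(t_\ell)+\phi(t_{\ell'})$, since the change equals $2(t_\ell-t_{\ell'})-2<0$. Hence at any minimizer at most one entry lies strictly between $0$ and $L$. The sum then forces $m=\lfloor T/L\rfloor$ entries equal to $L$ and one entry $\rho$, and the number of positions suffices because $T\le kL$. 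This yields the cost $c_{s,\eta}(k)$. The minimizer is unique as a multiset, and the multiset of child parameters determines the rooted branch up to isomorphism. The exchange step is the only point needing care, namely the strictness that gives uniqueness.

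For part~(ii) we use Theorem~\ref{thm:seven-construction}(i). Every graph with the given support arises from choices of $p_c,p_d,\eta_c,\eta_d$ satisfying \eqref{eq:seven-global}, together with branch degrees $k_{ij}$ satisfying \eqref{eq:seven-budget} and local data satisfying \eqref{eq:seven-local}. The order is $4$ plus the sum of the branch orders; a singleton $T_i$ contributes $0$ beyond its root, which is already counted among the four cycle vertices.

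For fixed global and degree data, the branches can be optimized independently, so by part~(i) each costs at least $c_{s_i,\eta_i}(k_{ij})$. Branches with $T<0$ are infeasible, matching the value $\infty$. Minimizing $\sum_j c(k_{ij})$ subject to $\sum_j k_{ij}=K_i$ with each $k_{ij}\in[2,s_i)$ is exactly the knapsack recurrence \eqref{eq:order-recursion}. The range $2\le k<s_i$ is justified by the bound $2\le k_{ij}<s_i$ established in Theorem~\ref{thm:seven-construction}(ii).

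When $p_i=0$, the budget condition forces $K_i=0$, and $F(0,0)=0$ matches this. Taking the minimum over the finitely many global parameters gives \eqref{eq:minimum-seven}. Conversely, any minimizing choice of the data is realizable by Theorem~\ref{thm:seven-construction}, so the minimum is attained.

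For the backtracking claim, every graph attaining the minimum must attain it at each stage of the decomposition. At the global stage it uses a minimizing choice of $(p_c,p_d,\eta_c,\eta_d)$. At the degree stage it uses a minimizing multiset of degrees, which is what backtracking through \eqref{eq:order-recursion} records, up to the order of the branches. At the branch stage each branch must be the unique minimizer from part~(i). Isomorphism classes are therefore recovered by enumerating all minimizing backtracks.
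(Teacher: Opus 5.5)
Your proposal is correct and follows the paper's proof essentially step for step. In part (i) you use the same vertex count and the same exchange argument; your strict concavity of $t(L-t)$ is the paper's maximization of $\sum_\ell t_\ell^2$ at fixed sum. In part (ii) you use the same stagewise reduction via Theorem~\ref{thm:seven-construction} and the recurrence \eqref{eq:order-recursion}, including the backtracking argument.

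The one ingredient the paper adds is an appeal to Corollary~\ref{cor:three-frequency-realization}. It guarantees that the outer minimum in \eqref{eq:minimum-seven} is finite, that is, some realization exists for every $u,v$. Your sentence ``any minimizing choice of the data is realizable'' tacitly assumes this, so you should cite that corollary there.
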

\begin{proof}
\textup{(i)} A branch with root degree $k$ has $k$ child parameters
$0\leq t_\ell\leq L$ with $\sum t_\ell=T$, so $T\geq0$ is necessary.
Conversely, suppose $T\geq0$ and write $T=mL+\rho$ with
$0\leq\rho<L$. Since $\eta>0$, we have $T<kL$.
The vector with $m$ entries equal to $L$, one entry equal to $\rho$
if $\rho>0$, and zeros elsewhere therefore fits in $k$ positions
and is feasible. This proves existence.

The order of a branch is $1+k+\sum t_\ell(1+L-t_\ell)$.
For fixed sum, minimizing the order is equivalent to maximizing
$\sum t_\ell^2$. If two entries satisfy $0<a\leq b<L$, replacing
them by $a-1,b+1$ preserves the constraints and increases the sum
of their squares by
\[
 (a-1)^2+(b+1)^2-a^2-b^2=2(b-a+1)>0.
\]
Thus a maximizing vector has at most one entry strictly between $0$
and $L$. The fixed sum $T=mL+\rho$ then makes every maximizing
vector a permutation of the feasible vector just constructed.
Its branch order is
\[
 1+k+mL+\rho(1+L-\rho)=1+k+T+\rho(L-\rho),
\]
which proves \eqref{eq:branch-cost}.
Moreover, its child parameters are unique up to permutation, and
\eqref{eq:seven-local} fixes all their descendants. Hence the minimizing
branch for a feasible root degree $k$ is unique up to rooted isomorphism.
This proves \textup{(i)}.

\textup{(ii)} For fixed $s,\eta$, the recurrence \eqref{eq:order-recursion}
minimizes over the root degree $k$ of one branch and the orders of the
remaining $j-1$ branches, whose root degrees sum to $K-k$. Induction on $j$ shows that
$F_{s,\eta}(j,K)$ is the minimum total order of $j$ branches whose root degrees sum to $K$.
Theorem~\ref{thm:seven-construction}\textup{(i)} shows that these choices
exhaust all possible realizations, and its part \textup{(ii)} bounds the parameter choices
in the outer minimum. Corollary~\ref{cor:three-frequency-realization}
ensures that, for each $u,v$, at least one admissible choice gives a finite value in \eqref{eq:minimum-seven}.
When $p_i=0$, a finite recurrence value requires $K_i=0$, hence
$\eta_i=\beta$; the attachment is then a singleton.
Adding the four cycle vertices proves \eqref{eq:minimum-seven}.

Every minimizing graph must use a minimum-order branch at each
chosen root degree: otherwise replacing that branch by the one in
\textup{(i)} preserves all parameter constraints and decreases the
order. Its root-degree choices must likewise attain each minimum in the
recurrence and the outer minimum. Conversely, any such sequence of
minimizing choices gives a graph of the displayed order.
This proves that backtracking recovers all minimizing graphs.
\end{proof}

\begin{theorem}\label{thm:minimum-seven}
Let $G$ be a connected unicyclic graph admitting PST between $a,b$,
and suppose $|\Phi_a|=7$. Then $|V(G)|\geq218$,
with equality if and only if $G$ is isomorphic to the graph in
Example~\ref{ex:218}.
\end{theorem}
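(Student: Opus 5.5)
By Theorem~\ref{thm:criterion}, $|\Phi_a|=7$ forces $S=\{1,u,v\}$ with odd $3\leq u<v$. By Theorem~\ref{thm:seven-construction} and Proposition~\ref{prop:minimum-seven}\textup{(ii)}, the minimum order for fixed $(u,v)$ is the outer minimum \eqref{eq:minimum-seven}, and every minimizer is obtained by backtracking. The plan has two parts. First, obtain a lower bound on the order for all $(u,v)\neq(3,5)$ that exceeds $218$. Second, evaluate \eqref{eq:minimum-seven} exactly for $(u,v)=(3,5)$, which gives $x=18$, $y=50$, $\sigma=66$, $\beta=772+2R$ and $15\leq R\leq45$, and show that the unique minimizer is the graph of Example~\ref{ex:218}, with $R=45$.

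For the lower bound I will use the explicit branch cost $c_{s,\eta}(k)\geq1+k+T$, where $T=k(s-k)-\eta$. Summing over the branches at root $i$ and using \eqref{eq:seven-budget}, the order is at least
\[
 4+R+\sum_{i}\Bigl(K_i+\sum_j\bigl(k_{ij}(s_i-k_{ij})-\eta_i\bigr)\Bigr),
 \qquad K_i=p_is_i+\eta_i-\beta .
\]
Each $k_{ij}$ lies in $[2,s_i-1]$, so $k(s_i-k)\geq\min\{2(s_i-2),\,s_i-1\}$ by concavity, and $\sum_j k_{ij}=K_i$ is fixed. The step is therefore to bound $\sum_j k_{ij}(s_i-k_{ij})$ below as a function of $p_i$, $K_i$ and $s_i$. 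Together with $\eta_c+\eta_d=xy$, this should give an explicit lower bound in terms of $x$, $y$, $R$ and the split $p_c,p_d$. I then check that this bound is minimized at the smallest admissible data and that it exceeds $218$ once $y\geq 98$ or $x\geq50$. For fixed $(x,y)$ the expression is a quadratic in $p_c,p_d$, so its minimum over the window $x-4<R<y-4$ can be found by examining endpoints and vertices.

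For $(u,v)=(3,5)$ the computation is finite by Theorem~\ref{thm:seven-construction}\textup{(ii)}. I split according to whether one tree is a singleton. If $p_c=0$, then $\eta_c=\beta$, $\eta_d=128-2R$ and $s_d=66-R$, and I evaluate $F_{s_d,\eta_d}(R,K_d)$ using the closed form of $c_{s,\eta}$ from Proposition~\ref{prop:minimum-seven}\textup{(i)}. Because $c_{s,\eta}$ is explicit, for each $R$ one only needs to decide which multiset of root degrees $k$, summing to $K_d$, minimizes the total cost. I expect the optimum at $R=45$ to use mostly $k=2$ stars together with a few deeper branches, and to total $214$ vertices off the cycle. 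If $p_c,p_d>0$, I use the bound $\eta_i\leq\beta-p_i(s_i-\max k)$ from the proof of Theorem~\ref{thm:degree20-classification}, together with $|V(G)|\geq4+3R+2p_cp_d$. This should exclude all but a few pairs $(p_c,p_d)$ near the boundary, and I would check those directly.

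The main obstacle is the $(3,5)$ enumeration. The cost $c_{s,\eta}(k)$ carries the term $\rho(L-\rho)$, which depends on a remainder, so the recurrence is not convex and one cannot simply take extremal $k$. I would first handle the case $\rho=0$ by convexity, then control the remainder term with case-by-case divisibility arguments. This is presumably the arithmetic recorded in Appendix~\ref{app:seven-certificate}. Uniqueness then follows from the uniqueness of minimizing branches in Proposition~\ref{prop:minimum-seven}\textup{(i)}, once every inequality used is shown to be tight only at the configuration of Example~\ref{ex:218}.
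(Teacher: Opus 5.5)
Your reduction to the finite outer minimum \eqref{eq:minimum-seven}, and uniqueness by backtracking through Proposition~\ref{prop:minimum-seven}, are sound. The first half of the plan, however, fails as stated.

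Since $\eta_i$ is much larger than $s_i$ (for a singleton attachment, $\eta_d=2(s_d-2)$), your estimate $k(s_i-k)\ge\min\{2(s_i-2),s_i-1\}=s_i-1$ gives a negative lower bound for $\sum_j T_{ij}$. The most that $c_{s,\eta}(k)\ge1+k+T$ then delivers is $T_{ij}\ge0$, which is exactly the preliminary quantity $4+R+K_c+K_d$ in \eqref{eq:seven-preliminary}. At $R=x-3$, $p_c=0$ this equals $2x+y-6$. That is $128$ for $(u,v)=(3,7)$, $192$ for $(3,9)$, and $192$ for $(5,7)$. It is also $208$ for $(3,7)$ at $R=16$, and $128+2p_c(15-p_c)\le216$ for $(3,7)$ at $R=15$ with $1\le p_c\le4$.

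So the claim that your quadratic bound in $p_c,p_d$ exceeds $218$ once $y\ge98$ or $x\ge50$ is false, and these rows need separate arguments. For the singleton rows, use that $\eta_d=2(s-2)$ gives $T=(k-2)(s-2-k)$. Then stars occur only at root degrees $2$ and $s-2$, every non-star branch has order at least $2s-7$, and neither can produce the required degree excess $D=(R-x+4)(y-4-R)$ within the budget. The non-singleton $(3,7)$ rows at $R=15$, like the non-singleton $(3,5)$ rows with $15\le R\le18$, need an argument that exploits integrality of the branch data. Your plan does not supply one.

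The missing idea is the remainder term $\rho(L-\rho)$ that you drop. If a branch is not a star, some $1\le t_\ell\le L$ gives $t_\ell(1+L-t_\ell)\ge L$, so the branch has at least $s_i+1$ vertices. In every surviving non-singleton row, $r\ge8$ and $s_d\ge49$, so this forces a star branch at $d$. Its leaf count $h$ is then an integer root of $X^2-s_dX+\eta_d$, which gives $\eta_d=h(s_d-h)$ and $\eta_c=xy-h(s_d-h)$. The bound $K_c\ge3p_c$ then confines $h$ to a short range.

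For $p_c\ge3$ (and $p_c\ge2$ when $(u,v)=(3,7)$), the same count forces a star at $c$ as well. Then $s_c^2-4\eta_c$ must be a perfect square, and this fails in every remaining case. For the smaller $p_c$, the few surviving pairs $(\eta_c,K_c)$ are excluded by evaluating or bounding $c_{s,\eta}$ directly.

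The tools you name cannot replace this step:
\begin{itemize}
\item The bound $\eta_i\le\beta-p_i(s_i-\max k)$ worked in Theorem~\ref{thm:degree20-classification} only because $\Delta(G)\le20$ capped $k$, and the order problem supplies no such cap.
\item The bound $4+3R+2p_cp_d$ is only $77$ at $R=15$, $p_c=1$.
\end{itemize}
Without the star-forcing and integrality step, ``check those directly'' is an unspecified search over $\eta_c$ and all branch-degree multisets, not an argument.
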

\begin{proof}
By Theorem~\ref{thm:criterion},
$\Phi_a=\{0,\pm\sqrt2,\pm u\sqrt2,\pm v\sqrt2\}$ for odd integers $3\leq u<v$.
\proofheading{Step 1: bound all parameters.}
It suffices to test $n=|V(G)|\leq218$. A unicyclic graph has $n$ edges,
so $\tr A^2=2n$. The six nonzero supported eigenvalues alone contribute
$4(1+u^2+v^2)$ to this trace. Hence
$n\geq2(1+u^2+v^2)$, or $u^2+v^2\leq108$.
If $u\geq7$, then $v\geq9$ and $u^2+v^2\geq130$; for $u=3$ the
possible $v$ are $5,7,9$, and for $u=5$ they are $7,9$.
These are exactly the five pairs in Table~\ref{tab:minimum-seven}.
Each branch has at least three vertices, so $4+3R\leq218$ gives
$R\leq71$. After interchanging $c,d$ if necessary, we therefore restrict to
\[
 x-3\leq R\leq\min\{y-5,71\},\quad
 0\leq p_c\leq\lfloor R/2\rfloor,\quad 1\leq\eta_c<xy,
\]
with $p_d=R-p_c$ and $\eta_d=xy-\eta_c$.
Since $c_{s,\eta}(k)\geq1+k$, every such graph has at least
$4+R+K_c+K_d$ vertices, counting the cycle, branch roots, and their
children. To expand this bound, use
\[
 K_c+K_d=R\sigma-(p_c^2+p_d^2)+xy-2\beta,
 \qquad p_c^2+p_d^2=R^2-2p_cp_d.
\]
Substitution of $\beta=(x-2)(y-2)+4+2R$ gives the expression below.
\proofheading{Step 2: perform the finite exclusion.}
We discard choices for which
\begin{equation}\label{eq:seven-preliminary}
 4+R+K_c+K_d
 =R(\sigma-3)-R^2+2p_cp_d-xy+4x+4y-12>218.
\end{equation}
Also, $p_c=0$ forces $K_c=0$ by \eqref{eq:seven-budget}, hence
$\eta_c=\beta$. Appendix~\ref{app:seven-certificate} excludes two
nonsingleton attachments by order bounds and tests for square discriminants, then checks
the possible branch-root degrees for a singleton attachment. These leave only
$(u,v,R,p_c,\eta_c)=(3,5,45,0,862)$, as recorded in
Table~\ref{tab:minimum-seven}. The same exclusion fixes the branch-root
degrees: $43$ are $2$, one is $19$, and one is $16$.
Their minimum orders are $3,20,65$, respectively, and sum to
$43\cdot3+20+65=214$. This equals the upper bound on the total order of the branches,
so $n=218$ and every branch attains its individual minimum.

\proofheading{Step 3: determine the equality graph.}
The branches with root degrees $2$ and $19$ are stars. For the
branch with root degree $16$, $s=21$, $\eta=38$, and hence $L=5$ and
$T=42=8\cdot5+2$. Proposition~\ref{prop:minimum-seven}\textup{(i)}
gives, up to permutation,
eight child parameters equal to $5$, one equal to $2$, and seven
equal to $0$. Theorem~\ref{thm:seven-construction} then fixes every
descendant: the first eight children each have five leaf children,
the next child has two children each with three leaf children,
and the remaining seven are leaves.
Thus both attachments, and hence $G$, are determined up to isomorphism.
They are exactly those in Example~\ref{ex:218}, which verifies the
prescribed support and attains order $218$.
\end{proof}

\begin{table}[htbp]
\centering
\caption{Under $|V(G)|\leq218$, the second column lists the values of $R$ not excluded by the preliminary bound \eqref{eq:seven-preliminary}; the last two columns give the outcome of Appendix~\ref{app:seven-certificate}, up to interchanging $c,d$.}
\label{tab:minimum-seven}
\begin{tabular}{@{}cccl@{}}
\toprule
$(u,v)$ & Surviving $R$ & $(p_c,\eta_c)$ retained & Order retained\\
\midrule
$(3,5)$ & $15$--$19$, $44$, $45$ & $(0,862)$ at $R=45$ & $218$\\
$(3,7)$ & $15$, $16$ & none & none\\
$(3,9)$ & $15$ & none & none\\
$(5,7)$ & $47$ & none & none\\
$(5,9)$ & none & none & none\\
\bottomrule
\end{tabular}
\end{table}

\begin{proof}[Proof of Theorem~\ref{thm:seven-summary}]
Part \textup{(i)} is Corollary~\ref{cor:three-frequency-realization},
and part \textup{(ii)} follows from
Theorem~\ref{thm:seven-construction}\textup{(ii)}.
Theorem~\ref{thm:minimum-seven} proves \textup{(iii)}, with
$(u,v,R)=(3,5,45)$ identified in Step 2 of its proof.
\end{proof}

\begin{example}\label{ex:218}
There is a $218$-vertex connected unicyclic graph admitting PST between
$a,b$, with $\Phi_a=\{0,\pm\sqrt2,\pm3\sqrt2,\pm5\sqrt2\}$ and an
attachment of depth four.
Start with the cycle $a,c,b,d,a$ and take $T_c$ to be a singleton. At $d$, attach
$43$ copies of $K_{1,2}$ and one copy of $K_{1,19}$ through their centers,
together with the following rooted tree $B$. Its root has $16$ children:
eight have five leaf children each, one has two children each having
three leaf children, and seven are leaves. Thus $B$ has $65$ vertices,
and the resulting graph has maximum degree $47$ and order
\[
 4+43\cdot3+20+65=218.
\]

Figure~\ref{fig:minimum-seven} shows the graph and the branch $B$.

\begin{figure}[htbp]
\centering
\begin{tikzpicture}[pst graph]
\begin{scope}
  \node at (1.7,3.28) {\textup{(a)}\quad The $218$-vertex graph};
  \node[pst terminal] (a) at (0,.65) {};
  \node[pst terminal] (b) at (0,-.65) {};
  \node[pst vertex] (c) at (-.85,0) {};
  \node[pst vertex] (d) at (.85,0) {};
  \draw (a)--(c)--(b)--(d)--(a);
  \node[above=4pt] at (a) {$a$};
  \node[below=4pt] at (b) {$b$};
  \node[left=4pt] at (c) {$c$};
  \node at (.54,0) {$d$};
  \foreach \y/\name in {1.75/su,.75/sl}{
    \node[pst vertex] (\name) at (2.8,\y) {};
    \draw (d)--(\name);
    \foreach \dy in {-.23,.23}{
      \node[pst vertex] (leaf) at (4.35,{\y+\dy}) {};
      \draw (\name)--(leaf);}}
  \pstvdots{2.8}{1.25}
  \node[pst note] at (3.05,2.45) {$43$ copies of $K_{1,2}$};
  \node[pst vertex] (s) at (2.8,-.65) {};
  \draw (d)--(s);
  \foreach \dy in {-.34,.34}{
    \node[pst vertex] (leaf) at (4.35,{-.65+\dy}) {};
    \draw (s)--(leaf);}
  \pstvdots{4.35}{-.65}
  \node[pst note] at (3.05,-1.25) {$1$ copy of $K_{1,19}$};
  \path[pst region] (2.8,-2.2)--(4.5,-1.73)--(4.5,-2.67)--cycle;
  \node[pst vertex] (br) at (2.8,-2.2) {};
  \draw (d)--(br);
  \node at (3.86,-2.2) {$B$};
  \node[pst note] at (3.45,-3.01) {$1$ copy};
\end{scope}
\begin{scope}[xshift=8cm]
  \node at (2.25,3.28) {\textup{(b)}\quad The $65$-vertex branch $B$};
  \node[pst vertex] (r) at (0,0) {};
  \node[above left=3pt,pst note] at (r) {root};
  \foreach \y in {1.15,1.36,1.57,1.78,1.99,2.20,2.41}{
    \node[pst vertex] (leaf) at (2,\y) {};
    \draw (r)--(leaf);}
  \node[pst note] at (2,2.81) {$7$ leaves};
  \node[pst vertex] (u) at (2,.3) {};
  \node[pst vertex] (u2) at (2,-.5) {};
  \draw (r)--(u) (r)--(u2);
  \pstvdots{2}{-.1}
  \node[pst note] at (2,-.88) {$8$ children};
  \foreach \y in {-.14,.08,.3,.52,.74}{
    \node[pst vertex] (leaf) at (4.5,\y) {};
    \draw (u)--(leaf);}
  \node[pst note] at (4.5,1.12) {$5$ leaves each};
  \node[pst vertex] (v) at (2,-1.85) {};
  \draw (r)--(v);
  \foreach \y/\name in {-1.42/w1,-2.28/w2}{
    \node[pst vertex] (\name) at (3.25,\y) {};
    \draw (v)--(\name);
    \foreach \dy in {-.22,0,.22}{
      \node[pst vertex] (leaf) at (4.5,{\y+\dy}) {};
      \draw (\name)--(leaf);}}
\end{scope}
\end{tikzpicture}
\caption{The graph in Example~\ref{ex:218} and its branch $B$. Open circles mark $a,b$; dots indicate omitted identical branches or children. The bottom child of the root in (b) and its descendants are drawn in full. The shaded region represents $B$; its boundary is not a graph edge.}
\label{fig:minimum-seven}
\end{figure}

For a direct verification using Theorem~\ref{thm:seven-construction}, take
\[
 (u,v)=(3,5),\quad (p_c,p_d)=(0,45),\quad
 \beta=862,\quad (\eta_c,\eta_d)=(862,38).
\]
Here $14<45<46$, $\eta_c+\eta_d=900$, and $s_d=66-45=21$.
For $p_c=0$, \eqref{eq:seven-budget} reduces to $\eta_c=\beta$; at $d$, it holds because
\[
 43\cdot2+19+16=121=45\cdot21+38-862.
\]
For the star branches, $k=2$ or $19$ gives $k(21-k)=38$, so all
child parameters are zero. For $B$, $k=16$ and $L=5$;
its child parameters are eight copies of $5$, one $2$, and seven zeros.
They lie in $[0,5]$ and sum to $42=16\cdot5-38$.
A child with parameter $5$ has five leaf children; a child with parameter $2$ has two children, each with three leaf children; a child with parameter $0$ is a leaf.
Thus all construction conditions hold. Formula~\eqref{eq:seven-plus} gives
\[
 g_+(t)=\frac{t(t^4-66t^2+862)}{(t^2-2)(t^2-18)(t^2-50)}.
\]
The factor $t^4-66t^2+862$ in the numerator takes the nonzero values $734,-2,62$ when $t^2=2,18,50$, respectively. Thus all six roots of the denominator are poles of $g_+$, and the stated support occurs,
with PST at minimum time $\pi/\sqrt2$.
\end{example}

\begin{remark}\label{rem:local-optimization}
For a branch in Theorem~\ref{thm:seven-construction}, put
$k=k_{ij}$, $L=s_i-k$, $t_\ell=t_{ij\ell}$, and
$T=\sum_\ell t_\ell=kL-\eta_i$. Its order is
$1+k+(L+1)T-\sum_\ell t_\ell^2$, so minimizing order maximizes the
sum of squares. Proposition~\ref{prop:minimum-seven}\textup{(i)} gives
the minimizing parameters: all are $0$ or $L$, except possibly one equal to the remainder of $T$ modulo $L$.

The degree constraint has a different form. In $G$, the branch root has degree $k+1$. A positive parameter $t_\ell$ gives
degrees $1+t_\ell$ and $1+L-t_\ell$ in the next two levels; a zero
parameter gives only a leaf. For $T>0$ and an integer $D\geq k+1$,
there is a choice of $t_1,\ldots,t_k$ with sum $T$ for which every
vertex of the branch has degree at most $D$ in $G$ if and only if
the number $m$ of positive parameters, with $1\leq m\leq k$,
can be chosen so that
\begin{equation}\label{eq:local-degree-feasibility}
 m\max\{1,L-D+1\}\leq T\leq m\min\{L,D-1\}.
\end{equation}
Indeed, each of the $m$ positive parameters must lie between
$\max\{1,L-D+1\}$ and $\min\{L,D-1\}$.
Conversely, if the inequalities hold, take
these parameters to be $\lfloor T/m\rfloor$ or $\lceil T/m\rceil$,
with the required sum, and set the other $k-m$ parameters to zero.
For fixed $m$, every such partition has largest positive parameter
at least $\lceil T/m\rceil$ and smallest positive parameter at most
$\lfloor T/m\rfloor$. Thus the balanced choice minimizes both the
largest positive parameter and $L$ minus the smallest one. The number
$m$ must also be chosen, and the degrees $k+1$ and $p_i+2$ of the
branch and attachment roots still constrain the maximum degree of
$G$. If $T=0$, the branch is a star and its vertices have
maximum degree $k+1$ in $G$. Thus the order and degree
optimizations in Examples~\ref{ex:218} and~\ref{ex:degree21} impose
different conditions on the same integers $t_1,\ldots,t_k$ satisfying \eqref{eq:seven-local}.
\end{remark}

The construction in Theorem~\ref{thm:recursive-construction} determines whether each prescribed eigenvalue support is realizable. Proposition~\ref{prop:Hpr} and
Corollary~\ref{cor:three-frequency-realization} give realizations for
every $S$ with $|S|\in\{2,3\}$. It remains open whether every larger set $S$ is realizable.

\begin{question}
Does every finite set $S$ of positive odd integers with $1\in S$ and
$|S|\geq4$ occur in a connected unicyclic graph admitting PST between
vertices $a,b$ with
\[
 \Phi_a=\{0\}\cup\{\pm q\sqrt2:q\in S\}?
\]
\end{question}

Finite termination alone does not guarantee a nonempty output for every
$S$. For prescribed $S$ and $R=\deg_{T_c}(c)+\deg_{T_d}(d)$, write
$M_S(t)=t^{2s}-e_1t^{2s-2}+e_2t^{2s-4}+\cdots$, where $s=|S|$.
Multiplying by the resolvent expansion
$g_+(t)=t^{-1}+4t^{-3}+(16+2R)t^{-5}+O(t^{-7})$ gives
\[
 \widetilde N(t)=t^{2s-1}+(4-e_1)t^{2s-3}
       +(16+2R-4e_1+e_2)t^{2s-5}+\cdots.
\]
When $s=3$, these are all the coefficients of the odd numerator,
giving the quartic $N(t)=\widetilde N(t)/t$ in \eqref{eq:seven-plus}.
For $s\geq4$, the two moment identities leave $s-3$ further integer
coefficients to determine; the higher moments
$\langle w_+,A^{2j}w_+\rangle$, $3\leq j\leq s-1$, determine them
successively. These coefficients cannot be chosen independently:
the roots of $\widetilde N$ must strictly interlace those of $M_S$, and
the polynomial identities \eqref{eq:initial-root-splitting}
and \eqref{eq:recursive-splitting} must admit choices of root resolvents
with positive residues and the required integer coefficients. To establish existence for every $S$, one must find choices satisfying all these conditions.

\appendix
\numberwithin{table}{section}
\section{Details of the finite minimum-order calculation}\label{app:seven-certificate}
This appendix supplies the parameter exclusions and intermediate arithmetic for
Theorem~\ref{thm:minimum-seven}. Throughout, the total order of the
branches is at most $214$, and all indices and parameters are integers.
We use the branch orders $c_{s,\eta}(k)$ in \eqref{eq:branch-cost}; this value is $\infty$ when $k(s-k)<\eta$. Throughout this appendix, a branch-root degree is measured within the branch, excluding its edge to $c$ or $d$.

\subsection{Excluding two nonsingleton attachments}
Assume $1\leq p_c\leq p_d$, and write $p=p_c$, $r=p_d$.
Let $C_i$ be the total order of the branches at $i\in\{c,d\}$.
Thus $C_c+C_d\leq214$.

\proofheading{Step 1: restrict $R$ and $p$.}
Write the preliminary lower bound in \eqref{eq:seven-preliminary} as
\[
 B(R,p)=R(\sigma-3)-R^2+2p(R-p)-xy+4x+4y-12.
\]
For $1\leq p\leq\lfloor R/2\rfloor$, the product $p(R-p)$ is
increasing in $p$, so
\[
 B(R,p)\geq B(R,1)
 =-R^2+(\sigma-1)R-xy+4x+4y-14.
\]
The last expression is concave in $R$; on an integer interval it is
therefore bounded below by the smaller of its endpoint values.
For $(u,v)=(3,9),(5,7),(5,9)$, its values at the endpoints of
$x-3\leq R\leq\min\{y-5,71\}$ are, respectively,
\[
 (220,5316),\qquad (284,932),\qquad (348,2532).
\]
All exceed $218$, excluding these three pairs. For $(3,5)$, its
endpoint values on $19\leq R\leq45$ are $232,258$; for $(3,7)$,
its endpoint values on $16\leq R\leq71$ are $238,1668$.
These intervals are excluded as well. At each remaining $R$,
monotonicity in $p$ gives exactly the ranges in
Table~\ref{tab:positive-parameters}.

\begin{table}[htbp]
\centering
\caption{Complete parameter ranges with $p,r>0$ allowed by
$B(R,p)\leq218$. In every row, $r=R-p$.}
\label{tab:positive-parameters}
\begin{tabular}{@{}crcl@{}}
\toprule
$(u,v)$ & $R$ & Allowed $p$ & $B(R,p)$\\
\midrule
$(3,5)$ & $15$ & $1\leq p\leq7$ & $80+2p(15-p)$\\
 & $16$ & $1\leq p\leq4$ & $112+2p(16-p)$\\
 & $17$ & $1\leq p\leq2$ & $142+2p(17-p)$\\
 & $18$ & $p=1$ & $170+2p(18-p)$\\
$(3,7)$ & $15$ & $1\leq p\leq4$ & $128+2p(15-p)$\\
\bottomrule
\end{tabular}
\end{table}

\proofheading{Step 2: force a star branch at $d$.}
Lemma~\ref{lem:extremal-root-branches}\textup{(iii)} gives root degree
at least three in every branch, so every branch has at least four vertices.
A stronger bound holds for a branch that is not a star.
Put $s=s_i$, let $k$ be its root degree, and set $L=s-k$.
Its order, by \eqref{eq:seven-local}, is
\[
 1+k+\sum_{\ell=1}^k t_\ell(1+L-t_\ell).
\]
If the branch is not a star, some $t_\ell$ satisfies $1\leq t_\ell\leq L$.
For this term,
\[
 t_\ell(1+L-t_\ell)-L=(t_\ell-1)(L-t_\ell)\geq0.
\]
The other terms are nonnegative. Every such branch therefore has
at least $1+k+L=s_i+1$ vertices. The ranges in
Table~\ref{tab:positive-parameters} give $r\geq8$ and $s_d\geq49$.
If no branch at $d$ were a star, then
$C_d\geq r(s_d+1)\geq400>214$. Thus a star branch occurs at $d$.

We next show that both roots of $X^2-s_iX+\eta_i$ are distinct
and greater than two. Put
$N_0(X)=X^2-\sigma X+\beta$, so $N(t)=N_0(t^2)$.
For $(u,v)=(3,5)$ and $15\leq R\leq18$, one quarter of the
discriminant of $N_0$ is $317-2R$, strictly between $16^2$ and $17^2$.
For $(3,7)$ and $R=15$, it is $1679$, strictly between $40^2$ and
$41^2$. Thus $N_0$ is irreducible over $\mathbb Q$ in every remaining
case. Since $p_i>0$, the monic quadratic $X^2-s_iX+\eta_i$ differs from the irreducible polynomial $N_0$ and is therefore coprime to it. Thus the corresponding factors in \eqref{eq:seven-root} do not cancel; the positive constant term $\eta_i$ also prevents cancellation of the denominator factor $t$. Consequently $g_i$ has all five poles and
four simple zeros. Those zeros are exactly the poles of
$\sum_j h_{ij}=t-1/g_i$. Positive residues prevent cancellation
between the branch resolvents. By Lemma~\ref{lem:extremal-root-branches}\textup{(iii)}, every eigenvalue in the support at a branch root lies outside $[-\sqrt2,\sqrt2]$.
Hence both roots of $X^2-s_iX+\eta_i$ are distinct and greater than two.

A star with $k$ leaves, rooted at its center, has eigenvalue support $\{\pm\sqrt{k}\}$ at the root.
Thus $k$ is a root of $X^2-s_dX+\eta_d$ for the star branch at $d$;
the integral sum of the two roots makes the other root an integer too. Denote the smaller root by $h$. Then
\begin{equation}\label{eq:appendix-star-parameter}
 \eta_d=h(s_d-h),\qquad
 \eta_c=xy-h(s_d-h),\qquad 3\leq h<s_d/2.
\end{equation}
Each of the $p$ branch roots at $c$ has degree at least three, so
$K_c\geq3p$. Substituting
$K_c=p s_c+\eta_c-\beta=p s_c+xy-h(s_d-h)-\beta$
and rearranging gives
\begin{equation}\label{eq:appendix-star-bound}
 h(s_d-h)\leq U,
 \qquad U=xy-\beta+p(s_c-3).
\end{equation}
The left side is strictly increasing for $h<s_d/2$.
This inequality restricts $h$ to the short ranges used below,
without testing every possible value of $\eta_c$.

\proofheading{Step 3: exclude $p\geq3$, and $p\geq2$ for $(u,v)=(3,7)$.}
If no branch at $c$ were a star, then
\[
 C_c+C_d\geq p(s_c+1)+4r=p(\sigma-p-3)+4R.
\]
The expression is increasing in both $p$ and $R$ over the ranges
in Table~\ref{tab:positive-parameters}. For $(3,5)$ and $p\geq3$,
its minimum is $3(66-3-3)+4\cdot15=240$; for $(3,7)$ and
$p\geq2$, its minimum is $2(114-2-3)+4\cdot15=278$.
Thus a star must also occur at
$c$ in these cases. Its number of leaves is an integer root of
$X^2-s_cX+\eta_c$, so
\[
 D_c=s_c^2-4\eta_c=s_c^2-4xy+4h(s_d-h)
\]
must be a square. In particular, its residue modulo $16$ must
belong to $\{0,1,4,9\}$.

Table~\ref{tab:positive-certificate} gives every remaining possibility.
Its column $h_{\max}$ means that
\[
 h_{\max}(s_d-h_{\max})\leq U
 <(h_{\max}+1)(s_d-h_{\max}-1),
\]
so \eqref{eq:appendix-star-bound} gives $3\leq h\leq h_{\max}$.
The next column retains exactly the values satisfying the necessary
modulo-$16$ condition. For odd $s_c$, the last column gives $D_c$;
for even $s_c$, it gives $D_c/4$, which must also be a square.
Each entry is written as $m^2+\varepsilon$ with $0<\varepsilon<2m+1$ and is therefore
strictly between consecutive squares. This excludes every row.
For example, in the first row $(u,v,R,p)=(3,5,15,3)$,
$s_c=63$, $s_d=54$, and $U=278$.
The inequalities $5\cdot49\leq278<6\cdot48$ give $h_{\max}=5$.
For $h=3,4,5$, the values of $D_c$ are $981,1169,1349$,
with residues $5,1,5$ modulo $16$. Only $h=4$ passes that test,
and $1169=34^2+13$ is not a square.

\begin{table}[htbp]
\centering
\caption{Complete square tests when a star branch is forced at both
attachment roots. Values in the last column follow the order of $h$
in the preceding column.}
\label{tab:positive-certificate}
\small
\setlength{\tabcolsep}{4pt}
\begin{tabular}{@{}rrrll@{}}
\toprule
$R$ & $p$ & $h_{\max}$ & $h$ after the modulo-$16$ test & $D_c$ or $D_c/4$\\
\midrule
\multicolumn{5}{@{}l}{$(u,v)=(3,5)$}\\
$15$ & $3$ & $5$ & $4$ & $34^2+13$\\
 & $4$ & $6$ & $3,4$ & $14^2+21,\ 16^2+9$\\
 & $5$ & $8$ & $4,6,8$ & $30^2+53,\ 36^2+25,\ 40^2+57$\\
 & $6$ & $9$ & $4,5,8,9$ & $14^2+16,\ 16^2+4,\ 19^2+31,\ 20^2+32$\\
 & $7$ & $10$ & $4,6,8,10$ & $27^2+16,\ 33^2+40,\ 38^2+37,\ 42^2+37$\\
$16$ & $3$ & $5$ & $3,4,5$ & $31^2+8,\ 33^2+64,\ 36^2+33$\\
 & $4$ & $7$ & $4,6$ & $16^2+5,\ 18^2+25$\\
\midrule
\multicolumn{5}{@{}l}{$(u,v)=(3,7)$}\\
$15$ & $2$ & $4$ & $4$ & $41^2+79$\\
 & $3$ & $5$ & $4$ & $82^2+109$\\
 & $4$ & $6$ & $3,4$ & $39^2+40,\ 40^2+57$\\
\bottomrule
\end{tabular}
\end{table}
\FloatBarrier

\proofheading{Step 4: exclude the small values of $p$.}
The remaining cases are $p=1$ for $(3,5)$ with $15\leq R\leq18$
and for $(3,7)$ with $R=15$, together with $p=2$ for $(3,5)$ with
$15\leq R\leq17$.
For $p=1$, \eqref{eq:appendix-star-bound} forces $h=3$:
for $(3,5)$, $U\leq160$ whereas $4(s_d-4)\geq180$;
for $(3,7)$, $U=304<4(100-4)=384$.
For $p=2$, $s_c=64$, $s_d=68-R$, and $U=250-2R$.
Here $h=3,4$ are the only possibilities, since
$5(s_d-5)=5(63-R)>250-2R$ for $15\leq R\leq17$.

\proofheading{Case 1: $p=1$.}
There is one branch at $c$, of root degree $K_c=s_c+\eta_c-\beta$.
Equations~\eqref{eq:appendix-star-parameter} and~\eqref{eq:branch-cost}
give the following values. In the first row, $s_c=65$, $K_c=16$,
and $\eta_c=753$, so $L=49$ and $T=\rho=31$.
Thus $c_{65,753}(16)=17+31+31\cdot18=606$; the other entries
follow by the same substitution.
\par\nopagebreak[4]\smallskip
{\centering
\begin{tabular}{@{}crrrr@{}}
\toprule
$(u,v)$ & $R$ & $\eta_c$ & $K_c$ & $c_{s_c,\eta_c}(K_c)$\\
\midrule
$(3,5)$ & $15$ & $753$ & $16$ & $606$\\
 & $16$ & $756$ & $17$ & $510$\\
 & $17$ & $759$ & $18$ & $386$\\
 & $18$ & $762$ & $19$ & $652$\\
$(3,7)$ & $15$ & $1473$ & $16$ & $1518$\\
\bottomrule
\end{tabular}\par}
\smallskip
Every minimum branch order exceeds $214$, excluding these five choices.

\proofheading{Case 2: $p=2$ and $h=3$.}
Now $s_c=64$, $\eta_c=705+3R$, and $K_c=61+R$.
For each of the two branch-root degrees $k$, feasibility requires
$k(64-k)\geq\eta_c$. Since $750\leq\eta_c\leq756$, this gives
$16\leq k\leq48$.
Write $K=K_c$ and let the two degrees be $k_1,k_2$, with $k_1+k_2=K$.
The feasible interval for $k_1$ is $K-48\leq k_1\leq48$.
The quadratic $k_1^2+k_2^2$ is maximized there at
$\{k_1,k_2\}=\{48,K-48\}$. Dropping the nonnegative remainder
term in \eqref{eq:branch-cost} therefore gives
\[
\begin{split}
 C_c&\geq2+65K-(k_1^2+k_2^2)-2\eta_c\\
 &\geq2+65K-48^2-(K-48)^2-2\eta_c\\
 &=-R^2+33R+84\geq354>214
 \qquad(15\leq R\leq17).
\end{split}
\]

\begin{samepage}
\proofheading{Case 3: $p=2$ and $h=4$.}
Here $s_c=64$, $\eta_c=644+4R$, and $K_c=2R$.
Feasibility gives $k\geq15$, since
$14(64-14)=700<\eta_c$. The smaller of the two branch-root degrees
is at most $K_c/2=R$. Thus it lies in $15\leq k\leq R$.
The corresponding values of $c_{64,644+4R}(k)$ are displayed below:
\begin{center}
\begin{tabular}{@{}crrr@{}}
\toprule
$R\mathbin{\backslash}k$ & $15$ & $16$ & $17$\\
\midrule
$15$ & $605$ & & \\
$16$ & $637$ & $509$ & \\
$17$ & $637$ & $393$ & $385$\\
\bottomrule
\end{tabular}
\end{center}
\end{samepage}
Each entry is $c_{64,644+4R}(k)>214$, excluding the last three
choices. This completes the exclusion of two nonsingleton attachments.

\subsection{A singleton attachment}
Now $p_c=0$, $p_d=R$, and $\eta_c=\beta$.
Put $s=s_d=\sigma-R$ and $\eta=\eta_d=xy-\beta=2(s-2)$.
For root degree $k=2$, \eqref{eq:seven-local} forces every child
parameter to be zero. Thus every such branch is $K_{1,2}$ rooted at
its center, and $c_{s,\eta}(2)=3$. Put
\[
 D=K_d-2R=(R-x+4)(y-4-R),\qquad E=214-3R.
\]
The factorization follows by substituting
$K_d=Rs+\eta-\beta$, $s=x+y-2-R$, and $\eta=xy-\beta$.
Here $D$ is the sum of the branch-root degrees minus $2R$, and $E$ is the number
of vertices allowed in addition to the $3R$ vertices in $R$ branches
with root degree two.
Since the other $R-1$ branches each have order at least three, a branch
with root degree $k$ in any candidate must satisfy
$c_{s,\eta}(k)\leq214-3(R-1)=E+3$.
We first derive the complete parameter and degree ranges in
Table~\ref{tab:singleton-certificate}.

\proofheading{Step 1: restrict $R$.}
For $p_c=0$, the preliminary lower bound is
\[
 B_0(R)=-R^2+(\sigma-3)R-xy+4x+4y-12.
\]
This is concave in $R$. The following intervals have both endpoint
values greater than $218$ and are therefore excluded:
\par\nopagebreak[4]\smallskip
{\centering
\begin{tabular}{@{}ccc@{}}
\toprule
$(u,v)$ & Excluded interval & Endpoint values of $B_0$\\
\midrule
$(3,5)$ & $20\leq R\leq43$ & $(220,220)$\\
$(3,7)$ & $17\leq R\leq71$ & $(286,1528)$\\
$(3,9)$ & $16\leq R\leq71$ & $(336,5176)$\\
$(5,7)$ & $48\leq R\leq71$ & $(240,792)$\\
$(5,9)$ & $47\leq R\leq71$ & $(256,2392)$\\
\bottomrule
\end{tabular}\par}
\smallskip
Within $x-3\leq R\leq\min\{y-5,71\}$, the remaining values are
exactly the $11$ rows in Table~\ref{tab:singleton-certificate}.

\proofheading{Step 2: restrict the branch-root degrees.}
Here $\eta=2(s-2)$, so
\[
 T=k(s-k)-\eta=(k-2)(s-k-2).
\]
Thus $2\leq k\leq s-2$. The two possible stars have root degrees $2,s-2$ and orders $3,s-1$, respectively. For a nonstar branch, $3\leq k\leq s-3$.
For $k=3$, we have $L=s-3$ and $T=\rho=s-5$, so
\eqref{eq:branch-cost} gives $c_{s,2(s-2)}(3)=3s-11$. For $4\leq k\leq s-3$, put
$j=k-2$. Dropping the nonnegative remainder term in
\eqref{eq:branch-cost} gives
\begin{equation}\label{eq:singleton-basic-cost}
 c_{s,2(s-2)}(k)\geq3+j(s-3-j)\geq2s-7.
\end{equation}
The last inequality follows by concavity on $2\leq j\leq s-5$;
its endpoint values are $2s-7,2s-7$. Since $3s-11\geq2s-7$,
this bound holds for every nonstar branch. For the rows with
$(u,v)\ne(3,5)$, it exceeds $E+3$, so only stars with root degrees $2$ or $s-2$ are possible. For $(u,v)=(5,7)$, root degree $s-2$ is also excluded because $c_{s,\eta}(s-2)-3=s-4>E$.

For $(3,5)$ and $15\leq R\leq19$, we have $47\leq s\leq51$
and $E+3=3s+19$. We exclude the whole interval $4\leq k\leq s-5$.
If $2\leq j\leq14$, then $2j<L=s-2-j$, and the remainder of
$T=j(L-2)$ modulo $L$ is $L-2j$. Consequently
\begin{equation}\label{eq:singleton-exact-cost}
 c_{s,2(s-2)}(k)=3+j(3s-7-7j).
\end{equation}
This identity holds whenever $2j<L$, including the small values of $s$
considered below. The quadratic $3+j(3s-7-7j)$ is concave in $j$, so its
minimum on $2\leq j\leq14$ is the smaller endpoint value,
$\min\{6s-39,42s-1467\}=6s-39>3s+19$.
For $15\leq j\leq s-7$, \eqref{eq:singleton-basic-cost} gives
the lower bound
$\min\{15s-267,4s-25\}=4s-25>3s+19$.
Only $k=2,3,s-4,s-3,s-2$ remain. The corresponding minimum branch orders are, respectively,
\[
 3,\quad 3s-11,\quad 3s-15+4(s\bmod2),\quad
 2s-7+2\delta_s,\quad s-1,
\]
where $\delta_s=0$ if $s\equiv2\pmod3$ and $\delta_s=1$ otherwise.
For $k=s-4$, the substitution has $L=4$ and $\rho\equiv2s\pmod4$,
giving the term $4(s\bmod2)$.
For $k=s-3$, it has $L=3$ and $\rho\equiv s-5\pmod3$,
giving $2\delta_s$.
Subtracting $3$ from these minimum orders gives the last column in the first five rows of Table~\ref{tab:singleton-certificate}.

For $R=44$, $s=22$ and $E+3=85$.
Equation~\eqref{eq:singleton-exact-cost} gives at least $93$ for
$4\leq k\leq8$, and \eqref{eq:singleton-basic-cost} gives at least
$87$ for $9\leq k\leq14$; both follow by taking endpoint minima
of the corresponding concave quadratics. For $k=15$, the minimum order is $c_{22,40}(15)=91$.
Thus only $k=2,3,16,17,18,19,20$ remain.
For $R=45$, $s=21$ and $E+3=82$.
Equation~\eqref{eq:singleton-exact-cost} gives at least $87$ for
$4\leq k\leq8$. For $9\leq k\leq14$, we have $7\leq L\leq12$.
The values $\rho=0,1,L-1$ would require $L$ to divide
$38,39,37$, respectively. No integer $7\leq L\leq12$ divides any
of these numbers, so $2\leq\rho\leq L-2$.
Hence
\[
 c_{21,38}(k)\geq1+k+T+2(L-2)
 =37+16j-j^2\geq85\qquad(7\leq j\leq12).
\]
Only $k=2,3,15,16,17,18,19$ remain. Substitution in
\eqref{eq:branch-cost} gives the minimum branch orders; subtracting $3$ gives the entries in the last two $(3,5)$ rows.
Thus Table~\ref{tab:singleton-certificate} is complete.

\begin{table}[htbp]
\centering
\caption{Root degrees $k$ satisfying the individual branch-order bound $c_{s,\eta}(k)\leq E+3$, with the values $c_{s,\eta}(k)-3$, when $T_c$ is a singleton. The degree-sum and total-order constraints are checked below.}
\label{tab:singleton-certificate}
\footnotesize
\setlength{\tabcolsep}{4pt}
\begin{tabular}{@{}crrrl@{}}
\toprule
$(u,v)$ & $R$ & $D$ & $E$ & Allowed pairs $(k,c_{s,\eta}(k)-3)$\\
\midrule
$(3,5)$ & $15$ & $31$ & $169$ & $(2,0),(3,139),(47,139),(48,94),(49,47)$\\
 & $16$ & $60$ & $166$ & $(2,0),(3,136),(46,132),(47,90),(48,46)$\\
 & $17$ & $87$ & $163$ & $(2,0),(3,133),(45,133),(46,90),(47,45)$\\
 & $18$ & $112$ & $160$ & $(2,0),(3,130),(44,126),(45,88),(46,44)$\\
 & $19$ & $135$ & $157$ & $(2,0),(3,127),(43,127),(44,84),(45,43)$\\
 & $44$ & $60$ & $82$ & $(2,0),(3,52),(16,78),(17,60),(18,48),(19,36),(20,18)$\\
 & $45$ & $31$ & $79$ & $(2,0),(3,49),(15,73),(16,62),(17,49),(18,34),(19,17)$\\
\midrule
$(3,7)$ & $15$ & $79$ & $169$ & $(2,0),(97,95)$\\
 & $16$ & $156$ & $166$ & $(2,0),(96,94)$\\
$(3,9)$ & $15$ & $143$ & $169$ & $(2,0),(161,159)$\\
$(5,7)$ & $47$ & $47$ & $73$ & $(2,0)$\\
\bottomrule
\end{tabular}
\end{table}

For an allowed root degree $k>2$, let $n_k$ be the number of branches with that root degree. Complete the collection to $R$ branches with copies of $K_{1,2}$ rooted at their centers. Each branch with root degree $k>2$ contributes $k-2$ to $D$ and requires at least $c_{s,\eta}(k)-3$ vertices beyond the three in a copy of $K_{1,2}$. Choosing every branch with minimum order therefore reduces feasibility to
\begin{equation}\label{eq:appendix-excess}
 \sum_{k>2}(k-2)n_k=D,\qquad
 \sum_{k>2}\bigl(c_{s,\eta}(k)-3\bigr)n_k\leq E,\qquad
 \sum_{k>2}n_k\leq R.
\end{equation}
We now examine the rows of Table~\ref{tab:singleton-certificate} in turn.
For $(u,v)=(3,7)$, the only positive value of $k-2$ is $95$ when
$R=15$ and $94$ when $R=16$. The required equations
$95n=79$ and $94n=156$ have no nonnegative integer solutions.
For $(3,9)$, the least positive value of $k-2$ is $159>143=D$.
For $(5,7)$, only $k=2$ is allowed, so the sum in the first equation is zero, whereas $D=47$.
It remains to consider $(u,v)=(3,5)$:
\begin{enumerate}[label=\textup{(\roman*)}]
\item If $R=15$, every allowed root degree other than $2,3$ has
$k-2\geq45>D$. Using only root degrees $2,3$ would require $31>15$
branches with root degree three.
\item If $R=16$, the second inequality in \eqref{eq:appendix-excess}
allows at most one branch with root degree three. Two branches with root degree at least $46$ contribute at least $88>D$ to the sum of $k-2$, so at most one is possible. That sum is then at most $1+46=47<60=D$.
\item If $R=17$, two branches with root degree three would require at least
$2\cdot133>163=E$ additional vertices, so at most one can occur.
At most two branches can have root degree at least $45$, since three
would contribute at least $3\cdot43=129>D$ to the sum of $k-2$.
The only sums of positive contributions $k-2$ equal to $D=87$ are
$43+44$ and $1+43+43$. These choices require at least $223$ and
$399$ additional vertices, respectively, both greater than $E=163$.
\item If $R=18$, two branches with root degree three would require at least
$2\cdot130>160=E$ additional vertices, so at most one can occur.
At most two branches can have root degree at least $44$, since three
would contribute at least $3\cdot42=126>D$ to the sum of $k-2$.
The sum of $k-2$ is then at most $1+2\cdot44=89<112=D$.
\item If $R=19$, two branches with root degree three would require at least
$2\cdot127>157=E$ additional vertices, so at most one can occur.
At most three branches can have root degree at least $43$, since four
would contribute at least $4\cdot41=164>135=D$ to the sum of $k-2$.
That sum is therefore at most $1+3\cdot43=130<135=D$.
\item If $R=44$ and a branch with root degree three occurs, at most
$82-52=30$ additional vertices remain for the other branches.
Thus at most one branch can have root degree greater than three,
and its root degree must be $20$.
The sum of $k-2$ is then at most $1+18=19<60=D$.
Without a branch with root degree three, three or fewer branches with root degree
greater than two contribute at most $54$ to this sum.
Five such branches would require at least $5\cdot18=90>82=E$ additional vertices.
Thus exactly four would be needed. Four branches with root degree $20$ require at least
$72$ additional vertices, leaving at most $82-72=10$.
Replacing any of their root degrees by a smaller allowed value greater than two
increases the minimum order by at least $18$, so all four must have root degree $20$.
They would then contribute $72$ to the sum of $k-2$, whereas $D=60$.
\item If $R=45$ and a branch with root degree three occurs, at most
$79-49=30$ additional vertices remain for the other branches.
Table~\ref{tab:singleton-certificate} then permits at most one further
branch with root degree greater than two, and that degree must be $19$.
Thus the sum of $k-2$ is at most $1+17=18<D$.
If no branch with root degree three occurs, at most two branches can have root degree greater than two,
since three such branches contribute at least $39>D$ to this sum.
One branch contributes at most $17$, so exactly two are needed.
The only pairs of contributions $k-2$ summing to $D=31$ are $14+17$ and
$15+16$. They require at least $79$ and $83$ additional vertices, respectively.
Only the pair $14+17$ satisfies the bound $E=79$. Thus there is one branch with root degree
$16$, one with root degree $19$, and $43$ with root degree $2$.
\end{enumerate}

For the surviving row, $s=21$ and $\eta=38$, and the total branch
order is $43\cdot3+65+20=214$. The branches with root degrees $2$ and $19$
have $T=0$ in \eqref{eq:branch-cost}, and hence are stars with two
and $19$ leaves, respectively. For the branch with root degree $16$,
$L=5$ and $T=42=8\cdot5+2$.
Proposition~\ref{prop:minimum-seven}\textup{(i)} therefore gives eight child
parameters equal to $5$, one equal to
$2$, and seven equal to $0$. These give eight children each with five leaf children, one child with two children each having three leaf children, and seven leaf children, exactly as in the branch $B$ in Example~\ref{ex:218}. Since these parameters
determine each branch up to rooted isomorphism, every order-$218$
realization is isomorphic to the graph in that example.


\begingroup\small
\begin{thebibliography}{99}
\bibitem{bose} S.~Bose, \href{https://doi.org/10.1103/PhysRevLett.91.207901}{Quantum communication through an unmodulated spin chain}, \emph{Physical Review Letters} \textbf{91} (2003), 207901.
\bibitem{asymmetry} R.~O.~Braga, J.~C.~Moraes, and M.~C.~Santos, \href{https://arxiv.org/abs/2610.22773v1}{Spectral asymmetry and odd cycles in integral graphs}, arXiv:2610.22773v1 (2026).
\bibitem{christandl} M.~Christandl, N.~Datta, A.~Ekert, and A.~J.~Landahl, \href{https://doi.org/10.1103/PhysRevLett.92.187902}{Perfect state transfer in quantum spin networks}, \emph{Physical Review Letters} \textbf{92} (2004), 187902.
\bibitem{coutinho} G.~Coutinho, \href{https://uwspace.uwaterloo.ca/items/d47f967a-fadf-4eeb-8cd8-d7852a53ff95}{\emph{Quantum State Transfer in Graphs}}, Ph.D. thesis, University of Waterloo, 2014.
\bibitem{supportdistance} G.~Coutinho, \href{https://doi.org/10.1016/j.disc.2018.04.011}{Quantum walks and the size of the graph}, \emph{Discrete Mathematics} \textbf{342} (2019), 2765--2769.
\bibitem{algorithm} G.~Coutinho and C.~Godsil, \href{https://doi.org/10.26421/QIC17.5-6-8}{Perfect state transfer is poly-time}, \emph{Quantum Information and Computation} \textbf{17} (2017), 495--502.
\bibitem{bridges} G.~Coutinho, C.~Godsil, E.~Juliano, and C.~M.~van Bommel, \href{https://doi.org/10.1016/j.laa.2022.07.009}{Quantum walks do not like bridges}, \emph{Linear Algebra and its Applications} \textbf{652} (2022), 155--172.
\bibitem{trees} G.~Coutinho, E.~Juliano, and T.~J.~Spier, \href{https://doi.org/10.1016/j.jctb.2024.04.004}{No perfect state transfer in trees with more than 3 vertices}, \emph{Journal of Combinatorial Theory, Series B} \textbf{168} (2024), 68--85.
\bibitem{decorated} G.~Coutinho, E.~Juliano, and T.~J.~Spier, \href{https://doi.org/10.1016/j.laa.2025.02.011}{The spectrum of symmetric decorated paths}, \emph{Linear Algebra and its Applications} \textbf{711} (2025), 17--39.
\bibitem{damnjanovic} I.~Damnjanovi\'c, \href{http://operator.pmf.ni.ac.rs/amcs/volumes/2023/amcs-2023-7-1-1.pdf}{Computing the characteristic polynomials of rooted trees and the energies of Bethe trees}, \emph{Applied Mathematics and Computer Science} \textbf{7} (2023), 1--16.
\bibitem{godsil} C.~Godsil, \href{https://doi.org/10.1016/j.disc.2011.06.032}{State transfer on graphs}, \emph{Discrete Mathematics} \textbf{312} (2012), 129--147.
\bibitem{guozhou} H.~Guo and B.~Zhou, \href{https://doi.org/10.2298/AADM180210022G}{On the $\alpha$-spectral radius of graphs}, \emph{Applicable Analysis and Discrete Mathematics} \textbf{14} (2020), 431--458.
\bibitem{hu} S.~Hu, \href{https://doi.org/10.1016/j.disc.2006.06.023}{The largest eigenvalue of unicyclic graphs}, \emph{Discrete Mathematics} \textbf{307} (2007), 280--284.
\bibitem{twins} S.~Kirkland, H.~Monterde, and S.~Plosker, \href{https://doi.org/10.1007/s10801-023-01261-3}{Quantum state transfer between twins in weighted graphs}, \emph{Journal of Algebraic Combinatorics} \textbf{58} (2023), 623--649.
\bibitem{limathias} C.-K.~Li and R.~Mathias, \href{https://doi.org/10.1007/s002110050397}{The Lidskii--Mirsky--Wielandt theorem---additive and multiplicative versions}, \emph{Numerische Mathematik} \textbf{81} (1999), 377--413.
\end{thebibliography}
\endgroup
\end{document}